\newcommand*{\arxiv}[1]{\href{http://www.arxiv.org/abs/#1}{arXiv: #1}}
\setlist[enumerate]{font=\normalfont}
\crefname{enumi}{}{} 
\crefname{enumi}{}{} 
\newcounter{thmintrocnt}
\newtheorem{thmintro}[thmintrocnt]{Theorem}
\numberwithin{equation}{section}
\theoremstyle{plain}
\newtheorem{theorem}[equation]{Theorem}
\newtheorem{lemma}[equation]{Lemma}
\newtheorem{proposition}[equation]{Proposition}
\newtheorem{corollary}[equation]{Corollary}
\theoremstyle{definition}
\newtheorem{definition}[equation]{Definition}
\theoremstyle{remark}
\newtheorem{remark}[equation]{Remark}
\newtheorem{example}[equation]{Example}
\DeclareMathOperator{\cspn}{\overline{span}}
\DeclareMathOperator{\spn}{{span}}
\DeclareMathOperator{\Ind}{Ind}
\DeclareMathOperator{\supp}{\mathrm{supp}}
\DeclareMathOperator{\id}{\mathrm{id}}
\DeclareMathOperator{\Bis}{\mathrm{Bis}}
\newcommand*{\Star}{\(^*\)\nobreakdash-}
\newcommand*{\N}{\mathbb N}
\newcommand*{\C}{\mathbb C}
\renewcommand*{\L}{\mathcal L}
\renewcommand*{\H}{\mathcal H}
\newcommand*{\Hilb}{\mathcal H} 
\newcommand*{\Qc}{\mathcal Q_c}
\newcommand*{\F}{\mathcal F} 
\newcommand*{\cont}{C}
\newcommand*{\contz}{\cont_0}
\newcommand*{\contc}{\cont_c}
\newcommand*{\M}{\mathcal M}
\newcommand*{\NN}{\mathcal N}
\newcommand*{\gammac}{\Gamma_c}
\newcommand*{\Id}{\textup{id}}
\newcommand*{\E}{\mathcal E}
\newcommand*{\congto}{\xrightarrow\sim}
\newcommand*{\braket}[2]{\langle#1\!\mid\!#2\rangle}
\newcommand*{\sbe}{\subseteq} 
\newcommand*{\cstar}{\texorpdfstring{$C^*$\nobreakdash-\hspace{0pt}}{*-}}
\newcommand*{\into}{\hookrightarrow}
\newcommand*{\onto}{\twoheadrightarrow}
\newcommand*{\red}{r}
\renewcommand*{\max}{\mathrm{max}}
\newcommand*{\alg}{\mathrm{alg}}
\newcommand*{\dual}[1]{\widehat{#1}}
\newcommand*{\I}{\mathcal{I}} 
\newcommand*{\J}{\mathcal{J}} 
\newcommand{\csp}{\overline{\operatorname{span}}}
\newcommand{\A}{\mathcal A}
\newcommand{\B}{\mathcal B}
\newcommand*{\s}{s} 
\newcommand*{\rg}{r}
\newcommand*{\algalg}[1]{\mathbb{C}_{\text{alg}}\left(#1\right)}
\newcommand*{\redalg}[1]{C_{\text{red}}^*\left(#1\right)} 
\newcommand*{\fullalg}[1]{C_{\text{max}}^*\left(#1\right)} 
\newcommand*{\essalg}[1]{C_{\text{ess}}^*\left(#1\right)} 
\newcommand*{\multiplieralg}[1]{\mathcal{M}\left(#1\right)} 
\newcommand*{\tensor}{\otimes}
\newcommand*{\algtensor}{\odot}
\newcommand*{\maxtensor}{\tensor_{\text{max}}}
\newcommand*{\mintensor}{\tensor_{\text{min}}}
\newcommand*{\Prob}[1]{\text{Prob}\left(#1\right)}
\newcommand{\idealin}{\mathrel{\triangleleft}} 
\begin{document}

\title[Approximation properties of Fell bundles over inverse semigroups]{Approximation properties of Fell bundles over inverse semigroups and non-Hausdorff groupoids}

\author[Alcides Buss]{Alcides Buss $^{1}$}
\address{Departamento de Matem\'atica, Universidade Federal de Santa Catarina, 88.040-900 Florian\'opolis-SC, Brazil}
\email{alcides.buss@ufsc.br}

\author[Diego Mart\'{i}nez]{Diego Mart\'{i}nez $^{2}$}
\address{Mathematisches Institut, WWU M\"{u}nster, Einsteinstr. 62, 48149 M\"{u}nster, Germany.}
\email{diego.martinez@uni-muenster.de}

\begin{abstract}
In this paper we study the nuclearity and weak containment property of reduced cross-sectional \cstar{}algebras of Fell bundles over inverse semigroups. In order to develop the theory, we first prove an analogue of Fell's absorption principle in the context of Fell bundles over inverse semigroups. In parallel, the approximation property of Exel can be reformulated in this context, and Fell's absorption principle can be used to prove that the approximation property, as defined here, implies that the full and reduced cross-sectional \cstar{}algebras are isomorphic via the left regular representation, i.e., the Fell bundle has the weak containment property. 

We then use this machinery to prove that a Fell bundle with the approximation property and nuclear unit fiber has a nuclear cross-sectional \cstar{}algebra. This result gives nuclearity of a large class of \cstar{}algebras as, remarkably, all the machinery in this paper works for \'{e}tale non-Hausdorff groupoids just as well.
\end{abstract}

\subjclass[2020]{46L55, 46L06, 20M18}

\keywords{Inverse semigroup; Fell bundle; Approximation property; Nuclearity}

\thanks{{$^{1}$} Partially supported by CNPq, CAPES - Brazil \& Humboldt Fundation and Germany’s Excellence Strategy - University of Münster - Germany.}

\thanks{{$^{2}$} Funded by the Deutsche Forschungsgemeinschaft (DFG, German Research Foundation) under Germany’s Excellence Strategy – EXC 2044 – 390685587, Mathematics Münster – Dynamics – Geometry – Structure; the Deutsche Forschungsgemeinschaft (DFG, German Research Foundation) – Project-ID 427320536 – SFB 1442, and ERC Advanced Grant 834267 - AMAREC}

\maketitle

\section{Introduction}
There is an abundant number of examples of \cstar{}algebras that have been studied in the almost 80 years of \cstar{}literature, along with numerous different properties. Of these properties, one of the most ubiquitous ones is \emph{nuclearity}. In general, a \cstar{}algebra $A$ is \emph{nuclear} if there is exactly one \cstar{}norm on $A \algtensor B$, the algebraic tensor product of $A$ and $B$, for any \cstar{}algebra $B$ (see \cite{Brown-Ozawa:Approximations} for many other equivalent characterizations). Since it was first introduced, nuclearity has played a crucial role in the field, for it provides a natural finite-dimensional approximation of $A$ in terms of finite-dimensional \cstar{}algebras (see, e.g.,~\cite{Brown-Ozawa:Approximations}*{Section~2.3}). Thus, as the class of \cstar{}algebras is vast, criteria that guarantees when a \cstar{}algebra is nuclear is desirable, as it allows to tackle many different problems by various means. The present paper has the goal of giving sufficient criteria for the nuclearity of a large class of \cstar{}algebras, namely those arising as \emph{cross-sectional} \cstar{}algebras from a \emph{Fell bundle} over an \emph{inverse semigroup}.

\emph{Inverse semigroups} were introduced independently by Wagner~\cites{Wagner1952,Wagner1953} and Preston~\cites{Preston1954-1,Preston1954-2,Preston1954-3} in the 50's. If a (discrete) group can be understood as the set of \emph{global} symmetries of a space, then an \emph{inverse semigroup} can be understood as an algebraic structure that captures the \emph{local} symmetries of a space~\cite{lledo-martinez-2021}. Following this idea, one can also define \emph{actions} of inverse semigroups on locally compact and Hausdorff spaces, which generalize group \emph{partial} actions~\cites{KellendonkLawson:PartialActions,lledo-martinez-2021}. In this setting, such an action gives rise to \emph{full} and \emph{reduced} crossed product \cstar{}algebras. One may even \emph{twist} the action, in the sense of~\cite{Exel:TwistedActions}, and construct \emph{full} and \emph{reduced twisted} crossed products. This class of \cstar{}algebras is, already, quite large, as it contains every \emph{classifiable} \cstar{}algebra~\cite{Li:Classifiable}, among many others. In this paper we shall go one step further, and study cross-sectional \cstar{}algebras arising from \emph{Fell bundles} over inverse semigroups.

A \emph{Fell bundle over an inverse semigroup} $S$ is a bundle $\A = (A_s)_{s\in S}$ of Banach spaces $A_s$ endowed with associative multiplication maps $A_s \times A_t \to A_{st}$, involution maps $A_s\to A_{s^*}$ and inclusion maps $A_s \into A_t$ for $s \leq t$ in $S$ satisfying certain natural axioms that are reminiscent of \cstar{}algebraic structures (see \cref{def:pre:fell-bundle} below). These objects were first introduced by Sieben in the unpublished paper~\cite{SiebenFellbundles}, and first formally introduced in~\cite{Exel:noncomm.cartan} by Exel. In fact, Exel introduced them in order to deal with \emph{noncommutative Cartan subalgebras} (see~\cite{Exel:noncomm.cartan} for precise definitions). Indeed, the Fell bundle data generalizes both twisted actions on spaces and actions on noncommutative \cstar{}algebras, and hence it is the right setting where one may study noncommutative Cartan subalgebras. We shall usually assume that the inverse semigroup $S$ has a unit $1 \in S$, as this amounts to no loss in generality. In this case $A_1$ is a distinguished \cstar{}algebra, and the fibres $A_s$ are Hilbert bimodules over $A_1$ with respect to the Fell bundle operations. In this setting, the Fell bundle $\A$ can be viewed as a (generalized) action of $S$ on $A_1$ by Hilbert bimodules; this is the point of view taken in \cite{Buss-Meyer:Actions_groupoids} and in the sequel.

As is well known~\cites{Lawson2012:duality,chung-martinez-szakacs-2022}, inverse semigroups are strongly connected to \emph{\'{e}tale groupoids}. In particular, every action of an inverse semigroup gives rise to an \'{e}tale groupoid that may be non-Hausdorff (see \cref{subsec:ap:ap-def} below). Likewise, Fell bundles over inverse semigroups are strongly related to Fell bundles over \'{e}tale groupoids, as defined by Kumjian in~\cite{Kumjian:Fell_bundles}. The connection between these structures is, in fact, so tight that Fell bundles over inverse semigroups may be interpreted as \emph{(twisted) \'etale groupoids with noncommutative unit spaces}, as is done in~\cite{BussExel:Fell.Bundle.and.Twisted.Groupoids}. This relationship is further detailed in \cref{subsec:ap:ap-def}, particularly in \cref{prop:fell-isg-vs-groupoids,thm:ap:groupoids-inv-smgps}.

Ever since it was known that amenable groups have nuclear reduced \cstar{}algebras, \emph{amenability} of different types of structures has received considerable attention from the community (see~\cites{Buss-Echterhoff-Willett:amenability,ExelNg:ApproximationProperty,Kranz:AP-groupooids-Fell-bundles,martinez-2022,Ozawa-Suzuki:Amenable_examples} and references therein for examples of \emph{amenable} actions). Amenability of Fell bundles has, in particular, also received attention, starting from the work of Exel~\cite{Exel:Amenability} and Exel-Ng~\cite{ExelNg:ApproximationProperty} where the notion of \emph{approximation property} for a Fell bundle $\A = (A_g)_{g \in G}$ over a group $G$ was first introduced. This should be viewed as amenability of the (generalized) action of $G$ on $A_1$ implemented by $\A$; indeed, whenever $\A$ comes from an ordinary action $\alpha$ of $G$ on $A_1$ the approximation property of $\A$ turns out to be equivalent to the amenability of $\alpha$ as defined by Anantharaman-Delaroche in~\cite{Anantharaman-Delaroche:Systemes}. Although Exel's approximation property and Anantharaman-Delaroche's amenability have been around for quite some time (around 35 years), only recently have they been proved to be equivalent for Fell bundles associated with ordinary group actions; see the recent papers \cites{Buss-Echterhoff-Willett:amenability,Abadie-Buss-Ferraro:Amenability} for further discussions on this.

Exel's approach in~\cite{Exel:Amenability}, in fact, shows that the approximation property of $\A$, together with the nuclearity of the unit fiber $A_1$, imply that $\fullalg{\A} \cong \redalg{\A}$ is nuclear. Note that $A_1$ needs to be nuclear if $\redalg{\A}$ is to be nuclear as well, since there always is a canonical conditional expectation $P \colon \redalg{\A} \to A_1$. Moreover, it was already known for some time, and proved in \cite{Ara-Exel-Katsura:Dynamical_systems}*{Theorem~6.4}, that if $\redalg{\A}$ is nuclear then $\fullalg{\A} \cong \redalg{\A}$, that is, the weak containment property for $\A$ holds. Only recently, see \cites{Buss-Echterhoff-Willett:amenability, Abadie-Buss-Ferraro:Amenability}, it was proved that the nuclearity of $\redalg{\A}$ (or equivalently of $\fullalg{\A}$) is equivalent to nuclearity of $A_1$ plus the approximation property of $\A$. It is, however, not yet known whether the weak containment property for $\A$ implies its approximation property.\footnote{\, This is known to be false in the more general setting of locally compact groups, as there are examples of locally compact groups $G$ and actions on noncommutative \cstar{}algebras for which the weak containment holds but the underlying Fell bundle does not have the approximation property (i.e., the action is not amenable), see \cites{Buss-Echterhoff-Willett:amenability}.} Lastly, Kranz recently~\cite{Kranz:AP-groupooids-Fell-bundles} generalized the notion of approximation property so as to cover Fell bundles over second countable locally compact Hausdorff \'{e}tale groupoids, and derived some of the expected results.

Our main goal in this paper is to extend the notion of \emph{approximation property} from Fell bundles over groups to Fell bundles over inverse semigroups. This is not trivial due to the subtlety implicitly contained in these objects. In fact, the most direct translation from groups or groupoids to inverse semigroups does not capture the correct properties. Nevertheless, our notion of approximation property (see \cref{def:fell-isg}) does extend that of groups and groupoids (see \cref{prop:fell-isg-vs-groupoids}). It is worth noting that our techniques actually allow us to generalize some of the main theorems of~\cite{Kranz:AP-groupooids-Fell-bundles}, as we do not require the groupoids to be Hausdorff or second-countable, nor do we need the Fell bundles to be saturated or separable (see \cref{cor:groupoids-final} below). In fact, we generalize the main result of~\cites{Exel:Amenability,Kranz:AP-groupooids-Fell-bundles} in the following way.
\begin{thmintro}[cf.\ \cref{thm:ap-nuclear}] \label{thmintro:ap-nuclear}
Let $\A = (A_s)_{s \in S}$ be a Fell bundle over an inverse semigroup with unit $1 \in S$. Suppose that $\A$ has the approximation property, as in \cref{def:fell-isg}, and that $A_1$ is nuclear. Then $\fullalg{\A} \cong \redalg{\A}$ is nuclear.
\end{thmintro}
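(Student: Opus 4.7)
The plan is to verify nuclearity of $\redalg{\A}$ via the completely positive approximation property (CPAP). Since the approximation property, together with the Fell absorption principle developed earlier in the paper, already forces $\fullalg{\A}\cong\redalg{\A}$, I am free to work with either completion. The strategy, modelled on Exel's original argument in \cite{Exel:Amenability} for group-graded bundles, is to use the approximating net provided by \cref{def:fell-isg} to produce, for each finite $F\subseteq\redalg{\A}$ and each $\varepsilon>0$, a diagram
\begin{equation*}
\redalg{\A}\xrightarrow{\phi}M_n(A_1)\xrightarrow{\psi}\redalg{\A}
\end{equation*}
with $\phi,\psi$ completely positive contractive and $\|\psi(\phi(a))-a\|<\varepsilon$ for all $a\in F$. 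Because $A_1$ is nuclear, so is every $M_n(A_1)$, and CPAP then delivers nuclearity of $\redalg{\A}$.

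Concretely, I would extract from the approximation property finitely supported sections $\xi_i=(\xi_i(s))_{s\in S_i}$, with $\xi_i(s)\in A_s$ and $S_i\subseteq S$ finite, satisfying a uniform bound on the $A_1$-valued inner product $\braket{\xi_i}{\xi_i}=\sum_s\xi_i(s)^*\xi_i(s)$ and such that the compression operator $T_{\xi_i}(a)\defeq\xi_i^**a*\xi_i$ converges to the identity on $\redalg{\A}$ in the point-norm topology. Given such a $\xi_i$, set $n_i=|S_i|$ and define
\begin{equation*}
\phi_i\colon\redalg{\A}\to M_{S_i}(A_1),\qquad \phi_i(a)_{s,t}\defeq P\bigl(\xi_i(s)^**a*\xi_i(t)\bigr),
\end{equation*}
where $P\colon\redalg{\A}\to A_1$ is the canonical conditional expectation, and dually
\begin{equation*}
\psi_i\colon M_{S_i}(A_1)\to\redalg{\A},\qquad \psi_i\bigl((b_{s,t})\bigr)\defeq\sum_{s,t\in S_i}\xi_i(s)*b_{s,t}*\xi_i(t)^*.
\end{equation*}
The map $\phi_i$ is completely positive as a matrix of slice-and-compress operators built from the positive map $P$; the map $\psi_i$ is completely positive as a Stinespring-type compression by the column vector $\xi_i$. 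Up to renormalizing by $\|\braket{\xi_i}{\xi_i}\|^{1/2}$ both are contractive, and a direct multiplication shows $\psi_i\circ\phi_i=T_{\xi_i}$, which tends to the identity by the approximation property.

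The main obstacle is ensuring that the \emph{correct} version of the approximation property is in play: the introduction explicitly warns that the naive translation of the group or groupoid AP to inverse semigroups fails, so the delicate interplay between $S$, its order, and the Fell bundle inclusions $A_s\hookrightarrow A_t$ for $s\le t$ has to be handled carefully in the matrix-entry sums above, to avoid double-counting contributions coming from comparable elements of $S$. The Fell absorption principle should be precisely what guarantees that $\phi_i$ and $\psi_i$ extend from finitely-supported sections to genuine completely positive maps on the reduced norm, and that the norm of $\psi_i$ is controlled by $\|\braket{\xi_i}{\xi_i}\|$ rather than the more naive $\ell^1$-type bound one would obtain by brute force. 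Once complete positivity and the norm estimates are in place, the CPAP argument is essentially formal, and nuclearity of $A_1$ propagates up to nuclearity of $\redalg{\A}\cong\fullalg{\A}$ as claimed.
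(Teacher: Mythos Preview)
Your strategy---CPAP through matrix algebras over $A_1$---is the one Exel used for groups, but it runs into a genuine obstruction in the inverse semigroup setting that the paper's tensor-product argument is specifically designed to avoid.

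The difficulty is that the canonical conditional expectation $P$ here takes values only in $A_1''$, not in $A_1$ (see \cref{prop:cond-exp} and \cref{def:fell-bundle-closed}); this is exactly what happens for non-Hausdorff groupoids. Correspondingly, the approximation property of \cref{def:fell-isg} is formulated using the central projections $1_{pt^*}, 1_{p(st)^*}\in A_1''$, which encode the order on $S$ and are precisely the device that resolves the double-counting you flag. As a result your map $\phi_i$, as written, lands in $M_{S_i}(A_1'')$ rather than $M_{S_i}(A_1)$, and $A_1''$ is almost never nuclear even when $A_1$ is. The identity $\psi_i\circ\phi_i=T_{\xi_i}$ likewise fails without inserting these projections; the paper's \cref{lemma:ap-implies-wk:maps} makes this explicit by showing that the natural completely positive map $\Psi_\xi$ built from a section $\xi$ lands only in a \cstar{}algebra $B$ generated by $\fullalg{\A}$ \emph{together with} $\pi_1''(A_1'')$, not in $\fullalg{\A}$ itself. (There is also a setup mismatch: in \cref{def:fell-isg} the sections satisfy $\xi_i(s)\in A_{ss^*}\subseteq A_1$, not $\xi_i(s)\in A_s$, and the inner product in item~\ref{def:fell-isg:bounded} is not the naive $\sum_s\xi_i(s)^*\xi_i(s)$.)

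The paper instead proves nuclearity via the tensor-product criterion: for an arbitrary \cstar{}algebra $B$ it establishes the chain
\[
\fullalg{\A}\maxtensor B\cong\fullalg{\A\maxtensor B}\cong\fullalg{\A\mintensor B}\cong\redalg{\A\mintensor B}\cong\redalg{\A}\mintensor B\cong\fullalg{\A}\mintensor B,
\]
using, in order, compatibility of $\fullalg{\cdot}$ with $\maxtensor$ (\cref{prop:tensor-prod:full-comp}), nuclearity of $A_1$ on fibres (\cref{lemma:tensor-prod-nuclear-fiber}), stability of the approximation property under tensoring plus weak containment (\cref{prop:tensor-prod:ap}, \cref{thm:ap-implies-wk}), and compatibility of $\redalg{\cdot}$ with $\mintensor$ (\cref{prop:tensor-prod:red-comp}). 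This route never requires a completely positive map landing back inside $A_1$, which is why it survives the $A_1''$ issue that blocks the direct CPAP argument.
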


In order to prove \cref{thmintro:ap-nuclear} we, in fact, need to prove a version of Fell's absorption principle (see~\cite{Exel:Partial_dynamical}*{Proposition~18.4}) for Fell bundles over inverse semigroups (see \cref{thmintro:fell-principle} below). In the case of groups, Fell's absorption principle states that a representation $\pi$ of $\A$, once tensored with the (left) regular representation $\lambda$ of the underlying group, yields a new representation $\pi \otimes \lambda$ whose extension to $\fullalg{\A}$ factors through $\redalg{\A}$. Again, for inverse semigroups this theorem is much more subtle, since the simple-minded tensor product $\pi \otimes \lambda$ does not necessarily yield a representation of $\A$, due to the relations one has to impose on those representations coming from the inclusions $A_s \into A_t$ for $s \leq t$. We are going to construct instead a different representation $\pi^\Lambda$ that will play the role of $\pi\otimes\lambda$. It will act on a Hilbert space $\ell^2_\pi(S,\H)$ that is a Hausdorff completion of a certain space of finitely supported sections $S\to \H$. Our version of Fell's absorption principle then reads as follows.

\begin{thmintro}[cf.\ \cref{thm:fell-principle,cor:fell-principle:red}] \label{thmintro:fell-principle}
Let $\A = (A_s)_{s \in S}$ be a Fell bundle over an inverse semigroup $S$, and let $\pi = (\pi_s)_{s \in S}$ be a representation of $\A$. Then $\pi^\Lambda$ is unitarily equivalent to the representation $\Ind\pi_1''=\Lambda\otimes_{\pi_1''}\id\colon \A\to \L(\ell^2(\A'')\otimes_{\pi_1''}\H)$ induced from the regular representation $\Lambda\colon \A\to \L(\ell^2(\A''))$. In particular $\pi^\Lambda$ extends to a representation of $\redalg{\A}$. Moreover, if $\pi_1''$ is faithful as a representation of $A_1''$, then $\pi^\Lambda$ yields a faithful representation 
$$\pi^\Lambda\colon \redalg{\A}\into \L(\ell^2_\pi(S,\H)).$$
\end{thmintro}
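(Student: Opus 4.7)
My plan is to establish the unitary equivalence by constructing an explicit unitary $U\colon \ell^2(\A'')\otimes_{\pi_1''}\H \to \ell^2_\pi(S,\H)$, to verify that $U$ intertwines the two actions on a dense set of simple tensors, and then to deduce the extension to $\redalg{\A}$ and the faithfulness claim by the standard induction-of-representations argument.

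For the construction of $U$, I would first describe both Hilbert spaces via dense subspaces of finitely supported sections of $\A''$, respectively of $\A$, with values paired with $\H$. On $\ell^2(\A'')\otimes_{\pi_1''}\H$, a typical elementary tensor is $\delta_s\otimes_{\pi_1''} (a''\otimes v)$ with $a''\in A''_s$ and $v\in\H$; I would send such a vector to the finitely supported section $s\mapsto \pi_s''(a'')v\in\H$, viewed inside $\ell^2_\pi(S,\H)$. The essential check is that this is well-defined and isometric. Well-definedness requires that the relation $\xi a\otimes_{\pi_1''} v = \xi\otimes_{\pi_1''}\pi_1''(a)v$ for $a\in A_1''$ is matched on the target side, which holds because $\ell^2_\pi(S,\H)$ is the Hausdorff completion with respect to a seminorm expressing exactly the same $A_1''$-module relations together with those coming from the inclusions $A_s\hookrightarrow A_t$ for $s\le t$. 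Isometry reduces to the identity
\[
\bigl\langle \pi_1''(\langle \xi,\eta\rangle_{\ell^2(\A'')})v,\,w\bigr\rangle_\H = \langle U(\xi\otimes v),U(\eta\otimes w)\rangle_{\ell^2_\pi(S,\H)},
\]
which is a direct comparison of the two $A_1''$-valued inner products coming from the Fell bundle structure on $\A$ and $\A''$.

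For the intertwining, I would fix $a\in A_s$ and check $U\circ(\Ind\pi_1'')(a)=\pi^\Lambda(a)\circ U$ on simple tensors. Both sides implement a ``multiplication on the left by $a$'' that (i) translates the support of a section from $t$ to $st$ via the Fell bundle multiplication $A_s\times A_t\to A_{st}$, and (ii) is compatible with the inclusions $A_s\hookrightarrow A_t$. The point is that $\pi^\Lambda$ was constructed precisely so as to make this multiplication action pass to the quotient $\ell^2_\pi(S,\H)$, and $\Ind\pi_1''$ is defined so that $\Lambda$ acts by the same convolution-type rule on the left tensor factor. The verification is therefore a routine but careful bookkeeping exercise using the Fell bundle axioms.

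Once the unitary equivalence $\pi^\Lambda\simeq \Ind\pi_1''$ is established, the extension to $\redalg{\A}$ is immediate because $\Lambda$ extends to $\redalg{\A}$ by definition, and induction is continuous for the reduced norm. For the faithfulness statement, I would appeal to the standard argument using the canonical faithful conditional expectation $P\colon \redalg{\A}\to A_1''$: if $\pi_1''$ is faithful on $A_1''$, then $\pi^\Lambda(x)=0$ for some $x\in \redalg{\A}$ forces $\pi_1''(P(x^*x))=0$, hence $P(x^*x)=0$, hence $x=0$ by faithfulness of $P$. The main obstacle I expect is the first step, namely verifying that the candidate map $U$ genuinely descends from the algebraic tensor product to the two Hausdorff completions and is isometric; this is where the extra relations built into $\ell^2_\pi(S,\H)$ from the inclusions $A_s\hookrightarrow A_t$ need to match exactly the relations built into the balanced tensor product with $A_1''$, and it is precisely here that passing from $\pi_1$ to the double dual $\pi_1''$ (rather than $\pi_1$) becomes essential.
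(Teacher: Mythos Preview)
Your proposal is correct and follows essentially the same route as the paper: construct the unitary $U$ from elementary tensors $a''_s\delta_s\otimes v\mapsto \pi_s''(a''_s)v\,\delta_s$, verify isometry via the identity $\braket{f\otimes v}{g\otimes w}=\braket{v}{\pi_1''(P(f^*g))w}$, and check intertwining on simple tensors. The only minor differences are that (i) the paper makes the semi-inner product on $\contc^\pi(S,\H)$ explicit as $\braket{x}{y}_\pi=\sum_{s,t}\braket{x(s)}{\pi_1''(1_{st^*})y(t)}$, which is what actually encodes the relations you allude to (and is strictly larger than just the inclusion relations $A_s\hookrightarrow A_t$), and (ii) for faithfulness the paper argues directly that $\ell^2(\A'')$ is a faithful $\redalg{\A}$--$A_1''$ correspondence, so faithfulness of $\pi_1''$ immediately gives faithfulness of $\Ind\pi_1''$, whereas your conditional-expectation argument reaches the same conclusion with one extra (routine) step.
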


\cref{thmintro:fell-principle} above will allow us to describe induced representations from the regular representation, and hence is the main technical tool in order to prove \cref{thmintro:ap-nuclear}. We now briefly mention that in \cref{cor:ess-nuclearity} we also prove that the approximation property of $\A$ implies that the \emph{essential \cstar{}algebra} $\essalg{\A}$ is nuclear (see~\cite{Kwaniewski2019EssentialCP}*{Theorem~4.11} or \cref{thm:ess-construction} below). In fact, in an upcoming paper, the authors will use similar methods to those in this text in order to introduce an \emph{essential approximation property}, proving the same results we now show but in the context of $\essalg{\A}$ instead.

We end the introduction with the structure of the paper. In \cref{sec:preliminaries} we introduce and recall the necessary background needed throughout the rest of the paper. Later, in \cref{sec:regular-Fell-principle} we recall the construction of the \emph{reduced} and \emph{full} cross-sectional \cstar{}algebras and present and prove our version of Fell's absorption principle (see \cref{thmintro:fell-principle} above). In \cref{sec:ap} we introduce the \emph{approximation property} for Fell bundles over inverse semigroups. Moreover, we also relate it to its companion property for Fell bundles over \'{e}tale groupoids. \cref{sec:tensor} then deals with maximal and minimal tensor products of a Fell bundle $\A$ with a fixed \cstar{}algebra $B$, and we prove the expected compatibility properties. These are technical tools used in the final \cref{sec:nuclearity}, where we prove \cref{thmintro:ap-nuclear}.

\textbf{Conventions:} throughout the paper $S$ will denote a (discrete) inverse semigroup with unit $1 \in S$ (i.e., an inverse \emph{monoid}), and its set of idempotents will be denoted by $E$. Likewise, $\A = (A_s)_{s \in S}$ will be a Fell bundle over $S$. Moreover, $\L(\Hilb{})$ denotes the set of linear bounded operators on the Hilbert space $\Hilb{}$. Lastly, given a \cstar{}algebra $A$, its double dual, or von Neumann enveloping algebra, will be denoted by $A''$.

\textbf{Acknowledgements:} we would like to thank Julian Kranz for his useful comments on a previous version of this manuscript. We would also like to thank the anonymous referee for their comments.

\section{Preliminaries}\label{sec:preliminaries}
We recall here all the basic theory necessary for this paper, including inverse semigroups and groupoids (see \cref{subsec:pre:isg-grpds}); Fell bundles (see \cref{subsec:pre:fell-bundles}); and cross-sectional \cstar{}algebras and their representations (see \cref{subsec:pre:cross-sectional-alg,subsec:pre:fell-reps}).

\subsection{Inverse semigroups and groupoids} \label{subsec:pre:isg-grpds}
Recall that a \emph{semigroup} is a set $S$ equipped with a binary associative operation $S\times S\to S$, where $(s,t) \mapsto st$. As expected, the main difference between \emph{semigroups} and \emph{groups} is the lack of the inverse $s^{-1}$ of some elements, which means that the maps given by $\lambda_s \colon S \to S$, where $t \mapsto st$, may be non-injective. In particular, the map $\lambda_s$ may not define a \emph{bounded} operator on $\ell^2(S)$. The following notion, introduced independently by Wagner~\cites{Wagner1952,Wagner1953} and Preston~\cites{Preston1954-1,Preston1954-2,Preston1954-3}, will fix this caveat, albeit only \emph{locally} (see \cref{rem:isg-local-actions}). 
\begin{definition} \label{def:isg}
A semigroup $S$ is called an \emph{inverse semigroup} if for all $s \in S$ there is a unique $s^* \in S$ such that $ss^*s = s$ and $s^*ss^* = s^*$.
\end{definition}

Henceforth $S$ will denote a discrete inverse semigroup. We now briefly recall the inner structure of a general inverse semigroup, and set some notation. We refer the reader to \cites{Lawson:InverseSemigroups} for a more comprenhensive introduction to inverse semigroups. The set of \emph{idempotents of $S$}, denoted by $E$, is the set $E \coloneqq \{ e \in S \mid e^2 = e\}$. It is well known (see~\cite{Lawson:InverseSemigroups}*{Theorem~3}) that $E \sbe S$ forms a \emph{commutative} sub-semigroup, and hence we may define a partial order on $E$ by stating that $e \leq f$ if, and only if, $ef = e$. This endows $E$ with a meet semi-lattice structure. In fact, the partial order $\leq$ can be extended to the whole semigroup $S$ by stating that $s \leq t$ if, and only if, $te = s$ for some idempotent $e \in E$. It is also well known (and follows from the uniqueness of $s^*$ in \cref{def:isg}, see~\cite{Lawson:InverseSemigroups}) that $E$ is precisely the set $\{s^*s \mid s \in S\}$, and hence every idempotent $e \in E$ is self-adjoint. We say that an element $z \in S$ is \emph{a zero} if $zt = tz = z$ for every $t \in S$. If there is a zero element then it is clear it must be unique, and in such case we will denote it by $0 \in E \sbe S$. Likewise, a \emph{unit}, or an \emph{identity of $S$}, is an element $u \in S$ such that $ut = tu = t$ for every $t \in S$. Again, if there is a unit in $S$ then it must be unique, and we shall henceforth denote it by $1 \in E \sbe S$. In fact, a unit may be always adjoined to $S$ and for convenience we shall assume $S$ has a unit throughout.

\begin{remark} \label{rem:isg-local-actions}
We briefly highlight that, even though the map $\lambda_s \colon S \to S$ defined above is still not injective even if $S$ is an inverse semigroup, it will be injective when restricted to $\lambda_s \colon s^*s \cdot S \to ss^* \cdot S$. This observation leads to a theory of \emph{actions} of inverse semigroups (see, e.g.,~\cite{AraLledoMartinez-2020} and references therein), and will permeate throughout the present paper, although not explicitly. In fact, the canonical action given by $\lambda_s \colon s^*s \cdot S \to ss^* \cdot S$ is known as the \emph{Wagner-Preston representation}. Likewise, inverse semigroups are then realized by partial isometries of Hilbert spaces with commuting domains and ranges.
\end{remark}

We may now shift our point of view, and discuss \emph{groupoids}. However, observe that the main objects of study of the text (see \cref{def:pre:fell-bundle,def:fell-isg}) revolve around inverse semigroups, and so we will keep the discussions about groupoids brief. We refer the reader to~\cite{SimsNotes2020} and references therein for a more in-depth discussion, as well as natural examples of groupoids arising in the literature.
\begin{definition} \label{def:grpd}
A \emph{groupoid} $G$ is a small category with inverses, that is, it is a set $G$ together with a distinguished set $G^{(2)} \sbe G \times G$ and maps $\cdot \colon G^{(2)} \to G$ and $^{-1} \colon G \to G$ such that
\begin{enumerate}
\item $(g^{-1})^{-1} = g$;
\item if $(g, h), (h, k) \in G^{(2)}$ then $(g, hk), (gh, k) \in G^{(2)}$ and $(gh) k = g (hk)$;
\item $(g, g^{-1}) \in G^{(2)}$, and whenever $(g, h) \in G^{(2)}$ (resp. $(h, g) \in G^{(2)}$) we have  $g^{-1}g h = h$ (resp. $h = h gg^{-1}$),
\end{enumerate}
for all $g, h, k \in G$.
\end{definition}

We usually think of elements in a groupoid as \emph{arrows between units}. Observe that every groupoid has a distinguished subset $G^0 \sbe G$, whose elements are called \emph{units}, which are precisely those elements that, as arrows, form loops. In fact, the rest of the elements of the groupoid can be thought of as arrows $g \colon g^{-1}g \to gg^{-1}$. This way we may define maps $r, s \colon G \to G^{0}$ by $s(g) \coloneqq g^{-1}g$ and $r(g) \coloneqq gg^{-1}$, which we call the \emph{range} and \emph{source} map respectively. Henceforth, every groupoid here considered will be endowed with a topology for which the multiplication and inverse maps are continuous. Hence, we say $G$ is a \emph{locally compact groupoid} if it is locally compact as a topological space. Moreover, the notion of \emph{\'etale} groupoid will be discussed in \cref{subsec:ap:ap-def}, and thus we will not discuss it here any further.

It is by now well known that \emph{\'etale groupoids} and \emph{inverse semigroups} are dual to each other (see~\cite{Lawson2012:duality} for precise statements). On one hand, given an inverse semigroup $S$ one may construct the \emph{universal} groupoid associated to $S$ (see~\cite{Paterson1999}*{Chapter~4}). As is expected from the name, this groupoid encapsulates the representation theory of $S$, and is \emph{universal} in some suitable sense~\cite{Paterson1999}*{Theorems~4.4.1 and~4.4.2}. However, we will not use this construction in the present paper, and hence we will refrain from discussing it further.

On the other hand, given a locally compact \'{e}tale groupoid $G$, one may look at the set of open \emph{bisections} $\Bis(G)$, which carries a natural inverse semigroup structure (see \cref{subsec:ap:ap-def} below, particularly \cref{def:wide-isg}). We may then recover the groupoid $G$ as the \emph{groupoid of germs $G^0 \rtimes \Bis(G)$}, where $\Bis(G)$ acts on $G^0$ canonically (see \cref{prop:wide}). All in all, elements in an inverse semigroup may be seen as appropriately chosen open sets of a groupoid, and arrows in a groupoid may be seen as germs of actions of inverse semigroups on spaces.

\subsection{Fell bundles} \label{subsec:pre:fell-bundles}
This section introduces the main object of study of the paper, and fixes notation that will be of use throughout the paper. 
\begin{definition} \label{def:pre:fell-bundle}
Let $S$ be an inverse semigroup with unit $1 \in S$, and let $A$ be a \cstar{}algebra. A \emph{Fell bundle over $S$ with unit fibre $A$} consists of 
\begin{itemize}
  \item Hilbert $A$-$A$-bimodules $A_s$ for every $s \in S$;
  \item bimodule embeddings $\mu_{s, t} \colon A_s \otimes_A A_t \hookrightarrow A_{st}$ for all $s, t \in S$;
\end{itemize}
such that
\begin{enumerate}[label=(\roman*)]
  \item \label{item:def:pre:fell-bundle:unit} $A_1 = A$ (as a $A$-$A$-bimodule);
  \item \label{item:def:pre:fell-bundle:mu} $\mu_{s, 1} \colon A_s \otimes_A A \rightarrow A$ and $\mu_{1, s} \colon A \otimes_A A_s \rightarrow A_s$ are the canonical isomorphisms;
  \item \label{item:def:pre:fell-bundle:assoc} for all $s, t, r \in S$ the diagram
    \begin{center}
      \begin{tikzcd}[scale=50em]
        A_s \otimes_A A_t \otimes_A A_r \arrow{d}{\id_{A_s} \otimes \mu_{t, r}} \arrow{r}{\mu_{s, t} \otimes \id_{A_r}} &[2.2em] A_{st} \otimes_A A_r \arrow{d}{\mu_{st, r}} \\
        A_s \otimes_A A_{tr} \arrow{r}{\mu_{s, tr}} & A_{str}
      \end{tikzcd}
    \end{center}
   commutes, and the common composition is denoted by $$\mu_{s,t,r}\colon A_s \otimes_A A_t \otimes_A A_r\to A_{srt};$$
 \item \label{item:def:pre:fell-bundle:iso} and $\mu_{s,s^*,s} \colon A_s \otimes_A A_{s^*} \otimes_A A_s \to A_s$ is an isomorphism for every $s \in S$.
\end{enumerate}
\end{definition}

The following remarks are in order.
\begin{remark} \label{rem:fell-history}
Historically, the notion of Fell bundles has first been defined by Sieben in an unpublished manuscript \cite{SiebenFellbundles}. This definition was then published and used in the paper \cite{Exel:noncomm.cartan} by Exel. We use the above notion from~\cite{Buss-Meyer:Actions_groupoids} by the first-named author and Meyer, where it is shown that both definitions are equivalent. In~\cite{Buss-Meyer:Actions_groupoids}, in fact, a Fell bundle over $S$ with unit fibre $A$ is viewed as a (generalized) \emph{action of $S$ on $A$ by Hilbert bimodules}.
\end{remark}

\begin{remark} \label{rem:saturated-vs-semi}
The notion in \cref{def:pre:fell-bundle} is actually what is known as a \emph{non-saturated} Fell bundle in the literature, since the maps $\mu_{s,t} \colon A_s \otimes_A A_t \rightarrow A_{st}$ are assumed to be only embeddings of Hilbert $A$-$A$-bimodules.  \emph{Saturated} Fell bundles, on the other hand, assume that $\mu_{s,t}$ is, actually, an isomorphism. It is well known (see~\cite{BussExel:InverseSemigroupExpansions}) that given a non-saturated Fell bundle $\A = (A_s)_{s \in S}$ one can enlarge $S$ to an inverse semigroup $T \coloneqq \mathrm{Pr}(S)$ (with the same unit, the so-called \emph{prefix expansion of $S$}) and endow $\A$ with a natural saturated Fell bundle structure over $T$ in such a way that the full and reduced \cstar{}algebras of $\A$ over $S$ and $T$ are canonically isomorphic (see~\cite{BussExel:InverseSemigroupExpansions}*{Theorem~7.2}).
\end{remark}
\begin{remark}
We restrict ourselves to the case where $S$ has a unit for convenience of both the readers and the authors, but this is not necessary. In general, if $S$ is \emph{not} a monoid then a Fell bundle over $S$ is defined as in \cref{def:pre:fell-bundle}, but conditions \cref{item:def:pre:fell-bundle:unit,item:def:pre:fell-bundle:mu} are dropped. As $S$ is always assumed to have a unit $1 \in S$ throughout the paper and the \cstar{}algebra $A_1 = A$ is then already part of the structure of $\A$, we shall usually just say that $\A$ is a \emph{Fell bundle over $S$}, as opposed to a Fell bundle over $S$ with unit fibre $A$.
\end{remark}

The following theorem was proved in~\cite{Buss-Meyer:Actions_groupoids}*{Theorem~4.8}, and describes more structure inherently present in any Fell bundle (given as axioms in \cite{Exel:noncomm.cartan}). In fact, the following was only proved for \emph{saturated} Fell bundles (recall \cref{rem:saturated-vs-semi}), but the same ideas carry over to the non-saturated setting.
\begin{theorem} \label{thm:fell-bundle-extra-str}
  Let $\A = (A_s)_{s \in S}$ be a Fell bundle over $S$. Then there are
  \begin{itemize}
    \item unique Hilbert bimodule embeddings $\iota_{t, s} \colon A_t \rightarrow A_s$, where $t \leq s$, such that
    \begin{enumerate}[label=(\roman*)]
      \item for all $s_1, t_1, s_2, t_2 \in S$ such that $t_1 \leq s_1$ and $t_2 \leq s_2$ the diagram
        \begin{center}
          \begin{tikzcd}[scale=50em]
            A_{t_1} \otimes_A A_{t_2} \arrow{d}{\iota_{t_1, s_1} \otimes \iota_{t_2, s_2}} \arrow{r}{\mu_{t_1, t_2}} &[1.3em] A_{t_1 t_2} \arrow{d}{\iota_{t_1 t_2, s_1 s_2}} \\
            A_{s_1} \otimes_A A_{s_2} \arrow{r}{\mu_{s_1, s_2}} & A_{s_1 s_2}
          \end{tikzcd}
        \end{center}
        commutes;

      \item $\iota_{s, s} = \Id_{A_s}$ for every $s \in S$; and
      \item $\iota_{s, r} \circ \iota_{t, s} = \iota_{t, r}$ for every $t \leq s \leq r$;
    \end{enumerate}
    \item unique Hilbert bimodule isomorphisms $J_s \colon A_s^* \rightarrow A_{s^*}$ such that
    \[ \mu_{s,s^*,s}\left(a\otimes J_s\left(a\right)\otimes a\right) = a \cdot \braket{a}{a}_A = {}_A\braket{a}{a} \cdot a \]
    for all $s \in S$ and $a \in A_s$; here $A_s^*$ denotes the dual Hilbert bimodule of $A_s$.
  \end{itemize}
\end{theorem}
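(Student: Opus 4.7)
The plan is to split the statement into two independent constructions: handle $J_s$ first, since axiom \cref{item:def:pre:fell-bundle:iso} essentially forces its definition, and then use the idempotent case $J_e$ together with the multiplication maps to build the inclusions $\iota_{t,s}$.

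For $J_s$: since $\mu_{s,s^*,s}\colon A_s\otimes_A A_{s^*}\otimes_A A_s\congto A_s$ is an isomorphism, the requirement $\mu_{s,s^*,s}(a\otimes J_s(a)\otimes a)=a\cdot\langle a,a\rangle_A$ determines $J_s(a)\in A_{s^*}$ uniquely for each $a\in A_s$. A polarization argument then turns this into the bilinear identity $\mu_{s,s^*,s}(a\otimes J_s(b)\otimes c)=a\langle b,c\rangle_A$, from which conjugate-$A$-linearity and preservation of the inner products become direct calculations, exhibiting $J_s$ as a Hilbert bimodule isomorphism $A_s^*\to A_{s^*}$. The twin identity $a\langle a,a\rangle_A={}_A\langle a,a\rangle\cdot a$ follows by running the same argument from the left.

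For $\iota_{t,s}$: first treat the idempotent case $e\leq 1$. Applying axioms \cref{item:def:pre:fell-bundle:assoc,item:def:pre:fell-bundle:iso} at $s=e$ makes $(A_e,\mu_{e,e},J_e)$ into a $C^*$-algebra, and the Hilbert $A$-bimodule structure then induces a canonical non-degenerate $*$-homomorphism $\iota_{e,1}\colon A_e\hookrightarrow A$ whose image is the closed two-sided ideal ${}_A\langle A_e,A_e\rangle\subseteq A$. For a general $t\leq s$, set $e\defeq s^*t\in E$, so that $t=se$ with $e$ idempotent, and build $\iota_{t,s}$ from $\mu_{s,e}\colon A_s\otimes_A A_e\hookrightarrow A_t$ together with $\id_{A_s}\otimes\iota_{e,1}\colon A_s\otimes_A A_e\to A_s\otimes_A A= A_s$: in the saturated case $\mu_{s,e}$ is an isomorphism, and one sets $\iota_{t,s}\defeq(\id_{A_s}\otimes\iota_{e,1})\circ\mu_{s,e}^{-1}$.

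The compatibility conditions (i)--(iii) are functorial and would be verified by tracing through the definitions and invoking axiom \cref{item:def:pre:fell-bundle:assoc}; uniqueness of $\iota_{t,s}$ reduces, via the idempotent case, to uniqueness of the ideal embedding $A_e\hookrightarrow A$, which is pinned down by the bimodule structure. The main obstacle is the construction of $\iota_{t,s}$ when the bundle is non-saturated, since then $\mu_{s,e}$ is only an embedding into $A_{se}=A_t$ and cannot be inverted directly. The cleanest workaround appears to be passing to the prefix expansion $T=\mathrm{Pr}(S)$ from \cref{rem:saturated-vs-semi}, where the Fell bundle becomes saturated and the construction above applies verbatim; one then pulls the resulting inclusions back to $S$ along the canonical map $S\to T$, which also yields the uniqueness assertion in the original non-saturated setting.
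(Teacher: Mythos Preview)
The paper does not prove this theorem; it cites \cite{Buss-Meyer:Actions_groupoids}*{Theorem~4.8} for the saturated case and simply asserts that ``the same ideas carry over to the non-saturated setting.'' So there is no detailed argument here to compare against, and your sketch is effectively being measured against the cited reference.

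Your outline is essentially the Buss--Meyer strategy and is correct in spirit: extract $J_s$ from the isomorphism $\mu_{s,s^*,s}$ of axiom~\cref{item:def:pre:fell-bundle:iso}, show each $A_e$ is a \cstar{}algebra sitting as an ideal in $A_1$, and then manufacture $\iota_{t,s}$ from the idempotent case by inverting $\mu_{s,e}$. One phrasing should be tightened: the cubic identity $\mu_{s,s^*,s}(a\otimes J_s(a)\otimes a)=a\langle a,a\rangle_A$ does not determine $J_s(a)$ for a \emph{fixed} $a$ (take $a=0$); what is unique is the map $J_s$, and this only emerges after polarizing to the trilinear identity $\mu_{s,s^*,s}(a\otimes J_s(b)\otimes c)=a\langle b,c\rangle_A$ and checking that $x\mapsto \mu_{s,s^*,s}(\,\cdot\otimes x\otimes\cdot\,)$ is injective on $A_{s^*}$ --- which does follow from axiom~\cref{item:def:pre:fell-bundle:iso} and fullness of $A_s$ over its range and source ideals.

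Your handling of the non-saturated case is the more interesting point. You are right that $\mu_{s,e}$ is, a priori, only an embedding, so the saturated formula $\iota_{t,s}=(\id\otimes\iota_{e,1})\circ\mu_{s,e}^{-1}$ is not immediately available. Your proposed detour through the prefix expansion $\mathrm{Pr}(S)$ of \cref{rem:saturated-vs-semi} is a legitimate fix: the construction in \cite{BussExel:InverseSemigroupExpansions} uses only the multiplication maps $\mu$, so no circularity arises, and $S$ embeds in $\mathrm{Pr}(S)$ with the same fibres over its image, so the saturated inclusions restrict to give the desired $\iota_{t,s}$. This is a more explicit route than the paper's unelaborated remark that the saturated proof ``carries over.'' Be aware, though, that you cannot shortcut this by appealing to \cref{lemma:saturation-idempotents}\,\ref{lemma:saturation-idempotents:3} (which would give surjectivity of $\mu_{s,e}$ directly): as written in the paper, that lemma's proof already uses the inclusions of \cref{thm:fell-bundle-extra-str}, so invoking it here would be circular.
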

\begin{remark} \label{rem:thm:fell-bundle-extra-str}
  Henceforth, the embeddings $\mu_{s, t} \colon A_s \otimes_A A_t \rightarrow A_{st}$ are assumed to be part of the data of a Fell bundle. Likewise, so are the bimodule maps $\iota_{t, s}$ and $J_s$ from \cref{thm:fell-bundle-extra-str}. Thus, for convenience of the reader, a Fell bundle over $S$ will just be denoted by $\A = (A_s)_{s \in S}$. Furthermore, we shall use the following abbreviations:
  \begin{itemize}
    \item $a_s a_t \coloneqq \mu_{s, t}(a_s \otimes a_t)$ whenever $a_s \in A_s$ and $a_t \in A_t$;
    \item the map $\iota_{t, s}$, where $t \leq s$, will just be understood as an inclusion $A_t \sbe A_s$;
    \item $a_s^* \coloneqq J_s(a_s) \in A_{s^*}$ for every $s \in S$.
  \end{itemize}
  The previous notation should be enlightening. Indeed, $\mu_{s,t}$ will be the product of elements; $\iota_{t, s}$ is just a natural inclusion; and $J_s$ stands for taking the adjoint of an element. 
\end{remark}

Given a \cstar{}algebra $A$ we write $A''$ for the von Neumann enveloping algebra of $A$, that is, the von Neumann algebra generated by the image of its universal representation. The weak and strong topologies on $A''$ are the ones inherited from this representation. The ultraweak topology corresponds to the \Star{}weak topology of $A'' \cong (A')'$.

The following notion was already introduced in~\cites{Buss-Exel-Meyer:Reduced} and will be important to us also in this paper.
\begin{definition}
Given a Fell bundle $\A = (A_s)_{s \in S}$ over an inverse semigroup $S$, the \emph{von Neumann enveloping Fell bundle of $\A$}, denoted by $\A''$, is the Fell bundle whose fibers are the biduals $A''_s$ of $A_s$ and with multiplication maps $A_s''\overline{\otimes}_{A_1''} A_t''\to A_{st}''$ being the weakly continuous extensions of the multiplications on $\A$. 
\end{definition}

\begin{remark}
The Fell bundle $\A''$ is a \emph{von Neumann Fell bundle} or a \emph{$W^*$-Fell bundle} as defined in \cite{Abadie-Buss-Ferraro:Amenability}*{Definition~5.1} for Fell bundles over groups (and the same definition obviously extends to inverse semigroups), which essentially means the whole structure of the Fell bundle is compatible with the weak topology. One can look at the \emph{universal representation} of the Fell bundle $\A$ in order to represent every fibre $A_s$ with $s\in S$ into a single Hilbert space. Alternatively, one can view $A_s$ as a subspace of $\fullalg{\A}$ (the full cross-sectional \cstar{}algebra of $\A$, see~\cref{subsec:pre:cross-sectional-alg}) and embed everything into the enveloping von Neumann algebra $\fullalg{\A}''$. This allows us to endow the fibres $A_s$ with a weak topology, as we implicitly did above. And in this sense $A_s''$ is just the weak closure of $A_s$.
\end{remark}

We end this section with a discussion of some notation that will be used throughout the paper virtually without mention. Note that this will lead to the useful \cref{prop:unit-s-commutation}, which allows us to perform some commutation relations in certain computations. 

\begin{definition} \label{def:pre:ideal-s-1}
Let $\A = (A_s)_{s \in S}$ be a Fell bundle over an inverse semigroup $S$ with unit $1 \in S$. Given any $s, t \in S$, let:
\begin{enumerate}[label=(\roman*)]
  \item $I_{s, t} \sbe A$ be the (closed) ideal generated by $A_{v^*v} \sbe A_1$ with $v \leq s,t$;
  \item $1_{s, t} \in I_{s, t}''$ be the unit of $I_{s, t}'' \sbe A''$;
  \item $1_s \coloneqq 1_{s, 1} \in I_{s, 1}''$.
\end{enumerate}
\end{definition}
\begin{remark}
  Note that if $e \in E$ is an idempotent then $I_{e, 1} = A_e$, which is a two-sided closed ideal in $A_1$. Thus $1_e \in A_e''$ is the unit of the von Neumann algebra $A_e''$. We can also view $1_e$ as the unit of the multiplier \cstar{}algebra $\M(A_e)\sbe A_e''$. More generally we may view $1_s$ as the unit of the multiplier \cstar{}algebra $\M(I_{s,1})\sbe I_{s,1}''$. Notice that $I_{s,t}=I_{s^*t,1}=I_{t^*s,1}=I_{t,s}$ so that $1_{s,t}=1_{s^*t}=1_{t^*s}=1_{t,s}$ for all $s,t\in S$. This follows from the fact that if $e\leq s^*t,1$, then $v \coloneqq se = te\leq s,t$ (see the proof of~\cref{lemma:unit-s-conjugation} below) and, conversely, if $v\leq s,t$, then $e \coloneqq v^*v\leq s^*t,1$.
\end{remark}

The ideal $I_{s, 1}$ is intrinsic to the Fell bundle $\A$ and plays an important role in its structure. In a sense that will become clear in what follows, $I_{s,1}$ implements the \emph{trivial} part of the fibre $A_s$. The following results give certain important computation relations that will be used later.
\begin{lemma} \label{lemma:unit-s-conjugation}
  Let $S$ be an inverse semigroup with idempotent semilattice $E$. For $s\in S$, write $E_s:=\{e\in E:e\leq s\}$ and let $\omega_s\colon E\to E$ be the map $\omega_s(e) \coloneqq ses^*$.
  Then:
  \begin{enumerate}[label=(\roman*)]
    \item \label{lemma:unit-s-conjugation:doms}  $E_r=E_{r^*}$ for all $r\in S$, in particular $E_{s^*t}=E_{t^*s}$ for all $s,t\in S$;
    \item \label{lemma:unit-s-conjugation:maps} $\omega_s=\omega_t$ as maps $E_{s^*t} \to E_{st^*}$ and give a bijection with inverse $\omega_{s^*}=\omega_{t^*}$.
  \end{enumerate}
\end{lemma}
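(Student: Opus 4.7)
The plan is to reduce everything to the characterization that $s \le t$ iff $s = ss^*t$ (equivalently, $s = ts^*s$), which for an idempotent $e$ simplifies to $e \le r$ iff $e = er$ iff $e = re$. Part (i) is then almost immediate: if $e \in E$ satisfies $e = er$, taking adjoints gives $e = r^*e$, so $e \le r^*$ by the same criterion, and symmetry yields $E_r = E_{r^*}$. Applying this to $r = s^*t$ gives the stated equality $E_{s^*t} = E_{t^*s}$.

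For (ii), the heart of the argument is to show that $se = te$ for every $e \in E_{s^*t}$. From $e \le s^*t$ and (by (i)) $e \le t^*s$, the characterization above gives $e = s^*te$ and $e = t^*se$ (one is the adjoint of the other). Multiplying these identities by $s$ and $t$ respectively on the left yields
\begin{equation*}
se = ss^*te, \qquad te = tt^*se.
\end{equation*}
Substituting the second into the first, I get $se = ss^*tt^*se$. Multiplying both sides by $tt^*$ on the left and using that the commuting idempotents $ss^*$ and $tt^*$ are themselves idempotent gives $tt^*se = se$, and combining with the second identity yields $te = tt^*se = se$. This is the main (and essentially only) genuine obstacle: once obtained, the equality $\omega_s(e) = \omega_t(e)$ follows at once from
\begin{equation*}
\omega_s(e) = ses^* = (se)(se)^* = (te)(te)^* = tet^* = \omega_t(e),
\end{equation*}
using that $e$ is a self-adjoint idempotent.

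It remains to verify that $\omega_s$ takes $E_{s^*t}$ into $E_{st^*}$ and is bijective with inverse $\omega_{s^*}$. First, any $e \le s^*t$ automatically satisfies $e \le (s^*t)(s^*t)^* = s^*tt^*s \le s^*s$, so that $es^*s = s^*se = e$. Then
\begin{equation*}
\omega_s(e)\cdot st^* = ses^*st^* = s(es^*s)t^* = set^* = tet^* = \omega_s(e),
\end{equation*}
showing $\omega_s(e) \le st^*$. Second, $\omega_{s^*}(\omega_s(e)) = s^*ses^*s = (s^*s)\,e\,(s^*s) = e$, again using that idempotents commute and that $e \le s^*s$. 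The analogous computation with the roles of $(s,t)$ and $(s^*,t^*)$ swapped establishes that $\omega_s\colon E_{s^*t} \to E_{st^*}$ is a bijection with inverse $\omega_{s^*}$, and applying the first part of (ii) to $(s^*,t^*)$ in place of $(s,t)$ shows $\omega_{s^*} = \omega_{t^*}$ on $E_{st^*}$, completing the proof.
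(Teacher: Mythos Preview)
Your proof is correct and follows essentially the same approach as the paper's: both establish the key identity $se=te$ for $e\in E_{s^*t}$ via the equations $e=s^*te$ and $e=t^*se$ and commutation of the idempotents $ss^*$, $tt^*$, then derive $\omega_s=\omega_t$ and the bijection from it. The only differences are cosmetic (your chain for $se=te$ takes one extra substitution step, and you verify $\omega_s(e)\le st^*$ while the paper verifies $\omega_s(e)\le ts^*$, which amount to the same thing by part~(i)).
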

\begin{proof}
  Recall that, in general, it is well known (and straightforward to prove) that $a \leq b$ if and only if $a^* \leq b^*$ for any $a, b \in S$. Thus $E_r=E_{r^*}$, since idempotents are self-adjoint. Next, observe that if $e\in E$ and $e \leq t^*s$ then $t^*se = e=s^*te$, so that
  \[ te = tt^*s e =tt^*ss^*se=ss^*tt^*se= se. \]
  From this it also follows that $et^*=es^*$ and therefore $\omega_s=\omega_t$ on $E_{s^*t}$. Moreover, if $e\leq s^*t$, then $ses^*\leq ts^*$ because $ts^*ses^*=tes^*=ses^*$ and we have $\omega_{t^*}(ses^*)=t^*ses^*t=e$, therefore $\omega_{t^*}=\omega_{s^*}\colon E_{st^*}\to E_{s^*t}$ realizes the inverse of $\omega_s=\omega_t$.
\end{proof}
Recall the notion of \emph{saturated} Fell bundles from \cref{rem:saturated-vs-semi}, and observe that the following lemma states that, even in the non-saturated setting we are working in, some multiplication maps are isomorphisms.
\begin{lemma} \label{lemma:saturation-idempotents}
Let $\A = (A_s)_{s \in S}$ be a Fell bundle. The following assertions hold:
\begin{enumerate}[label=(\roman*)]
\item \label{lemma:saturation-idempotents:1} $\csp A_s \cdot A_{s^*s} = \cspn A_s \cdot A_{s^*} \cdot A_s$; and
\item \label{lemma:saturation-idempotents:3} $A_{se} = \cspn A_s \cdot A_e$
\end{enumerate}
for every idempotent $e \in E$ and $s \in S$.
\end{lemma}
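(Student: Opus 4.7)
For part \cref{lemma:saturation-idempotents:1}, the plan is to exploit axiom \cref{item:def:pre:fell-bundle:iso} of \cref{def:pre:fell-bundle} directly: the isomorphism $\mu_{s,s^*,s}$ gives $\cspn A_s\cdot A_{s^*}\cdot A_s = A_s$. On the other hand, the embedding $\mu_{s^*,s}\colon A_{s^*}\otimes_A A_s\hookrightarrow A_{s^*s}$ together with $\mu_{s,s^*s}\colon A_s\otimes_A A_{s^*s}\hookrightarrow A_{s\cdot s^*s}=A_s$ produces the chain
\[ A_s\cdot A_{s^*}\cdot A_s\;\sbe\; A_s\cdot A_{s^*s}\;\sbe\; A_s. \]
Taking closed linear spans and combining with the above equality sandwiches $\cspn A_s\cdot A_{s^*s}$ between two copies of $A_s$, which forces both claimed spans to coincide with $A_s$ and hence with each other.

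For part \cref{lemma:saturation-idempotents:3}, the containment $\cspn A_s\cdot A_e\sbe A_{se}$ is immediate from the embedding $\mu_{s,e}$. For the opposite direction, I would apply \cref{lemma:saturation-idempotents:1} to the element $t=se$ (noting $(se)^*=e^*s^*=es^*$), obtaining $A_{se}=\cspn A_{se}\cdot A_{es^*}\cdot A_{se}$. Since $se\leq s$, \cref{thm:fell-bundle-extra-str} yields $A_{se}\sbe A_s$. A short check using the fact that idempotents commute and are self-adjoint (\cf \cref{lemma:unit-s-conjugation}) shows that $es^*se$ is an idempotent satisfying $es^*se\leq e$, and hence $A_{es^*se}\sbe A_e$, again by \cref{thm:fell-bundle-extra-str}. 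Composing these, $A_{es^*}\cdot A_{se}\sbe A_{es^*se}\sbe A_e$ and in turn $A_{se}\cdot A_{es^*}\cdot A_{se}\sbe A_s\cdot A_e$, providing the missing containment.

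The main obstacle I anticipate is not any single computation but rather the bookkeeping required to ensure that products taken in smaller fibres are consistently identified with the same products computed in larger fibres via the embeddings $\iota_{t,s}$. The compatibility diagram of \cref{thm:fell-bundle-extra-str} is precisely what guarantees this identification, and once it is invoked both parts reduce to elementary chains of subspace containments.
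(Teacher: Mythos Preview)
Your proposal is correct and follows essentially the same route as the paper's proof: both parts rest on axiom~\cref{item:def:pre:fell-bundle:iso} to get $A_{se}=\cspn A_{se}\cdot A_{(se)^*}\cdot A_{se}$, and then use the inclusions $A_{se}\sbe A_s$ and $A_{es^*se}\sbe A_e$ coming from \cref{thm:fell-bundle-extra-str}. The paper's writeup is a bit terser (it records the chain $A_{se}=\cspn A_{se}A_{(se)^*}A_{se}\sbe\cspn A_{se}A_{es^*s}\sbe\cspn A_s A_e$, using $es^*se=es^*s$), but the argument is the same.
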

\begin{proof}
In all assertions the inclusions $\supset$ are clear and follow from the canonical multiplication maps. Assertion \cref{lemma:saturation-idempotents:1} follows from \cref{def:pre:fell-bundle}~\cref{item:def:pre:fell-bundle:iso}. Assertion \cref{lemma:saturation-idempotents:3} follows from \cref{def:pre:fell-bundle}~\cref{item:def:pre:fell-bundle:iso} as well, since
\[ A_{se} = \cspn A_{se} A_{\left(se\right)^*} A_{se} \sbe \cspn A_{se} \cdot A_{es^*s} \sbe \cspn A_{s} \cdot A_e, \]
as desired.
\end{proof}
\begin{lemma} \label{lemma:unit-s-easy-commutation}
Let $\A = (A_s)_{s \in S}$ be a Fell bundle. Then $a_s 1_e = 1_{ses^*} a_s$ for every $a_s \in A_s, s \in S$ and idempotent $e \in E$ whenever $s^*s \geq e$.
\end{lemma}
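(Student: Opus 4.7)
The plan is to realize both sides of the identity as a common ultrastrong limit in $A_s''$, coming from approximate units. Concretely, I will show that $a_s\cdot 1_e$ and $1_{ses^*}\cdot a_s$ both coincide with $\lim_\lambda a_s u_\lambda = \lim_\mu v_\mu a_s$, where $(u_\lambda)$ is an approximate unit of the C*-ideal $A_e\subseteq A_1$ and $(v_\mu)$ is one of $A_{ses^*}\subseteq A_1$. The point is that these approximating nets live inside the subfiber $A_{se}\sbe A_s$, on which $1_e$ acts as the identity from the right and $1_{ses^*}$ acts as the identity from the left.

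I would begin with two purely semigroup-theoretic observations. Since $e\leq s^*s$ is equivalent (on the idempotent sublattice) to $s^*se=es^*s=e$, a direct computation gives that $ses^*$ is idempotent, as $(ses^*)^2=s(es^*se)s^*=ses^*$, and that $ses^*\cdot s=s(es^*s)=se$. These two identities yield the inclusions of fibres $A_s\cdot A_e\sbe A_{se}$ and $A_{ses^*}\cdot A_s\sbe A_{ses^*s}=A_{se}$, and show that $A_{ses^*}$ is an ideal of $A_{ss^*}$ while $A_e$ is an ideal of $A_{s^*s}$, so that $1_{ses^*}$ and $1_e$ are the units of the corresponding W*-ideals $A_{ses^*}''$ and $A_e''$ of $A_1''$.

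Next, I would invoke \cref{lemma:saturation-idempotents}\cref{lemma:saturation-idempotents:3}, which gives $A_{se}=\cspn A_s\cdot A_e$, and its adjoint version $A_{se}=A_{(ses^*)s}=\cspn A_{ses^*}\cdot A_s$ (obtained by applying $J$ to the statement with $s$ replaced by $s^*$). Since $1_e$ acts as identity on $A_e$ and $1_{ses^*}$ acts as identity on $A_{ses^*}$, a routine continuity argument using the boundedness of left/right multiplication by elements of $A_1''$ on $A_s''$ shows
\begin{equation*}
x\cdot 1_e=x=1_{ses^*}\cdot x\qquad\text{for every }x\in A_{se}.
\end{equation*}

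Finally, by normality of the $A_1''$-bimodule structure on $A_s''$, the convergences $u_\lambda\to 1_e$ and $v_\mu\to 1_{ses^*}$ in the ultrastrong topology of $A_1''$ pass to $a_s u_\lambda\to a_s\cdot 1_e$ and $v_\mu a_s\to 1_{ses^*}\cdot a_s$ ultrastrongly in $A_s''$. Since $a_s u_\lambda\in A_{se}$, the previous display gives $1_{ses^*}(a_s u_\lambda)=a_s u_\lambda$, and passing to the limit yields $1_{ses^*}\cdot a_s\cdot 1_e=a_s\cdot 1_e$. Dually, $v_\mu a_s\in A_{se}$ gives $(v_\mu a_s)\cdot 1_e=v_\mu a_s$, so in the limit $1_{ses^*}\cdot a_s\cdot 1_e=1_{ses^*}\cdot a_s$. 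Combining these two chains yields $a_s\cdot 1_e=1_{ses^*}\cdot a_s$, as desired. The main (mild) obstacle is bookkeeping: the equation lives in the bidual $A_s''$, so one must justify the normal extension of the $A_1$-actions and the ultrastrong continuity of the approximating nets; the non-saturated nature of the bundle is handled entirely by \cref{lemma:saturation-idempotents}.
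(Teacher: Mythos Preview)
Your proof is correct and follows essentially the same idea as the paper's: both arguments reduce to the observation that $a_s 1_e$ and $1_{ses^*} a_s$ lie in $A_{se}''$, where $1_e$ acts as the identity on the right and $1_{ses^*}$ as the identity on the left, giving $a_s 1_e = 1_{ses^*} a_s 1_e = 1_{ses^*} a_s$. The paper simply invokes the Hilbert $A_{ses^*}''$-$A_e''$-bimodule structure on $A_{se}''$ directly, whereas you unpack this via approximate units and ultrastrong limits; the content is the same, and your version just makes the passage to the bidual explicit.
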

\begin{proof}
  The claim follows from the fact that $A_{se}''$ is a Hilbert $A_{ses^*}''$-$A_e''$-bimodule, and hence $a_s 1_e = 1_{ses^*} a_s 1_e = 1_{ses^*} a_s$, as desired.
\end{proof}
\begin{proposition} \label{prop:unit-s-commutation}
Let $\A = (A_s)_{s \in S}$ be a Fell bundle. Then the following hold:
\begin{enumerate}[label=(\roman*)]
  \item \label{item:prop:unit-s-commutation:units} $1_{s^*t} = 1_{t^*s}$ and $1_{st^*} = 1_{ts^*}$ for every $s, t \in S$.
  \item \label{item:prop:unit-s-commutation:centrality} $1_s \in I_{s,1}''$ is central in $A_1''$.
  \item \label{item:prop:unit-s-commutation:comm} $a_s 1_{s^*t} = 1_{st^*} a_s$ for every $s, t \in S$ and $a_s \in A_s$.
\end{enumerate}
\end{proposition}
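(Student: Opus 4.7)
My plan is to treat the three claims in order; (i) and (ii) are short structural observations, while (iii) is the real content.

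For (i), the remark immediately following \cref{def:pre:ideal-s-1} already records the identifications $I_{s,t}=I_{s^*t,1}=I_{t^*s,1}$, which immediately yields $1_{s^*t}=1_{t^*s}$. The first identification uses the bijection $v\leftrightarrow v^*v$ between $\{v : v\leq s,t\}$ and $E_{s^*t}$ from \cref{lemma:unit-s-conjugation}, and the second uses $E_{s^*t}=E_{(s^*t)^*}=E_{t^*s}$ from \cref{lemma:unit-s-conjugation}~\cref{lemma:unit-s-conjugation:doms}. Applying this to $s^*,t^*$ in place of $s,t$ then delivers $1_{st^*}=1_{ts^*}$.

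For (ii), $I_{s,1}$ is a closed two-sided ideal of $A_1$, so $I_{s,1}''\subseteq A_1''$ is a weakly closed two-sided ideal with unit $1_s$. Centrality is a short standard computation: for $a\in A_1''$, both $a1_s$ and $1_sa$ lie in $I_{s,1}''$, hence
$$a1_s = 1_s(a1_s) = 1_sa1_s = (1_sa)1_s = 1_sa,$$
the outer equalities using that $1_s$ is the identity of $I_{s,1}''$.

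The substance is (iii). First I would observe that every idempotent $e\leq s^*t$ satisfies $e\leq s^*s$: by \cref{lemma:unit-s-conjugation} the element $v \coloneqq se = te$ is $\leq s$, so $v^*v = e \leq s^*s$. Hence \cref{lemma:unit-s-easy-commutation} applies fiberwise and gives $a_s 1_e = 1_{\omega_s(e)} a_s$ for every $e\in E_{s^*t}$, with $\omega_s\colon E_{s^*t}\to E_{st^*}$ a bijection by \cref{lemma:unit-s-conjugation}~\cref{lemma:unit-s-conjugation:maps}. Next I would identify $I_{s^*t,1}=\csp\bigcup_{e\in E_{s^*t}} A_e$ (each $A_e$ is an ideal of $A_1$, and $E_{s^*t}$ is a meet-subsemilattice), which forces $1_{s^*t}$ to be the projection supremum of the commuting family $\{1_e : e\in E_{s^*t}\}$ (commutativity coming from~(ii)) and hence the strong-operator limit of the increasing net of finite joins $p_F \coloneqq \bigvee_{e\in F} 1_e$.

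The finite case is handled by inclusion-exclusion: using $1_e 1_{e'} = 1_{ee'}$ together with $\omega_s(ee')=\omega_s(e)\omega_s(e')$ (immediate since $e,e' \leq s^*s$), the identity $a_s 1_e = 1_{\omega_s(e)}a_s$ promotes to $a_s p_F = p_{\omega_s(F)} a_s$ on each finite join. Since $\A''$ is a $W^*$-Fell bundle, left and right multiplication by $a_s$ are separately weakly continuous, and the bijection $\omega_s$ makes $\omega_s(F)$ cofinal in the finite subsets of $E_{st^*}$, so passing to the strong-operator limit along $F$ yields $a_s 1_{s^*t} = 1_{st^*} a_s$. The main obstacle is precisely this passage to suprema: one has to verify that inclusion-exclusion intertwines correctly with $\omega_s$ and that both sides converge to the intended units, which is where the commutativity provided by~(ii) and the $W^*$-Fell bundle structure of $\A''$ are essential.
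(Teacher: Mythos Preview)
Your proposal is correct and follows essentially the same route as the paper: both proofs handle (i) via the ideal identification $I_{s^*t,1}=I_{t^*s,1}$ from the remark after \cref{def:pre:ideal-s-1}, (ii) via centrality of the unit of a weakly closed ideal, and (iii) by writing $1_{s^*t}$ as a weak limit of finite joins $p_F=\bigvee_{e\in F}1_e$ over $F\Subset E_{s^*t}$, applying \cref{lemma:unit-s-easy-commutation} termwise through an inclusion--exclusion expansion, and using the bijection $\omega_s\colon E_{s^*t}\to E_{st^*}$ from \cref{lemma:unit-s-conjugation} to see that $p_{\omega_s(F)}\to 1_{st^*}$. The only cosmetic differences are that you spell out the centrality argument in (ii) more explicitly and phrase the limit in (iii) as strong rather than weak, which is harmless here.
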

\begin{proof}
  \cref{item:prop:unit-s-commutation:units} follows from \cref{lemma:unit-s-conjugation}~\ref{lemma:unit-s-conjugation:doms} and the fact that, by construction, $1_{s^*t} \in I_{s^*t,1}'' = \langle A_e \mid e \leq s^*t, 1\rangle'' \sbe A_1''$. In fact, note that $I_{s^*t, 1}'' = I_{t^*s, 1}''$, and hence $1_{s^*t} = 1_{t^*s}$. For assertion \cref{item:prop:unit-s-commutation:centrality} just observe that every projection $1_e \in A_e''$ is central whenever $e \in E$ is an idempotent.
  
  In order to show \cref{item:prop:unit-s-commutation:comm} denote by $E_{s^*t} \coloneqq \{e \in E \mid e \leq s^*t\}$. In addition, given any finite subset $K \Subset E$ let $1_K \coloneqq \prod_{e \in K} 1_e$. With this notation, observe that for any finite set $F \Subset E_{s^*t}$ the element
  \[ p_F \coloneqq \sum_{i = 1}^{\left|F\right|} \sum_{\substack{K \sbe F \\ \left|K\right| = i}} \left(-1\right)^{i+1} 1_K \in \sum_{e \in F} A_e'' \sbe I_{s^*t, 1}'' \]
  is a self-adjoint projection (it realizes the join projection $\vee_{e\in F} 1_e$). Moreover, $p_F \rightarrow 1_{s^*t}$ when $F$ grows, where convergence happens in the weak topology of $I_{s^*t, 1}''$. Now, for a fixed $F\Subset E_{s^*t}$, we have
  \[ a_s p_F = a_s \sum_{i = 1}^{\left|F\right|} \sum_{\substack{K \sbe F \\ \left|K\right| = i}} \left(-1\right)^{i+1} 1_K = \sum_{i = 1}^{\left|F\right|} \sum_{\substack{K \sbe F \\ \left|K\right| = i}} \left(-1\right)^{i+1} 1_{sKs^*} a_s = p_{sFs^*} a_s, \]
  where the second equality follows from \cref{lemma:unit-s-easy-commutation} and the third from \cref{lemma:unit-s-conjugation} \cref{lemma:unit-s-conjugation:maps}. Therefore, since $p_F \rightarrow 1_{s^*t}$, the proof will be done once we show that $p_{sFs^*} \rightarrow 1_{st^*}$ in the weak operator topology of $I_{st^*, 1}''$. But, again, this is clear by \cref{lemma:unit-s-conjugation}~\cref{lemma:unit-s-conjugation:maps}, since conjugation by $s$ defines a bijective map between finite subsets of $E_{s^*t}$ and finite subsets of $E_{st^*}$.
\end{proof}

\subsection{Cross-sectional \cstar{}algebras of Fell bundles} \label{subsec:pre:cross-sectional-alg}
In this section we recall the construction of the \emph{full cross-sectional \cstar{}algebra} associated to a Fell bundle (see \cref{sec:regular-Fell-principle,def:reduced-cross-sect} below for the \emph{reduced} counterpart). To such end, let $\A=(A_s)_{s\in S}$ be a Fell bundle over an inverse semigroup $S$ with unit $1 \in S$. Notice that $A_e$ is a \cstar{}algebra as it is, in fact, a closed two sided-ideal of $A = A_1$ for every idempotent $e \in E$.  We denote by $\contc(\A) \coloneqq \contc(S,\A)$ the space of finitely supported sections $S \to \A$, that is, the algebraic direct sum $\oplus_{s\in S}^\alg A_s$. We shall write $a_s \delta_s \in \contc(\A)$ for the section supported in $\{s\}$ taking value $a_s \in A_s$ at $s \in S$. Recall that $\contc(\A)$ is a \Star{}algebra with respect to the operations
$$\left(a_s \delta_s\right) \cdot \left(b_t \delta_t\right) \coloneqq a_s b_t \delta_{st} \quad \text{and} \quad \left(a_s \delta_s\right)^* \coloneqq a_s^* \delta_{s^*},$$
where we implicitly use the Fell bundle and inverse semigroup operations (recall \cref{thm:fell-bundle-extra-str} and the subsequent \cref{rem:thm:fell-bundle-extra-str}). The following definition is central to the paper.
\begin{definition} \label{def:max-cross-c-star-alg}
  The \textit{maximal cross-sectional \cstar{}algebra} $\fullalg{\A}$ of a Fell bundle $\A = (A_s)_{s \in S}$ is the universal \cstar{}algebra of the quotient $\Qc(\A)$ of $\contc(\A)$ given by
  \[ \Qc\left(\A\right) \coloneqq \contc\left(\A\right)/\I_\A, \]
  where $\I_\A \sbe \contc(\A)$ is the $*$-ideal generated by the differences $a_s \delta_s - a_s \delta_t$ for all $s \leq t$ in $S$ and $a_s \in A_s \sbe A_t$.
\end{definition}

In other words, $\fullalg{\A}$ is the universal \cstar{}algebra generated by $a_s \delta_s$ with $a_s \in A_s$ and $s \in S$ satisfying the relations that defined the \Star{}algebra structure on $\contc(\A)$ above and the relation imposing that $a_s \delta_s = a_s \delta_t$ whenever $a_s \in A_s$ and $s \leq t$ in $S$.

\begin{remark}
It can be shown that the subspace of $\contc(\A)$ generated by differences $a_s \delta_s - a_s \delta_t$ with $s \leq t \in S$ and $a_s \in A_s \sbe A_t$ is already a \Star{}ideal of $\contc(\A)$, and therefore this equals $\I_\A$. This is already observed in \cite{Exel:noncomm.cartan}, where the above definition was first given.
\end{remark}
\begin{remark}
By the universal property of $\fullalg{\A}$ (see \cref{pre:fell:fullalg-univ}) we get a \Star{}homomorphism $\Qc\left(\A\right) \to \fullalg{\A}$ with dense image. But, as it turns out, this \Star{}homomorphism does not have to be injective in general, that is, its kernel is not just $\I_\A$, but a possibly larger ideal that we now describe.
\end{remark}

Given a Fell bundle $\A = (A_s)_{s \in S}$ over $S$ and elements
$t,u\in S$, it is proved in \cite{Buss-Exel-Meyer:Reduced}*{Lemma~2.5} 
that there exists a unique isomorphism of Hilbert bimodules:
\begin{equation}\label{eq:theta-t-u}
\theta_{t,u}\colon A_u\cdot I_{t,u}\congto A_t\cdot I_{t,u}
\end{equation}
extending the identity map $A_u\cdot A_{v^*v}= A_v\to A_v= A_t\cdot A_{v^*v}$ for every $v\in S$ with $v\leq t,u$, so that $uv^*v=tv^*v=v$. Recall that $I_{t,u}$ is the ideal of $A$ generated by $A_v^*A_v\sbe A_{v^*v}$ with $v\leq t,u$ (see \cref{lemma:saturation-idempotents}).
\begin{definition}
Given a Fell bundle $\A$, let $\J_\A$ be the subspace of $\contc(\A)$ generated by elements of the form $\theta_{t,u}(a)\delta_t-a\delta_u$ for $t,u\in S$ and $a\in A_u\cdot I_{t,u}$.
\end{definition}

Notice that $\J_\A$ obviously contains $\I_\A$, but the inclusion might be strict as $\J_\A$ may also contain some limits of elements of $\I_\A$. As will become clear in what follows, $\J_\A$ is also a \Star{}ideal of $\contc(\A)$. In order to show this, however, we first need to define a certain conditional expectation and describe $\J_\A$ in a different way.
\begin{definition} \label{def:alg-cross-sectional}
Given a Fell bundle $\A = (A_s)_{s \in S}$ over $S$, the \emph{cross-sectional algebra of $\A$}, denoted by $\algalg{\A}$, is the quotient $\algalg{\A} \coloneqq \contc(\A)/\J_\A$.
\end{definition}

Since $\J_\A$ contains $\I_\A$, we get surjective \Star{}homomorphisms (compare with \cref{prop:dense-image-in-universal} below)
\[ \contc(\A)\onto \Qc(\A)\onto \algalg\A. \]
Moreover, it is observed in \cite{Buss-Exel-Meyer:Reduced} that every representation of $\fullalg{\A}$ vanishes on $\J_\A$, so we get also a homomorphism with dense range
\[ \algalg{\A}\to \fullalg{\A}. \]
In order to understand all this better, we need to recall the construction of the canonical (weak) conditional expectation of $\A$.
For this, first notice that as a particular case of \cref{eq:theta-t-u}, for each $s\in S$ we get an isomorphism $\theta_{s,1} \colon A_s \cdot I_{s,1} \congto I_{s,1}$ extending the identity map
$A_s\cdot A_e= A_e\to A_e$ for $e \in E$ with $e\leq s$. This gives rise
to a canonical map 
\[ \Theta_{s,1} \colon A_s \rightarrow \M\left(I_{s,1}\right), \;\; \text{where} \;\, \Theta_{s,1}\left(a\right)x \coloneqq \theta_{s,1}\left(a\cdot x\right). \]
Here we write $\M(B)$ for the multiplier of a \cstar{}algebra $B$. Viewing $\M(I_{s,1})\subseteq I_{s,1}''\subseteq A''$ in the standard way, the above map can also be represented as
$$\Theta_{s,1}(a)=\lim_i\theta_{s,1}(a u_i),$$
where $\{u_i\}_i \sbe I_{s,1}$ is an approximate unit and the limit is with respect to the weak topology of the von Neumann algebra $A_1''$. Actually, the limit above can also be understood in the strict topology of $\M(I_{s,1})$, see \cite{Buss-Exel-Meyer:Reduced}*{Lemma~4.5}.

It is fruitful to look at the above structure from the point of view of the 
von Neumann enveloping Fell bundle $\A''$ (and this is the way this is done in \cite{Buss-Exel-Meyer:Reduced}). 
The isomorphism $\theta_{s,1}$ extends to a weakly continuous isomorphism (of Hilbert $W^*$-bimodules):
$$\theta_{s,1}''\colon A_s''\cdot I_{s,1}''\congto I_{s,1}''$$
and so we get an embedding $A_s''\cdot I_{s,1}''\into A_1''$ and a map $\Theta_{s,1}''\colon A_s''\to A_1''$ that extends $\Theta_{s,1}$ 
when we view  $\M(I_{s,1})\subseteq I_{s,1}''\subseteq A_1''$. Moreover, notice that $A_s''\cdot I_{s,1}''=A_s''\cdot 1_s$ so that $A_s''$ decomposes as a direct sum
\[A_s''=A_s''\cdot 1_s\oplus A_s''\cdot 1_s^\perp,\]
where $1_s^\perp$ denotes the unit of the complement ideal $(I_{s,1}'')^\perp\idealin A_1''$. In this way, $\Theta_{s,1}''$ (and hence also its restriction $\Theta_{s,1}$) 
is just the composition of the projection map $A_s''\onto A_s''\cdot 1_s$, $x\mapsto x\cdot 1_s$ 
with the embedding $A_s''\cdot 1_s \congto I_{s,1}''\sbe  A_1''$. As with \cref{thm:fell-bundle-extra-str}, the following has only been proved in the saturated case, but the same ideas carry over to the non-saturated setting.
\begin{proposition}[\cite{Buss-Exel-Meyer:Reduced}]\label{prop:cond-exp}
Given a Fell bundle $\A = (A_s)_{s \in S}$, consider the map
\begin{align*}
  P \colon \contc\left(\A\right) & \to A_1'',\quad P(a_s \delta_s)=\theta_{s,1}''(a_s 1_s)
\end{align*}
defined as the linear extension of the maps $\Theta_{s,1} \colon A_s \to \M(I_{s,1})\sbe A_1''$.
Then $P$ vanishes on $\J_\A$ and factors through a faithful (weak) conditional expectation 
$$\tilde P\colon \algalg\A\to A_1''.$$
\end{proposition}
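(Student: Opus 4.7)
The proof breaks into three parts: (a) $P$ vanishes on $\J_\A$; (b) the induced $\tilde P$ is a (weak) conditional expectation with values in $A_1''$; and (c) $\tilde P$ is faithful. I would tackle these in order.

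For (a), the generators of $\J_\A$ are $\theta_{t,u}(a)\delta_t-a\delta_u$ with $a\in A_u\cdot I_{t,u}$. The key reduction is that $A_u\cdot I_{t,u}$ is densely spanned by the subspaces $A_v$ with $v\le t,u$: since $I_{t,u}$ is generated by $\{A_{v^*v}:v\le t,u\}$ and $v\le u$ implies $uv^*v=v$, \cref{lemma:saturation-idempotents} gives $A_u\cdot A_{v^*v}=A_{uv^*v}=A_v$. On $A_v$ the isomorphism $\theta_{t,u}$ restricts to the identity, so the problem reduces to proving
$$\theta_{t,1}''(a\cdot 1_t)=\theta_{u,1}''(a\cdot 1_u)\quad\text{for all }a\in A_v,\ v\le t,u.$$
The crucial algebraic input is that for $v\le t$ an idempotent $e\in E$ satisfies $e\le v$ iff $e\le t$ and $e\le v^*v$ (using the characterization $v=tv^*v$ to pass from $e\le t$ and $e\le v^*v$ to $ve=e$). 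This gives the identity $I_{v,1}=I_{t,1}\cap A_{v^*v}$ of ideals in $A_1$, hence the central identity $1_v=1_t\cdot 1_{v^*v}$ in $A_1''$. Since $a=a\cdot 1_{v^*v}$ in $A_v''$ and these projections are central and commute, one gets $a\cdot 1_t=a\cdot 1_v$, and likewise $a\cdot 1_u=a\cdot 1_v$. Finally $\theta_{t,1}''$ and $\theta_{v,1}''$ agree on $A_v''\cdot 1_v$ (both extend the identity on $A_e$ for $e\le v$), and similarly for $u$, so both sides coincide with $\theta_{v,1}''(a\cdot 1_v)$.

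For (b), once $\tilde P$ is well-defined the conditional-expectation properties are direct: $\theta_{1,1}''=\id$ gives $\tilde P|_{A_1}=\id_{A_1}$; $A_1''$-bimodularity follows from each $\theta_{s,1}''$ being a Hilbert $A_1''$-bimodule isomorphism together with the commutation relations in \cref{prop:unit-s-commutation}; contractivity and positivity hold because $\Theta_{s,1}$ is the composition of the corner projection $x\mapsto x\cdot 1_s$ with an isometric isomorphism $A_s''\cdot 1_s\congto I_{s,1}''$.

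For (c), faithfulness is the deepest point. The plan is the standard GNS approach: form the $A_1''$-valued sesquilinear form $\langle x,y\rangle:=\tilde P(x^*y)$ on $\algalg\A$, use it to build (essentially) the regular representation $\Lambda$ of $\A$ whose matrix coefficients recover $\tilde P$, and then deduce $\tilde P(x^*x)=0\Rightarrow x=0$ in $\algalg\A$ from the injectivity of $\Lambda$ at the algebraic level. The main obstacle is step (a): reconciling the ``diagonal part'' of a fixed element $a\in A_v$ across the different enveloping fibres $A_t$ and $A_u$, and tracking it through potentially different ideals of $A_1''$. Everything hinges on the central decomposition $1_v=1_t\cdot 1_{v^*v}$ and the naturality of the maps $\theta_{s,1}''$ under the inclusions $v\le s$, both of which rest on the commutation rules from \cref{prop:unit-s-commutation} and the conjugation bijections in \cref{lemma:unit-s-conjugation}.
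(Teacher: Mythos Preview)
The paper under review does not prove this proposition; it is quoted from \cite{Buss-Exel-Meyer:Reduced} (Proposition~3.6 there) and used as a black box, so there is no in-paper argument to compare against.

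Your part~(a) is correct. The reduction to $a\in A_v$ with $v\le t,u$, the ideal identity $I_{v,1}=I_{t,1}\cap A_{v^*v}$ (hence $1_v=1_t\cdot 1_{v^*v}$ in $A_1''$), and the compatibility of the maps $\theta_{s,1}''$ along the inclusions $v\le s$ combine to give $P(\theta_{t,u}(a)\delta_t)=P(a\delta_u)$ on generators; density plus contractivity of each $\Theta_{s,1}$ finishes it.

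Parts~(b) and~(c), however, have a real gap. For positivity you only argue that each $\Theta_{s,1}$ is a corner compression followed by an isometric embedding; this controls $P$ on a single fibre $A_s\delta_s$ but says nothing about $P(x^*x)$ when $x=\sum_s a_s\delta_s$ is supported on several fibres, which is what positivity means here. Your plan for faithfulness is then circular: you build the regular representation $\Lambda$ via GNS from the form $\langle x,y\rangle=\tilde P(x^*y)$ and invoke ``injectivity of $\Lambda$ at the algebraic level'' to conclude $\tilde P(x^*x)=0\Rightarrow x=0$. But the GNS Hilbert module for $\tilde P$ is precisely $\algalg{\A}$ modulo $\{x:\tilde P(x^*x)=0\}$, so the injectivity you want \emph{is} the statement you are trying to prove, not an independent input. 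What faithfulness actually requires is the inclusion $\NN_P\subseteq\J_\A$: given $x\in\contc(\A)$ with $P(x^*x)=0$, one must exhibit $x$ as a linear combination of the generators $\theta_{t,u}(a)\delta_t-a\delta_u$. In \cite{Buss-Exel-Meyer:Reduced} this is a direct structural computation carried out in the enveloping $W^*$-Fell bundle $\A''$, where the central projections $1_s$ permit orthogonal decompositions of the fibres; your sketch does not supply this step.
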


It is clear from the construction above that $P$ extends to $\contc(\A'')$, that is, we get from the above construction also a conditional expectation $P''\colon \contc(\A'')\to A_1''$ which factors faithfully to $\algalg{\A''}\to A_1''$.

\begin{definition}
Given a Fell bundle $\A=(A_s)_{s\in S}$, we call the map $P\colon \contc(\A)\to A_1''$ defined above the \emph{canonical conditional expectation} of $\A$.
\end{definition}

Since $\contc(\A)$ and $\algalg\A$ are only \Star{}algebras, it might be not clear what we mean by a (weak) conditional expectation on these algebras. This only means $P$ is a linear map which is positive in the sense that $P(x^*x)\geq 0$ for all $x\in \contc(\A)$; it preserves adjoints, meaning that $P(x^*)=P(x)^*$, and that $P$ restricts to the identity on $A_1$, that is, $P(a\delta_1)=a$ for all $a\in A_1$. Similar properties hold for $\tilde P$. And the fact that $\tilde P$ is faithful means that $P(x^*x)=0$ happens only if $x=0$ for any $x\in \algalg\A$. This is proved in \cite{Buss-Exel-Meyer:Reduced}*{Proposition 3.6}. The fact that $\tilde P$ is faithful also implies that $P$ is symmetric in the sense that
$P(x^*x) = 0$ if and only if $P(xx^*) = 0$, for all $x \in \contc(\A)$. This means that
\begin{equation} \label{eq:ideal-n-p}
\NN_P \coloneqq \left\{x \in \contc\left(\A\right): P\left(x^*x\right) = 0\right\}
\end{equation}
is a \Star{}ideal of $\contc(\A)$. In fact, since $\tilde P$ is faithful, it also follows that
$\NN_P$ coincides with the kernel of the quotient homomorphism $\contc(\A)\onto \algalg\A$, that is, 
\[\NN_P = \J_\A = \spn\left\{\theta_{u,t}(a)\delta_u - a\delta_t \colon a \in I_{u,t}\right\}.\]

The following result from \cite{Buss-Exel-Meyer:Reduced} indicates that $\algalg\A$ yields the ``correct'' definition of the algebraic crossed product of a Fell bundle. 
\begin{proposition}\cite{Buss-Exel-Meyer:Reduced}*{Proposition~4.3} \label{prop:dense-image-in-universal}
Given a Fell bundle $\A = (A_s)_{s \in S}$, the \Star{}algebra $\algalg{\A}$ 
embeds densely in $\fullalg{\A}$.    
\end{proposition}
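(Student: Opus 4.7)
The plan is to use the faithful conditional expectation $\tilde P\colon \algalg{\A}\to A_1''$ from \cref{prop:cond-exp} together with a Hilbert-module GNS construction to produce a faithful bounded $*$-representation of $\algalg{\A}$ that automatically factors through $\fullalg{\A}$, thereby forcing the canonical map to be injective. Dense range and existence of the map itself will follow immediately from the construction.

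First, for well-definedness and density, observe that $\contc(\A)$ has dense image in $\fullalg{\A}$ by construction, and as noted just before \cref{prop:cond-exp} every C*-representation of $\fullalg{\A}$ annihilates $\J_\A$; hence the canonical map $\contc(\A)\to\fullalg{\A}$ factors through $\algalg{\A}=\contc(\A)/\J_\A$, giving a $*$-homomorphism $\iota\colon\algalg{\A}\to\fullalg{\A}$ with dense image. It remains only to show $\iota$ is injective.

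For injectivity, endow $\algalg{\A}$ with the $A_1''$-valued inner product $\langle x,y\rangle \coloneqq \tilde P(x^*y)$, which is positive-definite by faithfulness of $\tilde P$, and let $\mathcal E$ be the resulting Hilbert $A_1''$-module completion. Left multiplication gives a candidate $*$-representation $L\colon\algalg{\A}\to\L(\mathcal E)$, $L(x)y\coloneqq xy$. The technical heart of the argument is the Kadison--Schwarz-type inequality
$$\tilde P(y^*x^*xy)\leq \|x\|_{\fullalg{\A}}^2\,\tilde P(y^*y)\quad\text{in }A_1''$$
for $x,y\in\algalg{\A}$, which is equivalent to saying that each $L(x)$ extends to an adjointable operator on $\mathcal E$ of norm at most $\|x\|_{\fullalg{\A}}$. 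The natural route is to verify $\|P(x^*x)\|\leq \|x\|_{\fullalg{\A}}^2$ directly for $x\in\contc(\A)$ using the explicit formula for $P$ in terms of the $\theta_{s,1}$ and the fiberwise bound $\|a_s\delta_s\|_{\fullalg{\A}}=\|a_s\|$, so that $P$ extends to a weakly continuous completely positive contraction $\bar P\colon\fullalg{\A}\to A_1''$ to which the standard Kadison--Schwarz inequality applies.

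Once $L$ is bounded, it is a C*-representation of $\algalg{\A}$, so by the universal property of $\fullalg{\A}$ it factors as $L=\tilde L\circ\iota$ for a C*-representation $\tilde L$ of $\fullalg{\A}$ on $\mathcal E$. To finish, one checks $L$ is faithful: if $L(x)=0$ then $\tilde P(y^*x^*xy)=\langle xy,xy\rangle=0$ for every $y\in\algalg{\A}$, and choosing $y$ from a suitable approximate unit (e.g., built out of the projections $1_e$ of \cref{def:pre:ideal-s-1}) together with faithfulness of $\tilde P$ forces $x=0$. Combined with $L=\tilde L\circ\iota$, this yields injectivity of $\iota$. The main obstacle is the Schwarz estimate above; intuitively, $L$ is the very regular representation that will be used to define $\redalg{\A}$ in \cref{sec:regular-Fell-principle}, so the proposition is really the assertion that the reduced norm is already a norm, from which the embedding into the full norm is automatic.
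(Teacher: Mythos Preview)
The paper does not give its own proof of this proposition; it merely cites \cite{Buss-Exel-Meyer:Reduced}*{Proposition~4.3}. Your strategy is the standard one and is essentially what is done there: build the left-regular representation from the GNS module associated to~$\tilde P$, observe it factors through $\fullalg{\A}$, and check faithfulness. Two technical points deserve tightening.

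First, you assert that the completion $\mathcal E$ of $\algalg{\A}$ is a Hilbert $A_1''$-module. It is not obvious how to put a right $A_1''$-action on $\algalg{\A}$ compatible with the $A_1''$-valued inner product; this is exactly the ``technical complication'' the paper flags just before \cref{def:left-reg:l-two}. The clean fix, as in the reference, is to pass to the enveloping bundle $\A''$ and complete $\algalg{\A''}$ instead, obtaining $\ell^2(\A'')$. Your $L$ then becomes the regular representation $\Lambda$ of \cref{def:reduced-cross-sect}.

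Second, your route to boundedness via a Kadison--Schwarz inequality for an extension $\bar P\colon\fullalg{\A}\to A_1''$ is workable but circuitous. The direct argument is simply that left multiplication by $a_s\delta_s$ on $\ell^2(\A'')$ defines a representation of the Fell bundle $\A$ in the sense of \cref{def:fell-bundle-rep}, so by the universal property (\cref{pre:fell:fullalg-univ}) it integrates to a \Star{}homomorphism on $\fullalg{\A}$, hence is automatically bounded. This bypasses the need to first extend $P$. Your faithfulness step via approximate units is correct, but once you are working in $\ell^2(\A'')$ it simplifies: apply $\Lambda(x)$ to the cyclic vector $1_{A_1''}\delta_1\in\algalg{\A''}$ to get $\tilde P(x^*x)=0$, hence $x=0$ by faithfulness of $\tilde P$.
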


For the following lemma recall that we canonically embed $A_s$ into $\contc(\A)$ as the set $A_s \delta_s$ of sections $a_s \delta_s$ taking value $a_s$ at $s$ and $0$ otherwise. 
\begin{lemma} \label{lemma:n-p-intersection-fibers}
  Given a Fell bundle $\A = (A_s)_{s \in S}$, $A_s \delta_s \cap \NN_P = 0$ for every $s \in S$.
\end{lemma}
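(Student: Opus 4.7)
The plan is to use the canonical conditional expectation $P\colon\contc(\A)\to A_1''$ of \cref{prop:cond-exp}, combined with the fact that the right $A_1$-valued inner product on each Hilbert bimodule $A_s$ is faithful. Given $a_s\delta_s\in A_s\delta_s\cap\NN_P$, I first carry out the elementary product computation
\[
(a_s\delta_s)^*(a_s\delta_s)=(a_s^*\delta_{s^*})(a_s\delta_s)=a_s^*a_s\,\delta_{s^*s},
\]
whose value lives in the fibre $A_{s^*s}\sbe A_1$ over the idempotent $e\coloneqq s^*s$.

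The key observation is then that $P$ collapses a fibre-supported section over an idempotent to its value in $A_1$. Indeed, since $e\leq 1$ in $S$ and $a_s^*a_s\in A_e$, the difference $a_s^*a_s\,\delta_e-a_s^*a_s\,\delta_1$ is one of the defining generators of $\I_\A\sbe\J_\A$; because $P$ factors through the faithful conditional expectation $\tilde P\colon\algalg\A\to A_1''$, it annihilates $\J_\A$. Together with the property $P(a\,\delta_1)=a$ for $a\in A_1$, this gives
\[
P\!\left((a_s\delta_s)^*(a_s\delta_s)\right)=P(a_s^*a_s\,\delta_e)=P(a_s^*a_s\,\delta_1)=a_s^*a_s.
\]

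The assumption $a_s\delta_s\in\NN_P$ then forces $a_s^*a_s=0$. Since $A_s$ is a Hilbert $A_1$-bimodule with right inner product $\langle a_s,a_s\rangle_{A_1}=a_s^*a_s$, faithfulness of this inner product yields $\|a_s\|^2=\|a_s^*a_s\|=0$, hence $a_s=0$, which is what we wanted. There is really no substantial obstacle here: the whole content of the lemma is the innocuous fact that $P$ recognises the right inner product of $A_s$ on fibre-supported sections, once one notices the reduction $\delta_{s^*s}\equiv\delta_1$ modulo $\I_\A$ on the fibre $A_{s^*s}\sbe A_1$.
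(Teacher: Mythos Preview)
Your proof is correct and takes essentially the same approach as the paper: both compute $P((a_s\delta_s)^*(a_s\delta_s))=a_s^*a_s$ and then conclude $a_s=0$ from faithfulness of the inner product. The only cosmetic difference is that the paper invokes the explicit formula $P(a_s^*a_s\,\delta_{s^*s})=\theta_{s^*s,1}''(a_s^*a_s\,1_{s^*s})=a_s^*a_s$, while you obtain the same value by observing that $a_s^*a_s\,\delta_{s^*s}-a_s^*a_s\,\delta_1\in\I_\A\sbe\ker P$.
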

\begin{proof}
  For any $a_s \delta_s \in A_s \delta_s$ we have
  \[ P\left(\left(a_s \delta_s\right)^*\left(a_s \delta_s\right)\right) = P\left(a_s^* a_s \delta_{s^*s}\right) = \theta_{s^*s, 1}''\left(a_s^*a_s 1_{s^*s}\right) = a_s^*a_s 1_{s^*s} \]
  since $s^*s \leq 1$. Therefore if $a_s \delta_s \in \NN_P$ then $a_s = 0$, as claimed.
\end{proof}

Observe that, in general, the target of the canonical conditional expectation $P$ in \cref{prop:cond-exp} is only the von Neumann algebra $A_1''$, as opposed to $A_1$ itself. We end the section with a condition that guarantees that the image of $P$ lands in $A_1$.
\begin{corollary} \label{cor:image-p-a}
Let $\A = (A_s)_{s \in S}$ be a Fell bundle. Then $P(x) \in A_1$ for all $x \in \contc(\A)$ if and only if $I_{s,1}$ is complemented in $\cspn A_s^*A_s \sbe A_1$ for all $s \in S$. 
\end{corollary}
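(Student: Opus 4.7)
The plan is to reduce the statement fibre-by-fibre and then translate $\Theta_{s,1}(a_s)\in A_1$ into an ideal-theoretic condition on the support ideal $J_s:=\cspn A_s^*A_s\sbe A_1$. Testing the hypothesis on single-fibre elements $x=a_s\delta_s$, the condition $P(x)\in A_1$ for all $x\in \contc(\A)$ is equivalent to $\Theta_{s,1}(A_s)\sbe A_1$ for every $s\in S$. Now $\Theta_{s,1}(a_s)=\theta_{s,1}''(a_s 1_s)$ always lies in $I_{s,1}''$, and the canonical inclusion of biduals gives $A_1\cap I_{s,1}''=I_{s,1}$; together with the bimodule isomorphism $\theta_{s,1}\colon A_s\cdot I_{s,1}\congto I_{s,1}$, this makes the condition equivalent to $a_s 1_s\in A_s\cdot I_{s,1}$ for every $a_s\in A_s$. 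Thus the proof reduces to checking, fibre-by-fibre, that this last condition is equivalent to $I_{s,1}$ being complemented in $J_s$.

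For the forward direction, assuming $J_s=I_{s,1}\oplus K_s$ as a direct sum of ideals of $A_1$, the support identity $A_s=A_s\cdot J_s$ yields the orthogonal decomposition $A_s=A_s\cdot I_{s,1}\oplus A_s\cdot K_s$, since $\langle A_s I_{s,1},A_s K_s\rangle_{A_1}\sbe I_{s,1}\cdot J_s\cdot K_s=0$. Decomposing $a_s=a_s'+a_s''$ accordingly, the relation $K_s\cdot I_{s,1}=0$ gives $K_s\cdot 1_s=0$ (by weakly approximating $1_s$ via an approximate unit of $I_{s,1}$), so that $a_s''\cdot 1_s=0$; while $u\cdot 1_s=u$ for $u\in I_{s,1}$ gives $a_s'\cdot 1_s=a_s'$, whence $a_s 1_s=a_s'\in A_s\cdot I_{s,1}$.

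For the converse, suppose $a_s 1_s\in A_s$ for every $a_s\in A_s$. The commutation $a_s 1_s=1_s a_s$ (a consequence of \cref{prop:unit-s-commutation} with $t=1$, using also $1_{s^*}=1_s$) combined with $1_s^*=1_s$ gives $(a_s 1_s)^*(b_s 1_s)=a_s^*b_s\cdot 1_s\in A_{s^*}\cdot A_s\sbe A_1$, so by linearity and norm continuity $J_s\cdot 1_s\sbe A_1$; combined with $J_s\cdot 1_s\sbe I_{s,1}''$ this forces $J_s\cdot 1_s=I_{s,1}$. Defining $K_s:=\{x-x\cdot 1_s:x\in J_s\}\sbe J_s$, the identity $x=x\cdot 1_s+(x-x\cdot 1_s)$ gives $J_s=I_{s,1}+K_s$, and $I_{s,1}\cdot K_s=0$ since $(1-1_s)\cdot I_{s,1}=0$; thus $J_s=I_{s,1}\oplus K_s$. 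The main delicate step is precisely this converse: extracting a genuine ideal complement $K_s$ inside the \cstar{}algebra $A_1$ from the a priori only Hilbert-module hypothesis $a_s 1_s\in A_s$, which is possible because the centrality of $1_s$ in $A_1''$ (\cref{prop:unit-s-commutation}) allows one to freely move $1_s$ across inner products.
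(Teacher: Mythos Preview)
Your proof is correct. The forward implication (complemented $\Rightarrow P$ lands in $A_1$) follows exactly the decomposition $A_s=A_s\cdot I_{s,1}\oplus A_s\cdot I_{s,1}^\perp$ sketched in the paper.

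For the converse, the paper does not give an argument but simply refers to \cite{Buss-Exel-Meyer:Reduced}*{Proposition~6.3}, calling it ``more complicated''. Your route is in fact quite direct: from $a_s 1_s\in A_s$ you pass to $J_s\cdot 1_s\sbe A_1\cap I_{s,1}''=I_{s,1}$ via inner products and centrality of $1_s$ (\cref{prop:unit-s-commutation}), and then read off the complement $K_s=J_s(1-1_s)$. Two small points worth making explicit: the inclusion $I_{s,1}\sbe J_s$ (needed for $J_s\cdot 1_s=I_{s,1}$) follows because each idempotent $e\leq s$ satisfies $A_e\sbe A_s\cap A_{s^*}$ via the inclusion maps, whence $A_e=A_eA_e\sbe A_s^*A_s$; and $K_s$ is a closed two-sided ideal of $J_s$ because $x\mapsto x(1-1_s)$ is a \Star{}homomorphism on $J_s$ by centrality of $1_s$. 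With these remarks your converse is self-contained and arguably cleaner than deferring to the reference.
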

\begin{proof}
The proof is given in~\cite{Buss-Exel-Meyer:Reduced}*{Proposition~6.3}, and it does not assume that the Fell bundle is saturated. However, we now give a sketch of the proof of the forward implication and refer the reader to~\cite{Buss-Exel-Meyer:Reduced}*{Proposition~6.3} for the (more complicated) proof of the converse. Observe that if $I_{s,1}$ is a complemented ideal of $A_{s^*s}$ then $A_s$ decomposes as $A_s = A_s \cdot I_{s,1} \oplus A_s \cdot I_{s,1}^\perp$. And in this case $P$ acts on $A_s$ by projecting first to $A_s\cdot I_{s,1}$ and then composes this with the isomorphism $\theta_{s,1}\colon A_s\cdot I_{s,1}\congto I_{s,1}\sbe A_1$.
Hence $P(A_s\delta_s)\sbe A_1$ for all $s$ 
and therefore also $P(\contc(\A))\sbe A_1$.
\end{proof}

The following definition is from~\cite{Kwaniewski2019EssentialCP}. We will use it in the sequel, albeit not predominantly.
\begin{definition} \label{def:fell-bundle-closed}
We say a Fell bundle $\A = (A_s)_{s \in S}$ is \emph{closed} if the image of the canonical conditional expectation $P$ is contained in $A_1$.
\end{definition}


\begin{remark} \label{rem:image-p-a}
  Observe that for $\A$ to be closed it is \emph{not} enough that the ideals $A_e$ be complemented or even unital. Indeed, this is not even true in the basic case of a Fell bundle coming from just an ordinary inverse semigroup. For instance, take $S \coloneqq \{1, z\} \cup \{e_n\}_{n \in \N}$, where $1$ is a unit, $z^2 = 1$,  $z \geq e_n$ for all $n \in \N$, and $e_n e_m = e_m e_n = e_{\min\{n,m\}}$. Consider the Fell bundle associated to the action of $S$ on its spectrum $X = \dual E \cong \N\sqcup\{\infty\}$ (the one-point compactification of $\N$). Notice that the groupoid of germs of this action is the universal (Paterson) groupoid of $S$ (see~\cite{Paterson1999}). Then the ideals $A_e$ are all complemented (as they are all unital). Nevertheless, $P$ does not take values in $A_1$ as the  universal groupoid of $S$ is not Hausdorff (see \cite{Buss-Exel-Meyer:Reduced}*{Proposition 5.1} and \cite{Paterson1999}*{Theorem~4.4.1}).
\end{remark}

\subsection{Representations of Fell bundles} \label{subsec:pre:fell-reps}
In this section we introduce the central notion of a \emph{representation of a Fell bundle}. As expected, it preserves all the inherent structure of the bundle.
\begin{definition} \label{def:fell-bundle-rep}
Let $\A = (A_s)_{s \in S}$ be a Fell bundle over an inverse semigroup $S$.
\begin{enumerate}[label=(\roman*)]
  \item A \textit{representation of $\A$ on the Hilbert space $\H$} is a family $\pi = (\pi_s)_{s\in S}$ of linear maps $\pi_s \colon A_s \to \L(\H)$ such that
    \[ \pi_s\left(a_s\right)\pi_t\left(b_t\right) = \pi_{st}\left(a_sb_t\right), \;\; \pi_s\left(a_s\right)^* = \pi_{s^*}\left(a_s^*\right) \;\; \text{and} \;\; \pi_s\left(a_s\right) = \pi_u\left(a_s\right), \]
    whenever $s,t,u\in S$ with $s \leq u$ and $a_s \in A_s$ and $b_t \in A_t$.
  \item We say the representation $\pi = (\pi_s)_{s \in S}$ is \emph{nondegenerate} if $\pi_1$ is a nondegenerate representation of $A=A_1$.
\end{enumerate}
\end{definition}

By definition a representation of $\A$ always respects the inclusion maps $\iota_{t,s}\colon A_s\into A_t$ for every $s\leq t$ in $S$. It turns out that they also automatically respect the extended inclusion maps $\theta_{t,u}\colon A_u\cdot I_{t,u}\into A_t\cdot I_{t,u}$ for every $t,u\in S$, as the following result shows. Actually, the following was only proved in the saturated case, but the same ideas work in the non-saturated setting as well (recall \cref{rem:saturated-vs-semi}).
\begin{proposition}[\cite{Buss-Exel-Meyer:Reduced}*{Proposition~2.9}]
Every representation $\pi$ of $\A$ satisfies
\[ \pi_t\circ\theta_{t,u}\left(a_u\right)=\pi_u\left(a_u\right) \]
for all $t,u\in S$ and $a_u\in A_u\cdot I_{t,u}$.
\end{proposition}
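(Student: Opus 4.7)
The plan is to verify the identity first on the ``trivial'' subsets where $\theta_{t,u}$ acts as the identity map, namely on each fiber $A_v$ with $v\leq t,u$ embedded canonically in both $A_u$ and $A_t$, and then to extend by linearity and continuity using a density argument. The conceptual input is simple: a representation $\pi$ of $\A$ is by definition compatible with every inclusion $\iota_{v,s}\colon A_v\into A_s$ for $v\leq s$, so the equality is built into the axioms once we reduce to elements indexed by $v\leq t,u$. Concretely, for such $v$ and $a\in A_v$, the definition of $\theta_{t,u}$ via \cref{eq:theta-t-u} sends $a$ (viewed as an element of $A_u\cdot A_{v^*v}=A_v\sbe A_u$) to itself (viewed as an element of $A_t\cdot A_{v^*v}=A_v\sbe A_t$). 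Applying \cref{def:fell-bundle-rep} to the inclusions $A_v\sbe A_u$ and $A_v\sbe A_t$ gives
\[ \pi_t\bigl(\theta_{t,u}(a)\bigr)=\pi_t(a)=\pi_v(a)=\pi_u(a), \]
so the claim holds on each such $A_v$ and hence on the linear span $\spn\bigcup_{v\leq t,u}A_v$ inside $A_u$.

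The next step is to verify that this span is norm-dense in $A_u\cdot I_{t,u}$. Since each $A_{v^*v}$ is already a two-sided closed ideal of $A_1$, the closed ideal $I_{t,u}$ that they generate coincides with the closed sum $\overline{\sum_{v\leq t,u}A_{v^*v}}$. Combining this with \cref{lemma:saturation-idempotents}~\ref{lemma:saturation-idempotents:3} and the elementary identity $uv^*v=v$ for $v\leq u$ yields
\[ A_u\cdot I_{t,u}=\cspn\bigcup_{v\leq t,u}A_v\sbe A_u, \]
which is exactly the density statement needed.

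To finish, I would invoke continuity. Both $\pi_u$ and $\pi_t\circ\theta_{t,u}$ are bounded linear maps from $A_u\cdot I_{t,u}$ into $\L(\H)$: indeed, $\pi_s$ is automatically contractive on any fiber $A_s$ since $\|\pi_s(b)\|^2=\|\pi_{s^*s}(b^*b)\|\leq \|b\|^2$, and $\theta_{t,u}$ is an isometric Hilbert bimodule isomorphism. Since these two maps agree on the norm-dense subspace $\spn\bigcup_{v\leq t,u}A_v$, they agree on all of $A_u\cdot I_{t,u}$. The only place that requires some care is the density assertion, which is a short but essential book-keeping exercise with the ideals $I_{t,u}$; once that is in hand the result is immediate from the definitions.
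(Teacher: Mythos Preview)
Your argument is correct and is essentially the natural proof: verify the identity on each $A_v$ with $v\leq t,u$ using the very definition of $\theta_{t,u}$ and the compatibility of $\pi$ with the inclusions $\iota_{v,t}$ and $\iota_{v,u}$, then pass to $A_u\cdot I_{t,u}$ by density and continuity. The paper itself does not supply a proof for this proposition; it simply cites \cite{Buss-Exel-Meyer:Reduced}*{Proposition~2.9} (noting that the same ideas carry over to the non-saturated setting), and the argument there follows exactly the line you wrote.
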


The following notation will be useful throughout the paper, particularly when discussing Fell's absorption principle in the context of Fell bundles over inverse semigroups.
\begin{definition} \label{def:fell-rep-hilbert-fibers}
Given a Fell bundle $\A = (A_s)_{s \in S}$ and a representation $\pi = (\pi_s)_{s \in S}$, we define $\H_s:=\cspn \pi_s(A_s)\H\sbe \H$.
\end{definition}

Observe that, by~\cite{Buss-Exel-Meyer:Reduced}*{Lemma~2.8}, we have $\H_s = \H_{ss^*} = \pi_1(A_{ss^*}) \H \sbe \H$ for all $s \in S$. Likewise, notice that $\H = \H_1$ if and only if the representation $\pi$ is nondegenerate. 

\begin{remark}
Notice that every representation $\pi$ of $\A$ extends uniquely to a weakly continuous representation $\pi''=(\pi_s'')_{s\in S}$ of its von Neumann enveloping Fell bundle $\A''$. Moreover, since $A_s$ is weakly dense in $A_s''$, for all $s \in S$ we have 
\[\H_s = \cspn \pi_s\left(A_s\right)\H = \cspn \pi_s''\left(A_s''\right)\H = \cspn \pi_1''\left(A_{ss^*}''\right)\H. \]
\end{remark}

We end this section with the expected universal characterization of the full cross sectional \cstar{}algebra.
\begin{proposition} \label{pre:fell:fullalg-univ}
Let $\A = (A_s)_{s \in S}$ be a Fell bundle over the inverse semigroup $S$. Then $\fullalg{\A}$ is universal for representations of $\A$, that is, given any representation $\pi = (\pi_s)_{s \in S}$ of $\A$ on a Hilbert space $\H$, there is a (unique) $*$-homomorphism $\rho \colon \fullalg{\A} \rightarrow \L(\H)$ such that $\rho(a_s \delta_s) = \pi_s(a_s)$ for every $a_s \in A_s$ and $s \in S$.
\end{proposition}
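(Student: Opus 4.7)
The plan is to build the required $*$-homomorphism from the bottom up: first on $\contc(\A)$, then descend to $\Qc(\A)$, and finally extend to the enveloping \cstar{}algebra $\fullalg{\A}$. Given the representation $\pi = (\pi_s)_{s \in S}$, I would first define a linear map $\tilde\pi\colon \contc(\A) \to \L(\H)$ by linearly extending $a_s \delta_s \mapsto \pi_s(a_s)$. Multiplicativity of $\tilde\pi$ on the generators $a_s\delta_s$ is precisely the relation $\pi_s(a_s)\pi_t(b_t) = \pi_{st}(a_s b_t)$, and the $*$-compatibility is precisely $\pi_s(a_s)^* = \pi_{s^*}(a_s^*)$; both are built into \cref{def:fell-bundle-rep}, so $\tilde\pi$ is a $*$-homomorphism of \Star{}algebras.

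Next, I would observe that $\tilde\pi$ annihilates the ideal $\I_\A$: its generators have the form $a_s\delta_s - a_s\delta_t$ with $s \leq t$ and $a_s \in A_s \sbe A_t$, and the third axiom in \cref{def:fell-bundle-rep} forces $\pi_s(a_s) = \pi_t(a_s)$. Thus $\tilde\pi$ factors through a $*$-homomorphism $\bar\pi\colon \Qc(\A) \to \L(\H)$. To pass from $\Qc(\A)$ to the universal enveloping \cstar{}algebra $\fullalg{\A}$, I need to know that the maximal \cstar{}seminorm on $\Qc(\A)$ is finite. I would obtain this from a fibrewise bound: for $a_s \in A_s$,
\[
  \|\pi_s(a_s)\|^2 = \|\pi_s(a_s)^*\pi_s(a_s)\| = \|\pi_{s^*s}(a_s^*a_s)\| = \|\pi_1(a_s^*a_s)\| \leq \|a_s^*a_s\|_{A_1} = \|a_s\|_{A_s}^2,
\]
where the second-to-last equality uses that $\pi_{s^*s}$ and $\pi_1$ agree on $A_{s^*s} \sbe A_1$ (as $s^*s \leq 1$), the inequality uses that $\pi_1$ is a contractive \Star{}representation of the \cstar{}algebra $A_1$, and the last equality is the Hilbert-module identity $\|a_s\|^2 = \|\braket{a_s}{a_s}_A\|$ together with the identification $\braket{a_s}{a_s}_A = a_s^*a_s$ from \cref{thm:fell-bundle-extra-str}. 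Summing over finitely many terms yields $\|\bar\pi(x)\| \leq \sum_i \|a_{s_i}\|$ for $x = \sum_i a_{s_i}\delta_{s_i}$, so the supremum $\|x\|_{\max} \coloneqq \sup_\pi \|\bar\pi(x)\|$ is finite on every element of $\Qc(\A)$.

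By the very construction of $\fullalg{\A}$ as the universal (enveloping) \cstar{}algebra of $\Qc(\A)$, we obtain a canonical \Star{}homomorphism $\Qc(\A) \to \fullalg{\A}$ with dense image, and $\bar\pi$ is contractive for $\|\cdot\|_{\max}$; hence it extends uniquely to a \Star{}homomorphism $\rho\colon \fullalg{\A} \to \L(\H)$ satisfying $\rho(a_s\delta_s) = \pi_s(a_s)$. Uniqueness of $\rho$ is automatic from the density of the image of $\Qc(\A)$ in $\fullalg{\A}$, since the requirement $\rho(a_s\delta_s) = \pi_s(a_s)$ pins $\rho$ down on a dense \Star{}subalgebra. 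The only substantive step in the argument is the fibrewise contractivity estimate above, which is the mechanism that makes the maximal \cstar{}seminorm on $\Qc(\A)$ finite; everything else is bookkeeping.
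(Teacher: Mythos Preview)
The paper states this proposition without proof; it is treated as immediate from the definition of $\fullalg{\A}$ as the universal \cstar{}algebra of $\Qc(\A)$ (see \cref{def:max-cross-c-star-alg} and the sentence following it). Your argument is correct and is exactly the standard verification one would supply: build the \Star{}homomorphism on $\contc(\A)$, kill $\I_\A$ via the compatibility axiom in \cref{def:fell-bundle-rep}, and use the fibrewise contractivity $\|\pi_s(a_s)\|\leq\|a_s\|$ to guarantee finiteness of the maximal \cstar{}seminorm and hence extension to $\fullalg{\A}$.
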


\section{Regular representation and Fell's absorption principle}\label{sec:regular-Fell-principle}
Before going into the \emph{approximation property} (see \cref{sec:ap}), in this section we prove an analogue of Fell's absorption principle in the setting of Fell bundles over inverse semigroups (see \cref{thm:fell-principle}).

\subsection{The regular representation} In this section we introduce the \emph{left regular representation} of a Fell bundle, and prove some structural results about it. Note that this representation will be key in order to construct the \emph{reduced cross-sectional \cstar{}algebra} of a Fell bundle (see \cref{def:reduced-cross-sect}). We refer the reader to~\cite{Buss-Exel-Meyer:Reduced} for a more comprehensive discussion.

We first need to discuss a canonical inner product on $\contc(\A)$. Recall we naturally endow $\contc(\A)$ with a \Star{}algebra structure (see \cref{subsec:pre:cross-sectional-alg}). Likewise, recall the definition of the ideal $\NN_P \sbe \contc(\A)$ from \cref{eq:ideal-n-p}.
\begin{proposition} \label{prop:inner-product}
Let $\A = (A_s)_{s \in S}$ be a Fell bundle. Consider 
\[ \braket{\xi}{\eta}_{A_1''} \coloneqq P\left(\xi^* \cdot \eta\right), \;\; \text{where} \;\, \xi, \eta \in \contc\left(\A\right). \]
Then $\braket{\cdot}{\cdot}_{A_1''}$ is a \emph{semi-inner product} with values in $A_1''$ whose kernel is exactly $\NN_P$, that is, $\braket{\xi}{\xi}_{A_1''}=0$ if and only if $\xi\in \NN_P$. Thus, it factors through an $A_1''$-valued inner product on $\algalg{\A}$.
\end{proposition}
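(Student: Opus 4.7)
The plan is to verify the three defining properties of an $A_1''$-valued semi-inner product directly from the properties of the canonical conditional expectation $P$ listed just before the statement, and then invoke the identification $\NN_P=\J_\A$ from the paragraph preceding the proposition to obtain the final sentence.

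First, I would establish sesquilinearity and the adjoint symmetry. Linearity in $\eta$ (and conjugate-linearity in $\xi$) is immediate from linearity of $P$ and the fact that the product and involution on $\contc(\A)$ are bilinear and antilinear respectively. For the adjoint property, using that $P$ preserves adjoints (one of the properties listed after \cref{prop:cond-exp}), I compute
\[
\langle \xi,\eta\rangle_{A_1''}^{*} = P(\xi^{*}\eta)^{*} = P\bigl((\xi^{*}\eta)^{*}\bigr) = P(\eta^{*}\xi) = \langle \eta,\xi\rangle_{A_1''}.
\]
Right $A_1$-linearity would come for free if needed: $\langle \xi,\eta\cdot a\rangle_{A_1''}=P(\xi^{*}\eta\,a\delta_1)=P(\xi^{*}\eta)a$, because $P$ restricts to the identity on $A_1$ and $a\delta_1$ is just right multiplication by $a$ in $\contc(\A)$.

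Second, I would verify positivity: by the positivity of $P$ recalled after \cref{prop:cond-exp}, we have $\langle \xi,\xi\rangle_{A_1''}=P(\xi^{*}\xi)\geq 0$ in $A_1''$ for every $\xi\in\contc(\A)$. Together with the previous step, this is precisely the definition of an $A_1''$-valued semi-inner product.

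Third, for the identification of the kernel, I would just unwind definitions: by the very definition \cref{eq:ideal-n-p} of $\NN_P$, we have $\langle \xi,\xi\rangle_{A_1''}=0$ if and only if $P(\xi^{*}\xi)=0$ if and only if $\xi\in \NN_P$. For the final assertion that the form descends to a genuine $A_1''$-valued inner product on $\algalg{\A}$, I would invoke the equality $\NN_P=\J_\A$ already recorded in the paragraph above the proposition (which itself rests on the faithfulness of $\tilde P$ proved in \cite{Buss-Exel-Meyer:Reduced}). Since $\algalg{\A}=\contc(\A)/\J_\A=\contc(\A)/\NN_P$, the semi-inner product $\langle\cdot,\cdot\rangle_{A_1''}$ factors through a well-defined, sesquilinear, positive, and now definite form on $\algalg{\A}$.

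There is essentially no hard step here once one accepts the properties of $P$ and the identification $\NN_P=\J_\A$ from \cite{Buss-Exel-Meyer:Reduced}; the only mildly delicate point worth being explicit about is that $\NN_P$ is already known to be a two-sided $*$-ideal (not merely a subspace), so that both $\xi^{*}\xi$ and $\xi\xi^{*}$ lie in $\J_\A$ simultaneously, which is what makes the form descend cleanly to the quotient rather than merely to a set of cosets.
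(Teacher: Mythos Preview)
Your proposal is correct and follows essentially the same approach as the paper, which simply defers to \cite{Buss-Exel-Meyer:Reduced}*{Proposition~3.6 and Remark~4.7} for the properties of $P$ and then recalls the definition of a semi-inner product. You have merely made explicit the verification that the paper leaves implicit in that citation.
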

\begin{proof}
The proof follows from the properties of $P$ showed in~\cite{Buss-Exel-Meyer:Reduced}*{Proposition~3.6 and Remark~4.7}. Recall that $\braket{\cdot}{\cdot}_{A_1''}$ being a semi-inner product with values in $A_1''$ means that it is a sesquilinear form, $A_1$-linear in the second variable, and is positive meaning that $\braket{\xi}{\xi}_{A_1''}\geq 0$ in the \cstar{}algebra $A_1''$ for all $\xi\in \contc(\A)$.
\end{proof}

We could now proceed and take the completion of $\algalg\A$ with the respect to the norm induced from the above inner product.
But there is a technical complication here as this completion does not necessarily carry a (right) $A_1''$-module structure.
The easy way out of this is to go directly to the von Neumann enveloping Fell bundle $\A''$ and consider the similar completion there.
This is the way it is done in \cite{Buss-Exel-Meyer:Reduced}.

\begin{definition} \label{def:left-reg:l-two}
Given a Fell bundle $\A = (A_s)_{s \in S}$ we denote by \emph{$\ell^2(\A'')$} the completion of $\algalg{\A''}$ with respect to the norm induced from the inner product $\braket{\cdot}{\cdot}_{A_1''}$ given by~\cref{prop:inner-product} for $\A''$.
\end{definition}

Note that $\ell^2(\A'')$ is, in particular, a Hilbert module over $A_1''$. We can now introduce the left regular representation of a Fell bundle.
\begin{definition} \label{def:reduced-cross-sect}
Let $\A = (A_s)_{s \in S}$ be a Fell bundle over an inverse semigroup $S$.
\begin{enumerate}[label=(\roman*)] 
\item The \emph{(left) regular representation} of $\A$ is the canonical \Star{}homomorphism $\Lambda \colon \algalg{\A} \to \L(\ell^2(\A''))$ defined by (left) multiplication, that is,
\[ \Lambda\left(a\right)\left(f\right) \coloneqq af \]
for every $f \in \ell^2(\A'')$ and $a \in \algalg{\A}$.
\item The \emph{reduced cross-sectional \cstar{}algebra} of $\A$ is the completion $\redalg{\A}$ of $\algalg{\A}$ with respect to the \cstar{}norm
\[ \|a\|_\red \coloneqq \|\Lambda\left(a\right)\|_{\L\left(\ell^2\left(\A''\right)\right)}, \;\; \text{where} \;\, a \in \algalg{\A}. \]
\end{enumerate}
\end{definition}

\begin{remark}
By the universal property of $\fullalg{\A}$ (see also \cref{pre:fell:fullalg-univ}) the left regular representation of $\A$ gives a representation $\fullalg{\A} \to \redalg{\A} \sbe \L(\ell^2(\A''))$. By abuse of notation we will denote both by $\Lambda$.
\end{remark}

The following proposition gives an explicit formula for the left regular representation $\Lambda \colon \algalg{\A} \rightarrow \L(\ell^2(\A''))$.  For it, recall that the sections $A_s\cong A_s \delta_s$ canonically embed into $\algalg{\A}$ (see \cref{lemma:n-p-intersection-fibers}).
\begin{proposition} \label{prop:left-reg-rep:expression}
  Let $\A = (A_s)_{s \in S}$ be a Fell bundle over $S$. Then:
  \[ \Lambda_s\left(a_s \delta_s\right)\left(f + \NN_P\right) = \sum_{t \in S} a_s f\left(t\right) \delta_{st} + \NN_P \]
  for every $a_s \in A_s$ and $f \in \contc(\A'')$.
\end{proposition}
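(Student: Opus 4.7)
The plan is to verify the formula by directly unwinding the definition of $\Lambda$ together with the \Star{}algebra structure on $\contc(\A'')$, since no deeper machinery is needed. By \cref{def:reduced-cross-sect}, $\Lambda(a)(g) = a\cdot g$ for $g \in \ell^2(\A'')$, where the product on the right is the extension of left multiplication in $\algalg{\A''}$ to its Hilbert-module completion. If $f \in \contc(\A'')$, the coset $f + \NN_P$ already lies in the dense subspace $\algalg{\A''} \sbe \ell^2(\A'')$, so $\Lambda(a_s\delta_s)(f+\NN_P)$ is literally the product $(a_s\delta_s)\cdot f$ taken in $\contc(\A'')$ and then reduced modulo $\NN_P$.

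To make sense of this product I would view $a_s\delta_s$ as an element of $\contc(\A'')$ through the canonical embeddings $A_s\into A_s''$ at each fibre. These embeddings are compatible with the bundle multiplications by construction, since the structure maps $\mu_{s,t}''\colon A_s''\,\overline{\otimes}_{A_1''}\,A_t'' \to A_{st}''$ of $\A''$ are defined as the weakly continuous extensions of $\mu_{s,t}$. Writing the finitely supported section as the finite sum $f = \sum_{t\in S} f(t)\,\delta_t$ with $f(t) \in A_t''$, and applying the bilinear extension of the basic product $(a_s\delta_s)(b_t\delta_t) = a_s b_t\,\delta_{st}$ on $\contc(\A'')$, one obtains
\[ (a_s\delta_s)\cdot f \;=\; \sum_{t\in S} a_s f(t)\,\delta_{st}, \]
where each $a_s f(t) \in A_{st}''$. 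Passing to cosets modulo $\NN_P$ then yields the claimed identity.

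I do not anticipate a substantive obstacle: the only care needed is in tracking which of the two bundles $\A$ or $\A''$ hosts each multiplication, but since the latter extends the former, no ambiguity arises, and the finiteness of the support of $f$ eliminates any convergence issue on the right-hand side. In essence the statement is a notational unpacking of the fact that, by definition, $\Lambda(a_s\delta_s)$ acts as convolution by $a_s\delta_s$ inside the enveloping algebraic cross-sectional algebra.
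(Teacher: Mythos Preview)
Your proposal is correct and follows essentially the same approach as the paper: both arguments simply write $f$ as the finite sum $\sum_t f(t)\delta_t$, invoke the definition of $\Lambda$ as left multiplication in $\algalg{\A''}$, and apply the basic product rule $(a_s\delta_s)(b_t\delta_t)=a_sb_t\,\delta_{st}$ before passing to cosets. Your extra remarks about the compatibility of the embeddings $A_s\into A_s''$ are accurate but not needed for the paper's terse version of the argument.
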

\begin{proof}
  We have $f = \sum_{r \in S} f(r) \delta_r + \NN_P$, where the sum has, at most, finitely many non-zero terms. By definition
  \[ \Lambda\left(a_s \delta_s\right)\left(f\right) = \sum_{t \in S} a_s \delta_s f\left(t\right) \delta_t + \NN_P = \sum_{t \in S} a_s f\left(t\right) \delta_{st} + \NN_P, \]
  as claimed.
\end{proof}
\begin{remark} \label{rem:induce-instead-of-extend}
  Note that, by \cref{prop:left-reg-rep:expression}, we have a commuting diagram
  \begin{center}
    \begin{tikzcd}
    \contc\left(\A\right) \arrow[two heads]{d}{\pi_P} \arrow{r}{\tilde{\Lambda}\left(a\right)} &[2.5em] \contc\left(\A\right) \arrow[two heads]{d}{\pi_P} \\
    \algalg{\A} \arrow{r}{\Lambda\left(a + \NN_P\right)} & \algalg{\A}
    \end{tikzcd}
  \end{center}
  for every $a \in \contc(\A)$, where $\pi_P \colon \contc(A) \rightarrow \algalg{\A}$ is the canonical quotient map and $\tilde{\Lambda}(a)(b) \coloneqq ab$. In particular, the map $\tilde{\Lambda}$ descends to a map $\Lambda$ on the quotients. Several different situations similar to this will happen throughout the paper (see, e.g., \cref{prop:fell-principle:unitary,prop:tensor-prod:red-comp}).
\end{remark}

We isolate the following notion due to its relevance within the paper.
\begin{definition} \label{def:weak-containment}
We say a Fell bundle $\A = (A_s)_{s \in S}$ has the \textit{weak containment property} if the left regular representation $\Lambda \colon \fullalg{\A} \to \redalg{\A}$ is a \Star{}isomorphism.
\end{definition}

We now turn our attention to the construction of new representations. In particular, we can induce representations of $A_1$ to representations of the Fell bundle $\A = (A_s)_{s \in S}$ using the left regular representation in the following way.
\begin{definition} \label{def:induced-reps}
Given a representation $\rho\colon A_1 \to \L(\H)$ on $\H$, we define:
\begin{enumerate}[label=(\roman*)]
  \item $\ell^2(\A'') \otimes_{\rho''} \H$ to be the \emph{balanced tensor product of $\ell^2(\A'')$ and $\H$}, that is, the Hausdorff completion of $\ell^2(\A'') \odot \H$ by the semi-inner product given by
  \[ \braket{f_1 \otimes h_1}{f_2 \otimes h_2}_{\ell^2\left(\A''\right) \otimes_{\rho''} \H} \coloneqq \braket{h_1}{\rho''\left(\braket{f_1}{f_2}_{A''}\right) h_2}_{\H}, \]
  where $f_1, f_2 \in \ell^2(\A'')$ and $h_1, h_2 \in \H$ and $\rho''$ is the weakly continuous extension of $\rho$ to $A_1''$; 
  \item the \emph{induced representation of $\rho$} to be the representation given by
    \begin{align*}
    \Ind\rho \colon \fullalg{\A} & \rightarrow \L\left(\ell^2\left(\A''\right) \otimes_{\rho''} \H \right), \\
    a_s \delta_s & \mapsto \left(\Ind\rho\right)_s\left(a_s \delta_s\right) \coloneqq \Lambda_s\left(a_s \delta_s\right) \otimes_{\rho''} \Id.
    \end{align*}
\end{enumerate}
\end{definition}

Henceforth, we will denote $\braket{\cdot}{\cdot}_{\H}, \braket{\cdot}{\cdot}_{A''}$ and $\braket{\cdot}{\cdot}_{\ell^2(\A'') \otimes_{\rho''} \H}$ in \cref{def:induced-reps} all by $\braket{\cdot}{\cdot}$ unless otherwise confusing. The following remarks are in order.

\begin{remark} \label{rem:induced-factors-reduced}
  Notice that $\Ind\rho$ is the induced representation from the weakly continuous extension $\rho''\colon A_1''\to \L(\H)$ via $\ell^2(\A'')$ viewed as a correspondence from $\fullalg{\A}$ to $A_1''$. Since the left action of $\fullalg{\A}$ on $\ell^2(\A'')$ factors through $\redalg{\A}$ by construction, the induced representation $\Ind\rho$ always factors through $\redalg{\A}$, regardless of the initial representation $\rho \colon A_1 \to \L(\H)$. Indeed, the reduced norm can be described via induced representations: take, e.g., the universal representation of $A_1$ and induce it up to $\algalg{\A}$. The associated representation of $\redalg{\A}$ is faithful because the universal representation of $A_1$ extends faithfully to $A_1''$.
\end{remark}

On the one hand a general faithful representation of $A_1$ need not induce a faithful representation of $\redalg{\A}$. And on the other hand an induced representation can be faithful on $\redalg{\A}$ without being faithful on $A_1''$ (or even on $A_1$) as $\redalg{\A}$ may be, for instance, simple. General criteria for faithfulness of induced representations on $\redalg{\A}$ are given in \cite{Buss-Exel-Meyer:Reduced}*{Section~4}. For instance, it is proved in \cite{Buss-Exel-Meyer:Reduced}*{Theorem~7.4} that the \emph{atomic} representation of $A_1$, i.e., the direct sum $\rho_a=\oplus_{[\pi]\in \hat A_1}\pi$ of all (classes of) irreducible representations of $A_1$, yields a faithful representation $\Ind\rho_a$ of $\redalg{\A}$.

\subsection{Fell's absorption principle for groups} \label{subsec:fell-principle:groups}
Let us now recall Fell's absorption principle for the case of Fell bundles over (discrete) groups (see \cite{Exel:Partial_dynamical}*{Chapter~18}). Let $\A = (A_g)_{g\in G}$ be a Fell bundle over a group $G$ and let $\pi=(\pi_g)_{g\in G}$ be a representation of $\A$ on some Hilbert space $\H$. Then we obtain a new representation $\pi^\lambda=(\pi^\lambda_g)_{g\in G}$ of $\A$ on $\ell^2(G,\H)\cong \ell^2(G)\otimes \H$ defined by $\pi^\lambda_g\coloneqq\lambda_g\otimes\pi_g$ for all $g\in G$, where $\lambda\colon G\to \L(\ell^2(G))$ denotes the left regular representation of $G$. Observe that
\[ \pi^\lambda_g\left(f\right)\left(h\right) = \pi_g\left(f\left(g^{-1}h\right)\right), \;\; \text{for} \;\, g,h \in G \;\, \text{and} \;\, f \in \ell^2\left(G,\H\right). \]
Here $\ell^2(\A)$ denotes the Hilbert $A_1$-module obtained as the completion of $\contc(\A)$ with respect to the norm induced by the $A_1$-valued inner product
\[ \braket{\xi}{\eta}_{A_1} \coloneqq \sum_{g\in G}\xi\left(g\right)^*\eta\left(g\right). \]
The so-called left regular representation of $\A$ is the representation 
$$\Lambda\colon \A\to \L(\ell^2(\A))\quad\mbox{given by }(\Lambda(a_g)\xi)(h)\coloneqq a_g\xi(g^{-1}h).$$
By definition, the reduced cross-sectional \cstar{}algebra $\redalg{\A}$ is $$\redalg{\A}\coloneqq \Lambda(\fullalg{\A})\sbe \L(\ell^2(\A)).$$
The following result is an analogue of Fell's absorption principle in the context of Fell bundles over discrete groups (see~\cite{Exel:Partial_dynamical}*{Proposition~18.4}). Before we state and prove the result, however, we need some notation. 
  Let $\ell^2_\pi(G,\H)$ be the (closed) subspace of $\ell^2(G,\H)$ consisting of the functions $\xi\in \ell^2(G,\H)$ satisfying $\xi(g)\in \H_g:=\cspn\pi_g(\A_g)\H\sbe \H$. A simple computation shows that this is an invariant subspace under $\pi\otimes\lambda$, hence it restricts to a representation of $\A$ on the Hilbert space $\ell^2_\pi(G,\H)$, which we denote by $\pi^\lambda$.
  
\begin{proposition} \label{prop:fell-absorption-groups}
  Let $\A = (A_g)_{g\in G}$ be a Fell bundle over a discrete group $G$ and let $\pi=(\pi_g)_{g\in G}$ be a representation of $\A$. Then the representation
  \[ \pi^\lambda\colon \fullalg{\A}\to \L\left(\ell^2_\pi\left(G,\H\right)\right) \]
  is weakly contained in the regular representation $\Lambda\colon \fullalg{\A} \to \redalg{\A}$, and therefore it factors through $\redalg{\A}$.
\end{proposition}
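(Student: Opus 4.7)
The plan is to exhibit $\pi^\lambda$ as unitarily equivalent to the induced representation $\Ind\pi_1\colon \fullalg{\A}\to \L(\ell^2(\A)\otimes_{\pi_1}\H)$. Since induced representations of this form always factor through $\redalg{\A}$ by construction---the left action of $\fullalg{\A}$ on $\ell^2(\A)$ does so, as recorded in \cref{rem:induced-factors-reduced}---such an equivalence will immediately deliver the desired weak containment.

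The candidate intertwiner $U\colon \ell^2(\A)\otimes_{\pi_1}\H \to \ell^2_\pi(G,\H)$ I would use is the one determined on elementary tensors by
\[ U(a_g\delta_g\otimes h) \coloneqq \bigl(\pi_g(a_g)h\bigr)\delta_g, \]
i.e., the finitely supported section taking value $\pi_g(a_g)h\in\H_g$ at $g$ and vanishing elsewhere. The condition $\pi_g(a_g)h\in\H_g$ explains why the image lies in $\ell^2_\pi(G,\H)$ rather than just in $\ell^2(G,\H)$, and the defining identity $\H_g=\cspn\pi_g(A_g)\H$ is exactly what guarantees that $U$ has dense range.

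Next I would verify that $U$ is well-defined and isometric; this reduces, on elementary tensors, to the identity $\langle h, \pi_1(a_g^*b_g)h'\rangle = \langle \pi_g(a_g)h, \pi_g(b_g)h'\rangle$, which follows from multiplicativity and adjoint-preservation of $\pi$ together with $a_g^*b_g\in A_{g^{-1}g}=A_1$. The intertwining property $U\circ \Ind\pi_1(b_h\delta_h) = \pi^\lambda(b_h\delta_h)\circ U$ is then a one-line check: both sides send $a_g\delta_g\otimes k$ to the section supported at $hg$ with value $\pi_h(b_h)\pi_g(a_g)k$, using $\pi_{hg}(b_ha_g)=\pi_h(b_h)\pi_g(a_g)$ on one side and the defining formula $(\pi^\lambda_h(b_h)\xi)(x)=\pi_h(b_h)\xi(h^{-1}x)$ on the other.

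The only genuinely technical point is the well-definedness of $U$ on the balanced tensor product---namely, that it respects the $\pi_1$-balancing relation and extends from finite sums of elementary tensors to the completion---but both follow from the isometry computation above by routine tensor-product arguments. Everything else is bookkeeping, and this strategy will serve as the natural template for the more delicate inverse-semigroup version to come, where one must replace $\ell^2(\A)$ by $\ell^2(\A'')$ and manage the additional subtleties induced by the inclusions $A_s\subseteq A_t$ for $s\leq t$.
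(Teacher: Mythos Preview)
Your proposal is correct and follows essentially the same approach as the paper: both construct the unitary $U\colon \ell^2(\A)\otimes_{\pi_1}\H\to\ell^2_\pi(G,\H)$ by sending $f\otimes v$ to the section $g\mapsto\pi_g(f(g))v$, check isometry via the inner-product computation you indicate, observe dense range from the definition of $\H_g$, and verify the intertwining with $\Ind\pi_1$. The paper simply compresses all of this into the phrase ``a simple computation with the inner products,'' and concludes exactly as you do that $\Ind\pi_1$, being induced via $\ell^2(\A)$, is weakly contained in $\Lambda$.
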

\begin{proof}
  A simple computation with the inner products shows that the canonical map
  \[ \contc\left(\A\right) \odot \H \to \contc\left(G,\H\right), \;\; \text{where} \;\, f \otimes v\mapsto \left[g \mapsto \pi_g\left(f\left(g\right)\right)v\right] \]
  is an isometry and extends to an isomorphism
  \[ U_\pi \colon \ell^2(\A)\otimes_{\pi_1} \H\congto \ell^2_\pi(G,\H) \]
  intertwining the representations $\Ind \pi_1 \coloneqq \Lambda\otimes_{\pi_1} \Id$ and the restriction of $\pi^\lambda$ to $\ell^2_\pi(G,\H)$. The representation $\Ind \pi_1$, being induced via $\ell^2(\A)$, is always weakly contained in the regular representation of $\A$.
\end{proof}

\begin{remark}
We note in passing that \cref{prop:fell-absorption-groups} is \emph{not} actually Fell's absorption principle, as employed in~\cite{Exel:Partial_dynamical}*{Proposition~18.4}, unless the Fell bundle $\A$ is assumed to be saturated (recall \cref{rem:saturated-vs-semi}) and the representation $\pi$ is nondegenerate. Indeed, in such case the space $\ell^2_\pi(G, \H) = \ell^2(G,\H)$ because $\H_g=\cspn \pi_1(A_g A_g^*)\H=\H$ for all $g\in G$, and hence we do recover~\cite{Exel:Partial_dynamical}*{Proposition~18.4}. Otherwise, \cref{prop:fell-absorption-groups} only proves that a sub-representation of $\pi \tensor \lambda$, namely $\pi^\lambda$, is weakly contained in $\Lambda$. Nevertheless, we shall stick to the naming of \emph{Fell's absorption principle} here and in the sequel because, in particular, the above version recovers the usual Fell's absorption principle for ordinary group actions, in special for group representations, where the name comes from.
\end{remark}

\subsection{Fell's absorption principle for inverse semigroups}
We are going to follow a similar strategy as the one outlined in \cref{subsec:fell-principle:groups} and \cref{prop:fell-absorption-groups}, but this time in the case of inverse semigroups, describing induced representations of a Fell bundle. The main complication in the more general case of inverse semigroups is the description of the regular representation $\Lambda$ of a Fell bundle over an inverse semigroup $S$. By construction, this representation acts not just on a completion of $\contc(\A'')$, but of a certain quotient of it. 
Thus it is natural to expect that an induced representation will act not just on a completion of a space of sections $S \to \H$, but on a quotient of it; and this is what will be happening indeed (see \cref{prop:fell-principle:inner-prod,prop:fell-principle:unitary} and \cref{thm:fell-principle}).

The main technical points of Fell's absorption principle in the setting of inverse semigroups are the key \cref{prop:fell-principle:inner-prod,prop:fell-principle:unitary}, which allow to construct a unitary operator that will intertwine two representations of the Fell bundle $\A$.
\begin{definition} \label{def:fell-principle:pre}
Let $\A = (A_s)_{s \in S}$ be a Fell bundle over $S$, and let $\pi = (\pi_s)_{s \in S}$ be a representation of $\A$.
\begin{enumerate}[label=(\roman*)]
\item We define $\contc^\pi(S,\H) \coloneqq \{f\in \contc(S,\H) \mid f(s)\in \H_s\;\, \text{for all} \;\, s\in S\},$ where $\H_s\coloneqq\cspn \pi_s(A_s) \H \sbe \H$ (recall \cref{def:fell-rep-hilbert-fibers}).
\item We denote by $U_{\pi,0} \colon \contc(\A) \odot \H \rightarrow \contc^\pi(S,\H)$ the linear map defined by
\[ U_{\pi,0}\left(f \otimes v \right)\left(s\right) \coloneqq \pi_s\left(f\left(s\right)\right) v \]
for $f \in \contc(\A)$ and $v \in \H$.
\item We also denote by $U''_{\pi,0}\colon \contc(\A'') \odot \H \rightarrow \contc^\pi(S,\H)$ the linear map defined in the same way by
\[ U_{\pi,0}''\left(f \otimes v \right)\left(s\right) \coloneqq \pi_s''\left(f\left(s\right)\right) v \]
$f \in \contc(\A'')$ and $v \in \H$. Recall that $\H_s=\cspn\pi_s''(A_s'')\H$, so the above makes sense.
\end{enumerate}
\end{definition}

For notational convenience we shall henceforth denote $U_{\pi,0}\left(f \otimes v \right)$ simply by $f_v$ whenever $f \in \contc(\A)$ and $v \in \H$. We shall use the same notation if $f\in \contc(\A'')$. 

Notice that $\contc^\pi(S,\H)$ is, as a vector space, just the algebraic direct sum $\oplus_{s\in S}^\alg \H_s \delta_s$. For technical reasons, we also need a certain special subspace of $\contc^\pi(S,\H)$.
For this we define:
\begin{align*}
  \H^0_s & \coloneqq \spn \pi_s\left(A_s\right) \H \sbe \H_s; \;\, \text{and} \\
  \contc^\pi\left(S,\H^0\right) & \coloneqq \left\{f \in \contc^\pi\left(S,\H\right) \mid f\left(s\right)\in \H_s^0\mbox{ for all }s \in S\right\}.
\end{align*}

Observe that, following from the definitions, $\H^0_s \sbe \H_s$ is dense for all $s$, and 
$\contc^\pi(S,\H^0)$ is exactly the image of $U_{\pi,0}$.
Hence we may see $U_{\pi,0}$ as a surjective linear map $U_{\pi,0}\colon \contc(\A) \odot \H \rightarrow \contc^\pi(S,\H^0)$. Furthermore, we now wish to construct a unitary $U_\pi$ from $U_{\pi,0}$ between certain Hausdorff completions of its domain and target spaces (see \cref{prop:fell-principle:unitary}). For this, however, we first need to define the adequate semi-inner product on $\contc^\pi(S,\H)$ (see \cref{prop:fell-principle:inner-prod}). 

\begin{lemma} \label{lemma:fell-principle:sesquilinear-form}
Let $\A = (A_s)_{s \in S}$ be a Fell bundle, and let $\pi = (\pi_s)_{s \in S}$ be a representation of $\A$ on $\H$. For any given $s, t \in S$, consider 
\[ \braket{x}{y}_{s,t} \coloneqq \sum_{i,j} \braket{v_i}{\pi_1''\left(P\left(a_i^*b_j\delta_{s^*t}\right)\right)w_j}, \]
where $x = \sum_i \pi_s(a_i)v_i \in \H_s^0$ and $y = \sum_j \pi_t(b_j)w_j \in \H_t^0$. Then $\braket{\cdot}{\cdot}_{s, t}$ is a well-defined sesquilinear form on $\contc^\pi(S, \H^0)$.

Furthermore, $\braket{x}{y}_{s, t} = \braket{x}{\pi_1''\left(1_{st^*}\right) y}$ for every $x \in \H^0_s$ and $y \in \H^0_t$.
\end{lemma}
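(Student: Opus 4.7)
The key idea is that both the well-definedness of the formula defining $\braket{x}{y}_{s,t}$ and its sesquilinearity follow for free once we establish the ``furthermore'' identity $\braket{x}{y}_{s,t} = \braket{x}{\pi_1''(1_{st^*})y}$, since the right-hand side depends manifestly only on $x$ and $y$ (and not on any choice of decomposition) and is sesquilinear in $(x,y)$. So the whole lemma reduces to verifying that identity for arbitrary decompositions $x = \sum_i \pi_s(a_i)v_i$ and $y = \sum_j \pi_t(b_j)w_j$.

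To carry that out, first I would rewrite $\pi_1''(1_{st^*})\pi_t(b_j)$ by commuting the central projection across the fibre $A_t''$. Using \cref{prop:unit-s-commutation}\,\cref{item:prop:unit-s-commutation:units} to identify $1_{st^*}=1_{ts^*}$ and $1_{t^*s}=1_{s^*t}$, and then \cref{prop:unit-s-commutation}\,\cref{item:prop:unit-s-commutation:comm} applied to $b_j\in A_t$ (with the roles of $s,t$ swapped), yields $1_{st^*}\cdot b_j = 1_{ts^*}\cdot b_j = b_j\cdot 1_{t^*s} = b_j\cdot 1_{s^*t}$, hence $\pi_1''(1_{st^*})\pi_t(b_j) = \pi_t(b_j)\pi_1''(1_{s^*t})$. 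Together with $\pi_s(a_i)^* = \pi_{s^*}(a_i^*)$ and the multiplicativity $\pi_{s^*}(a_i^*)\pi_t(b_j)=\pi_{s^*t}(a_i^*b_j)$, the inner product becomes
\[
\braket{x}{\pi_1''(1_{st^*})y} \;=\; \sum_{i,j}\braket{v_i}{\pi_{s^*t}(a_i^*b_j)\,\pi_1''(1_{s^*t})\,w_j}.
\]

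It then remains to absorb the trailing $\pi_1''(1_{s^*t})$ into the conditional expectation, i.e.\ to verify that for every $c\in A_{s^*t}$,
\[
\pi_{s^*t}(c)\,\pi_1''(1_{s^*t}) \;=\; \pi_1''\bigl(P(c\delta_{s^*t})\bigr).
\]
This is the ``representations respect $\theta$'' identity: by the definition of the canonical conditional expectation, $P(c\delta_{s^*t}) = \theta''_{s^*t,1}(c\cdot 1_{s^*t})$, and by the proposition of Buss--Exel--Meyer cited just before \cref{def:fell-rep-hilbert-fibers}, any representation $\pi$ of $\A$ (and hence its weak extension $\pi''$) intertwines $\pi_1\circ\theta_{s^*t,1} = \pi_{s^*t}$ on $A_{s^*t}\cdot I_{s^*t,1}$. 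Applied to $c\cdot 1_{s^*t}\in A_{s^*t}''\cdot 1_{s^*t}$, this is exactly the identity above. Plugging this in produces $\sum_{i,j}\braket{v_i}{\pi_1''(P(a_i^*b_j\delta_{s^*t}))w_j}$, which is the defining formula for $\braket{x}{y}_{s,t}$, completing the proof.

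\textbf{Anticipated obstacle.} Nothing here is genuinely deep; the care required is purely bookkeeping with the four cognate idempotents $1_{s^*t}, 1_{t^*s}, 1_{st^*}, 1_{ts^*}$ and the left/right action conventions for Hilbert bimodules. Getting the commutation step right the first time — in particular applying \cref{prop:unit-s-commutation}\,\cref{item:prop:unit-s-commutation:comm} to the correct fibre with the correct choice of \emph{which} of $s,t$ plays the role of $s$ in the statement — is the only place where a sign-of-subscript error can silently derail the computation.
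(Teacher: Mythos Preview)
Your proposal is correct and follows essentially the same route as the paper: both arguments hinge on the commutation relations of \cref{prop:unit-s-commutation} together with the fact that representations respect the maps $\theta_{s^*t,1}$ (so that $\pi_{s^*t}(c)\pi_1''(1_{s^*t})=\pi_1''(P(c\delta_{s^*t}))$), and both deduce well-definedness and sesquilinearity from the closed-form identity. The only cosmetic difference is direction: the paper starts from the defining sum and arrives at $\braket{x}{\pi_1''(1_{st^*})y}$, whereas you start from the latter and unwind to the sum.
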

\begin{proof}
  First, recall that $1_{s^*t}$ is the unit of $I_{s^*t,1}''$ (see \cref{def:pre:ideal-s-1}) and note that this is a central projection of $A_1''$. By \cref{prop:cond-exp} we have
  \[ P\left(a_s^*b_t\delta_{s^*t}\right) = \theta_{s^*t,1}''\left(a_s^*b_t 1_{s^*t}\right) \;\; \mbox{for all } a_s \in A_s \;\, \mbox{and } b_t \in A_t, \]
  where $\theta_{s^*t,1}''$ denotes the canonical embedding $A_{s^*t}'' \cdot I_{s^*t,1}'' \into A_1''$. Since these embeddings are compatible with representations (see \cite{Buss-Exel-Meyer:Reduced}*{Proposition~2.9}), we have
  \[ \pi_1''\left(\theta_{s^*t,1}''\left(a_s^*b_t 1_{s^*t}\right)\right) = \pi_{s^*t}''\left(a_s^*b_t 1_{s^*t}\right) = \pi_s\left(a_s\right)^*\pi_t\left(b_t\right) \pi_1''\left(1_{s^*t}\right). \]
  Thus, by the commutation relations given in \cref{prop:unit-s-commutation}~\cref{item:prop:unit-s-commutation:comm},
  \begin{align*}
    \braket{x}{y}_{s,t} & = \sum_{i,j}\braket{v_i}{\pi_1''\left(P\left(a_i^*b_j\delta_{s^*t}\right)\right) w_j} = \sum_{i,j}\braket{v_i}{\pi_1''\left(1_{s^*t}\right) \pi_s\left(a_i\right)^* \pi_t\left(b_j\right) w_j} \\
    & = \sum_{i,j}\braket{\pi_s\left(a_i\right) \pi_1''\left(1_{s^*t}\right) v_i}{\pi_t\left(b_j\right) w_j} = \sum_{i}\braket{\pi_s\left(a_i\right) \pi_1''\left(1_{s^*t}\right) v_i}{y} \\
    & = \sum_i \braket{\pi_s\left(a_i\right) v_i}{\pi_1''\left(1_{st^*}\right) y} = \braket{x}{\pi_1''\left(1_{st^*}\right) y},
  \end{align*}
  as desired. The fact that $\braket{\cdot}{\cdot}_{s, t}$ is a sesquilinear form is now immediate.
\end{proof}

We now assemble the sesquilinear forms from \cref{lemma:fell-principle:sesquilinear-form} in order to get a semi-inner product $\braket{\cdot}{\cdot}_\pi$ on $\contc^\pi(S, \H)$, as the following key proposition shows.
\begin{proposition} \label{prop:fell-principle:inner-prod}
Let $\A = (A_s)_{s \in S}$ be a Fell bundle, and let $\pi = (\pi_s)_{s \in S}$ be a representation of $\A$ on $\H$. Then
\[ \braket{x}{y}_\pi \coloneqq \sum_{s, t \in S} \braket{x\left(s\right)}{\pi_1''\left(1_{st^*}\right) y\left(t\right)}, \]
where $x, y \in \contc^\pi(S, \H)$, defines a semi-inner product on $\contc^\pi(S, \H)$. Furthermore,
\[ \braket{f_v}{g_w}_\pi = \braket{v}{\pi_1''\left(P\left(f^* \cdot g\right)\right)w} \]
for all $f, g \in \contc(\A)$ and $v, w \in \H$.
\end{proposition}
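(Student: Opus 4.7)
The plan is to establish the ``furthermore'' identity first, as positivity then follows transparently from the positivity of the canonical $A_1''$-valued form on $\contc(\A)$ provided by \cref{prop:inner-product}. Sesquilinearity of $\braket{\cdot}{\cdot}_\pi$ is immediate from the defining expression, since the sum is finite (as $x, y \in \contc^\pi(S, \H)$ are finitely supported) and each summand is sesquilinear in $x(s), y(t)$.

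For the explicit formula, I would write $f = \sum_s a_s \delta_s$ and $g = \sum_t b_t \delta_t$ in $\contc(\A)$, so that $f_v(s) = \pi_s(a_s) v$ and $g_w(t) = \pi_t(b_t) w$. For each fixed pair $(s, t)$, \cref{lemma:fell-principle:sesquilinear-form} gives the equality
\[ \braket{\pi_s(a_s) v}{\pi_1''(1_{st^*}) \pi_t(b_t) w} = \braket{v}{\pi_1''(P(a_s^* b_t \delta_{s^*t})) w}, \]
via the two identifications of $\braket{\pi_s(a_s) v}{\pi_t(b_t) w}_{s, t}$ available there. Summing over $(s, t) \in S \times S$ and using linearity of $P$ and $\pi_1''$ together with $f^* g = \sum_{s, t} a_s^* b_t \delta_{s^*t}$ in $\contc(\A)$, one obtains $\braket{f_v}{g_w}_\pi = \braket{v}{\pi_1''(P(f^* g)) w}$.

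Positivity on $\contc^\pi(S, \H^0)$—which equals the image of $U_{\pi, 0}$ by construction—then follows. Any such element can be written as $x = U_{\pi, 0}\left(\sum_i f_i \otimes v_i\right)$ for some finite collection $\{(f_i, v_i)\} \sbe \contc(\A) \times \H$, and sesquilinearity together with the furthermore identity gives
\[ \braket{x}{x}_\pi = \sum_{i, j} \braket{v_i}{\pi_1''(P(f_i^* f_j)) v_j}. \]
Since $P$ provides the positive $A_1''$-valued semi-inner product of \cref{prop:inner-product}, the matrix $[P(f_i^* f_j)]_{i, j}$ is positive in $\Mat_n(A_1'')$; applying the $*$-homomorphism $\pi_1''$ preserves this positivity, so the displayed sum is non-negative.

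To finish, I would extend positivity to all of $\contc^\pi(S, \H)$ by density: given $x$ supported on a finite $F \sbe S$, density of $\H_s^0$ in $\H_s$ allows us to approximate each $x(s)$ by vectors in $\H_s^0$, yielding $x^{(n)} \in \contc^\pi(S, \H^0)$ with $x^{(n)}(s) \to x(s)$ for every $s \in F$. Because the defining sum runs over the finite set $F \times F$ and each summand depends continuously on its Hilbert space arguments, $\braket{x^{(n)}}{x^{(n)}}_\pi \to \braket{x}{x}_\pi$, forcing $\braket{x}{x}_\pi \geq 0$. The only subtle step I expect is the passage through \cref{lemma:fell-principle:sesquilinear-form}—which itself relies on the central commutation from \cref{prop:unit-s-commutation}—to identify the two forms of the pairing on a fixed pair $(s, t)$; everything else reduces to clean bookkeeping.
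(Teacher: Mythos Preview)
Your proof is correct, and the ``furthermore'' identity is obtained exactly as in the paper, via \cref{lemma:fell-principle:sesquilinear-form}. The divergence is in the positivity argument. The paper handles positivity in one stroke on all of $\contc^\pi(S,\H)$, rewriting
\[
\braket{x}{x}_\pi=\sum_{s,t}\braket{\pi_1''(1_{ts^*})x(s)}{\pi_1''(1_{st^*})x(t)}
\]
and invoking only $1_{ts^*}=1_{st^*}$ from \cref{prop:unit-s-commutation}\cref{item:prop:unit-s-commutation:units}; unpacking why this is nonnegative amounts to observing that the commuting central projections satisfy $1_{st^*}1_{tr^*}\leq 1_{sr^*}$, so that over each point of the center the matrix $[1_{st^*}]$ is a block sum of all-ones matrices. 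You instead route positivity through the already-established positivity of $P$ from \cref{prop:inner-product}: the furthermore identity gives $\braket{x}{x}_\pi=\sum_{i,j}\braket{v_i}{\pi_1''(P(f_i^*f_j))v_j}\geq 0$ on $\contc^\pi(S,\H^0)$ by Gram-matrix positivity in the pre-Hilbert $A_1''$-module, and a short density step carries this to $\contc^\pi(S,\H)$. Your argument is slightly longer but more self-contained, trading a structural fact about the projections $1_{st^*}$ for results already on record.
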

\begin{proof}
  By \cref{lemma:fell-principle:sesquilinear-form} it follows that $\braket{x}{y}_\pi = \sum_{s, t \in S} \braket{x(s)}{y(t)}_{s,t}$. Observe that, by construction and \cref{lemma:fell-principle:sesquilinear-form} again, for $f, g \in \contc(\A)$ and $v, w \in \H$, we have
  \begin{align*}
    \braket{f_v}{g_w}_\pi & = \sum_{s,t \in S}\braket{f_v\left(s\right)}{g_w\left(t\right)}_{s,t} = \sum_{s,t \in S}\braket{\pi_s\left(f\left(s\right)\right)v}{\pi_t\left(g\left(t\right)\right) w}_{s,t} \\
    &= \sum_{s,t \in S}\braket{v}{\pi_1''\left(P\left(f\left(s\right)^*g\left(t\right)\delta_{s^*t}\right)\right) w} = \braket{v}{\pi_1''\left(P\left(f^* \cdot g\right)\right) w},
  \end{align*}
  as stated in the claim. The form $\braket{\cdot}{\cdot}_\pi$ is clearly symmetric and linear, so in order to end the proof it is enough to show it is also positive semi-definite. For that, fix any $x \in \contc^\pi(S, \H)$ and observe that
  \[ \braket{x}{x}_\pi = \sum_{s, t \in S} \braket{x\left(s\right)}{\pi_1''\left(1_{st^*}\right) x\left(t\right)} = \sum_{s, t \in S} \braket{\pi_1''\left(1_{ts^*}\right) x\left(s\right)}{\pi_1''\left(1_{st^*}\right) x\left(t\right)} \geq 0 \]
  by \cref{prop:unit-s-commutation}~\cref{item:prop:unit-s-commutation:units}.
\end{proof}

In light of \cref{prop:fell-principle:inner-prod}, the following definition is only natural (compare with \cref{def:left-reg:l-two}).
\begin{definition} \label{def:fell-principle:l-two}
Given a Fell bundle $\A = (A_s)_{s \in S}$ we denote by \emph{$\ell^2_\pi(S, \H)$} the Hausdorff completion of $\contc^\pi(S, \H)$ with respect to the semi-inner product $\braket{\cdot}{\cdot}_\pi$.
\end{definition}

We briefly observe in passing that $\braket{\cdot}{\cdot}_\pi$ is a semi-inner product, that is, there may be non-zero elements $x \in \contc^\pi(S, \H)$ such that $\braket{x}{x}_\pi = 0$. Thus, $\ell^2_\pi(S, \H)$ is only the completion of a \emph{quotient} of $\contc^\pi(S, \H)$. However, the fibers $\H_s$ embed isometrically into $\ell^2(S, \H)$ (compare the following with \cref{lemma:n-p-intersection-fibers}):
\begin{lemma} \label{lemma:fell-principle:fibers}
  For any Fell bundle $\A = (A_s)_{s \in S}$ and representation $\pi$, the canonical map $\H_s^0 \to \contc^\pi(S,\H^0) \to \ell^2_\pi(S,\H)$ is isometric. In particular, it extends to an isometry $\H_s \cong \H_s \delta_s \into \ell^2_\pi(S,\H)$.
\end{lemma}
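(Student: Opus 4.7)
The idea is to just compute $\braket{v\delta_s}{v\delta_s}_\pi$ directly from the formula in \cref{prop:fell-principle:inner-prod} and observe that the only surviving term gives back $\|v\|^2$. Fix $v \in \H_s^0$, viewed as the section $v\delta_s \in \contc^\pi(S,\H^0)$ supported at $s$. Then
\[ \braket{v\delta_s}{v\delta_s}_\pi = \sum_{t_1, t_2 \in S} \braket{\left(v\delta_s\right)\left(t_1\right)}{\pi_1''\left(1_{t_1 t_2^*}\right)\left(v\delta_s\right)\left(t_2\right)} = \braket{v}{\pi_1''\left(1_{ss^*}\right) v}, \]
since the only nonzero term is the one with $t_1 = t_2 = s$.

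The proof then reduces to showing that $\pi_1''(1_{ss^*})$ acts as the identity on $\H_s$. First note that $ss^* \in E$ is an idempotent, so $I_{ss^*, 1} = A_{ss^*}$ and hence $1_{ss^*}$ is the unit of the von Neumann algebra $A_{ss^*}''$. Since $\H_s = \cspn \pi_s(A_s)\H = \cspn \pi_1(A_{ss^*})\H$ (as recalled after \cref{def:fell-rep-hilbert-fibers}), and since $\pi_1''$ is the weakly continuous extension of $\pi_1$ to $A_1''$, the projection $\pi_1''(1_{ss^*})$ acts as the identity on $\H_s$. In particular this applies to $v \in \H_s^0 \sbe \H_s$, giving
\[ \braket{v\delta_s}{v\delta_s}_\pi = \braket{v}{v} = \|v\|_\H^2. \]
Thus the canonical map $\H_s^0 \to \contc^\pi(S,\H^0) \to \ell^2_\pi(S,\H)$ is isometric.

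Finally, since $\H_s^0$ is dense in $\H_s$ by construction, this isometric map extends uniquely to an isometry $\H_s \into \ell^2_\pi(S,\H)$, which is nothing other than the canonical identification $\H_s \cong \H_s \delta_s \into \ell^2_\pi(S,\H)$. The main (and essentially only) subtle point is identifying $1_{ss^*}$ correctly and applying the centrality/unit properties from \cref{prop:unit-s-commutation}; everything else follows from unravelling the definitions.
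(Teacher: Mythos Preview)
Your proof is correct and follows essentially the same approach as the paper: compute $\braket{v\delta_s}{v\delta_s}_\pi$ via the formula in \cref{prop:fell-principle:inner-prod}, observe that only the $t_1=t_2=s$ term survives, and then use that $\pi_1''(1_{ss^*})$ acts as the identity on $\H_s$. You actually spell out this last step more carefully than the paper does, which simply asserts $\braket{v}{v}$ directly.
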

\begin{proof}
  The proof boils down to the fact that $s^*s$ is an idempotent, and hence $P$ acts as the identity on $A_{s^*s} \sbe A_1$. In fact, we get
  \[ \braket{v\delta_s}{v\delta_s}_\pi = \sum_{t, r \in S} \braket{\left(v\delta_s\right)\left(t\right)}{\pi_1''\left(1_{tr^*}\right) \left(v \delta_s\right)\left(r\right)} = \braket{v}{v} \]
  for every $v \in \H_s$ and $s \in S$.
\end{proof}
The following is a technical point that needs to be resolved for later reference.
\begin{lemma} \label{lemma:fell-principle:l-two-h-zero}
The Hilbert space $\ell^2(S, \H)$ is the Hausdorff completion of $\contc^\pi(S, \H^0)$ with respect to the semi-inner product $\braket{\cdot}{\cdot}_{\pi}$.
\end{lemma}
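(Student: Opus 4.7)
The plan is to prove this by showing that $\contc^\pi(S,\H^0)$ is dense inside $\contc^\pi(S,\H)$ with respect to the seminorm induced by $\braket{\cdot}{\cdot}_\pi$. Once this is established, the two Hausdorff completions coincide, yielding the claim. Observe that $\contc^\pi(S,\H^0)\sbe \contc^\pi(S,\H)$ as subspaces of the algebraic direct sum $\oplus_s^\alg \H_s\delta_s$, so the density statement makes literal sense.

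For the density, fix $x\in \contc^\pi(S,\H)$ and let $F\Subset S$ be its (finite) support. Since $\H_s^0\sbe \H_s$ is dense by construction, for each $s\in F$ we may choose a sequence $y_k(s)\in \H_s^0$ with $\|x(s)-y_k(s)\|_{\H}\to 0$ as $k\to\infty$; set $y_k(s)\coloneqq 0$ for $s\notin F$. Then $y_k\in \contc^\pi(S,\H^0)$ and the remaining task is to show that $\braket{x-y_k}{x-y_k}_\pi\to 0$ in $A_1''$, or equivalently in norm.

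The key estimate uses the explicit formula from \cref{prop:fell-principle:inner-prod}, namely
\[ \braket{x-y_k}{x-y_k}_\pi = \sum_{s,t\in F} \Braket{x(s)-y_k(s)}{\pi_1''(1_{st^*})\bigl(x(t)-y_k(t)\bigr)}. \]
Since $1_{st^*}$ is a projection in $A_1''$, we have $\|\pi_1''(1_{st^*})\|\leq 1$, and Cauchy--Schwarz in $\H$ bounds each summand by $\|x(s)-y_k(s)\|_\H\,\|x(t)-y_k(t)\|_\H$. The sum is finite (indexed by $F\times F$) and every factor tends to $0$, so the whole expression vanishes in the limit. This is all that is needed.

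There is essentially no serious obstacle here: the statement is a soft density argument, and the only subtlety is that the seminorm involves cross terms $\braket{x(s)}{\pi_1''(1_{st^*})x(t)}$ rather than just the diagonal terms, but these are controlled uniformly by the operator norm bound on the central projections $1_{st^*}$ and the finiteness of the support of $x$.
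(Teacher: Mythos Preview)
Your argument is correct and follows essentially the same route as the paper: both show density of $\contc^\pi(S,\H^0)$ in $\contc^\pi(S,\H)$ with respect to the $\pi$-seminorm by approximating fibre-wise. The paper invokes \cref{lemma:fell-principle:fibers} (isometric embedding of $\H_s$) together with the triangle inequality, while you give the direct estimate on the cross terms; these amount to the same thing. One small slip: $\braket{\cdot}{\cdot}_\pi$ is scalar-valued (a sum of $\H$-inner products), so ``$\to 0$ in $A_1''$'' should simply read ``$\to 0$ in $\C$''.
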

\begin{proof}
By \cref{def:fell-principle:l-two}, the Hilbert space $\ell^2_\pi(S, \H)$ is the Hausdorff completion of $\contc^\pi(S, \H)$ with respect to the semi-inner product $\braket{\cdot}{\cdot}_{\pi}$. Thus, in order to prove the claim it is enough to show the canonical map $\contc^\pi(S, \H^0) \sbe \contc^\pi(S, \H) \to \ell^2_\pi(S, \H)$ has dense image. 
But $\H^0_s \sbe \H_s$ is dense for every $s \in S$, and by \cref{lemma:fell-principle:fibers} $\H_s$ embeds isometrically into $\ell^2_\pi(S,\H)$, and hence so does $\H_s^0$. Therefore the image of $\contc^\pi(S, \H^0)$ in $\ell^2_\pi(S, \H)$ is dense in that of $\contc^\pi(S, \H)$, as desired.
\end{proof}

\begin{proposition} \label{prop:fell-principle:unitary}
  Let $\A = (A_s)_{s \in S}$ be a Fell bundle, and let $\pi = (\pi_s)_{s \in S}$ be a representation of $\A$ on $\H$. Then the map $U_{\pi,0}''$ from \cref{def:fell-principle:pre} induces a unitary operator $U_\pi \colon \ell^2(\A'') \otimes_{\pi_1^{''}} \H \to \ell^2_\pi(S,\H)$ making the diagram
  \begin{center}
    \begin{tikzcd}
      \contc\left(\A''\right) \odot \H \arrow{d}{} \arrow{r}{U_{\pi,0}''} & \contc^\pi\left(S, \H\right) \arrow{d}{} \\
      \ell^2\left(\A''\right) \otimes_{\pi_1^{''}} \H \arrow{r}{U_\pi} & \ell^2_\pi\left(S,\H\right),
    \end{tikzcd}
  \end{center}
  commute.
\end{proposition}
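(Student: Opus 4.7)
The plan is to construct $U_\pi$ by showing that $U_{\pi,0}''$ is isometric with respect to the relevant (semi-)inner products, descends to the balanced algebraic tensor product, and has dense image, so that it extends to a unitary on the Hausdorff completions. Commutativity of the diagram is then automatic.

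\textbf{Step 1: Isometry.} Given $f,g\in \contc(\A'')$ and $v,w\in \H$, the computation in \cref{prop:fell-principle:inner-prod} (applied to the representation $\pi''$ of $\A''$, noting that the formula $\langle f_v, g_w\rangle_\pi = \langle v, \pi_1''(P(f^*\cdot g))w\rangle$ is proved there) gives
\[
\bigl\langle U_{\pi,0}''(f\otimes v),\, U_{\pi,0}''(g\otimes w)\bigr\rangle_\pi
= \bigl\langle v,\, \pi_1''\bigl(P''(f^*\cdot g)\bigr) w\bigr\rangle_\H.
\]
On the other hand, by the very definition of $\ell^2(\A'')$ and of the balanced tensor product,
\[
\bigl\langle f\otimes v,\, g\otimes w\bigr\rangle_{\ell^2(\A'')\otimes_{\pi_1''}\H}
= \bigl\langle v,\, \pi_1''\bigl(\langle f,g\rangle_{A_1''}\bigr)w\bigr\rangle_\H
= \bigl\langle v,\, \pi_1''\bigl(P''(f^*\cdot g)\bigr) w\bigr\rangle_\H.
\]
Hence the two semi-inner products agree on $U_{\pi,0}''$, so $U_{\pi,0}''$ preserves the semi-inner products.

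\textbf{Step 2: Balancing and descent.} For $f\in \contc(\A'')$, $a\in A_1''$, $v\in\H$ we have $(f\cdot a)(s)=f(s)\cdot a$ in $A_s''$, whence
\[
U_{\pi,0}''(f\cdot a\otimes v)(s) = \pi_s''\bigl(f(s)a\bigr)v = \pi_s''\bigl(f(s)\bigr)\pi_1''(a)v = U_{\pi,0}''\bigl(f\otimes \pi_1''(a)v\bigr)(s),
\]
so $U_{\pi,0}''$ is $A_1''$-balanced and therefore factors through the algebraic balanced tensor product. Combined with Step~1, this factorization is isometric for the semi-inner products and so descends to an isometric linear map
\[
U_\pi\colon \ell^2(\A'')\otimes_{\pi_1''}\H \to \ell^2_\pi(S,\H)
\]
between the Hausdorff completions. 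By construction $U_\pi$ extends $U_{\pi,0}''$, which gives the commutativity of the square.

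\textbf{Step 3: Density of image.} It remains to show $U_\pi$ is surjective. The image of $U_{\pi,0}''$ clearly contains every element of the form $v\delta_s$ with $v\in\pi_s''(A_s'')\H \supseteq \H_s^0$: take $f=a\delta_s \in \contc(\A'')$ with $v=\pi_s''(a)w$. Hence the image contains $\contc^\pi(S,\H^0)$ (up to linear combinations), which is dense in $\ell^2_\pi(S,\H)$ by \cref{lemma:fell-principle:l-two-h-zero}. Therefore $U_\pi$ has dense image, and being isometric, it is unitary.

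\textbf{Main obstacle.} The only delicate point is matching the two semi-inner products in Step~1; once \cref{prop:fell-principle:inner-prod} is in hand the computation is immediate, and the rest reduces to checking formal properties (balancing, density, commutativity of the diagram).
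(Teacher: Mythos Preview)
Your proof is correct and follows essentially the same approach as the paper: match the semi-inner products via \cref{prop:fell-principle:inner-prod}, then use that both sides are Hausdorff completions (invoking \cref{lemma:fell-principle:l-two-h-zero} for density of the image). Your Step~2 (balancing) is harmless but redundant, since $\ell^2(\A'')\otimes_{\pi_1''}\H$ is by definition the Hausdorff completion of $\contc(\A'')\odot\H$ with respect to the semi-inner product, so the balancing relations are automatically killed once Step~1 is established; the paper accordingly omits this step.
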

\begin{proof}
  By \cref{prop:fell-principle:inner-prod}, notice that
  \[ \braket{f_v}{g_w}_\pi = \braket{v}{\pi_1''\left(P\left(f^*\cdot g\right)\right) w} = \braket{f \otimes_{\pi_1''} v}{g \otimes_{\pi_1''}w}, \]
  that is, the semi-inner product $\braket{\cdot}{\cdot}_\pi$ is compatible with $U_{\pi,0}'' \colon \contc(\A'')\odot \H\to \contc^\pi(S,\H^0)$. By sesquilinearity the above equation extends to arbitrary elements of the tensor product, i.e., we get
  \[ \braket{U_{\pi,0}''\left(x\right)}{U_{\pi,0}''\left(y\right)}_\pi = \braket{x}{y} \;\; \mbox{for all } x,y \in  \contc\left(\A\right)\odot \H. \]
  The above equation implies the statement. Indeed, recall (see \cref{def:induced-reps,lemma:fell-principle:l-two-h-zero}) that $\ell^2(\A'') \otimes_{\pi_1''} \H$ and $\ell^2_\pi(S, \H)$ are, respectively, the Hausdorff completions of $\contc(\A'') \odot \H$ w.r.t. $\braket{\cdot}{\cdot}_{\ell^2(\A'') \otimes_{\pi_1''} \H}$ and $\contc^\pi(S, \H)$ w.r.t. $\braket{\cdot}{\cdot}_\pi$.
\end{proof}

\begin{remark}\label{rem:dense-subspace}
It follows from \cref{prop:fell-principle:unitary} that the canonical map $\contc(\A)\odot \H\to \ell^2(\A'')\otimes_{\pi_1''}\H$ has dense image because the latter is isomorphic to $\ell^2_\pi(S,\H)$ and the image of $U_{\pi,0}\colon \contc(\A)\odot \H\to \contc^\pi(S,\H)$ is dense as a subspace of $\ell^2_\pi(S,\H)$.
\end{remark}

We are now ready to provide a version of Fell's absorption principle for the context of inverse semigroups; this is the main result of the section. We start with a representation $\pi$ of $\A$, restrict it to the representation $\pi_1$ of $A_1=A$ and then induce it back to $\A$ via the regular representation $\Lambda$ on $\ell^2(\A'')$. This results in a description of the induced representation $\Lambda\otimes_{\pi_1''}\Id$ on $\ell^2(\A'')\otimes_{\pi_1''}\H$ in a more explicit way, identifying it with a representation on $\ell^2_\pi(S, \H)$, which is a quotient of a certain space of sections $S \to \H$.

\begin{theorem} \label{thm:fell-principle}
Let $\A = (A_s)_{s \in S}$ be a Fell bundle over an inverse semigroup $S$, and let $\pi = (\pi_s)_{s \in S}$ be a nondegenerate representation of $\A$. Consider the induced representation $\Lambda \otimes_{\pi_1^{''}} \Id = ((\Lambda \otimes_{\pi_1^{''}} \Id)_s)_{s \in S}$ given by
\begin{align}
(\Lambda \otimes_{\pi_1^{''}} \Id)_s \colon A_s & \rightarrow \L\left(\ell^2\left(\A''\right) \otimes_{\pi_1^{''}} \H\right), \label{eq:fell-principle:induced-rep} \\
(\Lambda \otimes_{\pi_1^{''}} \Id)_s & \left(a_s\right) \left(f \otimes v\right) \coloneqq \Lambda_s\left(a_s\right)\left(f\right) \otimes v, \nonumber
\end{align}
where $a_s \in A_s, f \in \ell^2(\A'')$ and $v \in \H$. Then the unitary isomorphism $U_\pi$ from \cref{prop:fell-principle:unitary} conjugates $\Lambda\otimes_{\pi_1''}\Id$ into the representation $\pi^\Lambda = (\pi^\Lambda_s)_{s \in S}$ given by
\begin{align}
\pi^\Lambda_s \colon A_s & \rightarrow \L\left(\ell^2_\pi\left(S, \H\right)\right), \label{eq:fell-principle:pi-lambda} \\
\pi^\Lambda_s & \left(a_s \delta_s\right)\left(v_t \delta_t\right) \coloneqq \pi_s\left(a_s\right) v_t \delta_{st}.
\nonumber
\end{align}
In other words, we have $\pi^\Lambda_s(a_s \delta_s) U_\pi = U_\pi (\Lambda \otimes_{\pi_1''} \id)(a_s\delta_s)$ for all $a_s \in A_s$ and $s \in S$, that is, $U_\pi$ intertwines $\pi^\Lambda$ and $\Lambda \otimes_{\pi_1''} \id$.
\end{theorem}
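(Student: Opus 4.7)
The cleanest strategy is to avoid defining $\pi^\Lambda$ directly on the semi-normed space $\contc^\pi(S,\H)$ and checking separately that it descends to the Hausdorff completion $\ell^2_\pi(S,\H)$; instead, I would \emph{transport} the already-constructed bounded operator $(\Lambda\otimes_{\pi_1''}\Id)_s(a_s\delta_s)$ through the unitary $U_\pi$ of \cref{prop:fell-principle:unitary}. Concretely, I want to produce $\pi^\Lambda_s(a_s\delta_s)$ as $U_\pi\,(\Lambda\otimes_{\pi_1''}\Id)_s(a_s\delta_s)\,U_\pi^{*}$ and then identify this operator with the formula \cref{eq:fell-principle:pi-lambda} on a dense set. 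Both the boundedness of $\pi^\Lambda_s(a_s\delta_s)$ on $\ell^2_\pi(S,\H)$ and the fact that $\pi^\Lambda$ is a representation of $\A$ (multiplicativity, involution, compatibility with the inclusions $A_s\into A_u$ for $s\leq u$) then become automatic transports of the corresponding properties of $\Lambda\otimes_{\pi_1''}\Id$.

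The heart of the argument is the intertwining identity on the dense subspace $\contc(\A'')\odot\H$ of $\ell^2(\A'')\otimes_{\pi_1''}\H$ (cf.\ \cref{rem:dense-subspace}). It suffices to check on a simple tensor $b_t\delta_t\otimes v$, with $b_t\in A_t''$ and $v\in\H$. Using \cref{prop:left-reg-rep:expression},
\[
U_\pi\circ(\Lambda\otimes_{\pi_1''}\Id)_s(a_s\delta_s)(b_t\delta_t\otimes v)
\;=\; U_\pi\bigl(a_sb_t\,\delta_{st}\otimes v\bigr)
\;=\; \pi_{st}''(a_sb_t)\,v\,\delta_{st}
\]
by definition of $U_\pi$. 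On the other hand, inserting the formula \cref{eq:fell-principle:pi-lambda} (which makes manifest sense on the image of $U_{\pi,0}''$, where $\pi_t''(b_t)v\in \H_t^0$),
\[
\pi^\Lambda_s(a_s\delta_s)\circ U_\pi(b_t\delta_t\otimes v)
\;=\; \pi^\Lambda_s(a_s\delta_s)\bigl(\pi_t''(b_t)v\,\delta_t\bigr)
\;=\; \pi_s(a_s)\pi_t''(b_t)\,v\,\delta_{st},
\]
which equals $\pi_{st}''(a_sb_t)\,v\,\delta_{st}$ by the fact that $\pi''$ is a representation of $\A''$. The two expressions agree, so the intertwining holds on a dense subspace and extends to all of $\ell^2(\A'')\otimes_{\pi_1''}\H$ by continuity, defining $\pi^\Lambda_s(a_s\delta_s)$ as a bounded operator satisfying \cref{eq:fell-principle:pi-lambda} on the dense subset $U_{\pi,0}''(\contc(\A'')\odot\H)\sbe\contc^\pi(S,\H^0)$.

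\textbf{Main obstacle.} The only genuinely delicate point is that $\braket{\cdot}{\cdot}_\pi$ is merely a semi-inner product on $\contc^\pi(S,\H)$, so a priori the formula \cref{eq:fell-principle:pi-lambda} could fail to descend to the quotient $\ell^2_\pi(S,\H)$ — one would otherwise need to show that the kernel of the seminorm is preserved, which looks unpleasant to verify directly. The tactic above sidesteps this by never attempting a direct definition on $\contc^\pi(S,\H)$: $\pi^\Lambda_s(a_s\delta_s)$ is exhibited from the start as a conjugate of a bounded operator by the unitary $U_\pi$, so well-definedness and boundedness on $\ell^2_\pi(S,\H)$ are absorbed into \cref{prop:fell-principle:unitary}, while the agreement with formula \cref{eq:fell-principle:pi-lambda} is checked only on the dense image of $U_{\pi,0}''$, where no ambiguity arises.
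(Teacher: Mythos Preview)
Your proposal is correct and follows essentially the same route as the paper: both compute $U_\pi\circ(\Lambda\otimes_{\pi_1''}\Id)_s(a_s\delta_s)$ and $\pi^\Lambda_s(a_s\delta_s)\circ U_\pi$ on elementary tensors $a_t\delta_t\otimes v$, find they agree via the multiplicativity of $\pi$, and then invoke density (\cref{rem:dense-subspace}) to conclude that $\pi^\Lambda_s(a_s\delta_s)=U_\pi(\Lambda_s(a_s\delta_s)\otimes_{\pi_1''}\Id)U_\pi^*$, so that well-definedness and the representation axioms for $\pi^\Lambda$ are inherited from $\Lambda\otimes_{\pi_1''}\Id$. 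The paper's version works with $a_t\in A_t$ rather than $A_t''$, but this is immaterial given \cref{rem:dense-subspace}.
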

\begin{proof}
Given the machinery of \cref{prop:fell-principle:inner-prod,prop:fell-principle:unitary} the proof of this result boils down to a computation. Indeed, note that for all basic tensors $a_t \delta_t \otimes v \in \ell^2(\A'') \otimes_{\pi_1''} \H$ (recall \cref{lemma:n-p-intersection-fibers,lemma:fell-principle:fibers}), where $t \in S$, $a_t\in A_t$ and $v\in \H$, we have
\[ U_\pi \left(\Lambda_s \left(a_s \delta_s\right) \otimes_{\pi_1''} \id\right) \left(a_t \delta_t \otimes v\right) = U_\pi\left(a_s a_t \delta_{st} \otimes v\right) = \pi_{st}\left(a_s a_t\right) v \delta_{st}.
\]
Likewise,
\[ \pi^\Lambda_s\left(a_s \delta_s\right)\left(U_\pi \left(a_t \delta_t \otimes v\right) \right) = \pi^\Lambda_s\left(a_s \delta_s\right)\left(\pi_t\left(a_t\right) v \delta_t\right) = \pi_s\left(a_s\right) \pi_t\left(a_t\right) v \delta_{st}. \]

Using \cref{rem:dense-subspace} it follows that $\pi^\Lambda_s(a_s\delta_s) = U_\pi (\Lambda_s(a_s\delta_s) \otimes_{\pi_1''} \id) U_\pi^*$, in particular this shows that $\pi^\Lambda$ defined as in the statement gives a well-defined representation of $\A$ since $\Lambda \otimes_{\pi_1''} \id$ is, itself, a representation of $\A$.
\end{proof}



Our main use of Fell's absorption principle comes from the following consequence of \cref{thm:fell-principle}.
\begin{corollary} \label{cor:fell-principle:red}
Let $\A = (A_s)_{s \in S}$ be a Fell bundle over an inverse semigroup $S$, and let $\pi = (\pi_s)_{s \in S}$ be a representation of $\A$. Suppose that $\pi_1$ extends to a faithful representation of $A''$. Then the reduced cross-sectional \cstar{}algebra $\redalg{\A}$ is isomorphic to the \cstar{}algebra
\[ C^*\left(\left\{\pi^\Lambda_s\left(a_s\right) \; \mid \; a_s \in A_s \; \text{and} \; s \in S\right\}\right) \sbe \L\left(\ell^2_\pi\left(S, \H\right)\right), \]
generated by the image of the representation $\pi^\Lambda = (\pi^\Lambda_s)_{s \in S}$ given by \cref{thm:fell-principle}.
\end{corollary}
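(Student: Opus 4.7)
The plan is to derive the corollary directly from \cref{thm:fell-principle} together with a short faithfulness argument for the interior tensor product.

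First, \cref{thm:fell-principle} supplies a unitary $U_\pi \colon \ell^2(\A'') \otimes_{\pi_1''} \H \to \ell^2_\pi(S, \H)$ that intertwines $\pi^\Lambda$ with the induced representation $\Lambda \otimes_{\pi_1''} \id$. Consequently, the \cstar{}algebra
\[
C^*\left(\left\{\pi^\Lambda_s(a_s) \mid a_s \in A_s,\, s \in S\right\}\right) \sbe \L(\ell^2_\pi(S, \H))
\]
is $*$-isomorphic, via conjugation by $U_\pi$, to the \cstar{}algebra generated inside $\L(\ell^2(\A'') \otimes_{\pi_1''} \H)$ by the operators $\Lambda_s(a_s\delta_s) \otimes_{\pi_1''} \id$. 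Since $\redalg{\A}$ is, by definition, the closure in $\L(\ell^2(\A''))$ of the $*$-subalgebra generated by the $\Lambda_s(a_s \delta_s)$, this latter \cstar{}algebra is precisely the image of $\redalg{\A}$ under the $*$-homomorphism
\[
\Phi \colon \redalg{\A} \to \L(\ell^2(\A'') \otimes_{\pi_1''} \H), \qquad T \mapsto T \otimes_{\pi_1''} \id.
\]

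Second, I would show that $\Phi$ is injective, so that as a $*$-homomorphism between \cstar{}algebras it is automatically isometric and yields the desired isomorphism. Suppose $T \in \redalg{\A} \sbe \L(\ell^2(\A''))$ satisfies $\Phi(T) = 0$. Then for all $\xi, \eta \in \ell^2(\A'')$ and $v, w \in \H$,
\[
0 = \left\langle T\xi \otimes v,\, \eta \otimes w \right\rangle_{\ell^2(\A'')\otimes_{\pi_1''}\H} = \left\langle v,\, \pi_1''\!\left(\langle T\xi, \eta\rangle_{A_1''}\right) w \right\rangle_\H.
\]
Letting $v, w$ range over $\H$ and invoking the hypothesis that $\pi_1''$ is faithful on $A_1''$ forces $\langle T\xi, \eta \rangle_{A_1''} = 0$ for all $\xi, \eta \in \ell^2(\A'')$, whence $T = 0$.

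Combining the two steps yields the claimed isomorphism between $\redalg{\A}$ and the \cstar{}subalgebra of $\L(\ell^2_\pi(S,\H))$ generated by the image of $\pi^\Lambda$. There is no serious obstacle here: the heavy lifting has been done in \cref{thm:fell-principle} and in setting up the semi-inner product structure on $\ell^2_\pi(S, \H)$, while the faithfulness of $\Phi$ is a routine consequence of the fact that the scalar inner product on $\ell^2(\A'') \otimes_{\pi_1''} \H$ is completely recovered from $\pi_1''$ applied to the $A_1''$-valued inner product on $\ell^2(\A'')$. This is also the content of the remark following \cref{rem:induced-factors-reduced} that induced representations coming from faithful normal extensions to $A_1''$ realize the reduced norm.
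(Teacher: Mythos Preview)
Your proof is correct and follows essentially the same approach as the paper: both arguments use \cref{thm:fell-principle} to reduce the question to faithfulness of $\Lambda\otimes_{\pi_1''}\id$ on $\redalg{\A}$, and then invoke faithfulness of $\pi_1''$ to conclude. The only difference is cosmetic: the paper packages the second step as the standard fact that inducing a faithful representation of $A_1''$ via the faithful correspondence $\ell^2(\A'')$ yields a faithful representation of $\redalg{\A}$, whereas you unpack this directly with the inner-product computation, which is exactly the content of that standard fact.
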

\begin{proof}
By definition, $\redalg{\A}$ is the \cstar{}algebra generated by the image of the regular representation $\Lambda\colon \A\to \L(\ell^2(\A''))$, so that $\ell^2(\A'')$ may be viewed as a faithful correspondence from $\redalg{\A}$ to $A_1''$, that is, it is a Hilbert module over $A_1''$ with a faithful representation of $\redalg{\A}$. Since $\pi_1''$ is assumed to be faithful, the induced representation $\Ind \pi_1'' = \Lambda \otimes_{\pi_1''} \id$ is also faithful as a representation of $\redalg{\A}$. And therefore so is the equivalent representation $\pi^\Lambda$ by~\cref{thm:fell-principle}, from which the result follows.
\end{proof}

We end the discussion about Fell's absorption principle with the following remarks.
\begin{remark}
Notice that, in \cref{cor:fell-principle:red}, with the same proof we may actually only assume that $\pi_1$ extends to a faithful representation of the \cstar{}algebra generated by $P(\contc(\A)) \sbe A_1''$.
\end{remark}
\begin{remark}
Observe that if $S$ is a group then all the discussion above reduces to Fell's absorption principle for discrete groups in the sense of \cref{subsec:fell-principle:groups}. In particular, in such case \cref{thm:fell-principle} reduces to \cref{prop:fell-absorption-groups}.
\end{remark}

\section{The approximation property} \label{sec:ap}
The present section introduces the central notion of the paper, namely the \emph{approximation property} of a Fell bundle $\A = (A_s)_{s \in S}$. The approach here taken generalizes the approximation property for Fell bundles over discrete groups, which was first defined by Exel in~\cite{Exel:Amenability} as follows.
\begin{definition} \label{def:fell-groups}
A Fell bundle $\A=(A_s)_{s\in G}$ over a discrete group $G$ has the \textit{approximation property} if there is a net $\{\xi_i\colon G\to A_1\}_{i \in I}$ of finitely supported sections such that
\begin{enumerate}[label=(\roman*)]
\item \label{def:fell-groups:bdd} $\sup_{i \in I} \|\sum_{s\in G}\xi_i(s)^*\xi_i(s)\| < \infty$;
\item \label{def:fell-groups:inv} and for every $s \in G$ and $a_s \in A_s$
\[ \sum_{t \in G}\xi_i(st)^* a_s \xi_i(t) \xrightarrow{i} a_s \]
in norm.
\end{enumerate}
\end{definition}

\begin{remark} \label{rem:ap-groups:bdd}
Observe that assertion \cref{def:fell-groups}~\cref{def:fell-groups:bdd} is actually stating that $\{\xi_i\}_{i \in I}$ is a bounded net in the Hilbert $A_1$-module $\ell^2(G,A_1)$, since $\|\xi_i\|_2^2 = \|\sum_{s\in G}\xi_i(s)^*\xi_i(s)\|$.

We use the word \emph{section} in \cref{def:fell-groups} to refer to functions $\xi \colon G \to A_1$. We do this because, in fact, in the setting of inverse semigroups the functions $\xi \colon S \to A_1$ will be honest sections of a certain bundle (see \cref{def:fell-isg}). The following notation will, hence, be useful throughout the rest of the text.
\end{remark}

\begin{definition} \label{def:gamma-sections}
Given a Fell bundle $\A = (A_s)_{s \in S}$ over $S$, we denote by $\gammac(S, \A)$ the space of finitely supported sections of the bundle $(A_{ss^*})_{s\in S}$, that is, functions $\xi \colon S \to A_1$ such that $\xi(s) \in A_{ss^*}$ for every $s \in S$.
\end{definition}
Working, as we are, in the setting of Fell bundles over inverse semigroups makes the definition of the approximation property a little more subtle. For it, recall that $I_{s,1} \idealin A_1$ is a two-sided closed ideal, and that $1_s$ denotes the unit of the von Neumann algebra $I_{s,1}''$ (recall \cref{def:pre:ideal-s-1,prop:unit-s-commutation}).
\begin{definition} \label{def:fell-isg}
A Fell bundle $\A=(A_s)_{s\in S}$ over an inverse semigroup with unit $1 \in S$ has the \textit{approximation property} if there is a net $\{\xi_i \colon S \rightarrow A_1\}_{i \in I} \sbe \gammac(S, \A)$ of finitely supported sections such that 
\begin{enumerate}[label=(\roman*)]
\item \label{def:fell-isg:bounded} $\sup_{i \in I} \|\sum_{p, t \in S} \xi_i(p)^*\xi_i(t) 1_{pt^*}\| < \infty$;
\item \label{def:fell-isg:invar} and for every $s \in S$ and $a_s \in A_s$
\[\sum_{p, t \in S} 1_{p\left(st\right)^*} \xi_i\left(p\right)^* a_s \xi_i\left(t\right) \to a_s \]
in norm.
\end{enumerate}
\end{definition}

We note in passing that assertion~\cref{def:fell-isg:bounded} in \cref{def:fell-isg} states that $\{\xi_i\}_{i \in I}$ is bounded in \emph{some} $\ell^2$-space of sections $\ell^2_\Gamma(S, \A)$ (as is the case for groups, recall \cref{rem:ap-groups:bdd}). Nevertheless, we will not introduce such a space as we do not need it in this paper. The following remarks are in order.
\begin{remark}
Note that, by \cref{prop:unit-s-commutation}, the element $1_s \in I_{s,1}''$ is central in $A_1''$, and hence, as $\xi(s) \in A_{ss^*} \sbe A_1''$, we have $1_{p(st)^*} \xi(p) = \xi(p) 1_{p(st)^*}$. This fact will be used virtually without mention in the sequel.
\end{remark}
\begin{remark}
Observe that \cref{def:fell-isg} does, in fact, generalize \cref{def:fell-groups}. Indeed, if $S = G$ is a group then it is routine to show that $1_{s^{-1}t} = 0$ unless $s = t$, in which case $1_{s^{-1}t} = 1 \in A_1''$. Therefore the double sums in \cref{def:fell-isg} simplify to simple sums, and we recover \cref{def:fell-groups} (recall that we are considering non-saturated Fell bundles, see \cref{rem:saturated-vs-semi}).
\end{remark}
\begin{remark}
\cref{def:fell-isg} indicates the expected behaviour of the inverse semigroup as \emph{bisections} over a groupoid. Indeed, the \emph{local} behaviour of the groupoid might be implemented by different bisections, and hence the nuclearity of $\redalg{\A}$ might be witnessed over possibly different choices of elements $s \in S$. \cref{def:fell-isg} fixes these choices, by summing over all possible labels $p, t \in S$ and then restricting to the appropriate ideals, namely $I_{p,t}''$ and $I_{st,p}''$ in \cref{def:fell-isg:bounded} and \cref{def:fell-isg:invar} respectively.
\end{remark}

%

We end the section with the following simple examples.
\begin{example} \label{ex:ap:group-actions}
Let $\alpha \colon G \curvearrowright X$ be an action of a discrete group $G$ on a compact Hausdorff space $X$. Then we may construct a canonical Fell bundle $\A_\alpha = (\A_{\alpha, g})_{g \in G}$ by setting $\A_{\alpha, g} \coloneqq \cont(X) \times \{g\}$. The multiplication and involution maps of $\A_\alpha$ are given by
\[ \left(f_1, g_1\right) \cdot \left(f_2,g_2\right) \coloneqq \left(f_1 \alpha_{g_1}\left(f_2\right),g_1g_2\right), \quad \left(f_1,g_1\right)^* \coloneqq \left(\alpha_{g_1^{-1}}(f_1^*),g_1^{-1}\right). \]
It is not hard to see that $\A_\alpha$ is then a Fell bundle over $G$, and that $\redalg{\A_\alpha} \cong \cont(X) \rtimes_{\text{red}} G$ and $\fullalg{\A_\alpha} \cong \cont(X) \rtimes_{\text{max}} G$. With this notation, it is also routine to show that $\A_\alpha$ has the approximation property, in the sense of \cref{def:fell-isg}, if, and only if, $\alpha$ is amenable (as an action, see~\cite{Brown-Ozawa:Approximations}*{Definition~4.3.5}). In fact, if $m_i \colon X \to \Prob{G}$ witnesses the amenability of $\alpha$ in the sense of \cite{Brown-Ozawa:Approximations}*{Definition~4.3.5}, then $\xi_i(g) \coloneqq [x \mapsto (m_i^x(g))^{1/2}]$ witnesses the approximation property of $\A_\alpha$ in the sense of \cref{def:fell-isg}. Likewise, given $\xi_i$ witnessing the approximation property, the maps $m_i^x(g) \coloneqq \xi_i(g)(x)^2$ witness the amenability of $\alpha$ (see \cref{prop:fell-isg-vs-groupoids} below for a similar argument).
\end{example}

\begin{example} \label{ex:ap:red-vs-ess}
Let $G$ be a discrete group, and let $S \coloneqq G \sqcup \{0\}$ be the inverse semigroup obtained from $G$ by adjoining a $0$ element. Consider the Banach spaces $A_g \coloneqq \cont[0,1]$ whenever $g \in G$ and $A_0 \coloneqq C(0,1]$. It is shown in~\cite{Kwaniewski2019EssentialCP}*{Example~4.7} that $\A = (A_s)_{s \in S}$ forms a Fell bundle over $S$. Moreover, it is also mentioned that both $\fullalg{\A}$ and $\redalg{\A}$ are $\cont[0,1]$-algebras with trivial fibers at $x \in (0, 1]$. Nevertheless, their fiber at $0$ may differ. Indeed, if $G$ is not amenable then $\fullalg{\A}$ has fiber $\fullalg{G}$ at $0$, whereas $\redalg{\A}$ has fiber $\C \oplus \redalg{G}$. On the contrary, if $G$ is amenable then both have $\redalg{G}$ as fiber at $0$, and hence $\fullalg{\A} \cong \redalg{\A}$ via the left regular representation, that is, $\A$ has the weak containment property. In fact, it is not hard to show that $\A$ has the approximation property if, and only if, $G$ is amenable (compare with \cref{thm:ap-implies-wk,ex:ap:red-vs-ess:2} below). Indeed, if $\{F_n\}_{n}$ is a F\o lner net for $G$ and $\{u_i\}_{i \in I} \sbe \contz(0, 1]$ is any approximate unit, then we can define
\[ \xi_{i, n}\left(0\right) \coloneqq u_i, \;\; \text{and} \;\; \xi_{i,n}\left(g\right) \coloneqq \left\{\begin{array}{rl} \frac{1}{\left|F_n\right|^{1/2}} 1_{\cont\left[0, 1\right]} & \text{if} \; g \in F_n, \\ 0 & \text{otherwise.} \end{array}\right.\]
It then follows from a computation that the net $\{\xi_{i,n}\}_{i,n}$ witnesses the approximation property for $\A$. Likewise, by \cite{Kwaniewski2019EssentialCP}*{Example~4.7}, if $G$ is non-amenable then $\fullalg{\A} \not\cong \redalg{\A}$. By \cref{thm:ap-implies-wk} below it follows that $\A$ does not have the approximation property.
\end{example}

\subsection{The approximation property for groupoids and inverse semigroups} \label{subsec:ap:ap-def}
This section builds upon \cref{ex:ap:group-actions,ex:ap:red-vs-ess} above, and deals with the case of Fell bundles over inverse semigroups that arise from Fell bundles over groupoids. Indeed, it is well known, and discussed below, that Fell bundles $\A = (A_s)_{s \in S}$ over inverse semigroups generalize in a canonical way Fell bundles over (possibly non-Hausdorff) locally compact, \'{e}tale groupoids. Recently, Kranz~\cite{Kranz:AP-groupooids-Fell-bundles} defined the \emph{approximation property} for Fell bundles over (second countable) \emph{Hausdorff} \'{e}tale groupoids as in \cref{def:fell-groupoids}. This section has the goal of proving that our approach generalizes that of~\cite{Kranz:AP-groupooids-Fell-bundles} in a canonical way (see \cref{prop:fell-isg-vs-groupoids}). In fact, we will not need our groupoids to be Hausdorff or second countable, nor will we need the Fell bundle to be separable or saturated, as is assumed in the main results from \cite{Kranz:AP-groupooids-Fell-bundles}.

We first recall that, given a locally compact groupoid $G$, a set $u \sbe G$ is a \emph{bisection} if the range and source maps $r,s \colon u \to G^0$ are homeomorphisms. Likewise, recall that $G$ is called \emph{\'{e}tale} if its topology admits a basis formed solely by open bisections. Moreover, the set $\Bis(G)$ of open bisections is well known to be an inverse semigroup. For instance, given $u, v \in \Bis(G)$ their product is defined to be
\[ u \cdot v \coloneqq \left\{ gh \; \mid \; g \in u, \, h \in v \; \text{and} \; s\left(g\right) = r\left(h\right) \right\}. \]

For the sequel, we refer the reader to~\cite{Kranz:AP-groupooids-Fell-bundles} for the appropriate definition of a Fell bundle over a groupoid, along with several examples of Fell bundles over locally compact Hausdorff \'etale groupoids. Moreover, recall that in the non-Hausdorff case $\contc(\A)$ denotes the space spanned by sections $G\to \A$ that are compactly supported and continuous in a bisection of $G$. Moreover, one may define $\contc(G, \rg^*(\A^0))$ in a similar vein. If $G$ is not Hausdorff, these sections are generally not globally continuous, but they are always Borel measurable functions. In any case, it makes sense to talk about pointwise or uniform convergence of these sections, as is done in the following.

\begin{definition} \label{def:fell-groupoids}
Let $\A = (A_g)_{g\in G}$ be a Fell bundle over a locally compact \'{e}tale groupoid $G$ with Hausdorff unit space $G^0$. We say that $\A$ has the \emph{approximation property} if there is a net $\{\xi_i\}_{i \in I} \sbe \contc(G,\rg^*(\A^0))$ such that
\begin{enumerate}[label=(\roman*)]
\item \label{def:fell-groupoids:1} $\sup_{i \in I} \sup_{x\in G^0}\|\sum_{g\in G^x} \xi_i(g)^*\xi_i(g)\| < \infty$;
\item \label{def:fell-groupoids:2} and for every $f \in \contc(\A)$ the net
\[ f_i \colon G \to \A, \;\; g \mapsto \sum_{h \in G^{\s(g)}} \xi_i\left(gh\right)^* f\left(g\right) \xi_i\left(h\right)=
\sum_{h \in G^{\rg(g)}} \xi_i\left(h\right)^* f\left(g\right) \xi_i\left(g^{-1}h\right)\]
converges to $f$ uniformly over compact sets of $G$.
\end{enumerate}
\end{definition}

The following definition first appeared in~\cite{BussExel:Fell.Bundle.and.Twisted.Groupoids}*{Definition~2.12}, but was based on the previous~\cite{Exel:Inverse_combinatorial}*{Proposition~5.4}. It precisely captures those inverse subsemigroups $S \sbe \Bis(G)$ that retain all the topological information of $G$.
\begin{definition} \label{def:wide-isg}
We say an inverse semigroup $S \sbe \Bis(G)$ is \emph{wide} if
\begin{enumerate}[label=(\roman*)]
  \item $S$ covers $G$, that is, $\cup_{u \in S} u = G$; and
  \item for any $g \in G$, $u, v \in S$, if $g \in u \cap v$ then there is some $w \in S$ such that $g \in w \sbe u \cap v$.
\end{enumerate}
\end{definition}
Note that if $S \sbe \Bis(G)$ then $S$ naturally acts on $G^0$. Indeed, given $u \in S$ and $x \in G^0$, the action of $u$ on $x$ is only defined if $s^{-1}(x) \cap u \neq \emptyset$, i.e., there is some $g \in u$ with $s(g) = x$. In such a case, $ux = r(g)$, as $g \in u$ is then uniquely determined, since $u$ is assumed to be a bisection. Therefore, if $S \sbe \Bis(G)$ we may consider the \emph{groupoid of germs $G^0 \rtimes S$} of the natural action of $S$ on $G^0$, that is, the set of equivalence classes $[s, x]$, where $s \in S$ and $s$ is defined at $x$, where $[s, x] = [t, y]$ if, and only if, $x = y$ and there is some idempotent $e \in E$ such that $x \in e$ and $se = te$. The groupoid structure of $X \rtimes S$ comes from the multiplication and involution maps
\[\left[s, ty\right] \cdot \left[t, y\right] \coloneqq \left[st, y\right] \;\; \text{and} \;\; \left[s, x\right]^{-1} \coloneqq \left[s^*, sx\right]. \]
We then call $[s,x] \in X \rtimes S$ the \emph{germ of $s$ at $x$}. Note that $G^0 \rtimes S$ is an \'etale locally compact groupoid, and that $s$ defines a bisection of $G^0 \rtimes S$. It follows that $[s, x]$ is the unique arrow in the usual groupoid of germs $G^0 \rtimes S$ contained in the bisection defined by $s$ and with source $x$.
\begin{proposition} \label{prop:wide}
Let $G$ be a locally compact étale groupoid, and let $S \sbe \Bis(G)$. The following conditions are equivalent:
\begin{enumerate}[label=(\roman*)]
  \item \label{prop:wide:groupoids} $G^0 \rtimes S \cong G$ as topological groupoids;
  \item \label{prop:wide:wide} $S$ is wide.
\end{enumerate}
In particular, $G^0 \rtimes \Bis(G) \cong G$.
\end{proposition}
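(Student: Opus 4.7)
The plan is to show both directions by constructing (or noting the obstruction to) the natural comparison map between $G^0\rtimes S$ and $G$. Define
\[ \Phi\colon G^0\rtimes S \to G,\qquad [u,x]\mapsto g_{u,x}, \]
where $g_{u,x}$ is the unique element of the bisection $u$ with $\s(g_{u,x})=x$. The first task is to verify that $\Phi$ is a well-defined groupoid homomorphism regardless of any hypothesis on $S$: well-definedness on germs follows from the definition of the equivalence relation $[u,x]=[v,x]\iff ue=ve$ for some idempotent $e$ with $x\in e$, and the multiplicativity comes directly from the multiplication of bisections.

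For the implication \ref{prop:wide:wide}$\Rightarrow$\ref{prop:wide:groupoids}, I would observe that the covering condition on $S$ gives surjectivity of $\Phi$, while the refinement condition is exactly what is needed for injectivity: if $\Phi([u,x])=\Phi([v,x])=g$, then $g\in u\cap v$, so wideness produces $w\in S$ with $g\in w\sbe u\cap v$; setting $e\coloneqq w^*w\in E$ we have $x\in e$ and $ue=w=ve$, so $[u,x]=[v,x]$. For the topological part, one uses that $\Phi$ sends the canonical basic open set $\{[u,x]:x\in\s(u)\cap V\}$ (with $V$ open in $G^0$) homeomorphically onto the open bisection $\{g\in u:\s(g)\in V\}\sbe G$; the refinement property again ensures that the image topology matches the one induced from $G$, so $\Phi$ is a homeomorphism.

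For the converse \ref{prop:wide:groupoids}$\Rightarrow$\ref{prop:wide:wide}, surjectivity of $\Phi$ forces $S$ to cover $G$. If $g\in u\cap v$ for $u,v\in S$, then $\Phi([u,\s(g)])=g=\Phi([v,\s(g)])$, so injectivity of $\Phi$ yields an idempotent $e\in E$ with $\s(g)\in e$ and $ue=ve$; putting $w\coloneqq ue$ gives the refining bisection required by \cref{def:wide-isg}. The \emph{in particular} statement then follows because $\Bis(G)$ is trivially wide: it covers $G$ since $G$ is \'etale, and the intersection of two open bisections is itself an open bisection.

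The step I expect to need most care is the topological half of \ref{prop:wide:wide}$\Rightarrow$\ref{prop:wide:groupoids}, namely showing that $\Phi$ is not merely a bijective morphism but actually a homeomorphism; this is where wideness does essential work, because without the refinement property the topology of $G^0\rtimes S$ could strictly refine that of $G$ (some $g\in u\cap v$ would have no neighbourhood in $S$ contained in $u\cap v$, and so $\Phi$ would fail to be open at the corresponding germ). The algebraic parts are routine semigroup bookkeeping once the germ relation is unwound.
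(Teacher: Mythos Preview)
Your argument is correct and follows the standard direct approach; the paper itself does not give a proof but simply cites \cite{Exel:Inverse_combinatorial}*{Proposition~5.4} and \cite{Kwaniewski2019EssentialCP}*{Proposition~2.2}, where essentially this argument appears.

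Two minor remarks. First, your final paragraph slightly overstates the role of wideness in the topological half: continuity and openness of $\Phi$ hold automatically, since $\Phi$ maps each basic open set $\{[u,x]:x\in V\}$ onto the open set $u\cap\s^{-1}(V)$, and preimages of open bisections are handled analogously. Wideness enters only through bijectivity (covering for surjectivity, refinement for injectivity), exactly as you argue earlier; once $\Phi$ is a bijection it is then automatically a homeomorphism. Second, in deriving \ref{prop:wide:wide} from \ref{prop:wide:groupoids} you tacitly assume that the isomorphism in \ref{prop:wide:groupoids} is the canonical map $\Phi$; this is the intended reading (and how the cited references phrase it), but the proposition as written only asserts the existence of \emph{some} isomorphism. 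Neither point is a genuine gap.
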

\begin{proof}
The proof this is mostly given in~\cite{Exel:Inverse_combinatorial}*{Proposition~5.4}, 
see \cite{Kwaniewski2019EssentialCP}*{Proposition 2.2} for a complete proof.
\end{proof}

We now turn our attention to \cref{prop:fell-isg-vs-groupoids}, which shows that our approximation property for Fell bundles over inverse semigroups is, at least, as general as their counterparts over \'{e}tale groupoids. In order to prove it, however, we first need to set some notation and prove some preliminary results. Let $G$ be a locally compact étale groupoid, and fix any wide $S \sbe \Bis(G)$. Then, given a Fell bundle $\A = (A_g)_{g \in G}$ over $G$ we can construct a Fell bundle $\B = (B_u)_{u \in S}$, where $B_u$ is the Banach space of $\contz$-sections $u \to \A$, i.e., continuous functions $f \colon u \to \A$ that vanish at infinity and such that $f(g) \in A_g$ for every $g \in u$; see \cite{BussExel:Fell.Bundle.and.Twisted.Groupoids}*{Example 2.11} for further details on this construction.

\begin{lemma} \label{lemma:fell-isg-vs-groupoids}
Let $G$ be a locally compact étale groupoid, and let $S \sbe \Bis(G)$ be wide. With notations as above, given any $\eta \in \contc(G,\rg^*(\A^0))$ there are finitely many bisections $\{u_j\}_{j = 1}^n \sbe S$ and $\{\eta_j\}_{j = 1}^n \sbe \contc(u_j, \rg^*(\A^0))$ such that $\eta = \eta_1 + \dots + \eta_n$.
\end{lemma}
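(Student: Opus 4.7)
The plan is to reduce to the case where $\eta$ is supported on a single open bisection $u$ of $G$ (possibly not in $S$), and then exploit the wideness of $S$ to split $\eta$ into finitely many pieces supported on bisections from $S$, using a partition of unity on the Hausdorff unit space $G^0$.

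For the reduction, recall that, by definition, $\contc(G,\rg^*(\A^0))$ is spanned by sections that are compactly supported and continuous on some open bisection of $G$. By linearity, we may thus assume $\eta$ is supported on a single open bisection $u$, and set $K \coloneqq \supp(\eta) \sbe u$, which is compact. The key observation is that the wideness of $S$, via the identification $G \cong G^0 \rtimes S$ of \cref{prop:wide}, implies that the family $\{v \cap \rg^{-1}(W) : v \in S,\ W \sbe \rg(v) \text{ open in } G^0\}$ is a basis of open bisections for the topology of $G$: these sets are exactly the images in $G$ of the standard basic open sets of the groupoid of germs $G^0 \rtimes S$. Therefore, for each $g \in K$ we can pick $v_g \in S$ and an open $W_g \sbe \rg(v_g)$ with $g \in v_g \cap \rg^{-1}(W_g) \sbe u$. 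Compactness of $K$ then yields a finite subcover indexed by $j = 1, \ldots, n$, with corresponding $v_j \in S$ and $W_j \sbe \rg(v_j)$. Since $G^0$ is locally compact Hausdorff, choose a partition of unity $\{\varphi_j\}_{j=1}^n \sbe \contc(G^0)$ with $\supp(\varphi_j) \sbe W_j$ and $\sum_{j=1}^n \varphi_j \equiv 1$ on a neighbourhood of $\rg(K)$.

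Define $\eta_j \colon v_j \to \A$ by $\eta_j(h) \coloneqq \varphi_j(\rg(h)) \cdot \eta(h)$, with the convention that $\eta(h) = 0$ if $h \notin u$. Since $\rg|_{v_j}$ is a homeomorphism onto $\rg(v_j)$, the support of $\eta_j$ inside $v_j$ lies in the compact set $\rg|_{v_j}^{-1}(\supp \varphi_j) \sbe v_j \cap \rg^{-1}(W_j) \sbe u$, on which both $\eta$ and $\varphi_j \circ \rg$ are continuous; hence $\eta_j \in \contc(v_j, \rg^*(\A^0))$. To verify that $\sum_j \eta_j = \eta$ pointwise, the only nontrivial case is $h \in u$, and then the claim is that $h \in v_j$ whenever $\varphi_j(\rg(h)) \neq 0$: indeed, then $\rg(h) \in W_j \sbe \rg(v_j)$, so the unique $h' \in v_j$ with $\rg(h') = \rg(h)$ lies in $v_j \cap \rg^{-1}(W_j) \sbe u$, and since $h, h' \in u$ share the same range and $u$ is a bisection, $h = h' \in v_j$. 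Thus $\sum_j \eta_j(h) = \eta(h) \sum_j \varphi_j(\rg(h))$, which equals $\eta(h)$ on $K$ and vanishes on $u \setminus K$, as needed. The main technical obstacle is establishing the basis property of $S$ stated above; the remainder is a standard partition-of-unity manipulation.
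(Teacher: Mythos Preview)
Your argument is correct and follows the same partition-of-unity strategy as the paper: reduce to $\eta$ supported on a single bisection, cover its compact support by finitely many members of $S$, and split via a subordinate partition of unity. The only cosmetic difference is that you take the partition of unity on $G^0$ and pull it back along $\rg$ (invoking the basis coming from $G \cong G^0 \rtimes S$), whereas the paper takes it directly on the Hausdorff bisection and only uses that $S$ covers $G$.
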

\begin{proof}
Recall that $\contc(G, \rg^*(\A^0))$ is, by definition, the set of sections $G \rightarrow \A^0$ that can be written as linear combinations of functions that are compactly supported in a bisection. Therefore, there are finitely many bisections $\{v_i\}_{i = 1}^{m} \sbe \Bis(G)$ and $\{\eta_i\}_{i = 1}^{m} \sbe \contc(v_i, \rg^*(\A^0))$ such that $\eta = \eta_1 + \dots + \eta_m$. The statement will follow once we prove that the bisections $v_i$ may be taken to be in $S$.

Since $S \sbe \Bis(G)$ is wide we have, for every $i$,
\[ K_i \coloneqq \supp\left(\eta_i\right) \sbe v_i \sbe G = \cup_{v \in S} v. \]
By hypothesis $K_i$ is compact, and hence we may choose a finite subcover, i.e., finitely many bisections $\{u_{i, \ell}\}^{n_i}_{\ell = 1} \sbe S$ such that $K_i \sbe \cup_{\ell = 1}^{n_i} u_{i, \ell}$. Now take any partition of unity $\{\varphi_{i, \ell}\}^{n_i}_{\ell = 1}$ subject to the cover $\{u_{i, \ell}\}^{n_i}_{\ell = 1}$, and let $\eta_{i, \ell} \coloneqq \eta_i \varphi_{i, \ell}$. Then $\{\eta_{i, \ell}\}_{i, \ell}$ and $\{u_{i, \ell}\}_{i, \ell}$ give the desired realization of $\eta$ as a linear combination of functions compactly supported in elements of $S$.
\end{proof}

We highlight the following result, which details the (expected) relation between the projections $1_s$ in \cref{def:fell-isg} and the germ relation. For the following and in the sequel, we shall take an étale groupoid $G$, fix a wide inverse semigroup $S\sbe\Bis(G)$ and identify $G$ with the groupoid of germs $G\cong G^0\rtimes S$ as in \cref{prop:wide}. We shall also tacitly use the convention that whenever a germ $[s, x]$ is written it is already implied that $s$ is defined at $x$, so that $s\cdot x$ makes sense.
\begin{lemma} \label{lemma:bisections-restriction}
Let $G$ be a locally compact \'{e}tale groupoid and let $S \sbe \Bis(G)$ be wide. Given a Fell bundle $\A = (A_g)_{g \in G}$ let $\B = (B_u)_{u \in S}$ be constructed as above. For any $p, t \in S$ and $x \in G^0$, the following assertions are equivalent:
\begin{enumerate}[label=(\roman*)]
\item \label{lemma:bisections-restriction:unit} $1_{p^*t}(x) = 1_{A_x''} \in \multiplieralg{A_x} \sbe A_x''$.
\item \label{lemma:bisections-restriction:coin} There is an idempotent $e \in E$ such that $x \in e \leq p^*p t^*t$ and $pe = te$.
\item \label{lemma:bisections-restriction:germ} $[p, x] = [t, x]$.
\end{enumerate}
\end{lemma}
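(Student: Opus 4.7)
The plan is to run the cycle \cref{lemma:bisections-restriction:coin} $\Leftrightarrow$ \cref{lemma:bisections-restriction:germ} $\Leftrightarrow$ \cref{lemma:bisections-restriction:unit}, identifying the ideal $I_{p^*t,1}$ with a concrete ideal of $B_1 = \contz(G^0, \A^0)$ supported on an open subset of $G^0$ that has both a groupoid-theoretic and a Fell-bundle-theoretic description.

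First I would dispatch \cref{lemma:bisections-restriction:coin} $\Leftrightarrow$ \cref{lemma:bisections-restriction:germ} directly from the definition of the germ equivalence in $G\cong G^0\rtimes S$ (\cref{prop:wide}): by construction, $[p,x]=[t,x]$ iff there exists $e\in E$ with $x\in e$ and $pe=te$, and the products $pe,te$ are well-defined exactly when $e\sbe s(p)\cap s(t)=p^*p\cdot t^*t$, i.e.\ $e\leq p^*p t^*t$. This is literally condition \cref{lemma:bisections-restriction:coin}.

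Next I would describe $I_{p^*t,1}$ concretely. For $v\in E$ the fibre $B_{v^*v}=B_v$ is $\contz(v,\A^0)$, viewed as the ideal of $B_1$ of sections supported on the open set $v\sbe G^0$. I would verify the key identification
\[
  U\coloneqq p^*t\cap G^0 \;=\; \{x\in G^0 : [p,x]=[t,x]\},
\]
using that any unit $x\in p^*t$ forces the unique arrows in $p$ and $t$ with source $x$ to coincide. Then I would show that for $v\in E$, the inverse-semigroup inequality $v\leq p^*t$ is equivalent to the set-theoretic inclusion $v\sbe U$: for the nontrivial direction one computes $p^*t\cdot v=v$ on each $y\in v$ by reading off germs, giving $v=p^*t\cdot v\leq p^*t$. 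Wideness of $S$ implies that the idempotents of $S$ contained in $U$ form a basis for the topology of $U$, so $I_{p^*t,1}=\overline{\mathrm{span}}\bigcup_{v\in E,\;v\sbe U}\contz(v,\A^0)=\contz(U,\A^0)$.

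With this identification in hand, \cref{lemma:bisections-restriction:unit} $\Leftrightarrow$ \cref{lemma:bisections-restriction:germ} follows by evaluating at $x$. The point evaluation $\mathrm{ev}_x\colon B_1\to A_x$ extends $\sigma$-weakly to $\mathrm{ev}_x''\colon B_1''\to A_x''$, and $1_{p^*t}$ is the unit of the direct summand $I_{p^*t,1}''=\contz(U,\A^0)''\sbe B_1''$. If $x\in U$, choose $v\in E$ with $x\in v\sbe U$; then $1_v\leq 1_{p^*t}$ and $\mathrm{ev}_x''(1_v)=1_{A_x''}$ (since $1_v$ is the weak limit of an approximate unit of $B_v$ whose images at $x$ form an approximate unit of $A_x$), forcing $\mathrm{ev}_x''(1_{p^*t})=1_{A_x''}$. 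Conversely, if $x\notin U$, every element of $I_{p^*t,1}=\contz(U,\A^0)$ vanishes at $x$, and $\sigma$-weak continuity of $\mathrm{ev}_x''$ gives $1_{p^*t}(x)=0\neq 1_{A_x''}$.

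The main obstacle is the identification $I_{p^*t,1}=\contz(U,\A^0)$: one must simultaneously use the inverse-semigroup definition of the generating set, the germ/bisection description of $U$, and the wideness of $S$ to cover $U$ by idempotents. The rest is essentially bookkeeping with point evaluations on Fell-bundle cross-sections and their biduals; the only mild caveat is the degenerate case $A_x=0$, which is irrelevant since then $1_{A_x''}=0$ and both sides of each equivalence collapse.
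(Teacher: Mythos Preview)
Your argument is correct. The overall strategy matches the paper's, but you organize the equivalence differently: the paper proves \cref{lemma:bisections-restriction:unit} $\Leftrightarrow$ \cref{lemma:bisections-restriction:coin} by direct inverse-semigroup algebra (from $1_{p^*t}(x)\neq 0$ one extracts $e\in E$ with $x\in e\leq p^*t$, and then checks $p^*pe=e=t^*te$ and $pe=te$; conversely $pe=te$ and $e\leq p^*p$ give $p^*te=e\leq p^*t$), and then cites the standard equivalence \cref{lemma:bisections-restriction:coin} $\Leftrightarrow$ \cref{lemma:bisections-restriction:germ}. You instead make the identification $I_{p^*t,1}\cong \contz(U,\A^0)$ with $U=p^*t\cap G^0=\{x:[p,x]=[t,x]\}$ the centrepiece and deduce \cref{lemma:bisections-restriction:unit} $\Leftrightarrow$ \cref{lemma:bisections-restriction:germ} by evaluating the unit of this ideal at $x$. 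The paper actually mentions this identification in its preamble (writing $I_{s,1}\cong\contz(O_{s,1},(A_x))$ with $O_{s,1}=\bigcup_{e\leq s,1}e$) but explicitly notes that it is not needed and opts for the shorter algebraic route. Your version is a bit longer but has the merit of making the open support of $I_{p^*t,1}$ and the role of wideness completely transparent; the paper's version is more economical since it never leaves the semilattice $E$.
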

\begin{proof}
Before we start the proof, we need to explain why the point evaluations in \cref{lemma:bisections-restriction:unit} make sense, that is, we need to explain why it makes sense to evaluate $1_s(x)$ for $s\in S$. Recall that $1_s$ is the unit of the multiplier \cstar{}algebra $\M(I_{s,1})\sbe I_{s,1}''$, and $I_{s,1}$ is a certain ideal of $B_1$ which, in turn, is the \cstar{}algebra of $\contz$-sections of the upper semi-continuous \cstar{}bundle $(A_x)_{x \in G^0}$. Hence $I_{s,1}$ also admits such a description as an upper semi-continuous bundle. In fact, one can see that $I_{s,1} \cong \contz(O_{s,1},(A_x)_{x \in O_{s,1}})$ for the open set $O_{s,1} = \cup_{e \leq s,1} e \sbe G^0$. Note, however, we do not actually need such description in this proof. We may then describe the multiplier \cstar{}algebra $\M(I_{1,s})$ as the \cstar{}algebra of sections $f \colon O_{s,1} \to (\M(A_x))_{x \in O_{s,1}}$ satisfying $f \cdot g\in \contz(O_{s,1},(A_x)_{x\in O_{s,1}})$ for all $g\in \contz(O_{s,1},(A_x)_{x\in O_{s,1}})$.

We now proceed with the actual proof. First note that, by construction, either $1_{p^*t}(x) = 1_{A_x''}$ or $1_{p^*t}(x) = 0$. Moreover, observe that if $1_{p^*t}(x) \neq 0$ then, by construction there is some $e \in E$ such that $x \in e$ and $e \leq p^*t$. Thus, $p^*p e = p^*p p^*t e = p^*t e = e$. Likewise, $e t^*t = p^*t t^*t e = e$, and hence $x \in e \leq p^*p t^*t$. Moreover, recall that, in general, $a \leq b$ if, and only if, $a^* \leq b^*$. Hence $e \leq (p^*t)^* = t^*p$, and
\[ pe = pp^* t t^*t e = t \left(t^* pp^* t\right)e = t \left(\left(p^*t\right)^* p^*t\right) e = te, \]
which proves that \cref{lemma:bisections-restriction:unit} implies \cref{lemma:bisections-restriction:coin}. For the reciprocal, let $e \in E$ witness \cref{lemma:bisections-restriction:coin}, and observe that then $p^*t e = p^*p e = e$, and therefore $1_{p^*t}(x) = 1_e(x) = 1_{A_x''}$, as claimed. The equivalence between \cref{lemma:bisections-restriction:coin} and \cref{lemma:bisections-restriction:germ} is well known (and routine to check), so we omit its proof.
\end{proof}

\begin{remark}
It follows from \cref{lemma:bisections-restriction} that for $1_{p^*t}(x) \neq 0$ it is \emph{not} enough that $px = tx$ (recall that $S$ canonically acts on $G^0$).
\end{remark}

We can now prove the approach taken in this paper generalizes that of~\cite{Kranz:AP-groupooids-Fell-bundles} to \'{e}tale groupoids that may be non-Hausdorff.
\begin{theorem} \label{prop:fell-isg-vs-groupoids}
Let $G$ be a locally compact \'{e}tale groupoid with Hausdorff unit space $G^0$, and let $S \sbe \Bis(G)$ be wide. 
If $\A$ is a Fell bundle over $G$ and $\B$ is the corresponding Fell bundle over $S$, then
$\A$ has the approximation property if, and only if, $\B$ has the approximation property.
\end{theorem}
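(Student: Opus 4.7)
I will set up a bijective correspondence between finitely supported sections on the two sides and check that it translates the defining conditions. Identifying $G \cong G^0 \rtimes S$ via \cref{prop:wide}, the range map restricts to a homeomorphism $\rg \colon u \to uu^*$ for each $u \in S$, which induces a linear bijection
\[ \Phi_u \colon \contc(u, \rg^*(\A^0)) \congto A_{uu^*} = \contz(uu^*, \A^0), \qquad \Phi_u(\xi)(x) \coloneqq \xi(\rg|_u^{-1}(x)). \]

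\textbf{Groupoid AP $\Rightarrow$ inverse-semigroup AP.} Given $\{\xi_i\}_i \sbe \contc(G,\rg^*(\A^0))$ witnessing \cref{def:fell-groupoids}, use \cref{lemma:fell-isg-vs-groupoids} to write $\xi_i = \sum_{j=1}^{n_i} \xi_{i,j}$ with $\xi_{i,j}$ supported in some $u_{i,j} \in S$, which we may take to be pairwise distinct for each fixed $i$. Define $\eta_i \in \gammac(S, \B)$ by $\eta_i(u_{i,j}) \coloneqq \Phi_{u_{i,j}}(\xi_{i,j})$ and $\eta_i(s) = 0$ for $s \notin \{u_{i,j}\}_j$. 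Note that $\eta_i(s) \in A_{ss^*}$, as required.

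\textbf{Inverse-semigroup AP $\Rightarrow$ groupoid AP.} Given $\{\eta_i\}_i \sbe \gammac(S, \B)$ with finite supports $F_i \sbe S$ witnessing \cref{def:fell-isg}, define $\xi_i \coloneqq \sum_{u \in F_i} \Phi_u^{-1}(\eta_i(u)) \in \contc(G, \rg^*(\A^0))$.

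\textbf{Matching the conditions.} The key observation is that, by \cref{lemma:bisections-restriction} (after rewriting $1_{pt^*}$ in the $1_{a^* b}$ form required by the lemma via \cref{prop:unit-s-commutation}~\cref{item:prop:unit-s-commutation:units}), one has $1_{pt^*}(y) = 1$ if and only if there exists an arrow $g \in p \cap t$ with $\rg(g) = y$. A direct pointwise expansion of $\xi_i$ as a sum of sections supported in bisections then yields, at each $y \in G^0$, the identity
\[ \sum_{g \in G^y} \xi_i(g)^* \xi_i(g) = \sum_{p, t \in S} \eta_i(p)(y)^* \eta_i(t)(y) \, 1_{pt^*}(y), \]
and taking supremum over $y$ shows that \cref{def:fell-groupoids:1} is equivalent to \cref{def:fell-isg:bounded}. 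For the invariance conditions one reduces, by linearity and \cref{lemma:fell-isg-vs-groupoids}, to $f = a_s \in A_s = B_s$ compactly supported in a bisection $s$; a parallel computation (now applying \cref{lemma:bisections-restriction} to $1_{p(st)^*}$, whose indicator locus is precisely where the germs of $p$ and $st$ coincide) identifies the groupoid transform $f_i(g)$ with the fibrewise value of $\sum_{p,t} 1_{p(st)^*} \eta_i(p)^* a_s \eta_i(t)$ at each $g \in s$, giving the equivalence of \cref{def:fell-groupoids:2} and \cref{def:fell-isg:invar}.

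\textbf{Main technical hurdle.} The principal subtlety is reconciling the two forms of convergence: uniform on compact subsets of $G$ versus norm convergence in the fibres $A_s$. This is handled by noting that compactly supported sections concentrated in a single bisection span a dense subspace of $\contc(\A)$, and that the uniform bound from \cref{def:fell-groupoids:1}/\cref{def:fell-isg:bounded} controls the tails via Cauchy--Schwarz, so that uniform-on-compacts convergence of $f_i$ against a compactly supported $f$ in a bisection $s$ translates to norm convergence in $B_s$ and conversely.
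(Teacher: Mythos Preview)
Your forward direction (groupoid AP $\Rightarrow$ inverse semigroup AP) is essentially the paper's argument: decompose via \cref{lemma:fell-isg-vs-groupoids}, transport along the range homeomorphism, and use \cref{lemma:bisections-restriction} to identify the sums pointwise. That part is fine.

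The reverse direction has a genuine gap. Your map $\Phi_u$ is a bijection from $\contc(u,\rg^*(\A^0))$ onto $\contc(uu^*,\A^0)$, \emph{not} onto $A_{uu^*}=\contz(uu^*,\A^0)$; the range map is a homeomorphism, so it preserves compact supports. An element $\eta_i(u)\in A_{uu^*}$ produced by the inverse semigroup AP is in general only a $\contz$-section on $uu^*$, so $\Phi_u^{-1}(\eta_i(u))$ need not exist, and even if you extend $\Phi_u$ to $\contz$, the resulting $\xi_i$ lies only in a span of $\contz$-sections over bisections, not in $\contc(G,\rg^*(\A^0))$. Thus your $\xi_i$ does not live in the space required by \cref{def:fell-groupoids}, and the argument does not even start. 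Your ``main technical hurdle'' paragraph addresses a different issue (mode of convergence) and does not repair this; Cauchy--Schwarz and the uniform bound cannot manufacture compact support.

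The paper resolves this by multiplying each $\xi_i(s)$ by compactly supported approximate units $\varphi_{ss^*,j}\in\contz(ss^*,\A^0)$, so that the resulting groupoid sections $\eta_{i,j}(g)=\sum_{g\in s}\xi_i(s)(\rg(g))\varphi_{ss^*,j}(\rg(g))$ are genuinely in $\contc(G,\rg^*(\A^0))$. This introduces an error that must be controlled: the paper chooses constants $\varepsilon_i$ so that $\delta_i=\varepsilon_i\cdot|\supp(\xi_i)|^2\to 0$, and for each $i$ selects $j=j(i)$ large enough that the approximate-unit perturbation is dominated by $\varepsilon_i$ termwise, hence by $\delta_i$ after summing. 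This diagonal extraction is the missing idea in your proposal.
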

\begin{proof}
First, let $\eta_i \in \contc(G,\rg^*(\A^0))$ witness the approximation property for $\A = (A_g)_{g \in G}$. By \cref{lemma:fell-isg-vs-groupoids} for every $i \in I$ there are $\{\eta_{i,j}\}_{j = 1}^{n_i} \sbe \contc(u_{i,j}, \rg^*(\A^0))$ where $u_{i, j} \in S$ and $\eta_i = \eta_{i,1} + \dots + \eta_{i, n_i}$. For convenience, let $K_{i, j} \coloneqq \supp(\eta_{i, j})$, and note that $K_{i, j}$ is compact. In this context, consider
\[
\xi_i \colon S \rightarrow B_1, \;\; \text{where} \;\, \xi_i\left(v\right)\left(r\left(g\right)\right) \coloneqq 
\left\{
\begin{array}{rl}
\eta_{i, j}\left(g\right) & \text{if} \;\; g \in v = u_{i, j}, \\
0 & \text{otherwise.}
\end{array}
\right. \]
We claim that the net $\{\xi_i\}_{i \in I}$ witnesses the approximation property of $\B$. First of all, note that $\xi_i$ is finitely supported as a section $S \rightarrow B_1$, as $\supp{(\xi)} \sbe \{u_{i, 1}, \dots, u_{i, n_i}\} \sbe S$. Furthermore, for every $g \in G$
\begin{equation} \label{eq:sum-xi-equals-eta}
\left(\sum_{g \in s \in S} \xi_i\left(s\right)\right)\left(r\left(g\right)\right) = \sum_{\substack{j = 1 \\ g \in u_{i,j}}}^{n_i} \xi_i\left(u_{i,j}\right)\left(r\left(g\right)\right) = \sum_{j = 1}^{n_i} \eta_{i, j} \left(g\right) = \eta_i\left(g\right)
\end{equation}
by construction. Moreover, fixing $i$ and $j \in \{1, \dots, n_i\}$, and putting $v \coloneqq u_{i, j}$, it follows that $\supp(\xi_i(v)) \sbe vv^*$ is compact. Indeed, the range map $r$ restricted to $v$ is a homeomorphism between
\begin{align*}
v \supset \supp{\left(\eta_{i, j}\right)} = & \left\{g \in v \; \colon \; \eta_{i, j}\left(g\right) \neq 0\right\} \xrightarrow{r|_v} \\
& \left\{r\left(g\right) \in vv^* \; \colon \; \eta_{i, j}\left(g\right) \neq 0\right\} = \supp{\left(\xi_i\left(v\right)\right)} \subset vv^*,
\end{align*}
and therefore $\xi_i(u) \in B_{uu^*}$ and $\xi_i \in \gammac(S, \B)$, as desired. We will now check that conditions~\cref{def:fell-isg:bounded} and~\cref{def:fell-isg:invar} in \cref{def:fell-isg} are also met. For~\cref{def:fell-isg:bounded}, recall that $S \sbe \Bis(G)$ is wide by assumption, and hence $G^0 \rtimes S \cong G$ by \cref{prop:wide}. We will use this fact to identify a germ $[s,x] \in G^0 \rtimes S$ as an arrow $[s,x] \in G$. In such fashion, recall \cref{lemma:bisections-restriction} and observe that
\begin{align*}
\sup_{i \in I} & \Bigg\|\sum_{p, t \in S} \xi_i\left(p\right)^*\xi_i\left(t\right) 1_{pt^*}\Bigg\| = \sup_{i \in I} \sup_{x \in G^0} \Bigg\|\sum_{p, t \in S} \left(\xi_i\left(p\right)^*\xi_i\left(t\right) 1_{pt^*}\right) \left(x\right) \Bigg\| \allowdisplaybreaks \\
& = \sup_{i \in I} \sup_{x \in G^0} \Bigg\|\sum_{\substack{p, t \in S \\ \left[p^*,x\right] = \left[t^*,x\right]}} \xi_i\left(p\right)\left(x\right)^* \xi_i\left(t\right)\left(x\right) \Bigg\| \allowdisplaybreaks \\
& = \sup_{i \in I} \sup_{x \in G^0} \Bigg\|\sum_{\substack{l, j = 1 \\ \left[u_{i,j}^*,x\right] = \left[u_{i,l}^*,x\right]}}^{n_i} \xi_i\left(u_{i,j}\right)\left(x\right)^* \xi_i\left(u_{i,l}\right)\left(x\right) \Bigg\| \allowdisplaybreaks \\
& = \sup_{i \in I} \sup_{x \in G^0} \Bigg\|\sum_{g \in G^x} \sum_{\substack{l, j = 1 \\ g \in u_{i,j} \cap u_{i,l}}}^{n_i} \xi_i\left(u_{i,j}\right)\left(r\left(g\right)\right)^* \xi_i\left(u_{i,l}\right)\left(r\left(g\right)\right) \Bigg\| \allowdisplaybreaks \\
& = \sup_{i \in I} \sup_{x \in G^0} \Bigg\|\sum_{g \in G^x} \sum_{\substack{l, j = 1 \\ g \in u_{i,j} \cap u_{i,l}}}^{n_i} \eta_{i,j}\left(g\right)^* \eta_{i,l}\left(g\right) \Bigg\|  \allowdisplaybreaks \\
& = \sup_{i \in I} \sup_{x \in G^0} \Bigg\| \sum_{g \in G^x} \eta_i\left(g\right)^* \eta_i\left(g\right)\Bigg\| < \infty
\end{align*}
by the assumptions on $\{\eta_i\}_{i \in I}$ and \cref{eq:sum-xi-equals-eta} (for the previous-to-last equality). The invariance condition \cref{def:fell-isg:invar} is proved similarly. Indeed, fix any section $b_s \in B_s$ and arrow $[s,x] \in s \sbe G$, and observe that
\begin{align*}
\Bigg(\sum_{p, t \in S} 1_{p\left(st\right)^*} & \xi_i\left(p\right)^* b_s \xi_i\left(t\right)\Bigg) \left(\left[s,x\right]\right) = \sum_{\substack{p, t \in S \\ \left[p^*, sx\right] = \left[(st)^*, sx\right]}} \xi_i\left(p\right)\left(sx\right)^* b_s\left(\left[s,x\right]\right) \xi_i\left(t\right)\left(x\right) \allowdisplaybreaks \\
& = \sum_{\substack{l,j = 1 \\ \left[u_{i,j}^*, sx\right] = \left[(su_{i,l})^*, sx\right]}}^{n_i} \xi_i\left(u_{i,j}\right)\left(sx\right)^* b_s\left(\left[s,x\right]\right) \xi_i\left(u_{i,l}\right)\left(x\right) \allowdisplaybreaks \\
& = \sum_{\substack{l,j = 1 \\ \left[u_{i,j}^*, sx\right] = \left[(su_{i,l})^*, sx\right]}}^{n_i} \eta_{i,j}\left(\left[u_{i,j}, u_{i,j}^*sx\right]\right)^* b_s\left(\left[s,x\right]\right) \eta_{i,l}\left(\left[u_{i,l}, u_{i,l}^*x\right]\right) \allowdisplaybreaks \\
& = \sum_{h \in G^{x}} \sum_{l,j = 1}^{n_i} \eta_{i,j}\left(\left[s, x\right] h\right)^* b_s\left(\left[s,x\right]\right) \eta_{i,l}\left(h\right) \allowdisplaybreaks \\
& = \sum_{h \in G^{x}} \eta_i\left(\left[s, x\right]h\right)^* b_s\left(\left[s,x\right]\right) \eta_i\left(h\right).
\end{align*}
Therefore, by \cref{eq:sum-xi-equals-eta} and the hypothesis on $\{\eta_i\}_{i \in I}$, it follows that for every compact set $K \sbe G$
\begin{align*}
\sup_{\left[s,x\right] \in K} & \left\| \Bigg(\sum_{p, t \in S} 1_{p\left(st\right)^*} \xi_i\left(p\right)^* b_s \xi_i\left(t\right) - b_s\Bigg) \left(\left[s,x\right]\right) \right\| \\
& = \sup_{\left[s,x\right] \in K} \left\|\sum_{h \in G^{x}} \eta_i\left(\left[s, x\right] h\right)^* b_s\left(\left[s,x\right]\right) \eta_i\left(h\right)-b_s\left(\left[s,x\right]\right)\right\| \xrightarrow{i} 0,
\end{align*}
which proves assertion \cref{def:fell-isg:invar}, as desired.

We now turn to the proof that if $\B$ has the approximation property then so does $\A$. To this end, for every idempotent $e \in E$ let $\{\varphi_{e,j}\}_{j} \sbe \contz(e, \A^0)$ be a quasi-central approximate unit such that $\varphi_{e,j}$ has compact support (in $e \sbe G^0$) and such that $\|\varphi_{e,j}\| \leq 1$. In addition, fix a net $\{\xi_i\}_{i \in I}$ witnessing the approximation property of $\B$ and positive numbers $\varepsilon_i \in (0,1)$ such that $\delta_i \coloneqq \varepsilon_i \cdot |\supp(\xi_i)|^2 \leq 1$ and $\delta_i \to 0$ when $i$ grows.\footnote{ \, For instance, if $I$ was countable then $\varepsilon_i \coloneqq (i \cdot |\supp(\xi_i)|)^{-2}$ would suffice.} In this context, consider the net $\{\eta_{i,j}\}_{i,j}$ given by
\[
\eta_{i,j}\left(g\right) \coloneqq \sum_{\substack{s \in S \\ g \in s}} \xi_i\left(s\right) \left(r\left(g\right)\right) \cdot \varphi_{ss^*,j}\left(r\left(g\right)\right).
\]
We shall prove that for every $i$ there is a $j = j(i)$ such that the subnet $\{\eta_{i,j(i)}\}_{i \in I}$ witnesses the approximation property of $\A$. To this end, observe $\eta_{i,j}$ is, by construction, a finite linear combination of functions that are continuous and have compact support on a bisection, and thus $\eta_{i,j} \in \contc(G,r^*(\A^0))$. Moreover, observe that for all $x \in G^0$ and $g \in G^x$
\begin{align}
\eta_{i,j}\left(g\right)^* \eta_{i,j}\left(g\right) & = \sum_{\substack{p,t \in S \\ g \in p \cap t}} \left(\varphi_{pp^*,j}\left(x\right) \cdot \xi_i\left(p\right)\left(x\right)^*\right)\left( \xi_i\left(t\right) \left(x\right) \cdot \varphi_{tt^*,j}\left(x\right)\right) \allowdisplaybreaks \nonumber \\
& \approx_{\varepsilon_i} \sum_{\substack{p,t \in S \\ g \in p \cap t}} \xi_i\left(p\right)\left(x\right)^* \xi_i\left(t\right) \left(x\right) \label{eqn:cond-eta-i-j-1}
\end{align}
for all large $j$. Therefore, since $\delta_i \leq 1$ for all large $j$, we have
\begin{align*}
\sup_{i \in I} \sup_{x \in G^0} \left\| \sum_{g \in G^x} \eta_{i,j}\left(g\right)^* \eta_{i,j}\left(g\right)\right\| & \approx_{1} \sup_{i \in I} \sup_{x \in G^0} \left\| \sum_{g \in G^x} \sum_{\substack{p, t \in S \\ g \in p \cap t}} \xi_i\left(p\right)\left(x\right)^* \xi_i\left(t\right)\left(x\right) \right\| \allowdisplaybreaks \\
& = \sup_{i \in I} \sup_{x \in G^0} \left\| \sum_{g \in G^x} \sum_{\substack{p, t \in S \\ \left[p^*, x\right] = \left[t^*,x\right] = g}} \xi_i\left(p\right)\left(x\right)^* \xi_i\left(t\right)\left(x\right) \right\| \allowdisplaybreaks \\
& = \sup_{i \in I} \left\| \sum_{p, t \in S} 1_{pt^*}\xi_i\left(p\right)^* \xi_i\left(t\right) \right\| < \infty.
\end{align*}
For the invariance condition, observe that given any $f \in \contc(\A)$, by assumption we may decompose it as a finite sum $f = \sum_{s \in S} b_s$, where $b_s$ is compactly supported in the bisection $s \in S$. By a simple approximation argument we may, without loss of generality, assume that $f = b_s \in B_s$ for some $s \in S$. In such case, observe that for every $g = [s,x] \in s$ and $h \in G$ such that $r(h) = r(g) = sx$, we have
\begin{align}
\eta_{i,j}\left(h\right)^* b_s\left(g\right) \eta_{i,j}(g^{-1}h) & = \sum_{\substack{p \in S \\ h \in p}} \sum_{\substack{t \in S \\ g^{-1} h \in t}} \varphi_{pp^*,j}\left(sx\right) \xi_i\left(p\right)\left(sx\right)^* b_s\left(g\right) \xi_i\left(t\right)\left(x\right) \varphi_{tt^*, j}\left(x\right) \nonumber \\
& \approx_{\varepsilon_i} \sum_{\substack{p \in S \\ h \in p}} \sum_{\substack{t \in S \\ g^{-1} h \in t}} \xi_i\left(p\right)\left(sx\right)^* b_s\left(g\right) \xi_i\left(t\right)\left(x\right) \label{eqn:cond-eta-i-j-2}
\end{align}
for all large $j$. Hence, and since $\delta_i = \varepsilon_i \cdot |\supp(\xi_i)|^2 \to 0$, we have that for all compact set $K \sbe G$
\begin{align*}
\sup_{[s,x] \in K} & \left\|\sum_{h \in G^{sx},} \eta_{i,j}\left(h\right)^* b_s\left(\left[s,x\right]\right) \eta_{i,j}\left(\left[s,x\right]^{-1}h\right) - b_s\left(\left[s,x\right]\right) \right\| \allowdisplaybreaks \\
& \approx_{\delta_i} \sup_{\left[s,x\right] \in K} \left\|\sum_{h \in G^{sx}} \sum_{\substack{p \in S \\ h \in p}} \sum_{\substack{t \in S \\ \left[s,x\right]^{-1} h \in t}} \xi_i\left(p\right)\left(sx\right)^* b_s\left(\left[s,x\right]\right) \xi_i\left(t\right)\left(x\right) - b_s\left(\left[s,x\right]\right)\right\| \allowdisplaybreaks \\
& = \sup_{\left[s,x\right] \in K} \left\|\sum_{\substack{p, t \in S \\ \left[p^*,sx\right] = \left[\left(st\right)^*,sx\right]}} \xi_i\left(p\right)\left(sx\right)^* b_s\left(\left[s,x\right]\right) \xi_i\left(t\right)\left(x\right) - b_s\left(\left[s,x\right]\right)\right\| \allowdisplaybreaks \\
& = \sup_{\left[s,x\right] \in K} \left\|\left(\sum_{p,t \in S} 1_{p\left(st\right)^*} \xi_i\left(p\right)^* b_s \xi_i\left(t\right) - b_s\right) \left(\left[s,x\right]\right)\right\| \xrightarrow{i} 0
\end{align*}
for all large $j$. Thus, for each $i$ let $j = j(i)$ be large enough so that \cref{eqn:cond-eta-i-j-1,eqn:cond-eta-i-j-2} are both satisfied. Then, by the above computations, the net $\{\eta_{i,j(i)}\}_{i \in I}$ witnesses the approximation property of $\A$, finishing the proof.
\end{proof}

\begin{remark}
Observe that, in \cref{prop:fell-isg-vs-groupoids}, if $G$ is assumed to be Hausdorff then the canonical expectation $P$ lands in $B_1$, as opposed to $B_1''$, i.e., $\B$ is closed (recall \cref{def:fell-bundle-closed}). In fact, by the arguments in the proof of \cref{prop:fell-isg-vs-groupoids}, all the sums in \cref{def:fell-isg} are actually in $B_1$, and we recover the definition of the approximation property for Hausdorff \'{e}tale groupoids in~\cite{Kranz:AP-groupooids-Fell-bundles}.
\end{remark}

\begin{remark} \label{rem:fell-isg-vs-groupoids:homo}
We note in passing that \cref{prop:fell-isg-vs-groupoids} also holds in a slightly more general setting. Indeed, if $T \sbe \Bis(G)$ is wide, and $\pi \colon S \rightarrow T$ is a homomorphism, then the canonical action of $T$ on $G^0$ induces via $\pi$ an action of $S$ on $G^0$. Given a Fell bundle $\A= (A_g)_{g \in G}$ over $G$, we translate it into a Fell bundle over $T$ and pull it back to $S$, so this gives a Fell bundle $\B = (B_s)_{s \in S}$. Assume that the action of $S$ on $G^0$ gives $G$ back, that is, $G^0 \rtimes S \cong G$. In such scenario, the conclusions of \cref{prop:fell-isg-vs-groupoids} also hold.
\end{remark}

\begin{remark}
In \cref{prop:fell-isg-vs-groupoids} it is apparent that the net $\{\xi_i\}_{i \in I}$ witnessing the approximation property for $\B$ depends on the choice of bisections $\{u_{i, j}\} \sbe S$ and $\eta_{i, j} \in \contc(u_{i,j}, r^*(\A^0))$ (see \cref{lemma:fell-isg-vs-groupoids}). This is related to the fact that $\contc(G, r^*(\A^0))$
 is a quotient of $\gammac\left(S, \B\right)$. More precisely, we have a canonical surjective linear map
\begin{align}
\Phi \colon \gammac\left(S, \B\right) & \twoheadrightarrow \contc\left(G, r^*\left(\A^0\right)\right), \nonumber \\
\Phi\left(\xi\right) \left(g\right) & \coloneqq \sum_{s \in S} \xi\left(s\right)\left(r\left(g\right)\right). \nonumber
\end{align}
In fact, one may view this as a bornological quotient map and describe its kernel, as is done in \cite{Buss-Meyer:Actions_groupoids}*{Proposition B.2}. Nevertheless, we are not going to need this, but notice that given any $\eta \in \contc(G, r^*(\A^0))$ the possibly different $\xi \in \gammac(S, \B)$ that are used in the proof of \cref{prop:fell-isg-vs-groupoids} represent the same element in the quotient space, that is, $\Phi(\xi) = \eta$.
\end{remark}

Let us now compare our approximation property with groupoid amenability. We would like to thank Julian Kranz for pointing us to \cite{Renault_AnantharamanDelaroche:Amenable_groupoids}*{Proposition~2.2.13}, which we use as the definition of \emph{(topological) amenability} of an \'{e}tale groupoid.
\begin{definition} \label{def:groupoid-amenable}
Let $G$ be a locally compact \'{e}tale groupoid with Hausdorff unit space $G^0$. We say $G$ is \emph{topologically amenable} if there is a net $\{\eta_i\}_{i \in I} \sbe \contc(G)$ such that the following conditions hold:
\begin{enumerate}[label=(\roman*)]
\item $\|\eta_i\|_2 \coloneqq \sup_{x\in G^0}(\sum_{h \in G^x} |\eta_i(h)|^2)^{1/2} \leq 1$ for all $i \in I$; and
\item $\sum_{h \in G^{r(g)}}  \overline{\eta_i(h)}\eta_i(g^{-1} h) \xrightarrow{i} 1$ for $g \in G$ uniformly in compact subsets of $G$.
\end{enumerate}
\end{definition}
\begin{remark}\label{rem-amenable-groupoid}
Normalizing the nets appropriately, one can actually drop the first boundedness condition (i) in the above definition (this is part of the statements in \cite{Renault_AnantharamanDelaroche:Amenable_groupoids}*{Proposition~2.2.13}). 
The functions $g\mapsto \varphi_i(g):=\sum_{h \in G^{r\left(g\right)}}  \overline{\eta_i(h)}\eta_i\left(g^{-1} h\right)$ appearing in (ii) are prototypical examples of \emph{positive type functions} on $G$, as defined and used to characterize amenability in \cite{Renault_AnantharamanDelaroche:Amenable_groupoids}.
\end{remark}

We shall use in the following the terminology that an action of an inverse semigroup $S$ on a \cstar{}algebra $A$ (or on a locally compact space $X$) has the approximation property if the associated Fell bundle over $S$ has the approximation property. Observe that in the following we do \textit{not} require $G$ to be globally Hausdorff, only its unit space.
\begin{corollary} \label{thm:ap:groupoids-inv-smgps}
Let $G$ be a locally compact \'{e}tale groupoid with Hausdorff unit space $G^0$. Then the following assertions are equivalent:
\begin{enumerate}[label=(\roman*)]
\item \label{thm:ap:groupoids-inv-smgps:amenable} $G$ is amenable.
\item \label{thm:ap:groupoids-inv-smgps:ap-1} The canonical action $\alpha \colon \Bis(G) \curvearrowright G^0$ has the approximation property.
\item \label{thm:ap:groupoids-inv-smgps:ap-2} The canonical action $\beta_S \colon S \curvearrowright G^0$ has the approximation property, where $S \sbe \Bis(G)$ is any wide subsemigroup.
\end{enumerate}
\end{corollary}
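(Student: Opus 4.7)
The plan is to reduce both \cref{thm:ap:groupoids-inv-smgps:ap-1} and \cref{thm:ap:groupoids-inv-smgps:ap-2} to the approximation property of the \emph{trivial} (scalar) Fell bundle $\A_0 = (A_g)_{g\in G}$ with $A_g = \C$ for every $g \in G$, and then match this directly with the amenability condition in \cref{def:groupoid-amenable}. First I would verify that, under the construction of \cref{prop:fell-isg-vs-groupoids}, the Fell bundle over $\Bis(G)$ coming from $\alpha$ translates back to $\A_0$ (the fibres $B_u = \contz(u)$ are the $\contz$-sections of the trivial line bundle over $u$), and likewise for $\beta_S$ using any wide subsemigroup $S$ via \cref{rem:fell-isg-vs-groupoids:homo} applied to the inclusion $S \hookrightarrow \Bis(G)$. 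Then \cref{prop:fell-isg-vs-groupoids} yields that both \cref{thm:ap:groupoids-inv-smgps:ap-1} and \cref{thm:ap:groupoids-inv-smgps:ap-2} are equivalent to the approximation property of $\A_0$ over $G$ in the sense of \cref{def:fell-groupoids}.

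For the remaining equivalence with \cref{thm:ap:groupoids-inv-smgps:amenable}, I would unpack \cref{def:fell-groupoids} in the scalar case: the AP amounts to the existence of a net $\{\xi_i\}\sbe\contc(G)$ with $\sup_i\sup_{x\in G^0}\sum_{g\in G^x}|\xi_i(g)|^2 < \infty$ and such that, for every $f \in \contc(G)$, the functions $g\mapsto \varphi_i(g)\,f(g)$ converge to $f$ uniformly on compact subsets of $G$, where $\varphi_i(g) := \sum_{h\in G^{r(g)}}\overline{\xi_i(h)}\,\xi_i(g^{-1}h)$. This matches \cref{def:groupoid-amenable} almost verbatim, which only requires $\varphi_i\to 1$ uniformly on compacts (and a normalization). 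The direction \cref{thm:ap:groupoids-inv-smgps:amenable}$\Rightarrow$AP is then immediate from linearity: for an amenability net and any $f \in \contc(G)$, uniform convergence $\varphi_i\to 1$ on the compact set $\supp(f)$ gives $\varphi_i f \to f$ uniformly on every compact of $G$. Conversely, given a compact $K \sbe G$, one covers $K$ by finitely many open bisections and builds $f \in \contc(G)$ with $f\equiv 1$ on $K$ by a partition of unity on $G^0$ (which is Hausdorff by hypothesis) lifted through those bisections; testing AP against this $f$ forces $\varphi_i\to 1$ uniformly on $K$. The boundedness requirement $\|\xi_i\|_2\leq 1$ in \cref{def:groupoid-amenable} is recovered via the normalization recorded in \cref{rem-amenable-groupoid}.

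The main obstacle I expect is the construction of the cutoff function $f \in \contc(G)$ identically equal to $1$ on a prescribed compact $K \sbe G$ in the (possibly) non-Hausdorff setting, since elements of $\contc(G)$ are only required to be continuous on bisections rather than globally continuous on $G$. A careful partition-of-unity argument on $G^0$ lifted through a finite bisection cover of $K$ handles this; once this is in place the rest of the proof is a direct translation between the two definitions, making routine use of \cref{prop:fell-isg-vs-groupoids}, \cref{rem:fell-isg-vs-groupoids:homo}, and \cref{rem-amenable-groupoid}.
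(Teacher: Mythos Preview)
Your approach is essentially the paper's: reduce to the trivial line bundle $\A_0$ over $G$ via \cref{prop:fell-isg-vs-groupoids} and then identify the approximation property of $\A_0$ with amenability of $G$. Two small remarks. First, \cref{prop:fell-isg-vs-groupoids} is already stated for an arbitrary wide $S\sbe\Bis(G)$, so you need not invoke \cref{rem:fell-isg-vs-groupoids:homo} separately for \cref{thm:ap:groupoids-inv-smgps:ap-2}. Second, the paper dismisses the passage between ``$\varphi_i f\to f$ for all $f\in\contc(G)$'' and ``$\varphi_i\to 1$ uniformly on compacts'' as essentially tautological (claiming the only discrepancy is in the boundedness clause, handled by \cref{rem-amenable-groupoid}); you are right to spell it out, and your partition-of-unity construction works, but note you only need $|f|\geq 1$ on $K$ rather than $f\equiv 1$, which is what the bisection-cover argument actually delivers and which sidesteps any delicacy about building exact cutoff functions on a non-Hausdorff $G$.
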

\begin{proof}
This is a special case of \cref{prop:fell-isg-vs-groupoids} for the trivial Fell bundle $\A$ over $G$, that is, the Fell bundle with $A_g\coloneqq\C$ for all $g\in G$. In this case the approximation property of $\A$ is equivalent to the amenability of $G$: the only small possible difference lies in the boundedness conditions required in the first item of \cref{def:fell-groupoids} and \cref{def:groupoid-amenable}. However in this case these conditions are redundant by \cref{rem-amenable-groupoid}.  

If $S\sbe \Bis(G)$ is a wide inverse semigroup of bisections of $G$, then the associated Fell bundle $\B$ over $S$ coming from $\A$ is the Fell bundle associated to the canonical action of $S$ on $G^0$. 
\end{proof}

Before we end this section, let us discuss the relationship between our approximation property with \emph{amenability} of inverse semigroups in the following sense.
\begin{definition}
We say that an inverse semigroup $S$ is \emph{\cstar{}amenable} if its canonical action on its spectrum $\dual E$ has the approximation property. 
\end{definition}

Let $G(S) \coloneqq \dual E\rtimes S$ be the universal groupoid of $S$; which is sometimes also called the \emph{Paterson groupoid} of $S$ (see~\cite{Paterson1999}*{Theorem~4.4.1}). Essentially, $G(S)$ is the universal groupoid for `ample' actions of $S$, that is, actions on locally compact spaces with clopen domains.\footnote{ \, In the language of Fell bundles, such actions give rise to Fell bundles $\A = (A_s)_{s \in S}$ such that $A_e \sbe A_1$ is a complemented ideal for every $e \in E$.} By \cref{thm:ap:groupoids-inv-smgps} an inverse semigroup $S$ is \cstar{}amenable if and only if $G(S)$ is amenable as a topological (étale) groupoid. By \cite{ExelStarling:Amenable_Actions_Inv_Sem}*{Theorem~3.3}, $S$ is \cstar{}amenable if and only if every action of $S$ on a locally compact Hausdorff space $X$ has the approximation property, that is, the étale groupoid $X\rtimes S$ is amenable. This combined with our previous results yields the following.
\begin{corollary}
Let $S$ be an inverse semigroup.
Then the following assertions are equivalent:
\begin{enumerate} [label=(\roman*)]
    \item $S$ is \cstar{}amenable, that is, the canonical action of $S$ on $\dual E$ has the approximation property;
    \item $G(S) = \dual E \rtimes S$ is amenable;
    \item every étale groupoid which is `represented' by $S$, in the sense that $S$ acts on $G^0$ and $G\cong G^0\rtimes S$, is amenable;
    \item every Fell bundle over an étale groupoid $G$ represented by $S$ has the approximation property.
\end{enumerate}
\end{corollary}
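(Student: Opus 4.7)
The plan is to establish the cycle $(\mathrm{i}) \Leftrightarrow (\mathrm{ii}) \Leftrightarrow (\mathrm{iii}) \Leftrightarrow (\mathrm{iv})$, and most of the work has already been carried out in the preceding discussion. The equivalence $(\mathrm{i}) \Leftrightarrow (\mathrm{ii})$ is just \cref{thm:ap:groupoids-inv-smgps} applied to the Paterson groupoid $G(S) = \dual E \rtimes S$, with $S$ embedded tautologically as a wide subsemigroup of $\Bis(G(S))$; this is in fact exactly the content of the paragraph preceding the corollary statement.

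For $(\mathrm{ii}) \Rightarrow (\mathrm{iii})$ I would feed \cstar{}amenability of $S$ (equivalent to $(\mathrm{ii})$ by the previous step) into \cite{ExelStarling:Amenable_Actions_Inv_Sem}*{Theorem~3.3} to conclude that every action of $S$ on a locally compact Hausdorff space $X$ has the approximation property, and then apply \cref{thm:ap:groupoids-inv-smgps} once more to transfer this into amenability of the associated transformation groupoid $X \rtimes S$. Any étale groupoid $G$ represented by $S$ is of this form with $X = G^0$, yielding $(\mathrm{iii})$. The converse $(\mathrm{iii}) \Rightarrow (\mathrm{ii})$ is immediate upon choosing $G \coloneqq G(S)$, which is represented by $S$ by construction.

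The real content is $(\mathrm{iii}) \Leftrightarrow (\mathrm{iv})$. The direction $(\mathrm{iv}) \Rightarrow (\mathrm{iii})$ is a one-liner: apply $(\mathrm{iv})$ to the trivial Fell bundle with $A_g \coloneqq \C$ over any étale $G$ represented by $S$; its approximation property in the sense of \cref{def:fell-groupoids} coincides verbatim with amenability in the sense of \cref{def:groupoid-amenable} (up to the redundant boundedness hypothesis discussed in \cref{rem-amenable-groupoid}), so $G$ is amenable. For the opposite direction $(\mathrm{iii}) \Rightarrow (\mathrm{iv})$ I would re-run the second half of the proof of \cref{prop:fell-isg-vs-groupoids}: start with a net $\{\eta_i\}_{i \in I} \sbe \contc(G)$ witnessing amenability of $G$ and a quasi-central approximate unit $\{\varphi_j\}_{j \in J}$ of the unit-bundle \cstar{}algebra $\contz(G^0, \A^0)$, and set
\[ \tilde\eta_{i,j}(g) \coloneqq \eta_i(g)\,\varphi_j(r(g)) \in A_{r(g)}. \]
A suitable diagonal subnet $\{\tilde\eta_{i, j(i)}\}_{i \in I}$ should then witness the approximation property of $\A$ in the sense of \cref{def:fell-groupoids}.

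The only non-cosmetic step is the verification of the uniform-on-compacts invariance condition in $(\mathrm{iii}) \Rightarrow (\mathrm{iv})$, where quasi-centrality of $\{\varphi_j\}$ is required to absorb the non-commutativity of the Fell bundle into the scalar-valued amenability data $\{\eta_i\}$. However, this is precisely what is already executed in the backwards direction of \cref{prop:fell-isg-vs-groupoids}, so no genuinely new argument should be needed beyond reorganising those estimates in the present setting.
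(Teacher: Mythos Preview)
Your proposal is correct and follows essentially the same route as the paper: the equivalences $(\mathrm{i})\Leftrightarrow(\mathrm{ii})\Leftrightarrow(\mathrm{iii})$ are exactly what the paper packages into ``previous observations and results'', and your construction $\tilde\eta_{i,j}(g)=\eta_i(g)\,\varphi_j(r(g))$ for $(\mathrm{iii})\Rightarrow(\mathrm{iv})$ is precisely the paper's $\xi_{i,j}(g)\coloneqq\eta_i(g)u_j(r(g))$.

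One small simplification: you do not need $\{\varphi_j\}$ to be quasi-central, only an ordinary approximate unit $0\le u_j\le 1$ for $\contz(G^0,\A^0)$. The reason is that the $\eta_i$ are scalar-valued, so they commute past everything and factor out of the invariance sum, leaving
\[
\sum_{h\in G^{s(g)}}\tilde\eta_{i,j}(gh)^*f(g)\tilde\eta_{i,j}(h)
=\Big(\sum_{h\in G^{s(g)}}\overline{\eta_i(gh)}\,\eta_i(h)\Big)\,u_j(r(g))\,f(g)\,u_j(s(g)),
\]
which converges to $f(g)$ uniformly on compacts just by amenability of $G$ together with the approximate-unit property acting on the left and right of the Hilbert bimodule $A_g$. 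So the appeal to the (more delicate) backwards half of \cref{prop:fell-isg-vs-groupoids}, where quasi-centrality really is used, is unnecessary here; the paper's ``simple computation'' is genuinely simpler than you suggest.
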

\begin{proof}
By the previous observations and results, it only remains to prove that if $G = G^0 \rtimes S$ is amenable, then every Fell bundle $\A$ over $G$ has the approximation property. But if $\{\eta_i\}_{i \in I} \sbe \contc(G)$ is a net witnessing the amenability of $G$ and $\{u_j\}_{j \in J} \sbe \contz(G^0,\A^0)$ is an approximate unit (as usual, with $0 \leq u_j \leq 1$), then a simple computation shows that the net $\{\xi_{i,j}\}_{i,j} \sbe \contc(G,\rg^*(\A^0))$ defined by $\xi_{i,j}(g) \coloneqq \eta_i(g)u_j(\rg(g))$ gives the approximation property of $\A$.
\end{proof}

\section{Tensor products of Fell bundles} \label{sec:tensor}
This section is dedicated to introducing tensor products of a Fell bundle $\A = (A_s)_{s \in S}$ over an inverse semigroup $S$ and any fixed \cstar{}algebra $B$. First we define such tensor products in \cref{subsec:tensor:definition} (see \cref{def:tensor-prod-fell-bundle}). Then, in \cref{subsec:tensor:compatibility}, we will prove some useful compatibility results between, on the one hand, the maximal tensor product and the full cross-sectional \cstar{}algebra (see \cref{prop:tensor-prod:full-comp}); and on the other hand, between the minimal tensor product and the reduced cross-sectional \cstar{}algebra (see \cref{prop:tensor-prod:red-comp}). Lastly, in \cref{prop:tensor-prod:ap} we show that the approximation property is preserved under taking tensor products with arbitrary \cstar{}algebras.

\subsection{Construction of tensor products} \label{subsec:tensor:definition}

Given a Hilbert bimodule $\E$ from $A$ to $B$ and another Hilbert bimodule $\F$ from $C$ to $D$, it is well known that we can form the \emph{minimal} and \emph{maximal} external tensor products $\E \mintensor \F$ from $A \mintensor C$ to $B \mintensor D$ and $\E \maxtensor \F$ from $A \maxtensor C$ to $B \maxtensor D$, see for instance \cite{Echterhoff-Kaliszewski-Quigg-Raeburn:Categorical}. Moreover, if both $\E$ and $\F$ are imprimitivity bimodules, then so are $\E\mintensor \F$ and $\E \maxtensor\F$. In fact, we shall only need here the case where $\F$ is a fixed \cstar{}algebra $C$ viewed as an (imprimitivity) $C$-$C$-bimodule. Using these constructions we may define the tensor product of a Fell bundle and an arbitrary \cstar{}algebra in the following way.
\begin{definition} \label{def:tensor-prod-fell-bundle}
Given a Fell bundle $\A = (A_s)_{s \in S}$ over an inverse semigroup $S$ and a \cstar{}algebra $B$, we define:
\begin{enumerate}[label=(\roman*)]
\item $\A\mintensor B$ to be the bundle over $S$ with fibres $(\A \mintensor B)_s \coloneqq A_s \mintensor B$;
\item $\A\maxtensor B$ to be the bundle over $S$ with fibres $(\A \maxtensor B)_s \coloneqq A_s \maxtensor B$.
\end{enumerate}
\end{definition}

The following lemma just checks that $\A \mintensor B$ and $\A \maxtensor B$ do carry a natural structure of Fell bundles.
\begin{lemma} \label{lemma:tensor-prod:multiplication}
With notation as above, for every $s, t \in S$ the map
\begin{align}
\left(A_s \algtensor B\right) \algtensor \left(A_t \algtensor B\right) \rightarrow A_{st} \algtensor B, \;\; \text{where} \;\, \left(a_s \tensor b_1\right) \algtensor \left(a_t \tensor b_2\right) \mapsto a_s a_t \tensor b_1b_2 \nonumber
\end{align}
extends to an embedding of Hilbert bimodules $(A_s \mintensor B) \mintensor (A_t \mintensor B) \rightarrow A_{st} \mintensor B$. Moreover, the map
\begin{align}
A_s \algtensor B \rightarrow A_{s^*} \algtensor B, \;\; \text{where} \;\, a_s \tensor b \mapsto a_s^* \tensor b^* \nonumber
\end{align}
extends to an isomorphism of Hilbert bimodules $(A_s \mintensor B)^* \congto A_{s^*} \mintensor B$. Likewise, the same results hold for the maximal tensor product $\maxtensor$.
\end{lemma}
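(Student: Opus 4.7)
The plan is to reduce both assertions to standard compatibility results between the external $C^*$-tensor products $\mintensor,\maxtensor$ and the internal (balanced) tensor product of Hilbert bimodules, applied to the Fell bundle structure of $\A$. Throughout I let $\alpha\in\{\min,\max\}$ and view $B$ as the trivial Hilbert $B$-$B$-bimodule.

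First, for the multiplication claim, I would invoke the well-known fact (see, e.g., Echterhoff--Kaliszewski--Quigg--Raeburn) that for any two Hilbert $A_1$-$A_1$-bimodules $M$ and $N$ there is a canonical isomorphism of Hilbert $(A_1\otimes_\alpha B)$-$(A_1\otimes_\alpha B)$-bimodules
\[
(M\otimes_\alpha B)\otimes_{A_1\otimes_\alpha B}(N\otimes_\alpha B)\;\congto\;(M\otimes_{A_1}N)\otimes_\alpha B,
\]
given on elementary tensors by $(m\otimes b_1)\otimes(n\otimes b_2)\mapsto(m\otimes n)\otimes b_1b_2$. Applied to $M=A_s$ and $N=A_t$, and composed with the Fell-bundle multiplication $\mu_{s,t}\colon A_s\otimes_{A_1}A_t\hookrightarrow A_{st}$ tensored with $\id_B$, this yields an embedding of Hilbert bimodules $(A_s\otimes_\alpha B)\otimes_{A_1\otimes_\alpha B}(A_t\otimes_\alpha B)\hookrightarrow A_{st}\otimes_\alpha B$ with precisely the prescribed formula on elementary tensors. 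The density of $\algtensor$ in $\otimes_\alpha$ lets me identify this extension with the continuous extension of the stated algebraic map.

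Second, for the involution claim, I would use the Fell bundle isomorphism $J_s\colon A_s^*\congto A_{s^*}$ from \cref{thm:fell-bundle-extra-str}, together with the canonical identification $(M\otimes_\alpha B)^*\cong M^*\otimes_\alpha B$ for any Hilbert $A_1$-$A_1$-bimodule $M$ (which uses that $B^*\cong B$ as a Hilbert $B$-$B$-bimodule via $b\mapsto b^*$, and the functoriality of both external tensor products with respect to duals of Hilbert bimodules). Composing $J_s\otimes\id_B$ with this identification gives the desired Hilbert-bimodule isomorphism $(A_s\otimes_\alpha B)^*\congto A_{s^*}\otimes_\alpha B$, which on elementary tensors reads $a_s\otimes b\mapsto a_s^*\otimes b^*$, as asserted.

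The main obstacle (and really the only nontrivial point) is justifying the compatibility isomorphism above in the two tensor-product settings. For $\maxtensor$ this follows by checking universal properties: both sides classify the appropriate commuting pairs of bimodule maps, and the induced norms agree. For $\mintensor$ it requires the concrete spatial description of the internal tensor product of two Hilbert bimodules, together with the fact that $\mintensor$ is injective on Hilbert modules; this is standard in the theory of $C^*$-correspondences, but must be applied with care since the bimodule structures (left and right actions, as well as both inner products) all need to be transported correctly. Once this compatibility is in hand, the remaining verifications—associativity of the multiplication on $\A\otimes_\alpha B$, compatibility with the inclusion maps $\iota_{t,s}$ for $t\leq s$, and with $J$—are immediate from the corresponding properties of $\A$, so that $\A\mintensor B$ and $\A\maxtensor B$ inherit Fell bundle structures over $S$ as claimed in \cref{def:tensor-prod-fell-bundle}.
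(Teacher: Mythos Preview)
Your proposal is correct and matches the paper's approach: the paper simply cites Abadie's work on tensor products of Fell bundles over groups and notes that the same proof extends to inverse semigroups, and what you have written is precisely an unpacking of that argument (the compatibility isomorphism $(M\otimes_\alpha B)\otimes_{A_1\otimes_\alpha B}(N\otimes_\alpha B)\cong (M\otimes_{A_1}N)\otimes_\alpha B$ for Hilbert bimodules, composed with $\mu_{s,t}\otimes\id_B$ and $J_s\otimes\id_B$). If anything, your version is more informative than the paper's one-line proof.
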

\begin{proof}
The same proof given in \cite{Abadie:Tensor} for groups extends to inverse semigroups.
\end{proof}

Using \cref{lemma:tensor-prod:multiplication} we may now see $\A \mintensor B$ and $\A \maxtensor B$ as Fell bundles over $S$ whose bundle structure comes as a continuous extension of that of the ``algebraic'' bundle $\A \algtensor B=(A_s\algtensor B)_{s\in S}$. 

We end this section with two basic observations. First, notice that
\begin{equation}\label{lemma:tensor-prod:norms}    
\left\|x\right\|_{A_s \maxtensor B} = \left\| x^*x\right\|^{1/2}_{A_{s^*s} \maxtensor B} \;\; \text{and} \;\; \left\|y\right\|_{A_s \mintensor B} = \left\| y^*y\right\|^{1/2}_{A_{s^*s} \mintensor B}
\end{equation}
for every $x \in A_s \maxtensor B$ and $y \in A_s \mintensor B$. The second observation states that maximal tensor products of Fell bundles enjoy the universal property one would expect.
\begin{lemma} \label{lemma:tensor-prod:max-univ-prop}
Given two representations $\pi = (\pi_s)_{s \in S}$ of $\A = (A_s)_{s \in S}$ and $\rho \colon B \rightarrow \L(\H)$ of $B$ on the some Hilbert module $\H$ satisfying $\pi_s(a)\rho(b)=\rho(b)\pi_s(a)$ for all $a\in A_s$, $s\in S$ and $b\in B$, there is a unique representation $\pi \maxtensor \rho = (\pi_s \maxtensor \rho)_{s \in S}$ of $\A \maxtensor B$ on $\H$ extending the map $\pi_s \otimes \rho \colon \A \algtensor B \to \L(\H)$, $a\otimes b\mapsto \pi_s(a)\rho(b)$.
\end{lemma}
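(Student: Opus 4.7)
The plan is to build the representation $\pi \maxtensor \rho$ fiber-by-fiber via the \cstar{}identity \eqref{lemma:tensor-prod:norms}, and then verify the representation axioms by density. The starting point is the unit fiber: the commuting pair of \Star{}representations $\pi_1 \colon A_1 \to \L(\H)$ and $\rho \colon B \to \L(\H)$ produces, by the universal property of the maximal \cstar{}tensor product, a unique \Star{}homomorphism $\pi_1 \maxtensor \rho \colon A_1 \maxtensor B \to \L(\H)$ extending $a \otimes b \mapsto \pi_1(a)\rho(b)$. The same construction applied to the \cstar{}subalgebras $A_{s^*s} \sbe A_1$ yields contractive \Star{}homomorphisms $\pi_{s^*s} \maxtensor \rho \colon A_{s^*s} \maxtensor B \to \L(\H)$ for every $s \in S$.

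For an arbitrary $s \in S$ and $x \in A_s \algtensor B$, I would set $\varphi_s(x) \coloneqq (\pi_s \otimes \rho)(x) \in \L(\H)$, which is well defined and linear because of the commutation $\pi_s(a)\rho(b) = \rho(b)\pi_s(a)$. Using that $\pi$ and $\rho$ satisfy the representation axioms on the algebraic tensor product together with the \cstar{}identity in $\L(\H)$ and the previous paragraph, we get
\begin{equation*}
  \|\varphi_s(x)\|_{\L(\H)}^2 = \|\varphi_s(x)^* \varphi_s(x)\|_{\L(\H)} = \|(\pi_{s^*s} \otimes \rho)(x^*x)\|_{\L(\H)} \leq \|x^*x\|_{A_{s^*s}\maxtensor B} = \|x\|_{A_s \maxtensor B}^2,
\end{equation*}
the last equality being \eqref{lemma:tensor-prod:norms}. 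Hence $\varphi_s$ extends by continuity to a contractive linear map $\pi_s \maxtensor \rho \colon A_s \maxtensor B \to \L(\H)$, uniquely determined by its values on the dense subspace $A_s \algtensor B$.

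Finally, the representation axioms for the family $(\pi_s \maxtensor \rho)_{s \in S}$, namely multiplicativity $(\pi_s \maxtensor \rho)(x)(\pi_t \maxtensor \rho)(y) = (\pi_{st} \maxtensor \rho)(xy)$, the involution $(\pi_s \maxtensor \rho)(x)^* = (\pi_{s^*} \maxtensor \rho)(x^*)$, and compatibility with the inclusions $A_s \into A_t$ for $s \leq t$, all hold on the algebraic tensor product by construction and extend to the full fibers by continuity. Uniqueness is automatic from density of $A_s \algtensor B$ in $A_s \maxtensor B$. The only delicate point is the norm estimate above, which rests entirely on the \cstar{}identity \eqref{lemma:tensor-prod:norms} together with the universal property of the maximal tensor product applied to the \cstar{}algebraic fiber $A_{s^*s}$; everything else is routine bookkeeping.
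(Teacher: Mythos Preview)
Your proof is correct and follows essentially the same approach as the paper: first invoke the universal property of the maximal tensor product on the \cstar{}algebraic idempotent fibers, then use \eqref{lemma:tensor-prod:norms} to push the contractive bound to arbitrary fibers, and finally check the representation axioms by density. Your write-up is in fact more detailed than the paper's, which simply asserts that the general case follows from \eqref{lemma:tensor-prod:norms} and that the remaining verifications are routine.
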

\begin{proof}
Observe that, whenever $e \in E$ is an idempotent, the maps $\pi_e \tensor \rho$ extend to $A_e \maxtensor B$ by the universal property of $\maxtensor$, since $A_e$ is then a \cstar{}algebra. The existence of $\pi_s \maxtensor \rho$ for general $s \in S$ then follows from \cref{lemma:tensor-prod:norms}, since the norm on $A_s \maxtensor B$ is induced from that of $A_{s^*s} \maxtensor B$. Moreover, it is routine to show that the resulting family $\pi \maxtensor \rho = (\pi_s \maxtensor \rho)_{s \in S}$ is a representation of $\A \maxtensor B$.
\end{proof}

In particular the above shows that given representations $\pi\colon\A\to \L(\H_1)$ and $\rho\colon B\to\L(\H_2)$ on two Hilbert modules $\H_1$ and $\H_2$ (over possibly different \cstar{}algebras), we get a representation $\rho\colon \A\maxtensor B\to \L(\H_1\maxtensor \H_2)$ sending $a\otimes b\mapsto \pi_s(a)\otimes\rho(b)$. If we look at the minimal tensor product of Hilbert modules instead, this is a prototypical representation of the minimal tensor product. Indeed, using the same idea of the above proof we see that the above representation factors through a representation of the minimal tensor product Fell bundle:
$$\pi\mintensor\rho\colon\A\mintensor B\to \L(\H_1\mintensor\H_2).$$

\subsection{Compatibility of tensor products and cross-sectional \cstar{}algebras} \label{subsec:tensor:compatibility}
We may now prove the expected compatibility conditions between the maximal tensor product and the universal cross-sectional \cstar{}algebra (see \cref{prop:tensor-prod:full-comp}); and likewise between their minimal/reduced companions (see \cref{prop:tensor-prod:red-comp}). We start with the approach to the universal objects, as it is, unsurprisingly, simpler.
\begin{proposition} \label{prop:tensor-prod:full-comp}
Let $\A = (A_s)_{s \in S}$ be a Fell bundle over an inverse semigroup $S$, and let $B$ be a \cstar{}algebra. Then the identity map on $\contc(\A) \algtensor B$ descends to a \Star{}isomorphism $\fullalg{\A} \maxtensor B \cong \fullalg{\A \maxtensor B}$.
\end{proposition}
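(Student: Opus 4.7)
The plan is to show that $\fullalg{\A\maxtensor B}$ and $\fullalg{\A}\maxtensor B$ are solutions of essentially the same universal problem, so that their canonical comparison map is an isomorphism. Concretely, representations of $\fullalg{\A\maxtensor B}$ correspond by~\cref{pre:fell:fullalg-univ} to representations of the Fell bundle $\A\maxtensor B$, which in turn correspond to commuting pairs consisting of a representation of $\A$ and a representation of $B$; integrating the former to $\fullalg{\A}$, this is the same data as a commuting pair of representations of $\fullalg{\A}$ and $B$, which by the universal property of the maximal tensor product of \cstar{}algebras is precisely a representation of $\fullalg{\A}\maxtensor B$. All bijections between these objects restrict to the identity on the common dense subalgebra $\contc(\A)\algtensor B$, so the three universal \cstar{}norms on it must agree.

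In one direction, I would construct the canonical \Star{}homomorphism $\Phi\colon \fullalg{\A\maxtensor B}\to \fullalg{\A}\maxtensor B$ by applying~\cref{pre:fell:fullalg-univ} to the family $\phi_s\colon A_s\maxtensor B\to \fullalg{\A}\maxtensor B$ given by $a\otimes b\mapsto (a\delta_s)\otimes b$. Multiplicativity, the involution, and compatibility with the inclusions $A_s\hookrightarrow A_t$ for $s\leq t$ are inherited directly from the analogous properties of $\A$ combined with the functoriality of $\maxtensor B$. The only delicate point is the continuity of each $\phi_s$ from the maximal \cstar{}norm on $A_s\maxtensor B$ (a Hilbert-bimodule completion, not a \cstar{}algebra) to $\fullalg{\A}\maxtensor B$; this is handled by~\cref{lemma:tensor-prod:norms}, which reduces the bound to the unit fibre $A_{s^*s}\maxtensor B$, where $\phi_{s^*s}$ is an honest \Star{}homomorphism of \cstar{}algebras.

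For the reverse inequality of norms, I would show that any faithful nondegenerate representation $\tau\colon \fullalg{\A\maxtensor B}\into \L(\H)$ factors as $\psi\circ\Phi$ for a \Star{}homomorphism $\psi\colon \fullalg{\A}\maxtensor B\to \L(\H)$. Viewing $\tau$ as a representation $(\tau_s)_{s\in S}$ of $\A\maxtensor B$ via~\cref{pre:fell:fullalg-univ}, the restriction to the unit fibre $A_1\maxtensor B$ splits by the universal property of $\maxtensor$ into commuting nondegenerate representations $\pi_1\colon A_1\to \L(\H)$ and $\rho\colon B\to \L(\H)$. Using the nondegeneracy of $\rho$, the formula $\pi_s(a)\rho(b)\xi\coloneqq \tau_s(a\otimes b)\xi$ defines, by extension from the dense subspace $\rho(B)\H$, a family of operators $(\pi_s)_{s\in S}$ that one checks is a representation of $\A$ commuting with $\rho$. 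Integrating this via~\cref{pre:fell:fullalg-univ} to $\pi\colon \fullalg{\A}\to \L(\H)$ and pairing with $\rho$ by the universal property of the maximal tensor product yields $\psi\coloneqq \pi\maxtensor\rho$. A direct check on the dense subalgebra $\contc(\A)\algtensor B$ shows $\psi\circ\Phi=\tau$, and hence $\|x\|_{\fullalg{\A\maxtensor B}}=\|\tau(x)\|\le\|\Phi(x)\|_{\fullalg{\A}\maxtensor B}$, making $\Phi$ isometric. Since $\Phi$ has dense range, it is the desired \Star{}isomorphism.

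The main obstacle I foresee is verifying that the family $(\pi_s)_{s\in S}$ extracted from $(\tau_s)_{s\in S}$ assembles into a bona fide representation of $\A$, in particular that it respects the bundle multiplication, the involutions $J_s$, and the inclusions $\iota_{t,s}$ for $t\leq s$; each of these reduces to a diagram chase on $\rho(B)\H$ via the corresponding identities for $\tau$, but the presence of the Hilbert-bimodule fibres and of the ideals $I_{s,1}$ demands some care. A secondary, entirely standard point is the non-unital case for $B$, which is dealt with by the usual passage to multipliers.
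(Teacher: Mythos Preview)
Your proposal is correct and your construction of $\Phi$ coincides with the paper's. The difference lies in how you handle the reverse direction. The paper builds an explicit inverse $\Psi\colon \fullalg{\A}\maxtensor B\to \fullalg{\A\maxtensor B}$ by passing to the unitizations $\tilde\A$ and $\tilde B$, so that the obvious commuting pair $a_s\mapsto a_s\delta_s\otimes 1_{\tilde B}$ and $b\mapsto 1_{\tilde A_1}\otimes b$ makes sense inside $\fullalg{\tilde\A\maxtensor\tilde B}$; it then invokes the result of Kwa\'sniewski--Meyer that a Fell bundle ideal induces an embedding of full cross-sectional \cstar{}algebras to land back in $\fullalg{\A\maxtensor B}$. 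Your route instead shows $\Phi$ is isometric by disintegrating an arbitrary faithful nondegenerate representation $\tau$ of $\fullalg{\A\maxtensor B}$ into a commuting pair $(\pi,\rho)$: you extend $\tau_1$ to $\M(A_1\maxtensor B)$ to extract $\rho$, and recover each $\pi_s$ via $\pi_s(a)\rho(b)=\tau_s(a\otimes b)$, whose well-definedness and boundedness follow from the identity $\tau_s(x\cdot m)=\tau_s(x)\bar\tau_1(m)$ for $m\in \M(A_1\maxtensor B)$ together with an approximate-unit argument. Your approach is self-contained and avoids the external embedding result, at the cost of the multiplier bookkeeping you flag at the end; the paper's approach is more hands-on but leans on the Fell-bundle-ideal machinery. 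Either way the verification that $(\pi_s)_{s\in S}$ respects products, involutions and the inclusions $\iota_{t,s}$ is a routine diagram chase on the dense subspace $\rho(B)\H$, exactly as you anticipate.
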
 
\begin{proof}
To prove the claim we shall construct mutually inverse \Star{}homomorphisms
\[ \Psi \colon \fullalg{\A} \maxtensor B \to \fullalg{\A \maxtensor B} \]
and
\[ \Phi \colon \fullalg{\A \maxtensor B} \to \fullalg{\A} \maxtensor B, \]
both of which will be induced by the identity map on $\contc(\A) \algtensor B$. 

Observe that, in order to construct $\Phi \colon \fullalg{\A \maxtensor B} \to \fullalg{\A} \maxtensor B$, by the universal property of the full cross-sectional \cstar{}algebra (see \cref{pre:fell:fullalg-univ}), it is enough to construct a representation $\phi = (\phi_s)_{s \in S}$ of $\A \maxtensor B$. For this, simply note that the identity map on $\contc(\A) \algtensor B$ induces the maps
\[ \phi_s \colon A_s \algtensor B \to \fullalg{\A} \maxtensor B, \;\; \text{where} \;\; a_s \tensor b \mapsto a_s \tensor b \]
(recall here \cref{lemma:n-p-intersection-fibers} and the fact that $\algalg{\A} \sbe \fullalg{\A}$ is dense). Note, by the universal property of the maximal tensor product $\maxtensor$, and since $A_e$ is a \cstar{}algebra, it follows that $\phi_e$ is $\max$-continuous whenever $e \in E$ is an idempotent. Likewise, whenever $x \in A_s \algtensor B$, by \cref{lemma:tensor-prod:norms}, we have  $\|x\|^2 = \|x^*x\|$, and hence, using the \cstar{}identity, we get
\begin{align*}
\left\|\phi_s\left(x\right)\right\|^2 & = \left\|\phi_s\left(x\right)^* \phi_s\left(x\right)\right\| = \left\| \left(\phi_e \tensor \id\right) \left(\sum_{k, l = 1}^n a_k^*a_l \tensor b_k^* b_l \right)\right\| \\
& \leq \left\|\sum_{k, l = 1}^n a_k^* a_l \tensor b_k^* b_l\right\| = \left\|x^*x\right\| = \left\|x\right\|^2
\end{align*}
for all $x = \sum_{k = 1}^n a_k \tensor b_k \in A_s \algtensor B$. Therefore, the maps $\phi_s$ are $\max$-continuous for every $s \in S$, and hence extend to maps $\phi_s \colon A_s \maxtensor B \to \fullalg{\A} \maxtensor B$ that we denote with the same symbol by abuse of notation. It is now routine to see that $\phi = (\phi_s)_{s \in S}$ forms a representation of $\A \maxtensor B$. Thus, its integrated form $\Phi \colon \fullalg{\A \maxtensor B} \to \fullalg{\A} \maxtensor B$ is a \Star{}homomorphism as well (see \cref{pre:fell:fullalg-univ}).

Consider the \emph{unitization Fell bundle} $\tilde\A$, that is, consider $\tilde A_s \coloneqq A_s$ if $s\not=1$ and $\tilde A_1$ to be the unitization of $A_1$ as a \cstar{}algebra. Notice that $\A$ is a Fell bundle ideal of $\tilde\A$ as defined in \cite{Kwasniewski-Meyer:Pure_infiniteness}. Likewise, $\A \maxtensor B$ is also a Fell bundle ideal of $\tilde\A\maxtensor \tilde B$. So by \cite{Kwasniewski-Meyer:Pure_infiniteness}*{Proposition~4.12} we get a canonical embedding
$$C^*_\max(\A\maxtensor B)\into C^*_\max(\tilde\A\maxtensor \tilde B)$$
extending the canonical inclusion $\A\maxtensor B\into \tilde\A\maxtensor \tilde B$.
Now we consider the maps
\[ \rho_s \colon A_s \to \fullalg{\tilde\A \maxtensor \tilde B}, \;\; \quad \;\; a_s \mapsto a_s \delta_s \tensor 1_{B},  \]
and
\[ \pi \colon B \to \fullalg{\tilde\A \maxtensor \tilde B}, \;\; \quad \;\; b \mapsto 1_{A_1} \tensor b, \]
It is straightfoward to check that $\rho$ is a representation of $\A$. By the universal property of the full cross-sectional \cstar{}algebra (see \cref{pre:fell:fullalg-univ}) we obtain that the integrated form
\[ \rho \colon \fullalg{\A} \to \fullalg{\tilde\A \maxtensor \tilde B} \]
is a \Star{}homomorphism that extends $\rho_s$ in every fiber $A_s\into \fullalg{\A}$. Likewise, $\pi$ is a \Star{}homomorphism. Furthermore, note that $\rho$ and $\pi$ have commuting ranges. Indeed, for all $a_s \in A_s$ and $b \in B$
\begin{align*}
\rho\left(a_s\right) \pi\left(b\right) & = \left(a_s \delta_s \tensor 1_B\right) \cdot \left(1_{A_1} \tensor b\right) = a_s \delta_s \tensor b = \pi\left(b\right) \rho\left(a_s\right).
\end{align*}
Thus, by the universal property of the maximal tensor product $\maxtensor$, there is a \Star{}homomorphism $\Psi \colon \fullalg{\A} \maxtensor B \to \fullalg{\tilde\A \maxtensor \tilde B}$ such that $\Psi(x \tensor b) = \rho(x) \pi(b)$ for all $x \in \fullalg{\A}$ and $b \in B$. In particular, for every $a_s \delta_s \tensor b \in A_s \delta_s \algtensor B \sbe \contc(\A) \algtensor B$, we have
\[ \Psi\left(a_s \delta_s \tensor b\right) = \rho(x) \pi(b) = a_s \delta_s \tensor b, \]
and hence $\Psi$ is induced by the identity map on $\contc(\A) \algtensor B$. In particular, the image of $\Psi$ is contained in $\fullalg{\A \maxtensor B}\sbe \fullalg{\tilde\A \maxtensor \tilde B}$, so we may see it as a \Star{}homomorphism $\Psi\colon \fullalg{\A} \maxtensor B \to \fullalg{\A \maxtensor B}$, as desired.

It is now routine to show that $\Psi$ and $\Phi$ are mutually inverses. Therefore any one of them gives the desired isomorphism $\fullalg{\A} \maxtensor B \cong \fullalg{\A \maxtensor B}$ induced by the identity on $\contc(\A) \algtensor B$.
\end{proof}

\begin{remark}
We briefly observe that the proof of \cref{prop:tensor-prod:full-comp}, as expected, heavily relies on the universal properties of both the full cross-sectional \cstar{}algebra of a Fell bundle and of the maximal tensor product.
\end{remark}

\begin{remark}
In the following results, we shall be working with several Fell bundles at the same time. Therefore, given a Fell bundle $\A = (A_s)_{s \in S}$ we shall denote the canonical conditional expectation $P \colon \contc(\A) \to A_1''$ by $P_\A$ instead. Likewise, we label other objects assigned to $\A$ in a similar way, like the ideal $\NN_P$, which will be denoted by $\NN_\A$ (recall \cref{eq:ideal-n-p}), and $\pi_\A$ will be the canonical quotient map
\[ \pi_\A \colon \contc\left(\A\right) \twoheadrightarrow \contc\left(\A\right)/\NN_\A = \algalg{\A}. \]
\end{remark}

The analogous result to \cref{prop:tensor-prod:full-comp} for the reduced/minimal objects is technically harder to prove (see \cref{prop:tensor-prod:red-comp}). Indeed, we will have to construct a unitary operator intertwining two representations (this is akin to the proof of \cref{thm:fell-principle}). Moreover, there is a subtlety that appears in this context, as it does in the context of \cstar{}bundles (see~\cites{Kranz:AP-groupooids-Fell-bundles,MR3383622} and references therein). We will, however, leave this discussion to after the proof of \cref{prop:tensor-prod:red-comp} (see \cref{cor:tensor-prod:red-comp:cond-exp,rem:tensor-prod:problem-comp}).

The following lemma partially describes a canonical faithful conditional expectation of $\redalg{\A} \mintensor B$. In particular, observe that the following describes the canonical conditional expectation $P_{\A \mintensor B}$ of $\A \mintensor B$ when applied to the fibers $A_s \delta_s \algtensor B \sbe \contc(\A) \algtensor B$ (recall \cref{lemma:fell-principle:fibers}).
\begin{lemma} \label{lemma:tensor-prod:red-cond-exp}
Given a Fell bundle $\A = (A_s)_{s \in S}$, the map
\[ P_\A \tensor \id_B \colon \contc(\A) \algtensor B \to A_1'' \algtensor B \]
coincides with 
$$P_{\A \mintensor B}\colon \contc(\A\mintensor B)\to (A_1\mintensor B)''$$
if we view $\contc(\A) \algtensor B\cong\contc(\A\algtensor B)\into \contc(\A\mintensor B)$ and
$A_1''\algtensor B\into (A_1\mintensor B)''$, that is, we have
$$(P_\A \tensor \id_B)(f \tensor b) = P_{\A \mintensor B}(f \tensor b)\quad\mbox{for every }f \in \contc(\A),\, b \in B.$$
In particular it follows that $\NN_\A\algtensor B\sbe \NN_{\A\mintensor B}$.
\end{lemma}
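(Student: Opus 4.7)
The plan is to reduce to a fiberwise verification and then check that each ingredient appearing in the definition of the canonical conditional expectation is compatible with the minimal tensor product. By linearity it suffices to show
\[P_{\A \mintensor B}\!\left(\left(a_s \otimes b\right)\delta_s\right) = \left(P_\A \otimes \id_B\right)\left(a_s \delta_s \otimes b\right)\]
for every $s \in S$, $a_s \in A_s$ and $b \in B$. Recall from \cref{prop:cond-exp} that $P_\A(a_s \delta_s) = {\theta_{s,1}^\A}''(a_s \cdot 1_s^\A)$, where $\theta_{s,1}^\A$ is the canonical Hilbert-bimodule isomorphism between $A_s \cdot I_{s,1}^\A$ and $I_{s,1}^\A$ and $1_s^\A$ is the unit of $\M(I_{s,1}^\A) \sbe (I_{s,1}^\A)''$; the same formula applies to $\A \mintensor B$. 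Thus the proof reduces to matching, between $\A$ and $\A \mintensor B$, the three ingredients: the ideal $I_{s,1}$, the unit $1_s$, and the isomorphism $\theta_{s,1}$.

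First, $I_{s,1}^{\A \mintensor B} = I_{s,1}^\A \mintensor B$ as ideals of $A_1 \mintensor B$, since both are generated by the same subspaces $A_{v^*v} \mintensor B$ with $v \leq s, 1$. As a consequence, the multiplier $1_s^\A \otimes 1_{\M(B)}$ acts as the identity on $I_{s,1}^\A \mintensor B$, and so it coincides with $1_s^{\A \mintensor B}$ when viewed inside $(A_1 \mintensor B)''$. For the third ingredient, the canonical map $\theta_{s,1}^{\A \mintensor B}$ coincides with $\theta_{s,1}^\A \otimes \id_B$, because both are Hilbert-bimodule isomorphisms restricting to the identity on each $A_{v^*v} \mintensor B$ with $v \leq s, 1$, so they agree by the uniqueness clause of \cite{Buss-Exel-Meyer:Reduced}*{Lemma~2.5}, and so do their weakly continuous extensions. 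Combining these observations yields
\begin{align*}
P_{\A \mintensor B}\!\left(\left(a_s \otimes b\right) \delta_s\right) & = {\theta_{s,1}^{\A \mintensor B}}''\!\left(\left(a_s \otimes b\right) \cdot 1_s^{\A \mintensor B}\right) \\
& = {\theta_{s,1}^\A}''\!\left(a_s \cdot 1_s^\A\right) \otimes b = \left(P_\A \otimes \id_B\right)\!\left(a_s \delta_s \otimes b\right),
\end{align*}
as required.

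For the ``in particular'' part, recall that $\NN_\A = \J_\A \sbe \ker P_\A$ by \cref{prop:cond-exp} and that $\NN_\A$ is a $*$-ideal of $\contc(\A)$. Given any $x = \sum_i x_i \otimes b_i \in \NN_\A \algtensor B$ we have $x^*x = \sum_{i,j} x_i^* x_j \otimes b_i^* b_j$ with each $x_i^* x_j \in \NN_\A$; applying the identity just proved gives
\[P_{\A \mintensor B}(x^*x) = \sum_{i,j} P_\A(x_i^* x_j) \otimes b_i^* b_j = 0,\]
whence $x \in \NN_{\A \mintensor B}$. The only mildly delicate technical step is the rigorous identification of the unit $1_s^{\A \mintensor B}$ inside $(A_1 \mintensor B)''$, but this is handled by the standard embedding $\M(J) \sbe J''$ for any \cstar{}algebra $J$, applied to $J = I_{s,1}^\A \mintensor B$.
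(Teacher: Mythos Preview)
Your proof is correct and follows essentially the same approach as the paper: both reduce to a fiberwise check, identify $I_{s,1}^{\A\mintensor B}=I_{s,1}^\A\mintensor B$, and verify that $\theta_{s,1}^{\A\mintensor B}$ agrees with $\theta_{s,1}^\A\otimes\id_B$. The only cosmetic difference is that the paper makes the limit explicit via approximate units $(u_i)\sbe I_{s,1}^\A$ and $(v_j)\sbe B$ (so that $a_su_i\otimes bv_j$ stays in the domain of $\theta_{s,1}^{\A\mintensor B}$ before passing to strict limits), whereas you work directly with the bidual units $1_s$ and invoke the weakly continuous extensions; these are two phrasings of the same computation.
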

\begin{proof}
For the first assertion, it is enough to show that $P_A\otimes\Id(a_s\delta_s\otimes b)=P_{\A\mintensor B}(a_s\delta_s\otimes b)$ for all $a_s\in A_s$, $b\in B$. First notice that $I_{s,1}^{\A\mintensor B}=I_{s,1}^\A\mintensor B$. Recall that $P_\A(a_s\delta_s)=\lim \theta_{s,1}^\A(a_s u_i)$, where $(u_i)$ is an approximate unit for $I_{s,1}$ and the limit is taken with respect to the strict topology of $\M(I_{s,1})$. Now, if $(v_j)$ is an approximate unit for $B$, then $(u_i\otimes v_j)$ is an approximate unit for $I_{s,1}\otimes B$. Therefore,
$$P_{\A\mintensor}(a_s\delta_s\otimes b)=\lim \theta_{s,1}^{\A\mintensor B}(a_s u_i\otimes b v_j)=\lim \theta_{s,1}^\A(a_s u_i)\otimes b=P_\A(a_s\delta_s)\otimes b.$$
This proves the first assertion, and the second follows directly from this and the definition of the ideals $\NN_\A$ and $\NN_{\A\mintensor B}$.
\end{proof}

\begin{proposition} \label{prop:tensor-prod:red-comp}
Let $\A = (A_s)_{s \in S}$ be a Fell bundle over an inverse semigroup $S$, and let $B$ be any \cstar{}algebra. Then canonical map  $\contc(\A) \algtensor B\to \contc(\A\mintensor B)$ induces a \Star{}isomorphism $\redalg{\A} \mintensor B \congto \redalg{\A \mintensor B}$.
\end{proposition}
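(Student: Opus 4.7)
The strategy is to realize both sides as the same concrete \cstar{}algebra of operators on a common Hilbert space via Fell's absorption principle (\cref{cor:fell-principle:red}), mirroring the proof of \cref{prop:tensor-prod:full-comp} but replacing the use of the universal property of $\maxtensor$ by the spatial realization of $\mintensor$. By \cref{lemma:tensor-prod:red-cond-exp} the canonical map $\contc(\A)\algtensor B\to \contc(\A\mintensor B)$ descends to a $*$-homomorphism $\algalg{\A}\algtensor B\to \algalg{\A\mintensor B}$, and it suffices to show that this extends to a $*$-isomorphism of the $\mintensor$- and reduced completions.

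First, choose a representation $\pi=(\pi_s)_{s\in S}$ of $\A$ on a Hilbert space $\H_1$ with $\pi_1''$ faithful on $A_1''$, together with a faithful nondegenerate representation $\rho\colon B\to \L(\H_2)$ whose bidual extension $\rho''$ is faithful on $B''$ (such choices exist by taking sufficiently large/universal representations). The family $\tilde\pi\coloneqq \pi\mintensor\rho=(\pi_s\mintensor\rho)_{s\in S}$ is then a representation of $\A\mintensor B$ on $\H_1\mintensor\H_2$, and its unit-fibre $\tilde\pi_1=\pi_1\mintensor\rho$ is an injective $*$-homomorphism whose bidual extension is faithful on $(A_1\mintensor B)''$. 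Applying \cref{cor:fell-principle:red} to both $\pi$ and $\tilde\pi$ yields faithful representations
\[
\pi^\Lambda\colon \redalg{\A}\into \L(\ell^2_\pi(S,\H_1))
\quad\text{and}\quad
\tilde\pi^\Lambda\colon \redalg{\A\mintensor B}\into \L(\ell^2_{\tilde\pi}(S,\H_1\mintensor\H_2)).
\]

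Next, I would construct a unitary
\[
U\colon \ell^2_{\tilde\pi}(S,\H_1\mintensor\H_2)\congto \ell^2_\pi(S,\H_1)\mintensor\H_2,
\quad (v_1\otimes v_2)\delta_s\mapsto (v_1\delta_s)\otimes v_2.
\]
Verifying that $U$ is well-defined and isometric amounts to comparing the semi-inner products of \cref{prop:fell-principle:inner-prod} for $\pi$ and $\tilde\pi$, which in turn reduces to the key identity
\[
(\pi_1\mintensor\rho)''\bigl(1_{st^*}^{\A\mintensor B}\bigr)=\pi_1''(1_{st^*}^{\A})\otimes \Id_{\H_2}.
\]
This is the main technical point: it relies on the observation that $I_{st^*,1}^{\A\mintensor B}=I_{st^*,1}^{\A}\mintensor B$ together with the fact that $\rho$ is nondegenerate, so that the strong-operator closure of $(\pi_1\mintensor\rho)(I_{st^*,1}^{\A}\mintensor B)$ inside $\L(\H_1\mintensor\H_2)$ has support projection $\pi_1''(1_{st^*}^{\A})\otimes \Id_{\H_2}$. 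Care is needed here to distinguish the enveloping von Neumann algebra $(A_1\mintensor B)''$ from the spatial von Neumann tensor product $\pi_1''(A_1)''\bar\otimes \rho''(B)''$ in $\L(\H_1\mintensor\H_2)$.

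Once $U$ is in place, a direct calculation on elementary tensors $a_s\otimes b$ and on basic vectors $(w_1\otimes w_2)\delta_t$ shows that $U$ intertwines $\tilde\pi^\Lambda_s(a_s\otimes b)$ with $\pi^\Lambda_s(a_s)\otimes \rho(b)$. Therefore $\tilde\pi^\Lambda(\redalg{\A\mintensor B})$ is carried by $U$ onto the \cstar{}subalgebra of $\L(\ell^2_\pi(S,\H_1)\mintensor \H_2)$ generated by $\{\pi^\Lambda_s(a_s)\otimes \rho(b):s\in S,\ a_s\in A_s,\ b\in B\}$, which, since $\pi^\Lambda$ is a faithful representation of $\redalg{\A}$ and $\rho$ is a faithful representation of $B$, is exactly $\redalg{\A}\mintensor B$ by definition of the minimal tensor product. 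This yields the desired isomorphism, and tracing back through the constructions confirms it is induced by the canonical map $\contc(\A)\algtensor B\to \contc(\A\mintensor B)$.
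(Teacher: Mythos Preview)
Your approach is essentially the paper's argument transported through Fell's absorption principle: the paper works directly in the induced-representation picture on $\ell^2(\A'')\otimes_{\pi_1''}\H_A$ and $\ell^2((\A\mintensor B)'')\otimes_{(\pi_1\otimes\sigma)''}(\H_A\otimes\H_B)$ (only a faithful representation $\pi_1$ of $A_1''$ and a faithful $\sigma$ of $B$ are fixed, not a representation of the whole bundle), and the intertwining unitary is $f\otimes b\otimes v\otimes w\mapsto (f\otimes v)\otimes\sigma(b)w$. Its isometry check uses \cref{lemma:tensor-prod:red-cond-exp} directly, i.e., the identity $P_{\A\mintensor B}((f_1\otimes b_1)^*(f_2\otimes b_2))=P_\A(f_1^*f_2)\otimes b_1^*b_2$, which is the conditional-expectation form of your support-projection identity $(\pi_1\mintensor\rho)''(1_{st^*}^{\A\mintensor B})=\pi_1''(1_{st^*}^{\A})\otimes\Id_{\H_2}$. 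Your $\pi^\Lambda$ picture is unitarily equivalent to the paper's via \cref{thm:fell-principle}, so the two proofs are the same up to that identification; the paper's route is slightly leaner in that it avoids choosing a representation of $\A$ and bypasses the support-projection computation by appealing to \cref{lemma:tensor-prod:red-cond-exp}.

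One point deserves more care in your write-up: you assert that $\pi,\rho$ can be chosen so that $(\pi_1\mintensor\rho)''$ is faithful on $(A_1\mintensor B)''$, which is what \cref{cor:fell-principle:red} needs for $\tilde\pi^\Lambda$. Taking universal representations of $A_1$ and of $B$ separately does not obviously give this, since $(A_1\mintensor B)''$ is generally larger than the spatial tensor product of the enveloping von Neumann algebras. The paper makes the equivalent assertion that $\Ind(\pi_1\otimes\sigma)$ is faithful on $\redalg{\A\mintensor B}$, so this is a shared subtlety rather than a defect of your route; once the unitary is in place, one inequality of norms follows from faithfulness of $\pi^\Lambda\mintensor\rho$ (equivalently $\Ind(\pi_1)\otimes\sigma$) on $\redalg{\A}\mintensor B$, and the other from the fact that $U$ identifies the concrete image of $\tilde\pi^\Lambda$ with $\redalg{\A}\mintensor B$ while $\tilde\pi^\Lambda$ always factors through $\redalg{\A\mintensor B}$.
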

\begin{proof}
Using \cref{lemma:tensor-prod:red-cond-exp} and the canonical isomorphism
$$\algalg\A\algtensor B=\left(\contc(\A)/\NN_\A\right)\algtensor B\cong (\contc(\A)\algtensor B)/(\NN_\A\algtensor B),$$
it follows that the homomorphism
$\contc(\A) \algtensor B\cong\contc(\A\algtensor B)\into \contc(\A\mintensor B)$
factors through a homomorphism
$$\phi_0\colon \algalg\A\algtensor B\to \algalg{\A\mintensor B}\sbe \redalg{\A\mintensor B}.$$
which clearly has dense range, as the image contains all elementary tensors of the form $a_s\delta_s\otimes b$ with $a_s\in A_s$, $s\in S$ and $b\in B$.

We now prove that $\phi_0$ extends to the desired isomorphism $\phi \colon \redalg{\A} \mintensor B \congto \redalg{\A \mintensor B}$. In order to do this, fix a faithful nondegenerate representation $\pi_1 \colon A_1'' \to \L(\H_A)$ of $A_1''$ -- one can take here, e.g., the universal representation of $A_1$ -- and a faithful nondegenerate representation $\sigma \colon B \to \L(\H_B)$ of $B$. Now, consider the induced representations
\[ \Ind\left(\pi_1\right) \tensor \sigma \colon \redalg{\A} \mintensor B \to \L\left(\left(\ell^2\left(\A''\right) \tensor_{\pi_1''} \H_A\right) \tensor \H_B\right) \]
and
\[ \Ind\left(\pi_1 \tensor \sigma\right) \colon \redalg{\A \mintensor B} \to \L\left(\ell^2\left(\A \mintensor B\right)'' \tensor_{\left(\pi_1 \tensor \sigma\right)''} \left(\H_A \tensor \H_B \right)\right), \]
which are given as in \cref{def:induced-reps}. Since $\pi_1$ and $\sigma$ are faithful, so are $\Ind(\pi_1) \tensor \sigma$ and $\Ind(\pi_1 \tensor \sigma)$. Consider the operator
\begin{align} \label{eq:uni-op:inter-induc-reps}
U \colon \ell^2\left(\A \mintensor B\right)'' \tensor_{\left(\pi_1 \tensor \sigma\right)''} \left(\H_A \tensor \H_B \right) & \to \left(\ell^2\left(\A''\right) \tensor_{\pi_1''} \H_A\right) \tensor \H_B, \nonumber \\
f \tensor b \tensor v \tensor w & \mapsto \left(f \tensor v\right) \tensor \sigma\left(b\right) w 
\end{align}
for $f \in \contc(\A), b \in B, v \in \H_A$ and $w \in \H_B$. 
Notice that the elementary tensors of the form $f \tensor b \tensor v \tensor w$ span a dense subspace of the domain of $U$ by \cref{rem:dense-subspace}. Likewise, the $\left(f \tensor v\right) \tensor \sigma\left(b\right) w$ are dense in the codomain of $U$.
However, we need to prove that $U$ is well defined, that it is bounded, and extends to the whole domain. We will check this by showing at the same time that $U$ is an isometry. Indeed, given $f_1,f_2\in \contc(\A)$, $v_1,v_2\in \H_A$ and $w_1,w_2\in \H_B$, we compute
\begin{align*}
&\braket{f_1\otimes b_1\otimes v_1\otimes w_1}{f_2\otimes b_2\otimes v_2\otimes w_2}\\
&=\braket{v_1\otimes w_1}{\pi_1\otimes\sigma(P_{\A\mintensor B}((f_1\otimes b_1)^*(f_2\otimes b_2)))(v_2\otimes w_2}\\
&=\braket{v_1\otimes w_1}{\pi_1(P_{\A}(f_1^*f_2))v_2\otimes \sigma(b_1^*b_2)w_2}\\
&=\braket{v_1}{\pi_1(P_{\A}(f_1^*f_2))v_2}\braket{w_1}{\sigma(b_1^*b_2)w_2}\\
&=\braket{f_1\otimes v_1}{f_2\otimes v_2}\braket{\sigma(b_1)w_1}{\sigma(b_2)w_2}\\
&=\braket{(f_1\otimes v_1)\otimes\sigma(b_1)w_1}{(f_2\otimes v_2)\otimes \sigma(b_2)w_2}.
\end{align*}
This shows that $U$ is well-defined and extends to an isometry. And since it has dense range, it is a unitary.
Moreover, we claim that 
\begin{equation} \label{eq:tensor-prod:red-comp:intertwine}
U^* \left(\Ind\left(\pi_1\right) \tensor \sigma\right) \left(x\right) U = \Ind\left(\pi_1 \tensor \sigma\right) \left(\phi_0\left(x\right)\right)
\end{equation}
for all $x \in \contc(\A)/\NN_\A \algtensor B$. The latter equation follows from a computation on elementary tensor that we carry out for the sake of the reader. In fact, for every $a_s \in A_s$ and  $c\in B$, we have
\begin{align*}
U\big(\Ind\left(\pi_1 \tensor \sigma\right) & \left(\phi_0\left(a_s \delta_s \tensor c\right)\right)  \left(f \tensor b \tensor v \tensor w\right)\big) \\
& = U\left(\Lambda_{\A \mintensor B}\left( a_s \delta_s \tensor c\right) \left(f \tensor b\right) \tensor v \tensor w\right) \\
& = U\left(a_s \delta_s f \tensor cb \tensor v \tensor w\right) = a_s \delta_s f \otimes v\otimes \tensor \sigma\left(cb\right) w.
\end{align*}
Likewise,
\begin{align*}
\left( \left(\Ind\left(\pi_1\right) \tensor \sigma\right) \left(a_s \delta_s \tensor c\right) U\right) & \left(f \tensor b \tensor v \tensor w\right) \\
& = \left(\left(\Ind\left(\pi\right) \tensor \sigma\right) \left(a_s \delta_s \tensor c\right)\right) \left(f \tensor v \tensor \sigma\left(b\right) w\right) \\
& = a_s \delta_s f \tensor v \tensor \sigma\left(cb\right) w
\end{align*}
which, by linearity, proves \cref{eq:tensor-prod:red-comp:intertwine} for all $x \in \contc(\A)/\NN_\A \algtensor B$. 
But then it follows that $\phi_0$ is isometric with respect to the reduced norms on $\redalg\A\mintensor B$ and $\redalg{\A\mintensor B}$, and therefore it extends to an isomorphism $\phi\colon \redalg{\A} \mintensor B \congto \redalg{\A \mintensor B}$, as desired.
\end{proof}

In case $A_1$ is well-behaved \cref{prop:tensor-prod:red-comp} can be improved. Indeed, recall that we have already used the canonical inclusion $\iota \colon A_1'' \algtensor B \hookrightarrow (A_1 \mintensor B)''$, $a \tensor b \mapsto a \tensor b$ in \cref{lemma:tensor-prod:red-cond-exp}. However, we should recall here that this map is not min-continuous in general. This is related to the \emph{locally reflexivity} of $A_1$.
\begin{lemma}[see \cite{Brown-Ozawa:Approximations}*{Proposition~9.2.5}] \label{lemma:tensor-prod:red-locally-refl}
A \cstar{}algebra $A$ is \emph{locally reflexive} if for every \cstar{}algebra $B$ the canonical inclusion $\iota \colon A'' \algtensor B \hookrightarrow (A \mintensor B)''$ is min-continuous, and hence extends to an inclusion $\iota \colon A'' \mintensor B \hookrightarrow (A \mintensor B)''$.
\end{lemma}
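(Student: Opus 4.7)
The plan is to deduce this from the standard characterization of local reflexivity via completely positive approximations, which is really the content of Brown--Ozawa, Proposition 9.2.5. Since the statement is a known equivalence attributed to the reference, the proposal is essentially to recall the argument there and adapt the notation.

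Recall that the classical (Effros--Haagerup) definition says that $A$ is locally reflexive if and only if for every finite-dimensional operator system $E \sbe A''$ there exists a net of completely positive contractions $\varphi_\lambda \colon E \to A$ converging to the inclusion $E \hookrightarrow A''$ in the point-ultraweak topology. First I would verify the forward implication: assuming this approximation property, given any elementary element $x = \sum_{i=1}^n a_i \otimes b_i \in A'' \algtensor B$, take $E \coloneqq \spn\{a_1,\dots,a_n,a_1^*,\dots,a_n^*,1\}\sbe A''$ and the associated CP nets $\varphi_\lambda$. The key step is then to show that $\sum \varphi_\lambda(a_i)\otimes b_i \to \sum a_i \otimes b_i$ weakly in $(A \mintensor B)''$, and since each $\varphi_\lambda \otimes \mathrm{id}_B$ is min-contractive on the operator system $E \algtensor B$ (by standard properties of CP maps and the minimal tensor norm), one obtains
\[
\Bigl\|\sum a_i\otimes b_i\Bigr\|_{(A\mintensor B)''} \leq \liminf_\lambda \Bigl\|\sum \varphi_\lambda(a_i)\otimes b_i\Bigr\|_{A\mintensor B} \leq \Bigl\|\sum a_i\otimes b_i\Bigr\|_{A''\mintensor B}.
\]
The reverse inequality is automatic from contractivity of the inclusion on algebraic tensors, giving min-continuity of $\iota$ and hence the required extension.

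For the converse, the strategy is the familiar Hahn--Banach/Arveson-type argument. Assuming $\iota\colon A''\mintensor B \hookrightarrow (A\mintensor B)''$ is min-continuous for every $B$, specialize $B$ to be the universal C\textsuperscript{*}-algebra of a finite-dimensional operator system $E \sbe A''$ (or, more concretely, use $B = \mathcal B(\ell^2)$ together with Wittstock's theorem) to produce completely bounded maps $E \to A$ approximating the inclusion $E\hookrightarrow A''$ in the point-ultraweak topology; a standard averaging then upgrades these to completely positive contractions, yielding local reflexivity of $A$.

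The main obstacle, and the reason this result is nontrivial, is keeping track of the interaction between the minimal tensor norm and ultraweak limits in the finite-dimensional approximations; in particular, handling the fact that the inclusion $A''\algtensor B \hookrightarrow (A\mintensor B)''$ is not min-continuous in general requires the CP approximations to manufacture the needed continuity. Because all of this is carried out in detail in \cite{Brown-Ozawa:Approximations}*{Proposition~9.2.5}, in the paper we only invoke the reference rather than reproducing the full argument.
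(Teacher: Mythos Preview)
The paper gives no proof of this lemma at all: it is stated purely as a citation of \cite{Brown-Ozawa:Approximations}*{Proposition~9.2.5}, with the characterization recorded for later use in \cref{cor:tensor-prod:red-comp:cond-exp}. Your proposal correctly recognizes this in its final sentence, and the sketch you give of the forward and backward implications is a faithful outline of the argument in Brown--Ozawa, so there is nothing to compare.
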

\begin{corollary} \label{cor:tensor-prod:red-comp:cond-exp}
Let $\A = (A_s)_{s \in S}$ be a Fell bundle over an inverse semigroup $S$, and let $B$ be any \cstar{}algebra. Suppose that $A_1$ is locally reflexive. Then the identity map on $\contc(\A) \algtensor B$ induces a \Star{}isomorphism $\phi \colon \redalg{\A} \mintensor B \congto \redalg{\A \mintensor B}$ such that the diagram
\begin{center}
\begin{tikzcd}
\redalg{\A} \mintensor B \arrow{d}{P_{\A} \tensor \id_B} \arrow{r}{\phi} & \redalg{\A \mintensor B} \arrow{d}{P_{\A \mintensor B}} \\
A_1'' \mintensor B \arrow[hook]{r}{\iota} & \left(A_1 \mintensor B\right)''
\end{tikzcd}
\end{center}
commutes.
\end{corollary}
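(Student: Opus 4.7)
The plan is to observe that the isomorphism $\phi$ is already provided by \cref{prop:tensor-prod:red-comp}, so the genuinely new content is the commutativity of the square. This commutativity only becomes meaningful because the local reflexivity hypothesis on $A_1$ yields, via \cref{lemma:tensor-prod:red-locally-refl}, a well-defined min-continuous inclusion $\iota\colon A_1''\mintensor B\hookrightarrow (A_1\mintensor B)''$; with $\iota$ in hand, the goal is to compare the two norm-continuous maps $\iota\circ(P_\A\otimes\id_B)$ and $P_{\A\mintensor B}\circ\phi$ on the common dense subspace coming from $\contc(\A)\algtensor B$.

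First I would check that both compositions extend continuously to the reduced completions. The faithful conditional expectation $P_\A\colon \algalg\A\to A_1''$ from \cref{prop:cond-exp} is completely positive contractive for the reduced norm (this is immediate from the GNS picture behind \cref{def:left-reg:l-two}, which gives $\|P_\A(x)\|\leq\|x\|_\red$), so it extends uniquely to a completely positive contraction $P_\A\colon \redalg\A\to A_1''$. Complete positivity then yields a min-continuous tensor extension $P_\A\otimes\id_B\colon \redalg\A\mintensor B\to A_1''\mintensor B$, which composed with $\iota$ produces a norm-continuous map into $(A_1\mintensor B)''$. The right-hand vertical map $P_{\A\mintensor B}$ extends analogously to a norm-continuous map on $\redalg{\A\mintensor B}$.

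To finish, I would verify agreement on the dense subspace $\contc(\A)\algtensor B$. For an elementary tensor $a_s\delta_s\otimes b$ with $a_s\in A_s$ and $b\in B$, \cref{lemma:tensor-prod:red-cond-exp} gives directly
\[ P_{\A\mintensor B}(a_s\delta_s\otimes b) = P_\A(a_s\delta_s)\otimes b = \iota\bigl((P_\A\otimes\id_B)(a_s\delta_s\otimes b)\bigr), \]
so by linearity $\iota\circ(P_\A\otimes\id_B)$ and $P_{\A\mintensor B}\circ\phi$ coincide on all of $\contc(\A)\algtensor B$. Norm-continuity of both maps together with density of $\contc(\A)\algtensor B$ in $\redalg\A\mintensor B$ then forces them to agree globally, which is exactly the commutativity of the diagram.

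The only real subtlety in executing this plan is bookkeeping rather than conceptual: one must confirm that the two canonical conditional expectations extend norm-continuously to the reduced completions, and that $P_\A\otimes\id_B$ extends min-continuously on the tensor product. Both are standard consequences of the fact that faithful conditional expectations are completely positive contractions, even though they are not extracted as separate lemmas in the preliminaries; once these continuities are in place, \cref{lemma:tensor-prod:red-cond-exp} closes the argument via a pure density-continuity step.
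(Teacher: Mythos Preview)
Your proposal is correct and follows essentially the same approach as the paper: reduce to the algebraic commutativity on $\contc(\A)\algtensor B$ given by \cref{lemma:tensor-prod:red-cond-exp}, then invoke local reflexivity via \cref{lemma:tensor-prod:red-locally-refl} to make $\iota$ well defined on $A_1''\mintensor B$ and extend by continuity. You supply more detail than the paper does about why $P_\A\otimes\id_B$ and $P_{\A\mintensor B}$ extend continuously to the reduced completions, but this is just filling in what the paper leaves implicit.
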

\begin{proof}
\cref{lemma:tensor-prod:red-cond-exp} can be reformulated by saying that the diagram
\begin{center}
\begin{tikzcd}
\left(\contc\left(\A\right)/\NN_{\A}\right) \algtensor B \arrow{d}{P_{\A} \tensor \id_B} \arrow{r}{\phi} & \redalg{\A \mintensor B} \arrow{d}{P_{\A \mintensor B}} \\
A_1'' \algtensor B \arrow[hook]{r}{\iota} & \left(A_1 \mintensor B\right)'',
\end{tikzcd}
\end{center}
commutes. This holds for every $\A$ and $B$, regardless of any property $A_1$ might or might not have. Now, if $A_1$ is locally reflexive, then \cref{lemma:tensor-prod:red-locally-refl} shows that $\iota$ extends to an inclusion $\iota \colon A_1'' \mintensor B \to (A_1 \mintensor B)''$ that makes the diagram in the statement commute.
\end{proof}

\begin{remark} \label{rem:tensor-prod:problem-comp}
By our discussions above, we see that $P_{\A \mintensor B} = P_{\A} \mintensor \id_B$ on the fibers $A_s \delta_s \algtensor B$ (see \cref{lemma:tensor-prod:red-cond-exp}), but this does not mean that $P_{\A \mintensor B} = P_{\A} \mintensor \id_B$ globally unless $A_1$ is assumed to be locally reflexive. A related subtlety already appeared in works of Kranz~\cite{Kranz:AP-groupooids-Fell-bundles}, LaLonde~\cite{MR3383622}, and previous work of Kirchberg-Wassermann~\cite{Kirchberg-Wassermann:Operations}, albeit in a different guise, which is related to continuity of $C^*$-bundles.

Let $\A = (A_x)_{x \in X}$ be an upper semi-continuous (u.s.c. for short) \cstar{}bundle over a locally compact Hausdorff space $X$. Moreover, let $S$ be the set of open sets of $X$, and let $\B = (B_u)_{u \in S}$ be the associated Fell bundle over $S$ (see the discussion before \cref{lemma:fell-isg-vs-groupoids}). Then, for any \cstar{}algebra $C$, we have $(\B \mintensor C)_u \coloneqq B_u \mintensor C$ defines a Fell bundle structure that does \emph{not} appear from a u.s.c. \cstar{}bundle over $X$. In fact, it appears from the bundle structure given by $\A \mintensor C \coloneqq (A_x \mintensor C)_{x \in X}$, which defines a, not necessarily u.s.c., \cstar{}bundle. In addition, note that $\A \mintensor C$ is u.s.c. for every \cstar{}algebra $C$ precisely when $A_1$ is exact (see~\cite{Kirchberg-Wassermann:Operations}*{Lemma~2.3} and recall that exact \cstar{}algebras are always locally reflexive~\cite{Brown-Ozawa:Approximations}).
\end{remark}

We end the discussion on tensor products of Fell bundles with the following proposition, which shows that they behave appropriately with respect to the approximation property. 
\begin{proposition} \label{prop:tensor-prod:ap}
Let $\A = (A_s)_{s \in S}$ be a Fell bundle over an inverse semigroup $S$, and let $B$ be any fixed \cstar{}algebra. If $\A$ has the approximation property, then $\A \mintensor B$ and $\A \maxtensor B$ do as well.
\end{proposition}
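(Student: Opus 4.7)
The plan is to construct a witness for the approximation property of $\A\otimes B$ (where $\otimes$ stands for either $\mintensor$ or $\maxtensor$) by tensoring a witness for $\A$ with an approximate unit of $B$. Take $\{\xi_i\}_{i\in I}\sbe\gammac(S,\A)$ witnessing the approximation property of $\A$, set $M\coloneqq\sup_i\|\sum_{p,t}\xi_i(p)^*\xi_i(t)1_{pt^*}^\A\|$, and fix a contractive approximate unit $\{u_j\}_{j\in J}$ of $B$. I propose the net indexed by $I\times J$ with the product order, given by
$$\eta_{i,j}(s)\coloneqq \xi_i(s)\otimes u_j,\quad s\in S.$$
Since $\xi_i(s)\in A_{ss^*}$ gives $\eta_{i,j}(s)\in A_{ss^*}\otimes B=(\A\otimes B)_{ss^*}$, and $\supp\eta_{i,j}=\supp\xi_i$ is finite, we have $\eta_{i,j}\in\gammac(S,\A\otimes B)$.

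The crucial preliminary is to identify the auxiliary units for $\A\otimes B$, namely
$$1^{\A\otimes B}_{s,t}=1^\A_{s,t}\otimes 1\quad\text{in }\M(I^\A_{s,t}\otimes B)=\M(I^{\A\otimes B}_{s,t}),$$
which follows because products $e_k\otimes f_\ell$ of approximate units for $I^\A_{s,t}$ and $B$ form an approximate unit for $I^\A_{s,t}\otimes B$ whose strict limit is $1^\A_{s,t}\otimes 1_{\M(B)}$. Granting this identification, both defining expressions of the approximation property factor cleanly across the tensor product:
\begin{align*}
\sum_{p,t}\eta_{i,j}(p)^*\eta_{i,j}(t)\,1_{pt^*}^{\A\otimes B}&=\Bigl(\sum_{p,t}\xi_i(p)^*\xi_i(t)\,1_{pt^*}^\A\Bigr)\otimes u_j^2,\\
\sum_{p,t}1_{p(st)^*}^{\A\otimes B}\,\eta_{i,j}(p)^*(a_s\otimes b)\,\eta_{i,j}(t)&=\Bigl(\sum_{p,t}1_{p(st)^*}^\A\,\xi_i(p)^*a_s\xi_i(t)\Bigr)\otimes u_jbu_j.
\end{align*}
The first has norm at most $M$, yielding condition~\cref{def:fell-isg:bounded}. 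For the second, the first tensor factor converges in norm to $a_s$ by the approximation property of $\A$ while $u_jbu_j\to b$; hence the product net delivers convergence to $a_s\otimes b$, establishing condition~\cref{def:fell-isg:invar} on every elementary tensor $a_s\otimes b$.

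The sole genuine obstacle is upgrading this convergence from elementary tensors to arbitrary $x\in A_s\otimes B$. My plan is to establish uniform boundedness of the linear maps
$$T_{i,j}\colon A_s\otimes B\to A_s\otimes B,\quad x\mapsto\sum_{p,t}1_{p(st)^*}^{\A\otimes B}\,\eta_{i,j}(p)^*x\,\eta_{i,j}(t),$$
with $\|T_{i,j}\|\le M$ in each of the min and max norms. This can be proved via a Cauchy--Schwarz-type estimate in the spirit of \cref{thm:fell-principle}: fixing a faithful representation $\pi$ of the relevant cross-sectional \cstar{}algebra of $\A\otimes B$, the element $T_{i,j}(x)$ appears as a generalized matrix coefficient of $\pi^\Lambda_s(x\delta_s)$ against a vector in $\ell^2_\pi(S,\H)$ whose squared norm is exactly $\|\sum_{p,t}\eta_{i,j}(p)^*\eta_{i,j}(t)1_{pt^*}^{\A\otimes B}\|\le M$, giving $\|T_{i,j}(x)\|\le M\|x\|$. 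With this uniform bound, density of $A_s\algtensor B$ in $A_s\otimes B$ combined with a standard $\varepsilon/3$-argument extends pointwise convergence to all of $A_s\otimes B$, completing the proof for both $\A\mintensor B$ and $\A\maxtensor B$.
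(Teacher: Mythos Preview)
Your proposal is correct and follows essentially the same approach as the paper: tensor the witnessing net with an approximate unit of $B$, identify $1^{\A\otimes B}_s=1^\A_s\otimes 1_B$, factor both sums, and then extend from elementary tensors to all of $A_s\otimes B$ by density and uniform boundedness. The only difference is that you are more explicit about the last step, invoking the Cauchy--Schwarz/Fell-absorption estimate (which is exactly the content of \cref{lemma:ap-implies-wk:maps}), whereas the paper simply appeals to the uniform boundedness guaranteed by condition~\cref{def:fell-isg:bounded} without spelling out the bound on $T_{i,j}$.
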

\begin{proof}
Fix a net $\{\xi_i\}_{i \in I} \sbe \gammac(S, \A)$ witnessing the approximation property of $\A$, and consider the net
\[ \zeta_{i,j}\left(s\right) \coloneqq \xi_i\left(s\right) \otimes u_j, \]
where $\{u_j\}_{j\in J}$ is an approximate unit for $B$. We claim that $\{\zeta_{i,j}\}_{i \in I, j\in J}$ witnesses the approximation property for both $\A \maxtensor B$ and $\A \mintensor B$. In fact, both proofs are identical, and hence we shall restrict ourselves to the proof for $\A \maxtensor B$. First, notice that $\zeta_{i,j} \in \gammac(S, A_1 \odot B)$, since $\supp(\zeta_{i,j}) = \supp(\xi_i)$. In order to avoid confusion, we will denote by $1^{\text{m}}_s$ the unit of the ideal defined as in \cref{def:pre:ideal-s-1} for the bundle $\A \maxtensor B$ (we try to omit naming it). In particular, observe that $1^{\text{m}}_s = 1_s \tensor 1_B$, where $1_B$ denotes the unit of $\M(B) \sbe B''$. We can now check that $\{\zeta_{i,j}\}_{i \in I, j\in J}$ does witness the approximation property for $\A \maxtensor B$. First,
\begin{align*}
\left\|\sum_{p, t \in S} \zeta_{i,j}\left(p\right)^* \zeta_{i,j}\left(t\right) 1^{\text{m}}_{pt^*}\right\| & = \left\| \sum_{p, t \in S} \left(\xi_i\left(p\right)^*\xi_i\left(t\right) 1_{pt^*}\right) \tensor u_j \right\| \\
& \leq \left\| \sum_{p, t \in S} \xi_i\left(p\right)^*\xi_i\left(t\right) 1_{pt^*} \right\| < \infty,
\end{align*}
which proves assertion~\cref{def:fell-isg:bounded} in \cref{def:fell-isg}. Assertion~\cref{def:fell-isg:invar} is proved similarly. First, given any $x_s = \sum_{k = 1}^n a_k \tensor b_k \in A_s \algtensor B$ observe that
\begin{align*}
\sum_{p, t \in S} 1^{\text{m}}_{p\left(st\right)^*} & \zeta_{i,j}\left(p\right)^* x_s \zeta_{i,j}\left(t\right) = \sum_{p, t \in S} \sum_{k = 1}^n 1_{p \left(st\right)^*} \xi_i\left(p\right)^* a_k \xi_i\left(t\right) \tensor u_jb_ku_j \\
& = \sum_{k = 1}^n \left(\sum_{p, t \in S} 1_{p \left(st\right)^*} \xi_i\left(p\right)^* a_k \xi_i\left(t\right) \right) \tensor u_jb_ku_j \xrightarrow{i,j} \sum_{k = 1}^n a_k \tensor b_k = x_s. \\
\end{align*}
Given any general $y_s \in (\A \maxtensor B)_s = A_s \maxtensor B$ the same result follows since $A_s \algtensor B \sbe A_s \maxtensor B$ is dense and $\{\zeta_{i,j}\}_{i,j}$ is uniformly bounded in the sense of \cref{def:fell-isg}~\cref{def:fell-isg:bounded}.
\end{proof}

\section{Nuclearity of the full and reduced cross-sectional \texorpdfstring{$C^*-$}{C*-}algebras}\label{sec:nuclearity}
In this section we prove the two main results of the paper. Firstly, \cref{thm:ap-implies-wk} shows that if a Fell bundle has the approximation property then it also has the weak containment property. Secondly, \cref{thm:ap-nuclear} shows, using the tensor product machinery of \cref{sec:tensor}, that Fell bundles with the weak containment property and nuclear unit fiber give rise to nuclear (full or reduced) cross-sectional \cstar{}algebras. 

We start the discussion with the proof of \cref{thm:ap-implies-wk}, which has two main ingredients. The first is Fell's absorption principle in the context of inverse semigroups, particularly \cref{cor:fell-principle:red}. The other main ingredient is the following lemma, which allows us to create completely positive maps \emph{in the wrong direction}, namely from $\redalg{\A}$ to $\fullalg{\A}$.
\begin{lemma} \label{lemma:ap-implies-wk:maps}
Let $\A = (A_s)_{s \in S}$ be a Fell bundle over the inverse semigroup $S$. Let $\xi \in \gammac(S, \A)$ be such that $\|\sum_{p, t \in S} \xi(p)^*\xi(t) 1_{pt^*}\| < \infty$. Then there are
\begin{enumerate}[label=(\roman*)]
\item a \cstar{}algebra $B$ containing a copy of $\fullalg{\A}$;
\item and a bounded completely positive map $\Psi_\xi \colon \redalg{\A} \rightarrow B$
\end{enumerate}
such that
\[ \Psi_\xi\left(a_t \delta_t\right) \coloneqq \sum_{p, t \in S} 1_{p\left(st\right)^*} \xi\left(p\right)^* a_s \xi\left(t\right) \]
for every $a_s \in A_s$ and $s \in S$. Furthermore, $\|\Psi_\xi\| \leq \|\sum_{p, t \in S} \xi(p)^*\xi(t) 1_{pt^*} \|$.
\end{lemma}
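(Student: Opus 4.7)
The plan is to realize $\Psi_\xi$ as a compression $V_\xi^* \pi^\Lambda(\cdot) V_\xi$ of the representation produced by Fell's absorption principle (\cref{thm:fell-principle}), where $V_\xi$ is a bounded operator built from $\xi$ acting like ``multiplication by $\xi$''. First I would fix a faithful nondegenerate representation $\pi\colon \fullalg{\A} \into \L(\K)$, view it as a representation $(\pi_s)_{s \in S}$ of $\A$ via the canonical embeddings $A_s \into \fullalg{\A}$, and take $B \coloneqq \pi(\fullalg{\A})'' \sbe \L(\K)$, a \cstar{}algebra containing a copy of $\fullalg{\A}$. By \cref{thm:fell-principle}, $\pi^\Lambda$ extends to a representation $\redalg{\A} \to \L(\ell^2_\pi(S, \K))$.

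Next I would define a linear map $V_\xi \colon \K \to \ell^2_\pi(S, \K)$ by
$$V_\xi h \coloneqq \sum_{s \in S} \pi_1(\xi(s)) h \cdot \delta_s,$$
which is well-defined because the sum is finite and $\xi(s) \in A_{ss^*}$ forces $\pi_1(\xi(s)) h \in \H_s$ (recall \cref{lemma:saturation-idempotents} and \cref{def:fell-rep-hilbert-fibers}). A direct application of the inner-product formula from \cref{prop:fell-principle:inner-prod}, followed by the centrality of $1_{st^*}$ in $A_1''$ from \cref{prop:unit-s-commutation}, yields
$$\braket{V_\xi h}{V_\xi h}_\pi = \sum_{s, t} \braket{\pi_1(\xi(s)) h}{\pi_1''(1_{st^*}) \pi_1(\xi(t)) h} = \braket{h}{\pi_1''\Big(\sum_{s,t} \xi(s)^*\xi(t)\, 1_{st^*}\Big) h}_\K,$$
so $V_\xi$ is bounded with $\|V_\xi\|^2 \leq \|\sum_{s,t} \xi(s)^*\xi(t)\, 1_{st^*}\|$.

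I would then set $\Psi_\xi(a) \coloneqq V_\xi^* \pi^\Lambda(a) V_\xi$ for $a \in \redalg{\A}$, which is automatically completely positive and bounded with $\|\Psi_\xi\| \leq \|V_\xi\|^2$. To verify the formula on a generator $a_s \delta_s$, I would expand the inner product using \cref{prop:fell-principle:inner-prod} once more and regroup:
\begin{align*}
\braket{k}{\Psi_\xi(a_s \delta_s) h}_\K &= \braket{V_\xi k}{\pi^\Lambda(a_s \delta_s) V_\xi h}_\pi \\
&= \sum_{p, t} \braket{\pi_1(\xi(p)) k}{\pi_1''(1_{p(st)^*}) \pi_s(a_s) \pi_1(\xi(t)) h}_\K \\
&= \braket{k}{\pi''\Big(\sum_{p,t} 1_{p(st)^*} \xi(p)^* a_s \xi(t)\Big) h}_\K,
\end{align*}
where the last equality again uses the centrality of $1_{p(st)^*}$ to absorb it into the Fell bundle product. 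Since $\xi$ is finitely supported, the sum on the right is a finite combination of elements of $\fullalg{\A}''$, and thus sits inside $B$; after the identification $\fullalg{\A} \sbe B$ via $\pi$, this gives the formula in the statement.

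The main obstacle I expect is the noncommutative bookkeeping in the final computation: one must justify, using only \cref{prop:unit-s-commutation} and the compatibility of $\pi$ with the Fell bundle operations, that the product $\pi_1(\xi(p))^* \pi_1''(1_{p(st)^*}) \pi_s(a_s) \pi_1(\xi(t))$ equals $\pi''(1_{p(st)^*} \xi(p)^* a_s \xi(t))$ as an operator on $\K$, despite $\pi_s(a_s)$ not lying in $\pi_1''(A_1'')$. Once this identification is established, the boundedness and norm estimate fall out of the computation of $\|V_\xi\|^2$, and the density of $\{a_s\delta_s\}$ in $\algalg{\A} \sbe \redalg{\A}$ ensures that $\Psi_\xi$ takes values in $B$ on all of $\redalg{\A}$.
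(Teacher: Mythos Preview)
Your proposal is correct and follows essentially the same approach as the paper: the paper also fixes a faithful representation $\pi$ of $\fullalg{\A}$, defines the same operator $T_\xi = V_\xi$ (sending $v$ to $\sum_t \pi_1(\xi(t))v\,\delta_t$), and sets $\Psi_\xi(x) = T_\xi^*\pi^\Lambda(x)T_\xi$. The only cosmetic differences are that the paper additionally requires $\pi_1''$ to be faithful (unnecessary for this lemma), takes $B$ to be the \cstar{}algebra generated by $\pi(\fullalg{\A})$ and $\pi_1''(A_1'')$ rather than your larger $\pi(\fullalg{\A})''$, and carries out the final computation by evaluating $\Psi_\xi(\pi^\Lambda_s(a_s\delta_s))v$ directly instead of via inner products.
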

\begin{proof}
Fix a faithful representation $\pi \colon \fullalg{\A} \to \L(\H)$ such that the normal extension $\pi_1'' \colon A_1'' \to \L(\H)$ is also faithful. Let $B \sbe \L(\H)$ be the \cstar{}algebra generated by the image of $\pi$ along with $\pi_1''(A_1'')$ in $\L(\H)$. It is clear that $B$ contains a copy of $\fullalg{\A}$. In addition, let $\ell^2_\pi(S, \H)$ be as in \cref{def:fell-principle:l-two}, that is, $\ell^2_\pi(S, \H)$ is the Hausdorff completion of $\contc^\pi(S, \H)$ with respect to the semi-inner product $\braket{\cdot}{\cdot}_\pi$. Moreover, let $\pi^\Lambda$ be as in \cref{thm:fell-principle}, that is, $\pi^\Lambda_s (a_s \delta_s)(v_t \delta_t) \coloneqq \pi_{st}(a_sa_t) v_t \delta_{st}$. By \cref{cor:fell-principle:red}, the \cstar{}algebra generated by the image of $\pi^\Lambda$ is canonically isomorphic to $\redalg{\A}$, since $\pi$ is assumed to be faithful.

Consider the operator
\[ T_\xi \colon \H \to \ell^2_\pi(S, \H), \;\; \text{where} \;\, v \mapsto \sum_{t \in S} \pi_1\left(\xi\left(t\right)\right)v \delta_t. \]
Observe that for every $v \in \H$ and $w_r \delta_r \in \H_r \delta_r \sbe \ell^2_\pi(S, \H)$, we have
\begin{align*}
\braket{T_\xi v}{w_r \delta_r}_\pi = \sum_{t \in S} \braket{\pi_1\left(\xi\left(t\right)\right) v \delta_t}{w_r \delta_r}_\pi & = \sum_{t \in S} \braket{\pi_1\left(\xi\left(t\right)\right) v}{\pi_1''\left(1_{tr^*}\right) w_r}_\H \\
& = \braket{v}{\sum_{t \in S} \pi_1''\left(\xi\left(t\right)^* 1_{tr^*}\right) w_r}_\H,
\end{align*}
and hence $T_\xi^*(w_r \delta_r) = \sum_{t \in S} \pi_1''(\xi(t)^* 1_{tr^*}) w_r$. In addition, observe that $T_\xi$ is bounded, as
\begin{align*}
\left\|T_\xi v\right\|^2 & = \braket{T_\xi v}{T_\xi v}_\pi = \sum_{t, p \in S} \braket{\pi_1\left(\xi\left(t\right)\right)v \delta_t}{\pi_1\left(\xi\left(p\right)\right)v \delta_p}_\pi \\ 
& = \sum_{t, p \in S} \braket{\pi_1\left(\xi\left(t\right)\right)v}{\pi_1''\left(1_{tp^*} \xi\left(p\right)\right)v}_\H = \braket{v}{\sum_{t, p \in S} \pi_1''\left(\xi\left(t\right)^* 1_{tp^*} \xi\left(p\right)\right)v}_\H,
\end{align*}
and hence $\|T_\xi\|^2 \leq \| \sum_{p, t \in S} \xi(p)^* \xi(t) 1_{pt^*} \| < \infty$ by the assumption on $\xi$. Now consider the map $\Psi_\xi \colon \L(\ell^2_\pi(S, \H)) \to \L(\H)$ given by $\Psi_\xi(x) \coloneqq T_\xi^* x T_\xi$, and observe that $\Psi_\xi$ is bounded, since $T_\xi$ is bounded. Likewise, it is completely positive. Furthermore, observe that for every $v \in \H$
\begin{align*}
\Psi_\xi\left(\pi^\Lambda_s\left(a_s\delta_s\right)\right)v & = \left(T_\xi^* \pi^\Lambda_s\left(a_s \delta_s\right)T_\xi\right) v = \sum_{t \in S} T_\xi^* \pi^\Lambda_s\left(a_s \delta_s\right) \pi_1\left(\xi\left(t\right)\right) v \delta_t \\
& = \sum_{t \in S} T_\xi^* \pi_s\left(a_s \xi\left(t\right)\right) v \delta_{st} = \sum_{p, t \in S} \pi_1''\left(\xi\left(p\right)^* 1_{p\left(st\right)^*} \right) \pi_s\left(a_s \xi\left(t\right)\right) v \\
& = \sum_{p, t \in S} \pi_s''\left(1_{p\left(st\right)^*} \xi\left(p\right)^* a_s \xi\left(t\right)\right) v,
\end{align*}
as desired. In addition, note that $\|\Psi_\xi\| \leq \|T_\xi^* T_\xi\| = \|T_\xi\|^2 < \infty$, as proved above. Lastly, observe that the image of $\Psi$ is contained in $B$, as $1_{r} \in A_1'', \xi(p) \in A_1$ and $a_s \in A_s \delta_s \sbe \fullalg{A}$, all of which are embedded into $B$ via $\pi$.
\end{proof}

\begin{theorem} \label{thm:ap-implies-wk}
Let $\A = (A_s)_{s \in S}$ be a Fell bundle over the inverse semigroup $S$. If $\A$ has the approximation property then it has the weak containment property, i.e., the regular representation $\Lambda \colon \fullalg{\A} \rightarrow \redalg{\A}$ is a \Star{}isomorphism. 
\end{theorem}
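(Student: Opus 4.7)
The plan is to exploit \cref{lemma:ap-implies-wk:maps} to build an asymptotic left inverse of $\Lambda\colon \fullalg{\A}\to\redalg{\A}$. Since $\Lambda$ is surjective by construction of $\redalg{\A}$, it suffices to prove injectivity; isometry then follows automatically, as every injective \Star{}homomorphism between \cstar{}algebras is isometric. This is the same high-level strategy Exel used for group Fell bundles, but here executed with the more delicate $1_{p(st)^*}$-twisted sums dictated by \cref{def:fell-isg}.

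Let $\{\xi_i\}_{i\in I}\sbe \gammac(S,\A)$ witness the approximation property of $\A$ and, for each $i$, apply \cref{lemma:ap-implies-wk:maps} to obtain a completely positive map $\Psi_{\xi_i}\colon \redalg{\A}\to B$, where $B$ is a \cstar{}algebra containing a faithful copy of $\fullalg{\A}$ and
\[ \Psi_{\xi_i}\bigl(\Lambda(a_s\delta_s)\bigr) \;=\; \sum_{p,t\in S}1_{p(st)^*}\,\xi_i(p)^*a_s\,\xi_i(t) \]
for every $a_s\in A_s$ and $s\in S$. The boundedness condition \cref{def:fell-isg:bounded} yields a uniform bound $C\coloneqq \sup_i\|\Psi_{\xi_i}\|<\infty$, while the invariance condition \cref{def:fell-isg:invar} says exactly that $\Psi_{\xi_i}(\Lambda(a_s\delta_s))\to a_s$ in norm, viewing $a_s\equiv a_s\delta_s$ inside $\fullalg{\A}\sbe B$. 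Extending linearly, $\Psi_{\xi_i}(\Lambda(x))\to x$ in $B$ for every $x$ in the dense \Star{}subalgebra $\algalg{\A}\sbe \fullalg{\A}$ (cf.\ \cref{prop:dense-image-in-universal}).

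The next step is to upgrade this pointwise convergence to all of $\fullalg{\A}$ via the standard three-term estimate: given $y\in \fullalg{\A}$ and $\varepsilon>0$, approximate $y$ in the maximal norm by some $x\in \algalg{\A}$ with $\|y-x\|_\max<\varepsilon/(C+1)$, and combine the uniform bound on the $\Psi_{\xi_i}$ with the trivial inequality $\|\Lambda(y-x)\|_\red\leq \|y-x\|_\max$ to conclude $\Psi_{\xi_i}(\Lambda(y))\to y$ in $B$. In particular, any $y\in \ker\Lambda$ satisfies $y=\lim_i\Psi_{\xi_i}(\Lambda(y))=0$, so $\Lambda$ is injective, and the theorem follows.

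I do not anticipate any real obstacle: the technical heart of the argument has already been absorbed into \cref{lemma:ap-implies-wk:maps}, whose construction of the $\Psi_{\xi_i}$ in turn rests on the inverse-semigroup form of Fell's absorption principle (\cref{thm:fell-principle,cor:fell-principle:red}). The only minor point deserving care is that the $\Psi_{\xi_i}$ take values in the auxiliary \cstar{}algebra $B\supseteq \fullalg{\A}$ rather than in $\fullalg{\A}$ itself; this is immaterial for our purposes, since the limit points $a_s$ already sit in $\fullalg{\A}$, which is norm-closed inside $B$.
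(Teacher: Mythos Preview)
Your proof is correct and follows essentially the same approach as the paper: both invoke \cref{lemma:ap-implies-wk:maps} to obtain the net of completely positive maps $\Psi_{\xi_i}$, verify that $\Psi_{\xi_i}\circ\Lambda$ converges to the identity first on the fibres $A_s\delta_s$ (by the invariance condition) and then on all of $\fullalg{\A}$ (by density and uniform boundedness), and conclude that $\ker\Lambda=0$. Your version is slightly more explicit about the three-term estimate and the role of the uniform bound $C$, and you also remark that the auxiliary algebra $B$ need not be independent of $i$; the paper instead observes from the proof of \cref{lemma:ap-implies-wk:maps} that a single $B$ works for all $\xi_i$, but either way the argument goes through.
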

\begin{proof}
Let $\{\xi_i\}_{i \in I} \sbe \gammac(S, \A)$ witness the approximation property of $\A$, and let $\Psi_{\xi_i} \colon \redalg{\A} \to B$ be as in \cref{lemma:ap-implies-wk:maps}. In fact, observe that it follows from the proof of \cref{lemma:ap-implies-wk:maps} that the \cstar{}algebra $B$ does \emph{not} depend on $\xi_i$, so we may take it to be generated by $\fullalg{\A}$ and $A_1''$. Consider then maps $\Phi_i \colon \fullalg{\A} \to B$ given by $\Phi_i \coloneqq \Psi_{\xi_i} \circ \Lambda$, and observe they are completely positive and bounded. Moreover, 
\begin{equation}\label{eq:AP-convergence}
\Phi_i\left(a_s\delta_s\right) = \Psi_{\xi_i} \left(\Lambda_s\left(a_s \delta_s\right)\right) = \Psi_{\xi_i}\left(a_s \delta_s\right) = \sum_{p, t \in S} 1_{p\left(st\right)^*} \xi_i\left(p\right)^* a_s \xi_i\left(t\right) \xrightarrow{i} a_s \delta_s
\end{equation}
since $\{\xi_i\}_{i \in I}$ is assumed to witness the approximation property for $\A$. Therefore, since the subspaces $A_t \delta_t$ span $\algalg{\A}$ and this algebra is dense in $\fullalg{\A}$, we obtain that $\Phi_i(x) \to x$ for every $x \in \fullalg{\A}$.

Now, for any $x \in \fullalg{\A}$ in the kernel of $\Lambda$, we have
\[ x = \lim_{i} \Phi_i\left(x\right) = \lim_{i} \Psi_{\xi_i}\left(\Lambda\left(x\right)\right) = 0, \]
which shows that $\Lambda$ is, indeed, injective, and hence a \Star{}isomorphism.
\end{proof}

\begin{remark}
It is apparent from the proof of \cref{thm:ap-implies-wk} that we could start with a net $\{\xi_i\}_{i}$ in $\gammac(S,\A'')$ witnessing the approximation property of $\A''$ in order to get the weak containment of $\A$. Moreover, in this situation it is actually enough to have weak*-convergence in \cref{def:fell-isg:invar} in order to get the conclusion in the proof above, as this would then yield weak convergence instead of norm convergence in \cref{eq:AP-convergence} after taking a representation as in \cref{lemma:ap-implies-wk:maps}.
\end{remark}

We now turn to the proof of \cref{thm:ap-nuclear}. We start by recording the following known general result.
\begin{lemma} \label{lemma:nuclear-fiber:general-fact}
Let $A \sbe B$ is an inclusion of \cstar{}algebras with a weak conditional expectation $P \colon B \to A''$. If $B$ is nuclear, then so is $A$.
\end{lemma}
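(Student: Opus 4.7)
The plan is to use the classical characterization of nuclearity due to Choi-Effros and Connes: a \cstar{}algebra $A$ is nuclear if and only if its enveloping von Neumann algebra $A''$ is semidiscrete. So I would aim at producing nets of normal c.c.p.\ approximations of $\id_{A''}$ factoring through finite-dimensional matrix algebras in the point-ultraweak topology.

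First I would exploit the nuclearity of $B$ to obtain c.c.p.\ maps $\phi_i\colon B\to M_{n_i}$ and $\psi_i\colon M_{n_i}\to B$ with $\psi_i\circ \phi_i\to \id_B$ point-norm. Then, using the inclusion $\iota\colon A\into B$ and the weak conditional expectation $P$, I would build the composed factorization
\[ A\xrightarrow{\phi_i\circ\iota} M_{n_i}\xrightarrow{P\circ\psi_i} A''. \]
Each leg is c.c.p. (as a composition of c.c.p.\ maps), and by the weak conditional expectation property (which in particular gives $P|_A=\id_A$ under the natural inclusion $A\sbe A''$), the composition satisfies
\[ \left(P\circ\psi_i\right)\circ\left(\phi_i\circ\iota\right)\left(a\right) = P\left(\psi_i\circ\phi_i\left(a\right)\right)\xrightarrow{i} P\left(a\right)=a \]
for every $a\in A$, where convergence happens in the norm of $A''$.

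Next, since each $M_{n_i}$ is a (finite-dimensional) von Neumann algebra, the c.c.p.\ maps $\phi_i\circ\iota\colon A\to M_{n_i}$ admit unique normal c.c.p.\ extensions $\widetilde\phi_i\colon A''\to M_{n_i}$ (this is standard: bounded operators from $A$ into any von Neumann algebra extend to normal operators on $A''$). I would then verify that the composition $P\circ\psi_i\circ\widetilde\phi_i\colon A''\to A''$ converges to $\id_{A''}$ in the point-ultraweak topology. The key point is that this net agrees on $A$ with the already-shown norm-convergent approximation of $\iota$, and on the weak-$^*$ dense subspace $A\sbe A''$ (Kaplansky density), this is enough to deduce point-ultraweak convergence to $\id_{A''}$ on all of $A''$ because the approximating maps are uniformly bounded and normal.

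The main obstacle will be making this last passage from point-norm convergence on $A$ to point-ultraweak convergence on $A''$ rigorous; however, this is a standard consequence of the uniform boundedness of c.c.p.\ maps together with the fact that the unit ball of $A$ is ultraweakly dense in the unit ball of $A''$ (Kaplansky density). Once the approximations on $A''$ are in place, semidiscreteness of $A''$ follows from Effros-Lance, and thus $A$ is nuclear by Connes' theorem.
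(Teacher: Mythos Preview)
Your approach via semidiscreteness is sound in outline, but the final passage --- extending point-norm convergence on $A$ to point-ultraweak convergence on all of $A''$ --- is not justified by ``uniformly bounded, normal, plus Kaplansky density'' alone. In predual terms you are claiming that the bounded net $(\Theta_i)_*\omega$ in $(A'')_*=A^*$, which converges to $\omega$ in the weak-$*$ topology $\sigma(A^*,A)$, also converges in the weak topology $\sigma(A^*,A'')$; but weak-$*$ convergence of a bounded net in a dual space does not in general imply weak convergence (think of the unit vectors in $\ell^1=(c_0)^*$). Normality of the $\Theta_i$ lets you pass Kaplansky approximants through each $\Theta_i$ individually, but the resulting double-limit problem remains. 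You can repair this either by observing that the factorizations $A\to M_{n_i}\to A''$ you already built exhibit the inclusion $A\hookrightarrow A''$ as a (weakly) nuclear map --- which is one of the known equivalent forms of nuclearity of $A$ --- or by invoking a convexity argument to upgrade the approximations, but the justification you wrote does not do the job as stated.

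The paper takes a shorter route that sidesteps this issue entirely. Rather than building finite-dimensional approximations, it extends $P$ to a normal conditional expectation $P''\colon B''\to A''$, uses that $B$ nuclear is equivalent to $B''$ injective, and then appeals to the fact that injectivity of a von Neumann algebra passes to the image of a conditional expectation; hence $A''$ is injective and $A$ is nuclear. Both arguments ultimately rest on the Choi--Effros/Connes equivalence of nuclearity with injectivity (or semidiscreteness) of the bidual, but the paper's version avoids any approximation bookkeeping and in particular avoids the delicate extension step where your argument currently has a gap.
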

\begin{proof}
Observe that $P$ extends to a normal conditional expectation $P'' \colon B'' \to A''$. In such case, $B$ is nuclear if, and only if, $B''$ is an injective von Neumann algebra, and this property passes to von Neumann subalgebras which are the image of a conditional expectation.
\end{proof}

\begin{lemma} \label{lemma:tensor-prod-nuclear-fiber}
Let $\A = (A_s)_{s \in S}$ be a Fell bundle over $S$, and let $B$ be any given \cstar{}algebra. If $A_1$ is nuclear, then $\A \mintensor B \cong \A \maxtensor B$.
\end{lemma}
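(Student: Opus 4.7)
My plan is to reduce the statement to the well-known fact that nuclearity passes to closed two-sided ideals. The key leverage is \cref{lemma:tensor-prod:norms}, which says that for any $s \in S$ and any \cstar{}algebra $B$, the norms on $A_s\maxtensor B$ and $A_s\mintensor B$ are both determined by the respective tensor-product norms on $A_{s^*s}\maxtensor B$ and $A_{s^*s}\mintensor B$ via the \cstar{}identity $\|x\|^2=\|x^*x\|$. Hence it suffices to show that the maximal and minimal tensor norms coincide on $A_{s^*s}\algtensor B$ for every $s\in S$.

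To this end I would observe that whenever $e\in E$ is an idempotent, the Banach space $A_e$ is in fact a closed two-sided ideal of the \cstar{}algebra $A_1$ (this is part of the Fell bundle structure: $A_e = A_e \cdot A_1 = A_1 \cdot A_e \subseteq A_1$ is an ideal for every $e\in E$). Since nuclearity passes to closed two-sided ideals, the hypothesis that $A_1$ is nuclear gives that $A_e$ is nuclear for every $e\in E$. In particular, $A_{s^*s}$ is a nuclear \cstar{}algebra for every $s\in S$, whence the canonical quotient map $A_{s^*s}\maxtensor B\onto A_{s^*s}\mintensor B$ is a \Star{}isomorphism for any \cstar{}algebra $B$.

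Combining these two observations, for any $x\in A_s\algtensor B$ we obtain
\[
\|x\|^2_{A_s\maxtensor B} \;=\; \|x^*x\|_{A_{s^*s}\maxtensor B} \;=\; \|x^*x\|_{A_{s^*s}\mintensor B} \;=\; \|x\|^2_{A_s\mintensor B},
\]
so the canonical surjection $A_s\maxtensor B \onto A_s\mintensor B$ is an isometric isomorphism of Banach spaces for every $s\in S$. Since this isomorphism is, by construction, the identity on the algebraic tensor product $A_s\algtensor B$, it is automatically compatible with the multiplication, involution and inclusion maps of the Fell bundle structure described in \cref{lemma:tensor-prod:multiplication}, because all of these structure maps are defined first on the algebraic subspaces and then extended by continuity. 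Assembling these fiberwise isomorphisms yields the desired isomorphism of Fell bundles $\A\maxtensor B\cong \A\mintensor B$.

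I do not anticipate any genuine obstacle in executing this plan: the only point worth being careful about is citing (or invoking a standard argument for) the fact that nuclearity passes to closed two-sided ideals of \cstar{}algebras, and the mild bookkeeping that the identification $A_s\mintensor B\cong A_s\maxtensor B$ intertwines the Fell bundle operations, which is immediate from the fact that both sides are norm-completions of the same \Star{}algebra with the same structure maps.
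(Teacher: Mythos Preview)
Your proposal is correct and follows essentially the same approach as the paper: reduce to the idempotent fibers via \cref{lemma:tensor-prod:norms}, then invoke the fact that nuclearity passes to closed two-sided ideals so that $A_e\maxtensor B\cong A_e\mintensor B$ for every $e\in E$. The paper's proof is simply a more compressed version of what you wrote.
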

\begin{proof}
We have to show that $A_s \mintensor B \cong A_s \maxtensor B$ for all $s \in S$. By \cref{lemma:tensor-prod:norms} this is immediate once we have $A_e \mintensor B \cong A_e \maxtensor B$ for all idempotents $e \in E$, which holds since, by assumption, $A_1$ is nuclear, and nuclearity is well-known to pass to ideals.
\end{proof}

\begin{theorem} \label{thm:ap-nuclear}
Let $\A = (A_s)_{s \in S}$ be a Fell bundle over an inverse semigroup $S$. Then the following assertions hold:
\begin{enumerate}[label=(\roman*)]
\item \label{thm:ap-nuclear:unit-fiber} If either $\redalg{\A}$ or $\fullalg{\A}$ is nuclear, then so is $A_1$.
\item \label{thm:ap-nuclear:nuclearity} If $A_1$ is nuclear and $\A$ has the approximation property, then $\fullalg{\A} \cong \redalg{\A}$ is nuclear. 
\end{enumerate}
\end{theorem}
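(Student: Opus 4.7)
\textbf{Proof proposal for \cref{thm:ap-nuclear}.}

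For assertion \cref{thm:ap-nuclear:unit-fiber}, the plan is to apply \cref{lemma:nuclear-fiber:general-fact} to the inclusion $A_1 \subseteq \fullalg{\A}$ (respectively $A_1 \subseteq \redalg{\A}$). On the full side, the canonical conditional expectation $P \colon \contc(\A) \to A_1''$ from \cref{prop:cond-exp} extends, via $\algalg{\A} \hookrightarrow \fullalg{\A}$, to a weak conditional expectation $\fullalg{\A} \to A_1''$ (using the faithful GNS-type construction behind $P$, this extension is standard and already implicit in \cite{Buss-Exel-Meyer:Reduced}). On the reduced side, one uses that $P$ factors through $\ell^2(\A'')$ as the rank-one projection onto $A_1''$, giving a weak conditional expectation $\redalg{\A} \to A_1''$. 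Then \cref{lemma:nuclear-fiber:general-fact} immediately yields nuclearity of $A_1$.

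For assertion \cref{thm:ap-nuclear:nuclearity}, the approach is the classical tensor-product argument adapted to our setting. First, by \cref{thm:ap-implies-wk}, the approximation property of $\A$ gives that $\Lambda \colon \fullalg{\A} \congto \redalg{\A}$; so we may denote this common \cstar{}algebra by $C^*(\A)$. To prove nuclearity, I will show that $C^*(\A) \maxtensor B = C^*(\A) \mintensor B$ for every \cstar{}algebra $B$. The key chain of identifications is:
\begin{equation*}
C^*(\A) \maxtensor B \;\cong\; \fullalg{\A \maxtensor B} \;\cong\; \fullalg{\A \mintensor B} \;\cong\; \redalg{\A \mintensor B} \;\cong\; C^*(\A) \mintensor B.
\end{equation*}
The first isomorphism is \cref{prop:tensor-prod:full-comp}; the last is \cref{prop:tensor-prod:red-comp} combined with $C^*(\A) \cong \redalg{\A}$. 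For the middle isomorphism I invoke \cref{lemma:tensor-prod-nuclear-fiber}: since $A_1$ is nuclear, the fibrewise maximal and minimal tensor products with $B$ coincide, hence $\A \maxtensor B \cong \A \mintensor B$ as Fell bundles, and the equality of their full \cstar{}algebras follows. The penultimate isomorphism uses \cref{prop:tensor-prod:ap}, which tells us that $\A \mintensor B$ still has the approximation property, combined with \cref{thm:ap-implies-wk} to conclude weak containment for $\A \mintensor B$.

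The step I expect to be the main obstacle is verifying that all these identifications are compatible on the canonical dense subspace $\contc(\A) \algtensor B$, so that composing them yields the identity of $C^*(\A) \algtensor B$ and hence a genuine equality of norms on $C^*(\A) \algtensor B$ rather than just an abstract isomorphism of \cstar{}algebras. However, each of \cref{prop:tensor-prod:full-comp,prop:tensor-prod:red-comp} is already proven to be induced by the identity on $\contc(\A) \algtensor B$, and the fibrewise nuclearity collapse in \cref{lemma:tensor-prod-nuclear-fiber} is likewise the identity at the algebraic level. Thus the composition is the identity on $\contc(\A) \algtensor B$, which is dense on both sides, and we obtain $\|\cdot\|_{\max} = \|\cdot\|_{\min}$ on $C^*(\A) \algtensor B$ for every $B$. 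This is exactly the defining condition for nuclearity of $C^*(\A)$, completing the proof.
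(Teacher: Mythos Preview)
Your proposal is correct and follows essentially the same approach as the paper. For \cref{thm:ap-nuclear:unit-fiber} the paper simply invokes \cref{lemma:nuclear-fiber:general-fact}, and for \cref{thm:ap-nuclear:nuclearity} it runs through exactly the chain of isomorphisms you wrote, citing \cref{prop:tensor-prod:full-comp}, \cref{lemma:tensor-prod-nuclear-fiber}, \cref{prop:tensor-prod:ap} with \cref{thm:ap-implies-wk}, and \cref{prop:tensor-prod:red-comp}; your extra care about compatibility on $\contc(\A)\algtensor B$ is also the point the paper makes when it says the identity map on $\contc(\A)\algtensor B$ descends through each step.
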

\begin{proof}
\cref{thm:ap-nuclear:unit-fiber} follows from \cref{lemma:nuclear-fiber:general-fact}. The proof of~\cref{thm:ap-nuclear:nuclearity} uses the tensor product machinery of \cref{sec:tensor}. Indeed, first observe that, by \cref{thm:ap-implies-wk}, we have $\fullalg{\A} \cong \redalg{\A}$ via the left regular representation, that is, the map induced from the identity map on $\contc(\A)$. Then, for any unital \cstar{}algebra $B$ the identity map on $\contc(\A) \algtensor B$ descends into \Star{}isomorphisms
\begin{align}
\fullalg{\A} \maxtensor B & \cong \fullalg{\A \maxtensor B} & \quad \quad \left(\text{Prop.}~\ref{prop:tensor-prod:full-comp}\right) \nonumber \\
& \cong \fullalg{\A \mintensor B} &  \quad \quad \left(\text{Lem.}~\ref{lemma:tensor-prod-nuclear-fiber}\right) \nonumber \\
& \cong \redalg{\A \mintensor B} & \quad \quad \left(\text{Prop.}~\ref{prop:tensor-prod:ap} \; \text{and} \; \text{Thm.}~\ref{thm:ap-implies-wk}\right) \nonumber \\
& \cong \redalg{\A} \mintensor B &  \quad \quad \left(\text{Prop.}~\ref{prop:tensor-prod:red-comp}\right) \nonumber \\
& \cong \fullalg{\A} \mintensor B, & \quad \quad \left(\text{Thm.}~\ref{thm:ap-implies-wk}\right) \nonumber
\end{align}
as desired.
\end{proof}

An immediate consequence is the following.
\begin{corollary}
If $S$ is a \cstar{}amenable inverse semigroup then $\fullalg{S}\cong\redalg{S}$ via the left regular representation, and both are nuclear \cstar{}algebras.
\end{corollary}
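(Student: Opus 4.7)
The plan is to realize both $\fullalg{S}$ and $\redalg{S}$ as the full and reduced cross-sectional \cstar{}algebras of the Fell bundle $\A$ naturally associated to the canonical action $\beta\colon S\curvearrowright \dual E$, and then apply \cref{thm:ap-nuclear} to this particular Fell bundle.

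First I would set up the correct Fell bundle. Let $\A=(A_s)_{s\in S}$ be the Fell bundle over $S$ associated to the canonical action of $S$ on its spectrum $\dual E$, so that $A_1=\contz(\dual E)$ and each $A_s$ is a Hilbert bimodule of sections over the appropriate (clopen) piece of $\dual E$. By the standard identification of $\fullalg{S}$ and $\redalg{S}$ with the full and reduced cross-sectional \cstar{}algebras of this Fell bundle, we have $\fullalg{S}\cong \fullalg{\A}$ and $\redalg{S}\cong \redalg{\A}$, and the left regular representation of $S$ is intertwined with the left regular representation $\Lambda$ of $\A$.

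Next, the two hypotheses of \cref{thm:ap-nuclear} are satisfied: on the one hand, $A_1=\contz(\dual E)$ is commutative, hence nuclear; on the other hand, by the very definition of \cstar{}amenability (see the paragraph preceding the corollary), the canonical action $\beta\colon S\curvearrowright \dual E$ has the approximation property, and this is exactly saying that $\A$ has the approximation property in the sense of \cref{def:fell-isg}.

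Applying \cref{thm:ap-nuclear}~\ref{thm:ap-nuclear:nuclearity} to $\A$ therefore yields that $\Lambda\colon \fullalg{\A}\to \redalg{\A}$ is a \Star{}iso\-morphism and that both \cstar{}algebras are nuclear. Translating back via the identifications $\fullalg{S}\cong\fullalg{\A}$ and $\redalg{S}\cong\redalg{\A}$ gives the claim. The only part that requires care is the identification of $\fullalg{S}$ and $\redalg{S}$ with the cross-sectional \cstar{}algebras of $\A$; this is classical (going back to Paterson) and essentially amounts to the fact that representations of $S$ by partial isometries are in bijective correspondence with representations of the Fell bundle $\A$ associated to $\beta$, but it is the only point where one might wish to add a reference to the existing literature.
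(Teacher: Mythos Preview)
Your proposal is correct and matches the paper's approach: the paper presents this corollary as ``an immediate consequence'' of \cref{thm:ap-nuclear} without further proof, and your argument spells out exactly the intended reasoning --- identify $\fullalg{S}$ and $\redalg{S}$ with the cross-sectional \cstar{}algebras of the Fell bundle for $S\curvearrowright\dual E$, note that $A_1=\contz(\dual E)$ is nuclear, and invoke the definition of \cstar{}amenability together with \cref{thm:ap-nuclear}.
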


Another consequence of our main results (\cref{prop:wide,prop:fell-isg-vs-groupoids,thm:ap-nuclear}) is the following corollary, which generalizes the main results of~\cite{Kranz:AP-groupooids-Fell-bundles} in several ways. In particular, we do not require the groupoid to be Hausdorff or second-countable, nor do we need the Fell bundle be saturated or separable.
\begin{corollary} \label{cor:groupoids-final}
Let $G$ be an \'{e}tale groupoid with Hausdorff unit space, and let $\A = (A_g)_{g \in G}$ be a Fell bundle over $G$. The following assertions then hold:
\begin{enumerate}[label=(\roman*)]
\item If $\A$ has the approximation property then $\redalg{\A} \cong \fullalg{\A}$ via the left regular representation.
\item If $\A$ has the approximation property and $\contz(\A^0)$ is nuclear then $\redalg{\A} \cong \fullalg{\A}$ is nuclear.
\end{enumerate}
\end{corollary}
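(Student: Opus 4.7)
The plan is to translate the statement from the groupoid setting into the inverse semigroup setting and then apply \cref{thm:ap-implies-wk,thm:ap-nuclear} to the resulting Fell bundle over an inverse semigroup. Concretely, I would fix the wide inverse subsemigroup $S \coloneqq \Bis(G)$ of open bisections of $G$, so that $G \cong G^0 \rtimes S$ by \cref{prop:wide}. Following the construction recalled right before \cref{lemma:fell-isg-vs-groupoids}, the Fell bundle $\A = (A_g)_{g \in G}$ over $G$ produces a Fell bundle $\B = (B_u)_{u \in S}$ over $S$, where $B_u$ is the space of $\contz$-sections $u \to \A$. Note that $B_1 = \contz(\A^0)$ is precisely the unit fibre \cstar{}algebra of $\B$.

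Next, I would verify that $\fullalg{\A} \cong \fullalg{\B}$ and $\redalg{\A} \cong \redalg{\B}$, both via the canonical map induced on $\contc(\A)$ from viewing sections continuous on bisections as sections valued in $\B$. Indeed, both cross-sectional \cstar{}algebras of $\A$ and $\B$ arise as completions of essentially the same algebraic object (the \Star{}algebra of sections supported on bisections in $S$), modulo the same relations encoding restrictions between bisections. This identification is already present in the literature on the correspondence between Fell bundles over étale groupoids and Fell bundles over inverse semigroups (see e.g.\ the companion \cite{BussExel:Fell.Bundle.and.Twisted.Groupoids} construction), and a careful check shows it intertwines the respective regular representations, so the left regular representation for $\A$ corresponds to that for $\B$.

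With these identifications in place, \cref{prop:fell-isg-vs-groupoids} tells us that $\A$ has the approximation property if and only if $\B$ does. For part (i), the approximation property of $\B$ implies by \cref{thm:ap-implies-wk} that the regular representation $\Lambda \colon \fullalg{\B} \to \redalg{\B}$ is an isomorphism, and transporting this across the identifications $\fullalg{\A} \cong \fullalg{\B}$ and $\redalg{\A} \cong \redalg{\B}$ gives the weak containment property for $\A$. For part (ii), when additionally $B_1 = \contz(\A^0)$ is nuclear, \cref{thm:ap-nuclear}\,\cref{thm:ap-nuclear:nuclearity} applied to $\B$ yields that $\fullalg{\B} \cong \redalg{\B}$ is nuclear, hence so is $\fullalg{\A} \cong \redalg{\A}$.

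The main obstacle will be verifying the identifications $\fullalg{\A} \cong \fullalg{\B}$ and $\redalg{\A} \cong \redalg{\B}$, particularly in the non-Hausdorff and non-saturated setting, where one cannot simply invoke a globally continuous section picture. For the full cross-sectional algebra this reduces to matching universal properties: a representation of $\A$ on $\H$ corresponds naturally to a representation of $\B$ on $\H$ via pointwise evaluation and integration over bisections, and the defining relations $a_s\delta_s = a_s \delta_t$ for $s \le t$ in $S$ match the inclusion of bisections in $G$. For the reduced side, it suffices to check that the canonical conditional expectations are compatible with the identification, so that the induced regular representations agree; this is where a careful bookkeeping of the ideals $I_{s,1}$ and their units $1_s$ from \cref{def:pre:ideal-s-1} becomes crucial, exactly as in the proof of \cref{lemma:bisections-restriction}.
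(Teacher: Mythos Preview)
Your approach is essentially identical to the paper's: pass to a wide inverse semigroup $S$ of bisections, form the associated Fell bundle $\B$, transfer the approximation property via \cref{prop:fell-isg-vs-groupoids}, and apply \cref{thm:ap-implies-wk,thm:ap-nuclear}. The only difference is that where you propose to verify the isomorphisms $\fullalg{\A} \cong \fullalg{\B}$ and $\redalg{\A} \cong \redalg{\B}$ by hand, the paper simply cites \cites{Buss-Meyer:Actions_groupoids,Buss-Exel-Meyer:Reduced} for these facts; so what you flag as ``the main obstacle'' is in fact already established in the literature and need not be reproved.
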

\begin{proof}
 This follows as a combination of \cref{thm:ap-implies-wk,thm:ap-nuclear} and the fact that if $S$ is a wide inverse semigroup of bisections of $G$ and $\B$ is the Fell bundle over $S$ associated to $\A$, then there are canonical isomorphisms $\fullalg\A\cong \fullalg\B$ and $\redalg\A\cong \redalg\B$, see \cites{Buss-Meyer:Actions_groupoids,Buss-Exel-Meyer:Reduced}.
\end{proof}

We end the paper with a brief discussion on the \emph{essential \cstar{}algebra} of the Fell bundle $\A$, henceforth denoted by $\essalg{\A}$. This \cstar{}algebra was introduced, in this generality, in the groundbreaking~\cite{Kwaniewski2019EssentialCP}*{Section~4} (see also~\cite{AraMathieu:book} for the definition of the \emph{local multiplier algebra}). For the purposes of this paper, however, the construction of $\essalg{\A}$ is irrelevant, albeit for the fact that it is, canonically, a quotient of $\redalg{\A}$ in the following way.
\begin{theorem}[\cite{Kwaniewski2019EssentialCP}*{Theorem~4.11}] \label{thm:ess-construction}
Given a Fell bundle $\A = (A_s)_{s \in S}$ over $S$, the identity map on $\contc(\A)$ induces canonical quotient maps
\[ \fullalg{\A} \twoheadrightarrow \redalg{\A} \twoheadrightarrow \essalg{\A} \]
that restrict to the identity on $A_1$.
\end{theorem}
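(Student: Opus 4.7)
The first quotient map $\fullalg{\A} \onto \redalg{\A}$ is already in hand: it is the left regular representation $\Lambda$ from \cref{def:reduced-cross-sect}, which by construction restricts to the identity on $A_1$ (as $A_1 \delta_1 \into \algalg{\A}$ is isometric by~\cref{lemma:n-p-intersection-fibers}, and $\Lambda$ is faithful on this copy of $A_1$).

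The main content is to construct the second quotient $\redalg{\A} \onto \essalg{\A}$ as a quotient by a certain ``singular ideal''. The plan is to first promote the canonical conditional expectation $P \colon \contc(\A) \to A_1''$ of \cref{prop:cond-exp} to a completely positive (weak) conditional expectation defined on all of $\redalg{\A}$. The obstacle is that $P$ does not extend continuously to $\redalg{\A}$ with values in $A_1''$ alone: one must enlarge the target. The standard remedy (following the approach of Kwaśniewski--Meyer) is to embed $A_1$ into its local multiplier \cstar{}algebra $\multiplieralg{\text{Loc}}(A_1)$ (or, equivalently, its injective envelope $I(A_1)$), and extend $P$ to a map
\[ \tilde P \colon \redalg{\A} \longrightarrow \multiplieralg{\text{Loc}}(A_1)'' \]
by using the canonical embeddings $\Theta_{s,1} \colon A_s \to \multiplieralg(I_{s,1})$ and their naturality with respect to essential ideals. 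Concretely, one shows that the formula defining $P$ on each fibre $A_s \delta_s$ already lands in a suitable corner of $\multiplieralg{\text{Loc}}(A_1)$, and that the extension is automatically contractive on $\redalg{\A}$ because $\redalg{\A}$ is the completion with respect to the norm induced by induced representations of the corresponding enlarged expectations.

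Given such an extension, one defines the \emph{singular ideal}
\[ J_{\text{sing}} \coloneqq \left\{ x \in \redalg{\A} \; \middle| \; \tilde P(x^* x) = 0 \right\} \sbe \redalg{\A}. \]
To verify that $J_{\text{sing}}$ is a closed two-sided \Star{}ideal, the crucial input is the symmetry property of $\tilde P$, namely that $\tilde P(x^*x) = 0$ if and only if $\tilde P(xx^*) = 0$; this propagates from the analogous fact for $P$ on $\contc(\A)$ (recall $\NN_P$ from~\cref{eq:ideal-n-p}) by continuity. Setting $\essalg{\A} \coloneqq \redalg{\A}/J_{\text{sing}}$ then gives a \cstar{}algebra together with the desired quotient map $\redalg{\A} \onto \essalg{\A}$.

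Finally, to check the map restricts to the identity on $A_1$ one verifies $A_1 \cap J_{\text{sing}} = 0$: if $a \in A_1$ satisfies $\tilde P(a^*a) = 0$, then since $\tilde P$ agrees with the identity on $A_1$ (because $P(a\delta_1) = a$), we get $a^*a = 0$ and hence $a = 0$. Thus the composition $A_1 \into \fullalg{\A} \onto \redalg{\A} \onto \essalg{\A}$ is injective with image identified with the image of $A_1$ in each quotient. The hardest step in this plan is the construction and continuity of the extended expectation $\tilde P$ on $\redalg{\A}$, since $P$ is only defined algebraically on $\contc(\A)$ and takes values in the (generally huge) von Neumann algebra $A_1''$; identifying the correct intermediate object ($\multiplieralg{\text{Loc}}(A_1)''$ or the injective envelope) is the key technical ingredient, carried out in detail in~\cite{Kwaniewski2019EssentialCP}*{Section~4}.
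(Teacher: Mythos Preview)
The paper does not prove \cref{thm:ess-construction} at all: it is quoted verbatim as \cite{Kwaniewski2019EssentialCP}*{Theorem~4.11} and used only as a black box to deduce \cref{cor:ess-nuclearity}. So there is no ``paper's own proof'' to compare against; your sketch is an attempt to summarize the construction from the cited reference.

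As such a summary, your outline is broadly faithful to the Kwa\'sniewski--Meyer approach: the essential \cstar{}algebra is indeed obtained by extending the canonical weak expectation to a generalized expectation with values in a suitable enlargement of $A_1$, and then quotienting $\redalg{\A}$ by the resulting singular ideal $\{x : \tilde P(x^*x)=0\}$. A couple of points are imprecise, however. First, the correct target for the extended expectation in \cite{Kwaniewski2019EssentialCP} is the injective envelope $I(A_1)$ (or, in favourable cases, the local multiplier algebra $\localmultiplieralg{A_1}$ itself), not $\localmultiplieralg{A_1}''$; the whole point of passing to $I(A_1)$ is that injectivity guarantees the extension exists and is completely positive, whereas landing in a bidual would reintroduce exactly the continuity problems you are trying to avoid. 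Second, your justification that $\tilde P$ is contractive on $\redalg{\A}$ (``because $\redalg{\A}$ is the completion with respect to the norm induced by induced representations of the corresponding enlarged expectations'') is circular as stated: one does not know a priori that the essential expectation dominates or is dominated by the reduced norm, and in \cite{Kwaniewski2019EssentialCP} this is established via the injectivity of $I(A_1)$ together with the rigidity of pseudo-expectations, not by comparing norms. These are genuine gaps in the sketch, though the overall architecture is right.
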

As an immediate consequence of the above theorem we get the following.
\begin{corollary} \label{cor:ess-nuclearity}
Let $\A = (A_s)_{s \in S}$ be a Fell bundle over a unital inverse semigroup $S$. If $A_1$ is nuclear and $\A$ has the approximation property, then $\essalg{\A}$ is nuclear.
\end{corollary}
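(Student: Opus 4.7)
The proof is essentially a two-line chase through the already-established machinery. The plan is to combine \cref{thm:ap-nuclear} with \cref{thm:ess-construction} and the well-known fact that quotients of nuclear \cstar{}algebras are nuclear.

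First I would apply \cref{thm:ap-nuclear}\,\cref{thm:ap-nuclear:nuclearity}: since $A_1$ is nuclear and $\A$ has the approximation property, we obtain that $\fullalg{\A} \cong \redalg{\A}$ is a nuclear \cstar{}algebra. Next, \cref{thm:ess-construction} provides a surjective \Star{}homomorphism $\redalg{\A} \twoheadrightarrow \essalg{\A}$ induced by the identity map on $\contc(\A)$. Finally, since nuclearity passes to quotients (see, e.g., \cite{Brown-Ozawa:Approximations}*{Corollary~9.4.4}), $\essalg{\A}$ is nuclear as well.

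There is no real obstacle in this argument, as all the heavy lifting has already been done in \cref{thm:ap-implies-wk} and \cref{thm:ap-nuclear}. The only point that might deserve explicit mention is that the quotient map $\redalg{\A} \twoheadrightarrow \essalg{\A}$ is genuinely surjective with dense image coming from $\contc(\A)$, which is exactly the content of \cref{thm:ess-construction} as quoted from \cite{Kwaniewski2019EssentialCP}*{Theorem~4.11}, so no additional work is required.
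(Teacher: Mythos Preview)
Your proof is correct and matches the paper's own argument essentially line for line: apply \cref{thm:ap-nuclear} to get nuclearity of $\redalg{\A}$, invoke \cref{thm:ess-construction} for the quotient map onto $\essalg{\A}$, and conclude via the Kirchberg result (cited from the same place, \cite{Brown-Ozawa:Approximations}*{Corollary~9.4.4}) that nuclearity passes to quotients.
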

\begin{proof}
The claim follows from \cref{thm:ap-nuclear,thm:ess-construction} and the very deep result of Kirchberg that states that quotients of nuclear \cstar{}algebras are nuclear as well (see, e.g.,~\cite{Brown-Ozawa:Approximations}*{Corollary~9.4.4}).
\end{proof}
Observe that the reverse of \cref{cor:ess-nuclearity} does not hold, that is, that $\essalg{\A}$ may be nuclear and $\A$ not satisfy the approximation property, as the following example shows.
\begin{example} \label{ex:ap:red-vs-ess:2}
Let $G$ be a group, and let $S$ and $\A = (A_s)_{s \in S}$ be as in \cref{ex:ap:red-vs-ess} (see also~\cite{Kwaniewski2019EssentialCP}*{Example~4.7}). By the discussion in \cref{ex:ap:red-vs-ess}, if $G$ is non-amenable then $\A$ does not have the approximation property, and the left regular representation $\Lambda \colon \fullalg{\A} \to \redalg{\A}$ is not injective. However, as stated in~\cite{Kwaniewski2019EssentialCP}*{Example~4.7}, $\essalg{\A}$ is nuclear, regardless of the amenability of $G$. Indeed, in this case $\essalg{\A} \cong \cont[0, 1]$, since $(0, 1] \sbe [0,1]$ is open and dense, and hence $\contz(0, 1] \sbe \cont[0,1]$ is an essential (closed two-sided) ideal.
\end{example}

\begin{bibdiv}
  \begin{biblist}
\bib{Abadie:Tensor}{article}{
  author={Abadie, Fernando},
  title={Tensor products of Fell bundles over discrete groups},
  status={eprint},
  note={\arxiv{funct-an/9712006}},
  date={1997},
}

\bib{Abadie-Buss-Ferraro:Amenability}{article}{
  author={Abadie, Fernando},
  author={Buss, Alcides},
  author={Ferraro, Dami\'an},
  title={Amenability and Approximation Properties for Partial Actions and Fell Bundles},
  journal={Bulletin Brazilian Mathematical Society},
  volume={53},
  pages={173-227},
  date={2022},
  doi={10.1007/s00574-021-00255-8},
}

\bib{Anantharaman-Delaroche:Systemes}{article}{
  author={Anantharaman-Delaroche, Claire},
  title={Syst\`emes dynamiques non commutatifs et moyennabilit\'e},
  journal={Math. Ann.},
  volume={279},
  date={1987},
  number={2},
  pages={297--315},
  issn={0025-5831},
  review={\MR{919508}},
  doi={10.1007/BF01461725},
}

\bib{Renault_AnantharamanDelaroche:Amenable_groupoids}{book}{
  author={Anantharaman-Delaroche, Claire},
  author={Renault, Jean},
  title={Amenable groupoids},
  series={Monographies de L'Enseignement Math\'ematique},
  volume={36},
  publisher={L'Enseignement Math\'ematique, Geneva},
  year={2000},
  pages={196},
  isbn={2-940264-01-5},
  review={\MR{1799683}},
}

\bib{Ara-Exel-Katsura:Dynamical_systems}{article}{
  author={Ara, Pere},
  author={Exel, Ruy},
  author={Katsura, Takeshi},
  title={Dynamical systems of type $(m,n)$ and their $\textup {C}^*$\nobreakdash -algebras},
  journal={Ergodic Theory Dynam. Systems},
  volume={33},
  date={2013},
  number={5},
  pages={1291--1325},
  issn={0143-3857},
  review={\MR {3103084}},
  doi={10.1017/S0143385712000405},
}

\bib{AraLledoMartinez-2020}{article}{
title={Amenability and paradoxicality in semigroups and C*-algebras},
journal={Journal of Functional Analysis},
volume={279},
number={2},
pages={108530},
year={2020},
issn={0022-1236},
doi={https://doi.org/10.1016/j.jfa.2020.108530},
url={https://www.sciencedirect.com/science/article/pii/S0022123620300732},
author={Ara, Pere},
author={Lledó, Fernando},
author={Martínez, Diego},
}

\bib{AraMathieu:book}{book}{
  author={Ara, Pere},
  author={Mathieu, Martin},
  title={Local multipliers of $C^*$\nobreakdash -algebras},
  series={Monographs in Mathematics},
  publisher={Springer},
  place={London},
  date={2003},
  doi={10.1007/978-1-4471-0045-4}
}

\bib{chung-martinez-szakacs-2022}{article}{
  author={Chyuan Chung, Yeong},
  author={Mart\'{i}nez, Diego},
  author={Szak\'{a}cs, N\'{o}ra},
  title={Quasi-countable inverse semigroups as metric spaces, and the uniform Roe algebras of locally finite inverse semigroups},
  status={preprint},
  note={\arxiv {2211.09624}},
  journal={},
  volume={},
  date={2022},
  number={},
  pages={},
  issn={},
  review={},
  doi={},
}

\bib{Brown-Ozawa:Approximations}{book}{
  author={Brown, Nathanial P.},
  author={Ozawa, Narutaka},
  title={$C^*$\nobreakdash -algebras and finite-dimensional approximations},
  series={Graduate Studies in Mathematics},
  volume={88},
  publisher={Amer. Math. Soc.},
  place={Providence, RI},
  date={2008},
  pages={xvi+509},
  isbn={978-0-8218-4381-9},
  isbn={0-8218-4381-8},
  review={\MR {2391387}},
}

\bib{Buss-Echterhoff-Willett:amenability}{article}{
  author={Buss, Alcides},
  author={Echterhoff, Siegfried},
  author={Willett, Rufus},
  title={Amenability and weak containment for actions of locally compact groups on C*-algebras},
  status={preprint},
  note={\arxiv {2003.03469}},
  date={2020},
}

\bib{BussExel:InverseSemigroupExpansions}{article}{
  author={Buss, Alcides},
  author={Exel, Ruy},
  title={Inverse semigroup expansions and their actions on \(C^*\)\nobreakdash -algebras},
  journal={Illinois J. Math.},
  volume={56},
  date={2012},
  number={4},
  pages={1185--1212},
  issn={0019-2082},
  eprint={http://projecteuclid.org/euclid.ijm/1399395828},
  review={\MR {3231479}},
}

\bib{BussExel:Fell.Bundle.and.Twisted.Groupoids}{article}{
  author={Buss, Alcides},
  author={Exel, Ruy},
  title={Fell bundles over inverse semigroups and twisted \'etale groupoids},
  journal={J. Operator Theory},
  volume={67},
  date={2012},
  number={1},
  pages={153--205},
  issn={0379-4024},
  review={\MR{2881538}},
  eprint={http://www.theta.ro/jot/archive/2012-067-001/2012-067-001-007.html},
}

\bib{Buss-Exel-Meyer:Reduced}{article}{
  author={Buss, Alcides},
  author={Exel, Ruy},
  author={Meyer, Ralf},
  title={Reduced \(C^*\)\nobreakdash -algebras of Fell bundles over inverse semigroups},
  journal={Israel J. Math.},
  date={2017},
  volume={220},
  number={1},
  pages={225--274},
  issn={0021-2172},
  review={\MR {3666825}},
  doi={10.1007/s11856-017-1516-9},
}

\bib{Buss-Meyer:Actions_groupoids}{article}{
  author={Buss, Alcides},
  author={Meyer, Ralf},
  title={Inverse semigroup actions on groupoids},
  journal={Rocky Mountain J. Math.},
  issn={0035-7596},
  date={2017},
  volume={47},
  number={1},
  pages={53--159},
  doi={10.1216/RMJ-2017-47-1-53},
  review={\MR {3619758}},
}

\bib{Echterhoff-Kaliszewski-Quigg-Raeburn:Categorical}{article}{
  author={Echterhoff, Siegfried},
  author={Kaliszewski, Steven P.},
  author={Quigg, John},
  author={Raeburn, Iain},
  title={A categorical approach to imprimitivity theorems for $C^*$\nobreakdash-dynamical systems},
  journal={Mem. Amer. Math. Soc.},
  volume={180},
  date={2006},
  number={850},
  pages={viii+169},
  issn={0065-9266},
  review={\MR{2203930}},
  doi={10.1090/memo/0850},
}

\bib{Exel:TwistedActions}{article}{
title={Twisted partial actions: a classification of regular C*-algebraic bundles},
volume={74},
DOI={10.1112/S0024611597000154},
number={2},
journal={Proceedings of the London Mathematical Society},
publisher={Cambridge University Press},
author={Exel, Ruy},
year={1997},
pages={417–443},
}

\bib{Exel:Amenability}{article}{
  author={Exel, Ruy},
  title={Amenability for Fell bundles},
  journal={J. Reine Angew. Math.},
  volume={492},
  date={1997},
  pages={41--73},
  issn={0075-4102},
  review={\MR {1488064}},
  doi={10.1515/crll.1997.492.41},
}

\bib{Exel:Inverse_combinatorial}{article}{
  author={Exel, Ruy},
  title={Inverse semigroups and combinatorial $C^*$\nobreakdash-algebras},
  journal={Bull. Braz. Math. Soc. (N.S.)},
  volume={39},
  date={2008},
  number={2},
  pages={191--313},
  issn={1678-7544},
  review={\MR{2419901}},
  doi={10.1007/s00574-008-0080-7},
}

\bib{Exel:noncomm.cartan}{article}{
  author={Exel, Ruy},
  title={Noncommutative Cartan subalgebras of $C^*$\nobreakdash -algebras},
  journal={New York J. Math.},
  issn={1076-9803},
  volume={17},
  date={2011},
  pages={331--382},
  eprint={http://nyjm.albany.edu/j/2011/17-17.html},
  review={\MR {2811068}},
}

\bib{Exel:Partial_dynamical}{book}{
  author={Exel, Ruy},
  title={Partial dynamical systems, Fell bundles and applications},
  series={Mathematical Surveys and Monographs},
  volume={224},
  date={2017},
  pages={321},
  isbn={978-1-4704-3785-5},
  isbn={978-1-4704-4236-1},
  publisher={Amer. Math. Soc.},
  place={Providence, RI},
  review={\MR {3699795}},
}

\bib{ExelStarling:Amenable_Actions_Inv_Sem}{article}{
  author={Exel, Ruy},
  author={Starling,Charles},
  title={Amenable actions of inverse semigroups},
  journal={Ergod. Theor. Dyn. Syst.},
  volume={37},
  date={2017},
  pages={481--489},
}

\bib{ExelNg:ApproximationProperty}{article}{
  author={Exel, Ruy},
  author={Ng, {Ch}i-Keung},
  title={Approximation property of $C^*$\nobreakdash -algebraic bundles},
  journal={Math. Proc. Cambridge Philos. Soc.},
  volume={132},
  date={2002},
  number={3},
  pages={509--522},
  issn={0305-0041},
  doi={10.1017/S0305004101005837},
  review={\MR {1891686}},
}

\bib{KellendonkLawson:PartialActions}{article}{
  author={Kellendonk, Johannes},
  author={Lawson, Mark V.},
  title={Partial actions of groups},
  journal={Internat. J. Algebra Comput.},
  volume={14},
  date={2004},
  number={1},
  pages={87--114},
  issn={0218-1967},
  doi={10.1142/S0218196704001657},
  review={\MR{2041539}},
}

\bib{Kirchberg-Wassermann:Operations}{article}{
  author={Kirchberg, Eberhard},
  author={Wassermann, Simon},
  title={Operations on continuous bundles of $C^*$\nobreakdash -algebras},
  journal={Math. Ann.},
  volume={303},
  date={1995},
  number={4},
  pages={677--697},
  issn={0025-5831},
  review={\MR {1359955}},
  doi={10.1007/BF01461011},
}

\bib{Kwaniewski2019EssentialCP}{article}{
  title={Essential crossed products for inverse semigroup actions: simplicity and pure infiniteness},
  author={Kwaśniewski, Bartosz Kosma},
  author={Meyer, Ralf},
  journal={Documenta Mathematica},
  year={2019}
}

\bib{Kwasniewski-Meyer:Pure_infiniteness}{article}{
  author={Kwa\'sniewski, Bartosz Kosma},
  author={Meyer, Ralf},
  title={Ideal structure and pure infiniteness of inverse semigroup crossed products},
  note={\arxiv{2112.07420.pdf}},
  date={2021},
}

\bib{Kranz:AP-groupooids-Fell-bundles}{article}{
  author={Kranz, Julian},
  title={Amenability for actions of \'etale groupoids on C*-algebras and Fell bundles},
  date={2022},
  note={to appear in Trans. Ame. Math. Soc.},
}

\bib{Kumjian:Fell_bundles}{article}{
  author={Kumjian, Alex},
  title={Fell bundles over groupoids},
  journal={Proc. Amer. Math. Soc.},
  volume={126},
  date={1998},
  number={4},
  pages={1115--1125},
  issn={0002-9939},
  review={\MR{1443836}},
  doi={10.1090/S0002-9939-98-04240-3},
}

\bib{MR3383622}{article}{
  author={LaLonde, Scott M.},
  title={Nuclearity and exactness for groupoid crossed products},
  journal={J. Operator Theory},
  volume={74},
  date={2015},
  number={1},
  pages={213--245},
  issn={0379-4024},
  review={\MR {3383622}},
  doi={10.7900/jot.2014jun06.2032},
}

 \bib{Lawson:InverseSemigroups}{book}{
  author={Lawson, Mark V.},
  title={Inverse semigroups: the theory of partial symmetries},
  publisher={World Scientific Publishing Co.},
  place={River Edge, NJ},
  date={1998},
  pages={xiv+411},
  isbn={981-02-3316-7},
}

\bib{Lawson2012:duality}{article}{
  author={Lawson, Mark},
  title={Noncommutative Stone duality: inverse semigroups, topological groupoids and C*-algebras},
  journal={International Journal of Algebra and Computation},
  volume={22},
  number={6},
  date={2012},
  doi={10.1142/S0218196712500580},
}

\bib{Li:Classifiable}{article}{
author={Li, Xin},
year={2020},
title={Every classifiable simple C*-algebra has a Cartan subalgebra},
journal={Inventiones Mathematicae},
volume={219},
number={2},
pages={653--699},
doi={10.1007/s00222-019-00914-0}
}

\bib{lledo-martinez-2021}{article}{
  author={Lled\'{o}, Fernando},
  author={Mart\'{i}nez, Diego},
  title={The uniform Roe algebra of an inverse semigroup},
  journal={J. Math. Anal. Appl.},
  volume={499},
  date={2021},
  number={1},
  pages={124996},
  issn={124996},
  review={\MR {MR4207321}},
  doi={https://doi.org/10.1016/j.jmaa.2021.124996},
}

\bib{martinez-2022}{article}{
  author={Mart\'{i}nez, Diego},
  title={A note on the quasi-diagonality of an inverse semigroup reduced C*-algebras},
  journal={J. Op. Th.},
  volume={},
  date={2022},
  number={},
  pages={},
  issn={},
  review={},
  doi={},
}

\bib{Ozawa-Suzuki:Amenable_examples}{article}{
  author={Ozawa, Narutaka},
  author={Suzuki, Yuhei},
  title={On characterizations of amenable \(\mathrm{C}^*\)-dynamical systems and new examples},
  journal={Selecta Math. (N.S.)},
  issn={1022-1824},
  volume={27},
  date={2021},
  pages={92},
  doi={10.1007/s00029-021-00699-2},
}

\bib{Paterson1999}{book}{
  author={Paterson, Alan L.~T.},
  title={Groupoids, Inverse Semigroups, and their Operator Algebras},
  publisher={Springer Verlag},
  year={1999},
}

\bib{Preston1954-1}{article}{
author={Preston, Gordon Bamford},
title={Inverse semi-groups},
volume={29},
journal={J. London Math. Soc.},
date={1954},
pages={396--403},
}

\bib{Preston1954-2}{article}{
author={Preston, Gordon Bamford},
title={Inverse semi-groups with minimal right ideals},
volume={29},
journal={J. London Math. Soc.},
date={1954},
pages={404--411},
}

\bib{Preston1954-3}{article}{
author={Preston, Gordon Bamford},
title={Representations of inverse semi-groups},
volume={29},
journal={J. London Math. Soc.},
date={1954},
pages={411--419},
}

\bib{SiebenFellbundles}{article}{
  author={Sieben, Nandor},
  title={Fell bundles over r-discrete groupoids and inverse semigroups},
  status={unpublished},
  date={1998},
  eprint={http://jan.ucc.nau.edu/~ns46/bundle.ps.gz},
}

\bib{SimsNotes2020}{article}{
  Author={Sims, Aidan},
  Booktitle={Operator algebras and dynamics: groupoids, crossed products, and Rokhlin dimension},
  Editor={Perera, Francesc},
  doi={10.1007/978-3-030-39713-5},
  Organization={CRM Barcelona},
  Publisher={Birkh\"auser/Springer},
  Series={Advanced Courses in Mathematics},
  Title={Hausdorff \'etale groupoids and their {C*-algebras}},
  Year={2020},
 }

\bib{Wagner1952}{article}{
author={Wagner, Viktor},
title={Generalised groups},
volume={84},
journal={Proc. USSR Ac. Scn.},
date={1952},
pages={1119--1122},
}

\bib{Wagner1953}{article}{
author={Wagner, Viktor},
title={The theory of generalized heaps and generalized groups},
volume={3},
journal={Sb. Mat.},
date={1953},
pages={545--632},
}

  \end{biblist}
\end{bibdiv}

\end{document}